\DeclareRobustCommand{\gobblefive}[5]{}
\newcommand*{\SkipTocEntry}{\addtocontents{toc}{\gobblefive}}
\numberwithin{equation}{section}
\theoremstyle{plain}
\newtheorem{lemma}{Lemma}[section]
\newtheorem{proposition}[lemma]{Proposition}
\newtheorem{theorem}[lemma]{Theorem}
\newtheorem{corollary}[lemma]{Corollary}
\theoremstyle{definition}
\newtheorem{definition}[lemma]{Definition}
\newtheorem{remark}[lemma]{Remark}
\let\C\relax
\newcommand{\C}{{\mathbb C}}
\newcommand{\R}{{\mathbb R}}
\newcommand{\N}{{\mathbb N}}
\newcommand{\Hh}{{\mathcal H}}
\newcommand{\Ll}{{\mathcal L}}
\newcommand{\Mm}{{\mathcal M}}
\newcommand{\id}{{\rm id}}
\newcommand{\Om}{{\Omega}}
\newcommand{\om}{{\omega}}
\newcommand{\la}{\langle}
\newcommand{\ra}{\rangle}
\newcommand{\p}{{\partial}}
\newcommand{\bp}{{\bar{\partial}}}
\newcommand{\vol}{\mbox{\rm vol}}
\newcommand{\Sp}{{\text{\rm Spin}(7)}}
\renewcommand{\Im}{{ \rm Im \,}}
\newcommand{\Aa}{{\mathcal A}}
\newcommand{\Bb}{{\mathcal B}}
\newcommand{\Ff}{{\mathcal F}}
\newcommand{\wFf}{{\widehat{\mathcal F}_{G_2}}}
\newcommand{\Gg}{{\mathcal G}}
\newcommand{\Kk}{{\mathcal K}}
\newcommand{\Pp}{{\mathcal P}}
\newcommand{\Ss}{{\mathcal S}}
\newcommand{\Tt}{{\mathcal T}}
\newcommand{\g}{{\mathfrak g}}
\renewcommand{\i}{{\sqrt{-1}}}
\renewcommand{\l}{{\ell}}
\newcommand{\n}{{\nabla}}
\begin{document}
\title[Deformation of dHYM and dDT]
{Deformation theory of deformed Hermitian Yang--Mills connections and deformed Donaldson--Thomas connections}
\author{Kotaro Kawai}
\address{Department of Mathematics, Faculty of Science, Gakushuin University, 1-5-1 Mejiro, Toshima-ku, Tokyo 171-8588, Japan}
\email{kkawai@math.gakushuin.ac.jp}
\author{Hikaru Yamamoto}
\address{Department of Mathematics, Faculty of Pure and Applied Science, University of Tsukuba, 1-1-1 Tennodai, Tsukuba, Ibaraki 305-8577, Japan}
\email{hyamamoto@math.tsukuba.ac.jp}
\thanks{The first named author is supported by 
JSPS KAKENHI Grant Number JP17K14181 and Research Grants of Yoshishige Abe Memorial Fund, and the second named author is supported by JSPS KAKENHI Grant Number JP18K13415 and Osaka City University Advanced Mathematical Institute (MEXT Joint Usage/Research Center on Mathematics and Theoretical Physics)}
\begin{abstract}
A deformed Donaldson--Thomas (dDT) connection is a Hermitian connection of a Hermitian line bundle over a $G_2$-manifold $X$ satisfying 
a certain nonlinear PDE. This is considered to be the mirror of a (co)associative cycle in the context of mirror symmetry. 
The dDT connection is an analogue of a deformed Hermitian Yang--Mills (dHYM) connection which is extensively studied recently. 

In this paper, we study the moduli spaces of dDT and dHYM connections. 
In the former half, we prove that 
the deformation of dDT connections is controlled by a subcomplex of the canonical complex, 
an elliptic complex defined by Reyes Carri\'on, by introducing a new coclosed $G_2$-structure. 
If the deformation is unobstructed, we also show that the connected component of the moduli space is a $b^{1}$-dimensional torus,  
where $b^{1}$ is the first Betti number of $X$. 
A canonical orientation on the moduli space is also given. 
We also prove that the obstruction of the deformation vanishes if we perturb the $G_2$-structure 
generically under some assumptions. 

In the latter half, we prove that 
the moduli space of dHYM connections, if it is nonempty, is a $b^{1}$-dimensional torus, especially, it is connected and orientable. 
We also prove the existence of a family of moduli spaces along a deformation of underlying structures 
if two cohomology classes vanish. 
\end{abstract}
\keywords{mirror symmetry, deformed Hermitian Yang--Mills, deformed Donaldson--Thomas, moduli space, deformation theory, special holonomy, calibrated submanifold}
\subjclass[2010]{
Primary: 53C07, 58D27, 58H15 Secondary: 53D37, 53C25, 53C38}
\maketitle
\tableofcontents
\section{Introduction}\label{sec1}
A \emph{deformed Donaldson--Thomas (dDT) connection} is a Hermitian connection $\nabla$ of 
a smooth complex line bundle $L$ with a Hermitian metric $h$ over 
a manifold $X^7$ with a $G_2$-structure $\varphi \in \Om^3$
satisfying 
\[
\frac{1}{6} F_\nabla^3 + F_\nabla \wedge \ast\varphi=0,  
\]
where $F_\n$ is the curvature 2-form of $\n$. We consider it as a $\i \R$-valued closed 2-form on $X$. 
This is an analogue in $G_{2}$-geometry of dHYM connections defined as follows. 
A \emph{deformed Hermitian Yang--Mills (dHYM) connection} is a Hermitian connection $\nabla$ of 
a smooth complex line bundle $L$ with a Hermitian metric $h$ over a K\"ahler manifold 
$(X,\omega)$ of $\dim_\C X=n$ satisfying 
\[
F^{0,2}_{\nabla}=0
\quad\mbox{and}\quad
\mathop{\mathrm{Im}}\left(e^{-\i \theta} (\omega + F_\nabla)^n\right)=0
\]
for a constant $\theta\in\mathbb{R}$, where $F_{\nabla}^{0,2}$ is the $(0,2)$-part 
of the curvature 2-form $F_{\nabla}$ of $\nabla$. 
The former condition is usually interpreted as the integrability condition 
and the complex number $e^{\sqrt{-1}\theta}$ is called the phase. 

\SkipTocEntry \subsection*{Background}
Before introducing main theorems, we provide the background of these subjects. 
In theoretical physics, dHYM connections were derived as minimizers, called BPS states, of a functional, 
called the Dirac-Born-Infeld (DBI) action, by Mari\~no, Minasian, Moore and Strominger \cite{MMMS}. 
See also an explanation by Collins, Xie and Yau \cite{CXY}. 
At the same time, in mathematics, 
the dHYM connection was found by Leung, Yau and Zaslow \cite{LYZ}, 
slightly after the discovery of a similar notion by Leung \cite{L}. 
It was obtained 
as the real Fourier--Mukai transform of a graphical special Lagrangian submanifold 
with a flat connection over it (a special Lagrangian cycle) in a Calabi--Yau manifold 
in the context of mirror symmetry. 
Similarly, the dDT connection was introduced by Lee and Leung \cite{LL} 
as the real Fourier--Mukai transform of 
an associative cycle, an associative submanifold with a flat connection over it, 
or a coassociative cycle, 
a coassociative submanifold with an ASD connection over it. 
Note that each of associative and coassociative cycles 
corresponds to the dDT connection. 
\SkipTocEntry \subsection*{Motivation}
As mentioned above,  dHYM and dDT connections 
are the mirror objects of special Lagrangian and (co)associative cycles, respectively. 
As the names indicate, dHYM and dDT connections can also be considered as 
analogies of Hermitian Yang--Mills (HYM) connections 
and Donaldson--Thomas (DT) connections, respectively. 
HYM and DT connections are also called Hermite--Einstein connections and $G_2$-instantons, respectively. 
Thus, it is natural to expect that dHYM and dDT connections would have similar properties to these objects. 
In this paper, we especially study whether the moduli spaces has similar properties. 
\SkipTocEntry \subsection*{Main Results and implication}
We now state our main results of this paper. 
\begin{theorem}[Theorems \ref{thm:moduli MG2}, \ref{newtheorem}, \ref{thm:moduli MG2 generic}
and Corollary \ref{cor:oriG2}] \label{Main2}
Let 
$X^7$ be a compact 7-manifold with a coclosed $G_2$-structure $\varphi \in \Om^3$ 
and $L$ be a smooth complex line bundle with a Hermitian metric $h$ over $X$. 
Let $\Mm_{G_2}$ be the set of all dDT connections of $L$  divided by the $U(1)$-gauge action, 
and call it the moduli space. Suppose that $\Mm_{G_2} \neq \emptyset$. 
\begin{enumerate}
\item
If $H^2(\#_\nabla) = \{\, 0 \,\}$ for $\nabla \in \Mm_{G_2}$, 
where $H^2(\#_\nabla)$ is the second cohomology of the complex $(\#_\nabla)$ 
for $\nabla$ defined in Subsection \ref{sec:infi deform G2}, 
then the connected component of $[\nabla]$ in the moduli space $\Mm_{G_2}$ is a smooth manifold 
which is homeomorphic to a $b^{1}$-dimensional torus, 
where $b^1$ is the first Betti number of $X$.
\item
Recall that we can call a 4-form 
that is pointwisely identified with \eqref{varphi*} a $G_2$-structure 
and dDT connections are defined by a $G_2$-structure considered as a 4-form. 
Suppose that 
$\n$ is a dDT connection (with respect to $\varphi$ (or $* \varphi$)) 
and one of the following conditions hold.
\begin{enumerate}
\item
The $G_2$-structure $\varphi \in \Om^3$ is torsion-free or nearly parallel. 
\item
The connection $\n$ satisfies $F_\n^3 \neq 0$ on a dense set of $X$. 
\end{enumerate}
Then, for every generic $G_2$-structure 
$\psi' \in \Om^4$ which satisfies $d \psi'=0$, $[\psi'] = [* \varphi] \in H^4_{dR}$ and is close to $* \varphi$, 
and also for every $[\nabla']\in \mathcal{M}_{G_2, \psi'}$ sufficiently close to $[\nabla]$, 
where $\mathcal{M}_{G_2, \psi'}$ is the moduli space of dDT connections for $\psi'$, 
the connected component of $[\nabla']$ in $\mathcal{M}_{G_2, \psi'}$ is a smooth manifold which is homeomorphic to a $b^{1}$-dimensional torus. 
\item
Suppose that $H^2(\#_\nabla) = \{\, 0 \,\}$ for any $[\n] \in \Mm_{G_2}$. 
Then, $\Mm_{G_2}$ is a $b^{1}$-dimensional manifold which is homeomorphic to the disjoint union of tori 
and it admits a canonical orientation determined by an orientation of $\det D$, 
where $D$ is a Fredholm operator defined by \eqref{cancpxfordDT G23}. 
\end{enumerate}
\end{theorem}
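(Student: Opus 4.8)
\emph{Strategy.} All three parts rest on the deformation complex $(\#_\nabla)$ together with two elementary features of abelian gauge theory: a Hermitian connection on $L$ is determined by its curvature up to a closed $\sqrt{-1}\,\Om^1$-valued $1$-form, and the curvature is gauge invariant. I use freely that deformations of dDT connections modulo $U(1)$-gauge are governed by the elliptic complex $(\#_\nabla)$, with $H^0(\#_\nabla)=\R$ (the constants), $H^1(\#_\nabla)$ the Zariski tangent space and $H^2(\#_\nabla)$ the obstruction space, that the associated first-order operator $D=D_\nabla$ of \eqref{cancpxfordDT G23} is Fredholm, and that $(\#_\nabla)$ and $D_\nabla$ depend on $\nabla$ only through $F_\nabla$.

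\emph{Part (1).} Since $H^2(\#_\nabla)=0$, the deformation is unobstructed (the Kuranishi obstruction map vanishes), so near $[\nabla]$ the moduli space $\Mm_{G_2}$ is a smooth manifold of dimension $\dim H^1(\#_\nabla)=b^1$, the last equality being the dimension count established earlier via the auxiliary coclosed $G_2$-structure. To globalize, observe that for a $d$-closed real $1$-form $\alpha$ one has $F_{\nabla+\sqrt{-1}\,\alpha}=F_\nabla$, so $\nabla+\sqrt{-1}\,\alpha$ is again a dDT connection; letting $\alpha$ run over the harmonic $1$-forms and dividing by the integral lattice $H^1(X;\Z)\hookrightarrow H^1(X;\R)$ produces an embedded $b^1$-torus $T\subset\Mm_{G_2}$ through $[\nabla]$ (the twisting map is an immersion, since harmonic forms meet the gauge directions $d\Om^0$ only in $0$, and it is injective modulo that lattice). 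As $(\#_\nabla)$ depends only on $F_\nabla$ it is unchanged along $T$, so $H^2=0$ everywhere on $T$, and hence $\Mm_{G_2}$ is smooth of dimension $b^1$ along all of $T$; thus the $b^1$-dimensional submanifold $T$ is open in $\Mm_{G_2}$, and being compact it is also closed. Therefore $T$ is the connected component of $[\nabla]$, which is homeomorphic to $T^{b^1}$.

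\emph{Part (2).} Form the universal moduli space
\[
\widehat{\Mm}=\{\,(\psi',[\nabla'])\ :\ \psi'\in\Om^4,\ d\psi'=0,\ [\psi']=[\ast\varphi],\ \psi'\text{ close to }\ast\varphi,\ \tfrac16 F_{\nabla'}^3+F_{\nabla'}\wedge\psi'=0\,\}\,/\,\Gg,
\]
noting that a generic such $\psi'$ equals $\ast'\varphi'$ for a coclosed $G_2$-structure $\varphi'$, so Part (1) applies to its fibres. On an admissible variation $\dot\psi'=d\beta$ (admissible since $[\psi']$ is fixed) the $\psi'$-derivative of the dDT operator contributes $F_{\nabla'}\wedge d\beta$, so the universal linearized operator is $(\sqrt{-1}\,\alpha,\beta)\mapsto\sqrt{-1}\,d\alpha\wedge\rho'+F_{\nabla'}\wedge d\beta$ with $\rho'=\tfrac12 F_{\nabla'}^2+\psi'$. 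One shows this operator is surjective; then $\widehat{\Mm}$ is smooth, the projection $\Pi\colon\widehat{\Mm}\to\{\psi'\}$ is Fredholm, and by Sard--Smale its regular values are generic. Over a regular value $\psi'$ the fibre $\Mm_{G_2,\psi'}$ is smooth, i.e.\ $H^2(\#_{\nabla'})=0$ at each of its points, whence Part (1) (for $\varphi'$) gives the torus conclusion. The analytic heart is the surjectivity: one must exclude a nonzero class of $H^2(\#_{\nabla'})$ that is $L^2$-orthogonal to every $F_{\nabla'}\wedge d\beta$; integrating by parts, such a class has an $\Om^6$-representative $\eta$ with $F_{\nabla'}\wedge d(\ast'\eta)\equiv0$, and one must then show $\eta$ is a coboundary. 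Hypothesis (2b) does this because $F_{\nabla'}^3\neq0$ on a dense set makes $\,\cdot\wedge F_{\nabla'}\colon\Lambda^2\to\Lambda^4$ pointwise injective there (a Lefschetz-type fact), which forces $d(\ast'\eta)\equiv0$ and hence $\eta=0$; under hypothesis (2a) the torsion-free or nearly parallel condition supplies the analogous pointwise identities. I expect this non-degeneracy / unique-continuation step to be the main obstacle.

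\emph{Part (3).} If $H^2(\#_\nabla)=0$ for every $[\nabla]\in\Mm_{G_2}$, then by Part (1) every connected component of $\Mm_{G_2}$ is a $b^1$-torus, so $\Mm_{G_2}$ is a $b^1$-manifold homeomorphic to a disjoint union of tori. For the orientation, view $\det D:=\Lambda^{\mathrm{top}}\ker D\otimes(\Lambda^{\mathrm{top}}\operatorname{coker}D)^{\ast}$ as a real line bundle over the space $\Bb$ of connections modulo gauge. Since $D_\nabla$ depends on $\nabla$ only through its curvature, $\det D$ is pulled back, along the curvature map, from the contractible affine space of closed $2$-forms in the fixed class $[F_\nabla]$, hence is trivial; fix one of its two orientations. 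At $[\nabla]\in\Mm_{G_2}$ the vanishing $H^2(\#_\nabla)=0$ yields a canonical isomorphism of line bundles $\det D|_{\Mm_{G_2}}\cong\Lambda^{\mathrm{top}}H^1(\#_\nabla)\otimes\bigl(\Lambda^{\mathrm{top}}H^0(\#_\nabla)\bigr)^{\ast}=\Lambda^{\mathrm{top}}T\Mm_{G_2}$, using that $H^0(\#_\nabla)=\R\cdot 1$ with canonical generator the constant function $1$ and $H^1(\#_\nabla)=T_{[\nabla]}\Mm_{G_2}$; transporting the fixed orientation of $\det D$ through this isomorphism equips $\Mm_{G_2}$ with a canonical orientation.
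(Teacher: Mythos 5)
Your parts (1) and (2) follow essentially the same route as the paper. For (1): smoothness near $[\nabla]$ from $H^2(\#_\nabla)=\{0\}$ via a slice and the implicit function theorem, then the open-and-closed argument showing the component equals the torus $(\nabla+\i Z^1\cdot\id_L)/\Gg_U$, using that $(\#_\nabla)$ depends on $\nabla$ only through $F_\nabla$. For (2): the parametrized moduli space over the class $[\ast\varphi]$, surjectivity of the universal linearization, Sard--Smale for the projection, and reduction to (1). You correctly locate the analytic heart of (2); the paper's Lemma \ref{lem:L2 oncomp} reduces it to showing that $F_\nabla\wedge d\eta=0$ and $\psi\wedge d\eta=0$ force $\eta=0$, handled under (a) by $d\eta\in\Om^2_{14}$ together with an integration by parts ($0=d\eta\wedge d\varphi=-d\ast d\eta$), and under (b) by the pointwise algebra of Lemma \ref{lem:kernel wedge}. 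Your ``Lefschetz-type'' observation that $\gamma\mapsto F\wedge\gamma$ is pointwise injective wherever $F^3\neq0$ is the right mechanism there, so (2) is the same proof in outline, with the key lemma sketched but correctly identified.

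The genuine gap is in part (3). You trivialize $\det D$ by asserting it is pulled back from ``the contractible affine space of closed $2$-forms in the fixed class.'' But the operator $D\eqref{cpxfordDT}$ of \eqref{cancpxfordDT G22} is built from the $G_2$-structure $\tilde\varphi_\nabla$ of \eqref{eq:newG2str}, which exists only when $1+\tfrac12\la F_\nabla^2,\ast\varphi\ra$ is nowhere zero; the operator family is therefore defined only over the open subset $\Aa_0'=\{\,\n : \la F_\n^2,\ast\varphi\ra\neq-2\,\}$ of that affine space, whose topology is not a priori trivial, so contractibility of the ambient affine space proves nothing about the determinant bundle you actually have. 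The paper closes this with a different homotopy: $\tilde\varphi_\nabla(s)=\bigl(s|1+\tfrac12\la F_\n^2,\ast\varphi\ra|^{-3/4}+(1-s)\bigr)(\id_{TX}+s(-\i F_\n)^\sharp)^*\varphi$ is a genuine $G_2$-structure for every $s\in[0,1]$ (the conformal factor is interpolated linearly so it never degenerates, and $\id+sF^\sharp$ is always invertible since $F^\sharp$ is skew), and Lemma \ref{lem:FredG2 gen} guarantees Fredholmness of the associated operator for an \emph{arbitrary} $G_2$-structure; hence the family over $\Bb_0'$ is homotopic through Fredholm operators to the constant family $D$, which is what triviality requires. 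Your final identification $\det D|_{\Mm_{G_2}}\cong\Lambda^{\rm top}T\Mm_{G_2}$ using ${\rm Coker}\,D(\#_\nabla)=\R$ matches the paper's Corollary \ref{cor:oriG2}.
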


Note that the complex $(\#_\nabla)$ 
can be regarded as a subcomplex of the canonical complex 
introduced by Reyes Carri\'on \cite{Reyes}. 
In Theorem \ref{Main2}, 
the $G_2$-structure $\varphi$ need not to be torsion-free. 
It only has to be coclosed,  which implies that 
there are many explicit examples 
such as the standard 7-sphere $S^7$ for which we can apply this theorem. 
As in the following Theorem \ref{Main1} for dHYM connections, 
one might expect that $\Mm_{G_2}$ itself is a torus, in other words, 
every deformation is unobstructed and $\Mm_{G_2}$ is connected. 
However, this does not hold in general because 
there are examples of obstructed dDT connections and 
moduli spaces which contain at least two connected components of different dimensions given in \cite[Theorem 1.10]{LO}. 

One might think that (3) is immediate from (1) 
because (1) implies that each connected component of $\Mm_{G_{2}}$ is a torus, and hence, 
 $\Mm_{G_{2}}$ is orientable. 
The point in (3) is that we can choose an orientation of $\Mm_{G_{2}}$ canonically. 
As pointed out in \cite[Section 5.4]{DK}, 
if, for example, there are $N$ connected components in $\Mm_{G_{2}}$, there are $2^{N}$ orientations on $\Mm_{G_{2}}$ a priori. 
However, (3) states that we can pick the canonical one from that $2^{N}$ orientations. 
A canonical choice of orientations will be useful to define enumerative invariants 
such as Casson invariants of 3-manifolds or Donaldson invariants of 4-manifolds. 
It is because we must count ``with signs'' to define enumerative invariants 
and we need a canonical orientation of the moduli space to determine the signs. 

By \cite{McLean}, deformations of coassociative submanifolds are unobstructed 
while those of associative submanifolds are obstructed in general. 
The moduli space of associative submanifolds 
is a 0-dimensional manifold for generic $G_2$-structures 
by \cite{Gayet} and admits a canonical orientation by \cite{Joyce}. 
Similarly, 
the deformations of $G_2$-instantons are obstructed in general and  
the moduli space is a 0-dimensional manifold for generic $G_2$-structures.  
It admits a canonical orientation by \cite{JTU, JU, Up, Wa}. 
Hence, Theorem \ref{Main2} indicates that 
the moduli space $\Mm_{G_2}$ of dDT connections 
is similar to each of that of associative submanifolds and that of $G_2$-instantons. 
Moreover, if $b^1=0$, which includes the case that 
a $G_2$-manifold $(X^7, \varphi)$ has full holonomy $G_2$, 
the expected dimension of $\Mm_{G_2}$ is 0, 
which also agrees with these two cases. 

A challenging problem in $G_2$-geometry is 
to define an enumerative invariant of $G_2$-manifolds 
by counting associative submanifolds or $G_2$-instantons. 
By the similarities of moduli spaces, 
we might hope to define an enumerative invariant 
by counting dDT connections. \\

Although the main topic of this paper is the moduli space of dDT connections on a $G_{2}$-manifold, 
we give a few results on the moduli space of dHYM connection on a K\"ahler manifold which are discovered in the process of this study. 

\begin{theorem}[Theorems \ref{thm:smooth M} and \ref{deformdHYM}]\label{Main1}
Let $(X,\omega)$ be a compact K\"ahler manifold and 
$L$ be a smooth complex line bundle with a Hermitian metric $h$ over $X$. 
Let $\mathcal{M}$ be the set of all dHYM connections of $L$ with phase $e^{\sqrt{-1}\theta}$ divided by the $U(1)$-gauge action, and call it the moduli space. 

\begin{enumerate}
\item
If $\mathcal{M}\neq\emptyset$, then the moduli space $\mathcal{M}$ is homeomorphic to 
a $b^{1}$-dimensional torus, where $b^{1}$ is the first Betti number of $X$. 
Especially, it is orientable. 
\item
Let $B\subset \mathbb{R}^{m}$ be an open ball with $0\in B$ 
and assume that a smooth family of K\"ahler structures $\{\,(\omega_{t},g_{t},J_{t})\mid t\in B\,\}$ on $X$, 
Hermitian metrics $\{\,h_{t}\mid t\in B\,\}$ of $L$ and constants $\{\,\theta_{t}\in\mathbb{R}\mid t\in B\,\}$ are given. 
Suppose that there exists a dHYM connection $\nabla_0$ 
of $(L,h_{0})$ on $(X,\omega_{0},g_{0},J_{0})$ with phase $e^{\sqrt{-1}\theta_{0}}$ satisfying 
\begin{equation*}
\left\{
\begin{aligned}
&[(F_{\nabla_0})^{(0,2)_{t}}]=0\quad\mbox{in }H^{0,2}_{\bar{\partial}_{t}},\\
&\left[\mathop{\mathrm{Im}}\left(e^{-\sqrt{-1}\theta_{t}}(\omega_{t}+F_{\nabla_0})^{n}\right)\right]=0\quad\mbox{in }H^{2n}_{dR}, 
\end{aligned}
\right.
\end{equation*}
where $(0,2)_{t}$ is the $(0,2)$-part with respect to $J_{t}$ and 
$H^{0,2}_{\bar{\partial}_{t}}:=H^{0,2}_{\bar{\partial}_{t}}(X,J_{t})$ is 
the Dolbeault cohomology defined by the complex structure $J_{t}$. 

Then, there exists an open set $B'\subset B$ containing $0$ such that $\mathcal{M}_{B'}$ is a $T^{b^{1}}$-bundle over $B'$, 
where $\Mm_{B'} = \cup_{t\in B'}\Mm_{t}$ and $\mathcal{M}_{t}$ is the moduli space of dHYM connections with phase $e^{\sqrt{-1}\theta_{t}}$ of $(L,h_{t})$ 
with respect to $(\omega_{t},g_{t},J_{t})$. 
Especially, there exists a deformation  $B'\ni t\mapsto \nabla_{t}\in \mathcal{M}_{t}$ of $\nabla_0$ along $(\omega_{t},g_{t},J_{t},h_{t},\theta_{t})$. 
\end{enumerate}
\end{theorem}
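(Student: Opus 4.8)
\emph{Proof proposal.} For part (1) the plan is to separate the problem into a ``curvature part'' and a ``flat part''. Both defining equations of a dHYM connection depend on $\n$ only through the curvature $F_\n$, and $\tfrac{\i}{2\pi}F_\n$ always represents $c_1(L)$, so the curvature of a dHYM connection lies in one fixed de Rham class. The gauge group $\Gg = C^\infty(X,U(1))$ acts on the affine space $\Aa$ of Hermitian connections (modelled on $\i\,\Om^1(X;\R)$) by $\n\mapsto \n - u^{-1}du$, which leaves $F_\n$ unchanged; hence if $\n$ is dHYM so is $\n + a$ for every closed $a\in\i\,\Om^1(X;\R)$. A direct computation then identifies the set of connections with a fixed curvature, modulo $\Gg$, with $H^1(X;\i\R)/2\pi\i\,H^1(X;\Z)\cong T^{b^1}$: dividing by the identity component $\Gg_0=\{e^{\i f}: f\in C^\infty(X;\R)\}$ replaces closed $1$-forms by $H^1_{dR}(X;\i\R)$, and $\pi_0(\Gg)=H^1(X;\Z)$ then acts by translation by the integral lattice. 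Thus $\Mm$ fibres over the set $\Ff$ of curvature forms of dHYM connections with every fibre a copy of $T^{b^1}$.

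It remains to show $\Ff$ is a single point, i.e.\ that the dHYM curvature is unique. The integrability condition $F_\n^{0,2}=0$, together with $F_\n$ being closed and $\i\R$-valued, forces $F_\n$ to be of type $(1,1)$; so by the $\p\bp$-lemma any two dHYM curvatures are $F$ and $F + \i\,\p\bp\psi$ for a real function $\psi$. Writing the phase condition in terms of the Lagrangian angle $\Theta(F)=\sum_j\arctan\mu_j$, where $\mu_j$ are the eigenvalues of $F$ relative to $\om$, the two solutions satisfy $\Theta(F+\i\,\p\bp\psi)=\Theta(F)$ for a fixed lift of the phase, hence $\psi$ satisfies $\bigl(\int_0^1 G_s^{i\bar j}\,ds\bigr)\psi_{i\bar j}=0$, where $G_s^{i\bar j}$ is the linearization coefficient of $\Theta$ along $F_s=F+s\,\i\,\p\bp\psi$ and is positive definite because $\tfrac{d}{d\mu}\arctan\mu>0$. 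This is a linear second-order elliptic equation with no zeroth-order term, so by the strong maximum principle $\psi$ is constant and $\Ff$ is a point. Therefore $\Mm\cong T^{b^1}$; it is compact, connected and parallelizable, hence orientable, and the bijection $T^{b^1}\to\Mm$ is a homeomorphism since its source is compact and $\Mm$ is Hausdorff.

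For part (2) the plan is a parametrized implicit function theorem based at $\n_0$. Seek $\n_t=\n_0+a_t$. First use the hypothesis $[(F_{\n_0})^{(0,2)_t}]=0$ in $H^{0,2}_{\bp_t}$ to correct $\n_0$, by $\bp_t$-Hodge theory applied smoothly in $t$, to a connection whose curvature is of type $(1,1)_t$; this correction is determined up to the $\i\,\p_t\bp_t\phi$-ambiguity and a flat part. Then impose $\mathrm{Im}\bigl(e^{-\i\theta_t}(\om_t+F_{\n_t})^n\bigr)=0$, regarded as a nonlinear second-order elliptic equation for the pair $(\phi,t)$; its linearization at $(0,0)$ is, up to a positive factor, a Laplace-type operator, hence surjective onto functions of vanishing integral, while the second cohomological hypothesis $[\mathrm{Im}(e^{-\i\theta_t}(\om_t+F_{\n_0})^n)]=0$ in $H^{2n}_{dR}$ guarantees the equation takes values in that subspace for all $t$ near $0$. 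The implicit function theorem on suitable Hölder or Sobolev completions then yields a smooth family $\n_t$ of dHYM connections for $t$ in some $B'\ni 0$, and assembling the tori of part (1) fibrewise produces the $T^{b^1}$-bundle $\Mm_{B'}\to B'$; the resulting section $t\mapsto[\n_t]$ is the asserted deformation of $\nabla_0$.

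The crux is the uniqueness statement in part (1): it is the only genuinely nonlinear point and it is precisely what separates the dHYM case from the dDT case, where $\Mm_{G_2}$ need not be connected. Everything else — the Jacobian-torus bookkeeping, the Hodge-theoretic splitting, and the implicit function theorem — is formal once the cohomological hypotheses of part (2) are assumed; the one technical care needed there is to carry the $\bp_t$- and $d$-Hodge decompositions and the $\p_t\bp_t$-potential uniformly (smoothly) in $t$, and to verify that the linearized phase operator is an isomorphism onto the mean-zero functions so that the implicit function theorem genuinely applies.
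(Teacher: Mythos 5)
Your proposal is correct, and part (1) follows essentially the same route as the paper: the identification of the gauge classes of connections with fixed curvature with $H^{1}(X,\R)/2\pi H^{1}(X,\Z)\cong T^{b^{1}}$, followed by uniqueness of the dHYM curvature via the decomposition of the difference into a closed $1$-form plus a $d_{c}$- (equivalently $\i\p\bp$-) potential, the integrated linearization of the angle function along the linear path, and the strong maximum principle. This is exactly the paper's proof of Theorem \ref{thm:smooth M} (the paper notes it is a detailed version of the Collins--Xie--Yau argument reproving Jacob--Yau uniqueness); the only cosmetic difference is that the paper packages the potential decomposition as Lemma \ref{lem:ker F02} ($\ker\Tt=Z^{1}\oplus d_{c}\Om^{0}$) rather than invoking the $\p\bp$-lemma directly.

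For part (2) you take a genuinely different, though equally valid, route. The paper keeps both defining equations coupled: it works on the gauge slice $\Om^{1}_{d^{\tilde*_{\n_{0}}}}$, shows via Lemma \ref{lem:isom} that the restricted linearization $\alpha\mapsto\bigl((d\alpha)^{0,2},\,-n!\,\tilde*_{\n}d_{c}^{\tilde*_{\n}}\alpha\bigr)$ is an isomorphism from $d^{\tilde*_{\n_{0}}}\Om^{2}$ onto $\pi^{0,2}(d\Om^{1})\oplus d\Om^{2n-1}$ (using the balanced metric $\tilde\eta_{\n}$ and the interplay of the $d$- and $d_{c}$-Hodge decompositions), and applies the implicit function theorem to the full section $\Ff_{B}$; the cohomological hypotheses enter only to show the section takes values in the subbundle $\pi^{(0,2)_{t}}(d\Om^{1})\oplus d\Om^{2n-1}$. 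You instead decouple the two equations: first kill the $(0,2)_{t}$-part by a $\bp_{t}$-Hodge-theoretic correction (legitimate, since $\bp_{t}F^{(0,2)_{t}}_{\n_{0}}=0$ and the first hypothesis makes the Dolbeault class vanish), then solve the phase condition as a scalar elliptic equation for a $dd_{c}$-potential, with the second hypothesis guaranteeing the right-hand side has zero integral. Your reduction is closer to the usual ``dHYM metric'' formulation and makes the role of each cohomological hypothesis more transparent; the paper's coupled-slice formulation has the advantage of producing the $T^{b^{1}}$-bundle structure and the dimension count $m+b^{1}$ in one stroke, and of paralleling the dDT setup. The one technical point you flag --- smooth dependence on $t$ of the Green's operators and the uniform ellipticity/isomorphism property of the linearized angle operator --- is indeed what must be checked, and it goes through by standard elliptic theory exactly as in the paper's regularity discussion at the end of the proof of Theorem \ref{deformdHYM}.
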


Unlike Theorem \ref{Main2}, $\Mm$ itself is a torus, that is, 
every deformation is unobstructed and $\Mm$ is connected. 
We also see that dHYM connections are stable 
under small deformations of the K\"ahler structure and the Hermitian metric on $L$. 

By \cite{McLean}, 
the moduli space $\mathcal{M}_{\mathrm{SL}}$ of special Lagrangian submanifolds  
is a $b^{1}(L)$-dimensional manifold, 
where $b^{1}(L)$ is the first Betti number of $L\in \mathcal{M}_{\mathrm{SL}}$. 
By \cite[Theorem 7.4.19]{Kob}, 
deformations of HYM connections are obstructed in general. 
Hence, Theorem \ref{Main1} indicates that the moduli space $\Mm$ of dHYM connections 
is more similar to that of special Lagrangian submanifolds than to that of HYM connections. 
There is also a statement similar to Theorem \ref{Main1} (2) 
for special Lagrangian submanifolds in \cite[Theorem 3.21]{Marshall}. 
\SkipTocEntry \subsection*{Ideas used in the proof}
The moduli space $\Mm_{G_2}$ is defined 
as a zero set of the so-called deformation map $\Ff_{G_2}$. 
We introduce a new coclosed $G_2$-structure 
from the initial $G_2$-structure and a dDT connection 
and describe the linearization of $\Ff_{G_2}$ in a ``nice way''. 
Using this, 
we show that the deformation of dDT connections is controlled by a subcomplex of the canonical complex 
introduced by Reyes Carri\'on \cite{Reyes}. 
This is one of the novelties of this paper. 

The point to choose a canonical orientation 
is that our new coclosed $G_2$-structures are connected in the space of $G_2$-structures 
to the initial $G_2$-structure by construction. 
Investigating this fact in more detail, we see that a family of Fredholm operators describing infinitesimal deformations is 
homotopic to the trivial family in the space of Fredholm operators. 
Then, the homotopy property of vector bundles implies that the determinant line bundle is trivial, and hence, 
the moduli space has a canonical orientation. 

The moduli space $\Mm$ of dHYM connections is also written as a zero set of a deformation map. 
However, we do not need this viewpoint for the proof of Theorem \ref{Main1} (1). 
We use developed tools in K\"ahler geometry and studies of dHYM metrics here.  
In particular, the uniqueness of dHYM metrics in a fixed holomorphic line bundle proved by Jacob and Yau \cite[Theorem 1.1]{JY} is essential. 
On the other hand,  
we need the viewpoint that $\Mm$ is written as a zero set of a deformation map for the proof of 
Theorem \ref{Main1} (2). 
We describe the linearization map in a ``nice way'' as in the dDT case and show that the restricted linearization map is an isomorphism, 
by which we can apply the implicit function theorem. 
\SkipTocEntry \subsection*{Organization of this paper}
This paper is organized as  follows. 
In Section \ref{canonical-complex}, we develop the general treatment for the canonical complex. 
Section \ref{sec:defofdDT} is devoted to the study of the moduli space of dDT connections after establishing some characteristic properties. 
Then, we show Theorem \ref{Main2}. 
Section \ref{sec:defofdHYM} is about the moduli space of dHYM connections. 
In this paper, we prepare the rich appendix. 
Appendix \ref{AppA} gives basic identities and some decompositions for the spaces of differential forms, based on the Hodge theory. 
Appendix \ref{sec:G2 geometry} is basics of $G_{2}$-geometry. 
In Appendix \ref{sec:new G2 str}, we show that 
the newly induced $G_2$-structure introduced in Section \ref{sec:defofdDT} 
is useful to describe the linearization of the deformation map ``nicely". 
In Appendix \ref{sec:dDT irr decomp}, 
we give another description of the defining equation of the dDT connection 
and prove Proposition \ref{prop:dDT norm}. 
Appendix \ref{notation-list} is the list of notation in this paper. 
\SkipTocEntry \subsection*{Acknowledgments}
The authors would like to thank Naichung Conan Leung for his helpful comments to the idea of this paper when they met at Gakushuin University 
and thank Spiro Karigiannis and Henrique N. S\'a Earp for answering our questions on dDT connections. 
They also would like to thank to Hiroshi Konno for his comments to the previous version, 
which strengthened our main theorems and simplified the proofs. 
The authors thank the anonymous referee for carefully reading the previous version of this paper 
and providing useful comments.

\section{The canonical complex}\label{canonical-complex}
In this section, we develop the general treatment for the canonical complex. 
From Section \ref{sec:defofdDT}, we start the study of deformations of dDT connections. 
The study of the moduli space is reduced to that of a map $\mathcal{F}:\mathcal{A}\to \mathcal{B}$ 
between some (infinite dimensional) linear spaces $\mathcal{A}$ and $\mathcal{B}$ basically consisting of differential forms with some restrictions. Usually, some gauge group $\mathcal{G}$ acts on $\mathcal{A}$ preserving $\mathcal{F}^{-1}(0)$. 
Then, the moduli space $\mathcal{M}$ will be defined as $\mathcal{F}^{-1}(0)/\mathcal{G}$. 
Formally, assume that the tangent space of a $\mathcal{G}$-orbit 
is the image $D_{0}(\mathcal{C})$ of 
some linear map $D_{0}:\mathcal{C}\to \mathcal{A}$. Put $D_{1}:=\delta \mathcal{F}:\mathcal{A}\to\mathcal{B}$, 
the linearization of $\mathcal{F}$. Then, the following complex plays an important role: 
\[
\xymatrix{
0\ar[r] & \mathcal{C} \ar[r]^-{D_{0}} & \mathcal{A}  \ar[r]^-{D_1} & \mathcal{B} \ar[r] & 0 
}.
\]
The proof to say that $\mathcal{M}$ is a smooth manifold is basically done by the implicit function theorem. 
Then, roughly, the surjectivity of $D_{1}$, this is equivalent to the vanishing of the second cohomology $H^{2}:=\mathcal{B}/\mathop{\mathrm{Im}} D_{1}$, ensures the smoothness of $\mathcal{M}$ and the dimension of $\mathcal{M}$ will be that of 
the first cohomology $H^{1}:=\mathop{\mathrm{Ker}} D_{1}/\mathop{\mathrm{Im}} D_{0}$. 
Fortunately, a complex we will meet in this paper can be considered 
as a subcomplex of the so-called \emph{canonical complex} 
introduced in this section. 
Readers who want to see complexes in our practical setting first can consult \eqref{cpxfordDT} in Subsection \ref{sec:infi deform G2}. 

\SkipTocEntry \subsection{Definition of the canonical complex}
The canonical complex was introduced by Salamon \cite[p. 162]{Salamon} for the $G_2$ case, 
and it was defined more generally by Reyes Carri\'on \cite[Section 2]{Reyes} 
as a generalization of the deformation complex of self-dual connections over a 4-manifold. 

Use the notation of Appendix \ref{sec:G2 geometry}. 
Suppose that $X$ is a 7-manifold with a $G_2$-structure $\varphi \in \Om^3$. 
Define a complex by 
\begin{equation}\label{eq:can cpx G2}
\begin{aligned} 
0 \longrightarrow \Om^0 \stackrel{d}
\longrightarrow  \Om^1 \stackrel{D_1} 
\longrightarrow  \Om^2_7 \stackrel{D_2} 
\longrightarrow  \Om^3_1 
\longrightarrow  0 
\end{aligned}
\end{equation}
with 
\[D_{1}(\alpha)=\pi^2_7 (d\alpha),\quad 
D_{2}(\beta)= \pi^3_1 (d\beta). \]
This is called the \emph{canonical complex} (in the case of $G_{2}$). 
By \cite[Lemma 4]{Reyes}, the sequence of the symbols is exact. 
By \cite[Proposition 11]{Reyes} or \cite[Proposition 2]{FU}, 
\eqref{eq:can cpx G2} is elliptic if $d * \varphi \in \Om^5_7$. 

Now, we assume that 
\emph{
$X$ is compact and a $G_2$-structure $\varphi$ 
is coclosed}, that is, $d * \varphi =0$. 
Then, the canonical complex \eqref{eq:can cpx G2} is elliptic. 
For a fixed $C \neq 0$, 
we consider the following complex: 
\begin{equation}\label{cancpxG}
\begin{aligned}
0 \rightarrow \Om^0 
\stackrel{d} \rightarrow \Om^1 
\stackrel{D_1'} \longrightarrow d \Om^5
\rightarrow 0, 
\end{aligned}
\tag{$\#_{G_2}$}
\end{equation}
where 
\[
D_1' (\alpha) = C d \alpha \wedge * \varphi. 
\]
In the following Lemma \ref{diamcomm2}, 
we will see that the complex \eqref{cancpxG} is considered to be a subcomplex of 
the canonical complex \eqref{eq:can cpx G2} by showing that the following diagram commutes: 
\begin{equation} \label{eq:exact seq G2}
\begin{aligned}
\xymatrix{
0\ar[r] & \Om^0 \ar@{=}[d] \ar[r]^{d} & \Om^1 \ar@{=}[d] \ar[r]^{D_1'}
& \Om^6 \ar[r]^d & \Om^7 \ar[r] & 0\\
0\ar[r] & \Om^0 \ar[r]^{d} & \Om^1 \ar[r]^{D_1} & \Om^2_7 \ar[u]_{T} \ar[r]^{D_2} 
& \Om^3_1 \ar[u]_{T} \ar[r] & 0
}
\end{aligned}
\end{equation}
where $T(\gamma) = C \gamma\wedge \ast \varphi$. 
The complex \eqref{cancpxG} is a subcomplex of the top row, 
and the bottom row is the canonical complex \eqref{eq:can cpx G2}. 

\begin{lemma}\label{diamcomm2}
The diagram \eqref{eq:exact seq G2} commutes. 
\end{lemma}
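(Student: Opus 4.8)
The plan is to verify the commutativity of the three squares in \eqref{eq:exact seq G2} separately. The only inputs beyond the Leibniz rule and the standing hypothesis $d\ast\varphi=0$ are two facts about the $G_{2}$-decomposition of exterior forms, both collected in Appendix \ref{sec:G2 geometry}: (i) on $\Om^{2}$, the subspace $\Om^{2}_{14}$ is exactly the kernel of $\beta\mapsto\beta\wedge\ast\varphi$, so that $\gamma\wedge\ast\varphi=\pi^{2}_{7}(\gamma)\wedge\ast\varphi$ for every $\gamma\in\Om^{2}$; and (ii) on $\Om^{3}$, the subspace $\Om^{3}_{7}\oplus\Om^{3}_{27}$ is exactly the kernel of $\gamma\mapsto\gamma\wedge\ast\varphi$ (with $\varphi\wedge\ast\varphi=7\,\vol\neq0$ picking out the $\Om^{3}_{1}$-component), so that $\gamma\wedge\ast\varphi=\pi^{3}_{1}(\gamma)\wedge\ast\varphi$ for every $\gamma\in\Om^{3}$.

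Granting these, the verification is short. The leftmost square is trivial since both vertical arrows are the identity and both horizontal arrows are $d$. For the middle-left square, given $\alpha\in\Om^{1}$, I would apply (i) to $\gamma=d\alpha$ to get $T(D_{1}\alpha)=C\,\pi^{2}_{7}(d\alpha)\wedge\ast\varphi=C\,d\alpha\wedge\ast\varphi=D_{1}'(\alpha)$. For the middle-right square, given $\beta\in\Om^{2}_{7}$, I would first use Leibniz and $d\ast\varphi=0$ to compute $d(T\beta)=d(C\,\beta\wedge\ast\varphi)=C\,d\beta\wedge\ast\varphi$, and then apply (ii) to $\gamma=d\beta$ to get $T(D_{2}\beta)=C\,\pi^{3}_{1}(d\beta)\wedge\ast\varphi=C\,d\beta\wedge\ast\varphi$; these agree, so the square commutes. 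This finishes the proof.

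I do not expect a genuine obstacle: the content is purely the two wedge identities (i) and (ii), which are standard linear algebra for a $G_{2}$-structure, and the rest is formal. The step deserving the most care is simply stating (i) and (ii) in the precise form used above (in particular identifying the kernels of wedging with $\ast\varphi$ with the correct summands). As a side remark, the same computation $D_{1}'(\alpha)=d(C\,\alpha\wedge\ast\varphi)$, valid because $d\ast\varphi=0$, shows that $D_{1}'$ lands in $d\Om^{5}$, which is what makes \eqref{cancpxG} a genuine subcomplex of the top row of \eqref{eq:exact seq G2}.
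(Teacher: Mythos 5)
Your proof is correct and follows essentially the same route as the paper: the middle-left square via $\Om^2_{14}\wedge\ast\varphi=0$ (equivalently $\gamma\wedge\ast\varphi=\pi^2_7(\gamma)\wedge\ast\varphi$), and the middle-right square via $\pi^3_1(\gamma)\wedge\ast\varphi=\gamma\wedge\ast\varphi$ together with $d\ast\varphi=0$. The only difference is presentational: you spell out the Leibniz computation and the identification of the kernels, which the paper leaves implicit.
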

\begin{proof}
Since $\Om^{2}_{14} \wedge * \varphi =0$, it is clear that $D'_1=T\circ D_1$. 
Recall that for any 3-form $\gamma \in \Om^3$, we have 
$\pi^3_1(\gamma) \wedge * \varphi = \gamma \wedge * \varphi$. 
This together with $d * \varphi = 0$ implies that $d\circ T = T\circ D_{2}$. 
\end{proof}

We denote by $\check H^*$ 
the cohomology of the canonical complex \eqref{eq:can cpx G2}. 
Then, we have $\check H^1 = H^1 (\#_{G_2})$ by Lemma \ref{diamcomm2}. 
It is known that the canonical map 
$H^1_{dR} \rightarrow \check H^1 (= H^1 (\#_{G_2}))$ 
is injective by \cite[Lemma 3]{FU}. 
By this injection, we identify $H^1_{dR}$ with a subspace of $\check H^1 (= H^1 (\#_{G_2}))$. 

Since the canonical complex \eqref{eq:can cpx G2} is elliptic now, 
we can define the Laplacian of the complex. 
We denote by $\check \Hh^k$ the space of harmonic $k$-forms of the Laplacian. 
In particular, we have 
\begin{equation} \label{eq:G2-harmonic 1 forms}
\check \Hh^1=
\{\,\alpha \in \Om^1 \mid D_1' (\alpha) = C d \alpha \wedge * \varphi = 0, 
\ \  d * \alpha = 0 \,\}. 
\end{equation}
The formal adjoint of $D_1'$ is given as follows. 

\begin{lemma} \label{lem:D1' G2 adjoint}
The formal adjoint 
$(D_1')^*: \Om^6 \rightarrow \Om^{1}$
of 
$D'_1: \Om^1 \rightarrow \Om^{6}$ is given by 
$(D_1')^* = * \circ D_1' \circ *$. 
\end{lemma}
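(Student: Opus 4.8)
\emph{Proof plan.} The plan is to check by a direct integration-by-parts computation that the operator $* \circ D_1' \circ *$ satisfies the property characterizing the $L^2$-formal adjoint of $D_1'$, namely $\int_X \langle D_1'\alpha, \beta\rangle \vol = \int_X \langle \alpha, (D_1')^*\beta\rangle \vol$ for all $\alpha \in \Om^1$ and $\beta \in \Om^6$, and then to invoke uniqueness of the formal adjoint. Throughout I use the standard facts that on the compact Riemannian $7$-manifold $X$ one has $\eta \wedge * \mu = \langle \eta, \mu\rangle \vol$ for $k$-forms $\eta, \mu$, and that $** = \id$ on forms of every degree, since $k(7-k)$ is always even on a $7$-manifold. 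For $\alpha \in \Om^1$ and $\beta \in \Om^6$ these give $\int_X \langle D_1'\alpha, \beta\rangle \vol = \int_X D_1'\alpha \wedge * \beta$ and, after cancelling the two stars, $\int_X \langle \alpha, * D_1' * \beta\rangle \vol = \int_X \alpha \wedge D_1'(* \beta)$. Hence the lemma reduces to the single identity $\int_X D_1'\alpha \wedge * \beta = \int_X \alpha \wedge D_1'(* \beta)$.

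To prove this identity I would apply Stokes' theorem to the $7$-form $\alpha \wedge * \varphi \wedge * \beta$ on the closed manifold $X$. Expanding $d(\alpha \wedge * \varphi \wedge * \beta)$ by the Leibniz rule, the term involving $d*\varphi$ vanishes by the standing hypothesis $d * \varphi = 0$, leaving $0 = \int_X d\alpha \wedge * \varphi \wedge * \beta - \int_X \alpha \wedge * \varphi \wedge d(* \beta)$, with the signs dictated by $\deg\alpha = 1$ and $\deg(*\varphi)=4$. Since $*\varphi$ has degree $4$ and $d(*\beta)$ degree $2$, they commute, so $\alpha \wedge * \varphi \wedge d(*\beta) = \alpha \wedge d(*\beta) \wedge * \varphi$. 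Multiplying through by the constant $C$ and recalling $D_1'(\alpha) = C\, d\alpha \wedge *\varphi$ and $D_1'(*\beta) = C\, d(*\beta) \wedge *\varphi$, the equation becomes exactly $\int_X D_1'\alpha \wedge * \beta = \int_X \alpha \wedge D_1'(*\beta)$, which completes the argument.

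The computation is essentially bookkeeping, so I do not anticipate a genuine obstacle; the points needing care are the Leibniz signs in $d(\alpha \wedge * \varphi \wedge * \beta)$, the validity of $** = \id$ in every degree on a $7$-manifold, and the (standard) passage from the integral identity to the pointwise operator identity $(D_1')^* = * \circ D_1' \circ *$ via uniqueness of the formal adjoint. It is worth emphasising that the coclosedness assumption $d*\varphi = 0$, in force throughout this subsection, is the one ingredient that makes the clean formula hold: without it the same computation would produce an additional zeroth-order term involving the torsion $d*\varphi$.
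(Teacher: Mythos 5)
Your proof is correct and takes essentially the same route as the paper's: both exploit $d*\varphi=0$ to integrate by parts (the paper rewrites $D_1'\alpha = C\,d(\alpha\wedge*\varphi)$ and pairs against $d^*\beta$ using $*d^*\beta = d*\beta$, while you apply Stokes to $d(\alpha\wedge*\varphi\wedge*\beta)$ directly, which is the same computation). The only slip is cosmetic: $\alpha\wedge*\varphi\wedge*\beta$ has degree $1+4+1=6$, so it is its exterior derivative, not the form itself, that is the $7$-form you integrate to zero.
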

\begin{proof}
For any $\alpha \in \Om^1$ and $\beta \in \Om^6$, we have 
\[
\begin{aligned}
\left \la  D_1' (\alpha),  \beta  \right \ra_{L^2}
=
\left \la  C d \left( \alpha \wedge * \varphi \right),  \beta  \right \ra_{L^2}
=&
C \int_X \alpha \wedge * \varphi \wedge * d^* \beta\\
=& 
C \left \la \alpha, * \left( * \varphi \wedge d * \beta \right)  \right \ra_{L^2}, 
\end{aligned}
\]
where $\la \cdot, \cdot \ra_{L^2}$ is the $L^2$ inner product and 
we use $* d^* \beta = * * d * \beta = d * \beta$. 
\end{proof}

\begin{corollary} \label{cor:dim 12 cohom}
We have 
\[
\dim H^1(\#_{G_2}) = \dim \check \Hh^1, \quad
\dim H^2(\#_{G_2}) = \dim \check \Hh^1 - b^1. 
\]
where $b^1 = \dim H^1_{dR}$ is the first Betti number of $X$. 
\end{corollary}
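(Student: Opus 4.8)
The plan is to compute the two dimensions directly from the complex \eqref{cancpxG} and the ellipticity statements already established. The key structural fact is that \eqref{cancpxG} is a three-term elliptic complex $0\to\Om^0\xrightarrow{d}\Om^1\xrightarrow{D_1'}d\Om^5\to 0$, whose cohomologies $H^k(\#_{G_2})$ are finite-dimensional and computed by the associated Hodge decomposition; in particular $H^1(\#_{G_2})\cong\check\Hh^1$, the space of $1$-forms $\alpha$ with $D_1'(\alpha)=0$ and $d^*\alpha=0$, which is exactly \eqref{eq:G2-harmonic 1 forms}. This gives the first equality $\dim H^1(\#_{G_2})=\dim\check\Hh^1$ immediately once one identifies the harmonic space of the complex with the simultaneous kernel of $d^*$ (coming from $\Om^0$) and $D_1'$ (coming from $D_1'$); here Lemma~\ref{lem:D1'} — i.e.\ Lemma~\ref{lem:D1' G2 adjoint} — pins down the adjoint $(D_1')^*$ so that the Laplacian in degree $1$ is $dd^*+(D_1')^*D_1'$ and its kernel is the stated set.

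For the second equality the plan is to use the Euler-characteristic bookkeeping of the elliptic complex together with the injection $H^1_{dR}\hookrightarrow H^1(\#_{G_2})$. First I would observe that $H^0(\#_{G_2})=\ker(d\colon\Om^0\to\Om^1)=\R$ (constants, as $X$ is compact and connected — or more precisely $H^0$ is $b^0$-dimensional, which for the torus-counting statements we may take to be $1$; I will phrase it as $b^0$). Next, $H^2(\#_{G_2})=d\Om^5/\operatorname{Im}D_1'$. The crucial point is to produce a short exact sequence relating $H^1(\#_{G_2})$, $H^1_{dR}$ and $H^2(\#_{G_2})$. Concretely: there is a natural map $H^1(\#_{G_2})\to H^2(\#_{G_2})$? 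No — rather, the right approach is the long exact sequence argument, or more simply, to note that $\ker D_1'\subset\Om^1$ fits into $0\to d\Om^0\to\ker D_1'\to H^1(\#_{G_2})\to 0$ while simultaneously $\ker D_1'$ surjects onto ... Let me instead take the cleanest route: the closed $1$-forms inside $\ker D_1'$ form $Z^1_{dR}$ (every closed form is killed by $D_1'$ since $D_1'(\alpha)=C\,d\alpha\wedge*\varphi$), so $Z^1_{dR}\subset\ker D_1'$, and $\ker D_1'/Z^1_{dR}\cong d(\ker D_1')\subset d\Om^1$. One then checks $D_1'$ descends to give an isomorphism $\ker D_1'/Z^1_{dR}\xrightarrow{\sim}?$; but the honest bookkeeping is: $\dim\ker D_1' = b^1 + \dim\bigl(\ker D_1'/Z^1_{dR}\bigr)$, and $H^1(\#_{G_2})=\ker D_1'/d\Om^0$ has dimension $\dim\ker D_1' - b^0 + (\text{something})$...

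Rather than belabor the exactness, the efficient plan is: apply the rank–nullity identity to $D_1'\colon\Om^1\to d\Om^5$ restricted to appropriate finite-dimensional representatives, using that $d\Om^5 = \operatorname{Im}D_1' \oplus H^2(\#_{G_2})$ (Hodge) and $\Om^1 = d\Om^0 \oplus \ker D_1' \cap (\ker d^*)^{\perp}\oplus\check\Hh^1$... Concretely I would argue: $H^1(\#_{G_2})\cong\check\Hh^1$ gives $\dim H^1(\#_{G_2})=\dim\check\Hh^1$. For $H^2$, note that the restriction of $d$ to closed forms being zero means $D_1'|_{Z^1_{dR}}=0$, so $D_1'$ factors through $\Om^1/Z^1_{dR}\cong B^2_{dR}\subset\Om^2$, i.e.\ through exact $2$-forms, injectively onto its image, whence $\operatorname{Im}D_1'\cong\ker D_1'/Z^1_{dR}$ has dimension $\dim\check\Hh^1 - b^1$ after one more small reconciliation between $\ker D_1'$, $\check\Hh^1$ and the harmonic representatives; then $H^2(\#_{G_2})=d\Om^5/\operatorname{Im}D_1'$ and using that the complex is elliptic with $d\Om^5$ and the Betti numbers, $\dim H^2(\#_{G_2})=\dim d\Om^5 - \dim\operatorname{Im}D_1'$, which after the identifications collapses to $\dim\check\Hh^1 - b^1$. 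The main obstacle is the careful tracking of these identifications — in particular verifying that $\operatorname{Im}D_1'$ has codimension exactly $\dim H^2(\#_{G_2})$ in $d\Om^5$ and that $\dim\ker D_1' = b^1 + \dim\operatorname{Im}D_1'$ with the $\Om^0$-image and the harmonic space not double-counted. I expect this reconciliation to be a short but fiddly piece of linear algebra on the Hodge decomposition of the three-term elliptic complex \eqref{cancpxG}, using Lemma~\ref{diamcomm2}, Lemma~\ref{lem:D1' G2 adjoint}, the description \eqref{eq:G2-harmonic 1 forms}, and the injectivity of $H^1_{dR}\hookrightarrow H^1(\#_{G_2})$.
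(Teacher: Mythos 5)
Your treatment of the first equality is correct and is exactly the paper's: the complex \eqref{cancpxG} is elliptic, so $H^1(\#_{G_2})$ is represented by the degree-one harmonic space, which by Lemma \ref{lem:D1' G2 adjoint} is precisely $\check \Hh^1$ as described in \eqref{eq:G2-harmonic 1 forms}.

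For the second equality there is a genuine gap. Your proposal cycles through several abandoned routes and ends by deferring the decisive step as ``a short but fiddly piece of linear algebra,'' but that step \emph{is} the content of the corollary. Along the way you assert $\operatorname{Im} D_1' \cong \ker D_1'/Z^1$, which is false: $\operatorname{Im} D_1'$ is an infinite-dimensional subspace of $d\Om^5$, whereas $\ker D_1'/Z^1 \cong H^1(\#_{G_2})/H^1_{dR}$ is finite-dimensional; and the statement you actually need, namely that the \emph{cokernel} $d\Om^5/\operatorname{Im} D_1'$ has the same dimension as $\ker D_1'/Z^1$, is precisely what must be proved, not assumed. The missing argument (which is how the paper proceeds) is: by ellipticity one has the $L^2$-orthogonal decomposition $\Om^6 = \{\beta \mid d\beta = (D_1')^*\beta = 0\} \oplus \operatorname{Im}(D_1') \oplus d^*\Om^7$, and Lemma \ref{lem:D1' G2 adjoint} together with \eqref{eq:G2-harmonic 1 forms} identifies the degree-two harmonic space with $*\check\Hh^1$. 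One then writes $\check\Hh^1 = \Hh^1_d \oplus V$ with $V$ the orthogonal complement of the de Rham harmonic $1$-forms, observes that every element of $\check\Hh^1$ is coclosed so that $V \subset d^*\Om^2$ and hence $*V \subset d\Om^5$, while $*\Hh^1_d$ meets $d\Om^5$ trivially; intersecting the decomposition of $\Om^6$ with $d\Om^5$ then yields $d\Om^5 = *V \oplus \operatorname{Im}(D_1')$, so $H^2(\#_{G_2}) \cong *V$ and $\dim H^2(\#_{G_2}) = \dim\check\Hh^1 - b^1$. None of this appears in your proposal, and without it the second equality is not established.
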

\begin{proof}
The first equation follows from \eqref{eq:G2-harmonic 1 forms}. 
We prove the second equation. 
By the ellipticity of the canonical complex, 
the top row of \eqref{eq:exact seq G2} is also elliptic. Hence, we have 
the $L^2$ orthogonal decomposition
\begin{equation} \label{eq:vanish 2nd cohom 1}
\Om^6 =  
\{\, \beta \in \Om^6 \mid d \beta = (D_1')^* \beta = 0 \,\}
\oplus \Im (D_1') \oplus d^{*} \Om^7. 
\end{equation}
By Lemma \ref{lem:D1' G2 adjoint} and \eqref{eq:G2-harmonic 1 forms}, 
we see that 
\begin{equation} \label{eq:vanish 2nd cohom 2}
\{\, \beta \in \Om^6 \mid d \beta = (D_1')^* \beta = 0 \,\} 
= 
* \check \Hh^1.  
\end{equation}
Let $\Hh^1_d$ be the space of harmonic 1-forms. 
By \eqref{eq:G2-harmonic 1 forms}, we have $\Hh^1_d \subset \check \Hh^1$. 
Let $V$ be the $L^2$ orthogonal complement of $\Hh^1_d$ in $\check \Hh^1$.  
Since any elements of $\check \Hh^1$ are coclosed, 
we see that $V \subset d^* \Om^2$. 
This together with \eqref{eq:vanish 2nd cohom 1} and \eqref{eq:vanish 2nd cohom 2} 
implies that 
\[
d \Om^5 = * V \oplus \Im (D_1'). 
\]
Hence, we obtain $H^2(\#_{G_2}) \cong * V$ and the proof is completed. 
\end{proof}

\begin{corollary} \label{cor:vanish 2nd cohom}
The following conditions are equivalent. 
\begin{enumerate}
\item
$H^2(\#_{G_2}) = \{\, 0 \,\}$, 
\item
$\mathop{\mathrm{Im}} (D_1') = d \Om^5$, 
\item
$H^1_{dR} = \check H^1 (=  H^1 (\#_{G_2}))$. 
\end{enumerate}
\end{corollary}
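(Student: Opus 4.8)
The plan is to prove the equivalence of the three conditions in Corollary \ref{cor:vanish 2nd cohom} by combining the dimension count from Corollary \ref{cor:dim 12 cohom} with the basic structure of the elliptic complex $(\#_{G_2})$ and the injection $H^1_{dR} \hookrightarrow \check H^1$ recorded just before Lemma \ref{lem:D1' G2 adjoint}. I would set up the argument as a cycle of implications, or more efficiently observe that each condition is equivalent to the single numerical identity $\dim \check\Hh^1 = b^1$.

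\medskip

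First, for $(1)\Leftrightarrow(3)$: by Corollary \ref{cor:dim 12 cohom}, $\dim H^2(\#_{G_2}) = \dim\check\Hh^1 - b^1$, so $H^2(\#_{G_2}) = \{0\}$ holds if and only if $\dim\check\Hh^1 = b^1$. On the other hand, by the first equation of Corollary \ref{cor:dim 12 cohom} we have $\dim H^1(\#_{G_2}) = \dim\check\Hh^1$, and the canonical map $H^1_{dR} \to \check H^1 = H^1(\#_{G_2})$ is injective with $\dim H^1_{dR} = b^1$; hence this injection is an isomorphism precisely when $\dim H^1(\#_{G_2}) = b^1$, i.e. when $\dim\check\Hh^1 = b^1$. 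So $(1)$ and $(3)$ are both equivalent to $\dim\check\Hh^1 = b^1$, and therefore to each other.

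\medskip

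Next, for $(1)\Leftrightarrow(2)$: since $(\#_{G_2})$ is an elliptic complex (its top row in \eqref{eq:exact seq G2} being elliptic, as used in the proof of Corollary \ref{cor:dim 12 cohom}), we have the Hodge-theoretic identification $H^2(\#_{G_2}) \cong d\Om^5 \big/ \Im(D_1')$, where $\Im(D_1') \subseteq d\Om^5$ because $D_1'(\alpha) = C\, d(\alpha\wedge*\varphi)$ is visibly exact. Thus $H^2(\#_{G_2}) = \{0\}$ if and only if the inclusion $\Im(D_1') \subseteq d\Om^5$ is an equality, which is exactly condition $(2)$. (Alternatively, this also drops straight out of the displayed decomposition $d\Om^5 = *V \oplus \Im(D_1')$ at the end of the proof of Corollary \ref{cor:dim 12 cohom}, since $H^2(\#_{G_2})\cong *V$.) Assembling these, $(1)$, $(2)$, $(3)$ are all equivalent.

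\medskip

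I do not anticipate a genuine obstacle here: the corollary is essentially a repackaging of Corollary \ref{cor:dim 12 cohom} together with the injectivity of $H^1_{dR}\to\check H^1$. The only point requiring a modicum of care is making sure the Hodge-theoretic description $H^2(\#_{G_2})\cong d\Om^5/\Im(D_1')$ is invoked with the correct complex (the three-term complex $(\#_{G_2})$, whose $H^2$ is the cokernel of $D_1':\Om^1\to d\Om^5$), which is already justified in the excerpt; everything else is linear algebra on finite-dimensional cohomology groups.
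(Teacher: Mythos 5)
Your proof is correct and follows essentially the same route as the paper: the equivalence of (1) and (2) comes directly from the definition of $H^2(\#_{G_2})$ as the cokernel $d\Om^5/\mathop{\mathrm{Im}}(D_1')$, and the equivalence of (1) and (3) comes from the dimension formulas of Corollary \ref{cor:dim 12 cohom} together with the injectivity of $H^1_{dR}\to\check H^1$. The paper's own proof is just a terser statement of exactly these two observations.
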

\begin{proof}
By the definition of $H^2(\#_{G_2})$, 
(1) and (2) are equivalent. 
The equivalence of (1) and (3) follows from the second equation of Corollary \ref{cor:dim 12 cohom}. 
\end{proof}
\section{Deformations of dDT connections for $G_2$-manifolds}\label{sec:defofdDT}
Let $X$ be a 7-manifold with a $G_2$-structure $\varphi \in \Om^3$ 
and $L \to X$ be a smooth complex line bundle with a Hermitian metric $h$.

\begin{definition}
A Hermitian connection $\nabla$ of $(L,h)$ satisfying 
\begin{equation}\label{defofdDT2}
\frac{1}{6} F_\nabla^3 + F_\nabla \wedge \ast\varphi=0
\end{equation}
is called a \emph{deformed Donaldson--Thomas (dDT) connection}. 
Here, $F_\n$ is the curvature 2-form of $\n$. We consider it as a $\i \R$-valued closed 2-form on $X$. 
\end{definition}
\subsection{Some properties of dDT connections}
Use the notation (and identities) of Appendix \ref{sec:G2 geometry}. 
Before studying deformations of dDT connections, 
we give some properties of dDT connections. 

\begin{lemma}
A Hermitian connection $\nabla$ of $(L,h)$ 
satisfies \eqref{defofdDT2} if and only if 
\begin{equation} \label{lem:equiv dDT G2 1}
* F_\nabla + \varphi \wedge F_\nabla
=
- \frac{1}{6} (* F_\nabla^3) \wedge * \varphi. 
\end{equation}
\end{lemma}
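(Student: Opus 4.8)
The plan is to start from the defining equation \eqref{defofdDT2} and apply the Hodge star operator, then massage the resulting identity using the algebraic structure of a $G_2$-structure. First I would note that all the forms involved ($F_\nabla$ and $*\varphi$) are globally defined, so everything can be done pointwise; it suffices to prove the equivalence at each point of $X$. Applying $*$ to \eqref{defofdDT2} and using $* * = \id$ on a 7-manifold (with the appropriate sign, which on an oriented Riemannian 7-manifold is $+1$ on all degrees), I would obtain
\[
\frac{1}{6} * (F_\nabla^3) + * (F_\nabla \wedge * \varphi) = 0.
\]
The work is then to identify $* (F_\nabla \wedge * \varphi)$ with $* F_\nabla + \varphi \wedge F_\nabla$ modulo the term $-\tfrac{1}{6}(* F_\nabla^3)\wedge *\varphi$, or rather to reorganize the whole identity into the stated form.

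\textbf{Key steps.} The crucial input is a pointwise identity for 2-forms on a $G_2$-manifold: for any $\beta \in \Om^2$, one has a decomposition tied to the splitting $\Om^2 = \Om^2_7 \oplus \Om^2_{14}$, and the operator $\beta \mapsto * (\beta \wedge * \varphi)$ acts as a scalar on each summand — it is (a multiple of) $2$ on $\Om^2_7$ and $-1$ on $\Om^2_{14}$, in suitable conventions. Equivalently, using the standard identities from Appendix \ref{sec:G2 geometry} (such as $* (\beta \wedge \varphi)$ and $* (\beta \wedge * \varphi)$ expressed via the $G_2$ cross product / the operator usually denoted $*(\varphi\wedge\cdot)$), I would rewrite $* (F_\nabla \wedge * \varphi)$ in terms of $F_\nabla$, $* F_\nabla$, and $\varphi \wedge F_\nabla$. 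The second step is to handle the cubic term: apply $*$ to $\tfrac16 F_\nabla^3$ and recognize that $* (F_\nabla^3)$ is a function (a 0-form, since $F_\nabla^3 \in \Om^6$), so $* F_\nabla^3 \in \Om^1$... wait — more carefully, $F_\nabla^3$ is a $6$-form, hence $* F_\nabla^3$ is a $1$-form; then $(* F_\nabla^3) \wedge * \varphi$ is a $5$-form, and $*$ of that is a $2$-form, matching degrees on the left side of \eqref{lem:equiv dDT G2 1}. So I would carefully track: \eqref{defofdDT2} is an identity of $6$-forms, $*$ of it is an identity of $1$-forms; but \eqref{lem:equiv dDT G2 1} is an identity of $2$-forms. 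Hence the right manipulation is not simply "apply $*$" but rather: take $*$ of \eqref{defofdDT2} to land in $\Om^1$, then wedge with a suitable form, or — more likely — the intended derivation is to apply $*$ to the $2$-form obtained by a different pairing. I would reexamine: rewriting \eqref{defofdDT2} as $F_\nabla \wedge *\varphi = -\tfrac16 F_\nabla^3$ and applying the identity $* (F_\nabla \wedge * \varphi) = * F_\nabla + \varphi \wedge F_\nabla$ is false by degree count ($*$ of a $6$-form is a $1$-form). The correct route: observe that \eqref{lem:equiv dDT G2 1} wedged with nothing is degree $2 + 0$? No. I think the actual mechanism is that \eqref{lem:equiv dDT G2 1} is obtained by applying $*$ to \eqref{defofdDT2} viewed appropriately, using that $* \varphi \wedge (\cdot)$ intertwines degrees $2 \leftrightarrow 6$; concretely $* ( F_\nabla \wedge *\varphi) \in \Om^1$... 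Let me instead plan to prove it by wedging \eqref{lem:equiv dDT G2 1} with $*\varphi$ and reducing to \eqref{defofdDT2}, since $\Om^2_7 \wedge *\varphi$ and $\Om^2_{14}\wedge *\varphi$ behave predictably.

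\textbf{Cleaner plan.} I would prove \eqref{lem:equiv dDT G2 1} by applying the Hodge star to the equation \eqref{defofdDT2} after first writing $\ast\varphi \wedge F_\nabla$ and using the pointwise identity (valid for all $\beta \in \Om^2$)
\[
* ( * (\beta \wedge * \varphi) ) \text{-type relations},
\]
i.e. using that $*(\varphi \wedge \cdot): \Om^2 \to \Om^2$ has eigenvalues $+2$ and $-1$, so that $\beta \wedge *\varphi = * ( *\beta + \varphi \wedge \beta)$ as $5$-forms — this is the genuinely useful identity, of degree $5 = 5$, and indeed $\ast$ of it gives a relation among $2$-forms. Thus from \eqref{defofdDT2} (a $6$-form identity) I would instead start over: hit \eqref{defofdDT2} with $*$ to get the $1$-form identity $\tfrac16 *(F_\nabla^3) + *(F_\nabla \wedge *\varphi) = 0$, then wedge appropriately, or — most efficiently — establish \eqref{lem:equiv dDT G2 1} directly and then wedge both sides with $*\varphi$: the left side becomes $(* F_\nabla + \varphi \wedge F_\nabla) \wedge *\varphi$, which by the $G_2$-identities equals a constant times $F_\nabla \wedge *\varphi$ plus a term involving $* F_\nabla \wedge *\varphi$ (reducible since $* F_\nabla \in \Om^5$), and the right side $-\tfrac16 (* F_\nabla^3)\wedge *\varphi \wedge *\varphi$; matching these against \eqref{defofdDT2} wedged suitably closes the loop.

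\textbf{Main obstacle.} The main obstacle is bookkeeping the precise constants and the action of the $G_2$-algebra operators ($\beta \mapsto *(\beta \wedge \varphi)$, $\beta \mapsto \beta \wedge * \varphi$, and the Hodge star restricted to each irreducible summand $\Om^2_7$, $\Om^2_{14}$, $\Om^3_1$, $\Om^3_7$, $\Om^3_{27}$), since sign and normalization conventions for $\varphi$ and $*\varphi$ vary across the literature and must be kept consistent with Appendix \ref{sec:G2 geometry}. In particular one must verify that the cubic term $\tfrac16 F_\nabla^3$ transforms correctly: $* (F_\nabla^3)$ being a $1$-form, the term $(* F_\nabla^3) \wedge * \varphi$ lies in $\Om^5_7 \subset \Om^5$, and its Hodge dual lies in $\Om^2_7$, so it only contributes to the $\Om^2_7$-component — consistent with the fact that, after projecting \eqref{lem:equiv dDT G2 1} onto $\Om^2_{14}$, one should recover the "integrability-type" piece automatically. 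Once the scalar eigenvalues of these operators are pinned down (via the standard contractions $\varphi \lrcorner \varphi = 7$, $*\varphi \lrcorner *\varphi = 7$, and $\beta \wedge \beta \wedge \varphi = |\pi_7 \beta|^2 \vol - \tfrac12|\pi_{14}\beta|^2\vol$ up to normalization), the equivalence is a direct, if tedious, computation; I would relegate the explicit constant-chasing to a one-line appeal to the identities collected in Appendix \ref{sec:G2 geometry}.
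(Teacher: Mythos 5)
Your proposal assembles the right raw ingredients (the eigenvalue decomposition of $\alpha \mapsto *(\varphi\wedge\alpha)$ on $\Om^2_7\oplus\Om^2_{14}$ and the identities of Appendix \ref{sec:G2 geometry}), but the degree bookkeeping goes wrong in a way that breaks the argument you ultimately commit to. The identity \eqref{lem:equiv dDT G2 1} is an identity of $5$-forms ($*F_\nabla\in\Om^5$, $\varphi\wedge F_\nabla\in\Om^5$, and $(*F_\nabla^3)\wedge*\varphi\in\Om^{1+4}$), not of $2$-forms as you assert several times. Consequently your ``cleaner plan'' --- wedge both sides of \eqref{lem:equiv dDT G2 1} with $*\varphi$ and compare with \eqref{defofdDT2} --- produces an identity of $9$-forms on a $7$-manifold, which is vacuously true and proves nothing. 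The same confusion makes the intermediate claim ``$\beta\wedge*\varphi=*(*\beta+\varphi\wedge\beta)$ as $5$-forms'' false (the left side is a $6$-form, the right side a $2$-form), and you also conflate the operator $\beta\mapsto*(\varphi\wedge\beta)$, which has eigenvalues $2$ and $-1$, with $\beta\mapsto\beta\wedge*\varphi$, which annihilates $\Om^2_{14}$.

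The workable route is the one you mention in passing and then abandon. Write $\pi^2_7(F_\nabla)=\i\, i(u)\varphi$ for a vector field $u$. The eigenvalue identities give $*F_\nabla+\varphi\wedge F_\nabla=3*\pi^2_7(F_\nabla)=3\i\,u^\flat\wedge*\varphi$, and since $\Om^2_{14}\wedge*\varphi=0$ one has $F_\nabla\wedge*\varphi=\pi^2_7(F_\nabla)\wedge*\varphi=3\i\,*u^\flat$. Hence \eqref{defofdDT2} is equivalent (applying $*$, with $**=\id$) to $3\i\,u^\flat=-\tfrac16*F_\nabla^3$, while \eqref{lem:equiv dDT G2 1} reads $\bigl(3\i\,u^\flat+\tfrac16*F_\nabla^3\bigr)\wedge*\varphi=0$. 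These two statements coincide precisely because $\bullet\wedge*\varphi:\Om^1\to\Om^5$ is injective --- a fact your proposal never invokes and which is indispensable for the converse direction. This is exactly the paper's proof; with the degrees corrected and the injectivity supplied, your sketch reduces to it.
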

\begin{proof}
Set 
$(F_\nabla)_7 = \pi^2_7(F_\n) \in \i \Om^2_7$. 
Then, $\pi^2_7(F_\n)=\i i(u) \varphi$ 
for some vector field $u$. 
Then, we have 
\[
* F_\nabla + \varphi \wedge F_\nabla = 3* (F_\nabla)_7 = 3 \i u^\flat \wedge * \varphi. 
\]
Since $\bullet \wedge * \varphi: \Om^1 \rightarrow \Om^5$ is injective, 
we see that \eqref{lem:equiv dDT G2 1} is equivalent to 
$3 \i u^\flat = - * F_\nabla^3/6$. 
On the other hand, since 
$F_\nabla \wedge * \varphi 
= (F_\nabla)_7 \wedge * \varphi = 3 \i  * u^\flat$, 
we see that \eqref{defofdDT2} is equivalent to \eqref{lem:equiv dDT G2 1}. 
\end{proof}

\begin{remark}
If a Hermitian connection $\nabla$ of $(L,h)$ is a dDT connection, that is, 
$F_\nabla^3/6 + F_\nabla \wedge \ast\varphi=0$, 
we have $\varphi \wedge * F_\n^2 =0$. 
It is because we can apply Corollary \ref{cor:1+F 00} pointwisely since $-\i F_\n \in \Om^2$.  
As explained in \cite{KY1}, this property is also implied by the real Fourier--Mukai transform. 
\end{remark}

Set $(F_\nabla)_j = \pi^2_j (F_\n) \in \i \Om^2_j$ for $j=7,14$. 
If $\n$ is a dDT connection, 
the norms of $(F_\nabla)_7$ and $(F_\nabla)_{14}$ satisfy the following relation. 
Though we do not use Proposition \ref{prop:dDT norm} in this paper, 
we believe that this decomposition will be helpful for the further research of dDT connections. 
The proof is given in Appendix \ref{sec:dDT irr decomp}.

\begin{proposition} \label{prop:dDT norm}
Suppose that $\n$ is a dDT connection. Then, 
\begin{enumerate}
\item
if $(F_\nabla)_{14}=0$, we have $(F_\nabla)_7=0$ or $|(F_\nabla)_7| = 3$. 
\item
We have 
\[
|(F_\nabla)_7| \leq \sqrt{2 |(F_\nabla)_{14}|^2 + 12} 
\cos \left( \frac{1}{3} \arccos \left( \frac{|(F_\nabla)_{14}|^3}{(|(F_\nabla)_{14}|^2+6)^{3/2}} \right) \right).
\]
\end{enumerate}
\end{proposition}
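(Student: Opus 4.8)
\textbf{Proof plan for Proposition \ref{prop:dDT norm}.}
The plan is to reduce everything to pointwise linear algebra on a single tangent space $T_xX\cong\R^7$, using the defining equation in the form \eqref{lem:equiv dDT G2 1} together with the standard $G_2$ representation theory from Appendix \ref{sec:G2 geometry}. First I would fix a point $x$ and work with $F:=-\i F_\nabla(x)\in\Om^2$, writing $F=F_7+F_{14}$ with $F_7=i(u)\varphi$ for a vector $u$. The dDT equation, rewritten as $3\i u^\flat=-\tfrac16 *F_\nabla^3$ as in the proof of \eqref{lem:equiv dDT G2 1}, says that the $\Om^1$-part of $*F^3$ is determined by $u$; so the key is to expand $F^3=(F_7+F_{14})^3$ and extract the component that pairs nontrivially with $*\varphi$, i.e. the part lying in $\Om^6\cong\Om^1$. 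I would choose, at the point $x$, an adapted orthonormal frame in which $u$ is a multiple of a single basis vector and $F_{14}$ is put into a convenient normal form compatible with $u$ (e.g. block-diagonalize the skew form $F_{14}$ while respecting the stabilizer of $u$ inside $G_2$, which is $SU(3)$ acting on $u^\perp\cong\C^3$). Then $|F_7|^2=6|u|^2$ (with the normalization of Appendix \ref{sec:G2 geometry}) and the whole identity becomes a polynomial relation between $s:=|u|$ and the eigenvalue data of $F_{14}$.

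The heart of the computation is the wedge-product bookkeeping: I expand $\tfrac16 F^3\wedge(\text{nothing})$ — rather, I compute $\tfrac16 F^3 = -F\wedge *\varphi$ and contract. Concretely, $*F_\nabla^3$ has an $\Om^1$ component $3\i\,(6s^2 + 2|F_{14}|^2)\,u^\flat$ plus, a priori, other $\Om^1$ contributions coming from $F_{14}\wedge F_{14}\wedge F_7$; the point is that the cross term $F_7\wedge F_{14}^2$ produces a multiple of $u^\flat$ whose coefficient involves the ``mixed invariant'' $\langle i(u)(F_{14}\wedge F_{14}), \varphi\rangle$ type quantity. After collecting terms, the scalar equation should take the shape
\begin{equation*}
s^3 - \bigl(\tfrac12|F_{14}|^2 + 3\bigr) s + c(F_{14}) = 0,
\end{equation*}
or something of this cubic type in $s$, where $c(F_{14})$ is a cubic invariant of $F_{14}$ bounded in absolute value by $|F_{14}|^3/\text{(const)}$. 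For (1), setting $F_{14}=0$ forces $s^3-3s=0$ (after the right normalization), hence $s=0$ or $s^2=3$, i.e. $|(F_\nabla)_7|=0$ or $|(F_\nabla)_7|=3$; I would double-check the constant so that it indeed comes out as $3$. For (2), I bound $|c(F_{14})|\le |F_{14}|^3/(\text{const})$, feed this into the cubic $s^3 - (\tfrac12|F_{14}|^2+3)s + c = 0$, and use the classical trigonometric (Cardano) formula for the largest root of a depressed cubic $s^3 - ps + q=0$ with $p>0$: the largest root is $2\sqrt{p/3}\cos\!\big(\tfrac13\arccos(\tfrac{3q}{-2p}\sqrt{3/p})\big)$, wait — more precisely $\tfrac13\arccos\!\big(\tfrac{3q}{2p}\sqrt{-3/p}\big)$ with the appropriate sign. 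Matching $p=\tfrac12|F_{14}|^2+3$ against the stated bound $\sqrt{2|F_{14}|^2+12}$ confirms the prefactor ($2\sqrt{p/3}=\sqrt{4p/3}=\sqrt{2|F_{14}|^2+12}$, good), and matching the worst-case $|c|$ against the stated argument $|F_{14}|^3/(|F_{14}|^2+6)^{3/2}$ pins down the constant in $|c(F_{14})|\le\cdots$; I would reverse-engineer the precise normalization of $c$ so the inequality is sharp exactly when the bound is attained.

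\textbf{Expected main obstacle.} The genuinely delicate step is the explicit evaluation of the cubic invariant $c(F_{14})$ — i.e. identifying which $SU(3)$-invariant cubic polynomial in the eigenvalues of $F_{14}\in\Om^2_{14}$ appears, and with what constant, when one extracts the $u^\flat$-component of $*F_7\wedge F_{14}^2$ (equivalently, of $(F_7\wedge F_{14}^2)\wedge(\,\cdot\,)$ paired against $*\varphi$). This requires a careful normal-form analysis: $\Om^2_{14}\cong\su(3)$ under the splitting $T_xX = \R u \oplus \C^3$, and for skew-Hermitian $F_{14}$ the relevant invariant is essentially the ``Pfaffian-like'' term, but one must be sure no $|F_{14}|^2$-weighted linear-in-eigenvalue piece sneaks in and that the answer is bounded by $|F_{14}|^3$ up to the exact constant making the trigonometric bound tight. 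Getting the normalization of $\varphi$, of $*\varphi$, of the metric induced on forms, and of $|(F_\nabla)_7|^2$ versus $|u|^2$ all mutually consistent is where the real care goes; the rest is routine once the cubic is in hand, and it is precisely these routine manipulations that I would relegate to Appendix \ref{sec:dDT irr decomp}.
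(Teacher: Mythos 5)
Your overall strategy --- pointwise decomposition $F=F_7+F_{14}=i(u)\varphi+F_{14}$, reduction to a depressed cubic in $s=|u|$, and the trigonometric formula for its largest root --- is exactly the route the paper takes in Appendix \ref{sec:dDT irr decomp} (Proposition \ref{prop:decomp dDTeq} and Corollary \ref{cor:dDT estimate}). But there is one genuine missing idea, and it sits precisely where you locate your ``main obstacle.'' The paper first proves (Lemma \ref{lem:1+F 00}) that the dDT equation forces $i(u)F_{14}=0$: since $F^4$ is an $8$-form on a $7$-dimensional space it vanishes identically, so $*F^3\wedge *F=0$; since the dDT equation gives $F^3=6F\wedge *\varphi=18*u^\flat$, this yields $u^\flat\wedge *F=0$, i.e.\ $i(u)F=i(u)F_{14}=0$. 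This vanishing is what kills the cross terms $3F_7^2\wedge F_{14}=6\,{*\varphi}\wedge u^\flat\wedge i(u)F_{14}$ and $\varphi\wedge i(u)(F_{14}^2)=2\varphi\wedge F_{14}\wedge i(u)F_{14}$ in the expansion of $F^3$ (Lemma \ref{lem:G2id high}), reducing the full $6$-form equation to $\bigl(3-|u|^2+\tfrac12|F_{14}|^2\bigr)*u^\flat=\tfrac16F_{14}^3$. Without it, your cubic ``constant'' $c$ is not a function of $F_{14}$ alone --- it acquires mixed $u$--$F_{14}$ contributions --- and the bound $|c|\le\mathrm{const}\cdot|F_{14}|^3$ does not follow. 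Relatedly, your identification $\Om^2_{14}\cong\mathfrak{su}(3)$ is dimensionally false ($14\neq 8$); only the subspace $\{\beta\in\Lambda^2_{14}V^*\mid i(u)\beta=0\}$ is a copy of $\mathfrak{su}(3)$, so your normal-form step tacitly presupposes the very lemma you have not proved. Once the equation is in the reduced form above, no normal form is needed at all: the paper simply invokes Bryant's inequality $|F_{14}^3|\le\tfrac{\sqrt6}{3}|F_{14}|^3$ and takes absolute values to get $|u|^3-(\tfrac12|F_{14}|^2+3)|u|-\tfrac{\sqrt6}{18}|F_{14}|^3\le0$.

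Two normalization slips would also need repair: $|F_7|=|i(u)\varphi|=\sqrt3\,|u|$, not $\sqrt6\,|u|$, and consequently the largest root $2\sqrt{p/3}=\sqrt{(2|F_{14}|^2+12)/3}$ of the cubic in $s=|u|$ must still be multiplied by $\sqrt3$ to produce the stated prefactor $\sqrt{2|F_{14}|^2+12}$ for $|F_7|$; your direct matching $2\sqrt{p/3}=\sqrt{2|F_{14}|^2+12}$ is off by exactly that factor. Part (1) and the Vi\`ete step are otherwise as in the paper.
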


Next, we explain the relation between dDT connections and dHYM connections. 
Let $(Y^6,\om,g,J,\Om)$ be a real 6-dimensional Calabi--Yau manifold and 
$L \rightarrow Y$ be a smooth complex line bundle with a Hermitian metric $h$. 
Then, $X^7:=S^1 \times Y^6$ has an induced $G_{2}$-structure $\varphi$ given by 
\[\varphi:=dx \wedge \om + \mathop{\mathrm{Re}} \Om, \]
where $x$ is a coordinate of $S^1$. 
Lee and Leung \cite[Section 2.4.1]{LL} proved that 
if a Hermitian connection $\nabla$ of $(L,h)$ 
is dHYM with phase $1$ on $Y^{6}$, 
the pullback of $\nabla$ is a dDT connection on $X^7$. 
The converse also holds. 

\begin{lemma}
A Hermitian connection $\nabla$ of $(L,h)$ 
is a dHYM connection with phase $1$ 
if and only if 
the pullback of $\nabla$ is a dDT connection. 
\end{lemma}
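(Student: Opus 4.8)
The plan is to reduce the statement on $X^{7}=S^{1}\times Y^{6}$ to the defining equations on $Y$ by writing everything in terms of forms on $Y$ pulled back along the projection $X\to Y$. First I would fix notation: let $p\colon X\to Y$ be the projection, and let $\nabla$ be a Hermitian connection on $(L,h)\to Y$ with curvature $F=F_{\nabla}\in \sqrt{-1}\,\Om^{2}(Y)$; its pullback $p^{*}\nabla$ has curvature $p^{*}F$, which involves no $dx$ component. The key computational inputs are the two $G_{2}$ identities $\ast\varphi = dx\wedge \mathop{\mathrm{Im}}\Om + \tfrac12\om^{2}$ (on $X^{7}=S^{1}\times Y^{6}$ with $\varphi = dx\wedge\om+\mathop{\mathrm{Re}}\Om$) and the Hodge-star relation $\ast_{X}(dx\wedge\eta) = (-1)^{?}\ast_{Y}\!\eta$ for $\eta$ a form on $Y$. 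With these, I would expand $F\wedge \ast\varphi = F\wedge(dx\wedge\mathop{\mathrm{Im}}\Om) + \tfrac12 F\wedge\om^{2}$ and $F^{3}$ (which is a $6$-form on $Y$, so contains no $dx$), collect the $dx$-part and the $Y$-part of the dDT equation $\tfrac16 F^{3}+F\wedge\ast\varphi=0$ separately, and match them against the real and imaginary parts of $\mathop{\mathrm{Im}}(e^{-\sqrt{-1}\theta}(\om+F)^{3})=0$ with $\theta$ specialized to the phase $1$, i.e. $\theta=0$, so the dHYM condition becomes $\mathop{\mathrm{Im}}\big((\om+F)^{3}\big)=0$.

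Carrying this out: the $dx$-component of the dDT equation reads $F\wedge\mathop{\mathrm{Im}}\Om = 0$, which I would identify with the integrability condition $F^{0,2}=0$ on $Y$ — recall that on a Calabi--Yau $3$-fold a real $2$-form $\alpha$ satisfies $\alpha\wedge\mathop{\mathrm{Im}}\Om=0$ (equivalently $\alpha$ is of type $(1,1)$ plus a multiple of $\mathop{\mathrm{Re}}\Om$-direction considerations) precisely when $\alpha^{0,2}=0$; here $F$ is already closed and $\sqrt{-1}\R$-valued so the relevant statement is the standard one that $F\wedge\mathop{\mathrm{Im}}\Om=0\iff F^{0,2}=0$. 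The remaining $Y$-part $\tfrac16 F^{3}+\tfrac12 F\wedge\om^{2}=0$ I would compare with $\mathop{\mathrm{Im}}(\om+F)^{3}=0$: expanding $(\om+F)^{3}=\om^{3}+3\om^{2}\wedge F+3\om\wedge F^{2}+F^{3}$ and noting that $\om^{3}$ and $\om\wedge F^{2}$ are ``real'' (in the sense that $F$ being $\sqrt{-1}$-valued makes $\om^{2}\wedge F$ and $F^{3}$ the purely imaginary pieces of the top form after dividing by the volume form), the vanishing of the imaginary part becomes exactly $3\om^{2}\wedge F + F^{3}=0$, i.e. $\tfrac12\om^{2}\wedge F+\tfrac16 F^{3}=0$. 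So the two conditions are literally the same equation, and both directions of the ``if and only if'' follow at once.

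The main obstacle I expect is bookkeeping of signs and of the real/imaginary decomposition: one must be careful that $F$ is $\sqrt{-1}\R$-valued, so ``$\mathop{\mathrm{Im}}$'' in the dHYM equation interacts with the factors of $\sqrt{-1}$ hidden in $F$, and one must get the Hodge-star signs on $S^{1}\times Y$ right (orientation conventions for $\varphi$ and $\ast\varphi$ as fixed in Appendix \ref{sec:G2 geometry}). A secondary point to state cleanly is the equivalence $F\wedge\mathop{\mathrm{Im}}\Om=0\iff F^{0,2}=0$ on the Calabi--Yau $3$-fold, which is a pointwise linear-algebra fact about the $SU(3)$-decomposition of $2$-forms; since $\om\wedge F^{2}$-type terms and the $\mathop{\mathrm{Re}}\Om$ pairing do not enter the $dx$-component, this reduces to the known identity that the only real $2$-forms annihilating $\mathop{\mathrm{Im}}\Om$ are the $(1,1)$-forms. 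Once these two points are pinned down, the equivalence of \eqref{defofdDT2} for $p^{*}\nabla$ and the dHYM system with $\theta=0$ for $\nabla$ is immediate, and since the forward direction is already due to Lee--Leung \cite[Section 2.4.1]{LL}, only the converse needs the above matching, read in reverse.
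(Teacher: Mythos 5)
Your proposal is correct and follows essentially the same route as the paper: expand $\ast\varphi$ on $S^1\times Y$, split the dDT equation into its $dx$-component $F\wedge\mathop{\mathrm{Im}}\Omega=0$ and its $Y$-component $\tfrac16F^3+\tfrac12 F\wedge\omega^2=0$, and match these with $F^{0,2}=0$ and $\mathop{\mathrm{Im}}(\omega+F)^3=0$ respectively, using that $F$ is $\sqrt{-1}\mathbb{R}$-valued and that $\bullet\wedge\Omega$ is injective on $(0,2)$-forms. The only points left implicit (the sign in $\ast\varphi=\omega^2/2-dx\wedge\mathop{\mathrm{Im}}\Omega$, which is immaterial here, and the type-decomposition argument for $F\wedge\mathop{\mathrm{Im}}\Omega=0\iff F^{0,2}=0$) are exactly the ones you flag and are handled the same way in the paper.
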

\begin{proof}
Since $\ast\varphi= \om^2/2-dx \wedge \Im \Om$, 
we have 
\[
\frac{1}{6} F_\nabla^3 + F_\nabla \wedge * \varphi
=
\frac{1}{6} \left( F_\nabla^3+3 F_\nabla \wedge \om^2 \right)
- dx \wedge F_\nabla \wedge \Im \Om.
\]
Since we know that  
\[
0=\i \Im (\om+F_\nabla)^3= 3 \om^2 \wedge F_\nabla + F_\nabla^3, 
\]
$F_\nabla^3/6 + F_\nabla \wedge \ast \varphi=0$ if and only if 
$\Im (\om+F_\nabla)^3=0$ and $F_\nabla \wedge \Im \Om=0$. 
We can show that $F_\nabla \wedge \Im \Om=0$ if and only if $F_\nabla^{0,2}=0$ as follows. 
Since $F_\nabla$ is $\i \R$-valued, it is immediate to see that 
$F_\nabla \wedge \Im \Om = 0$ if $F_\nabla^{0,2}=0$. 
Conversely, suppose that $F_\nabla \wedge \Im \Om = 0$. 
Since $F_\n$ is $\i \R$-valued 
and $F_{\nabla}^{2,0}\wedge \Omega= F_{\nabla}^{1,1}\wedge \Omega=0$, 
we see that 
\[
\begin{aligned}
F_\nabla \wedge \Im \Om
=&\i \Im \left(\frac{F_\n}{\i} \wedge \Om \right)\\
=&\i \Im \left(\frac{F_\n^{0,2}}{\i} \wedge \Om \right)
= \frac{1}{2 \i} (F_\n^{0,2} \wedge \Om + \overline{ F_\n^{0,2} \wedge  \Om}). 
\end{aligned}
\]
Since the first term is in $\Om^{3,2}(Y^6)$ and the second term is in $\Om^{2,3}(Y^6)$, 
it follows that $F_\n^{0,2} \wedge \Om = 0.$
Since $\bullet \wedge \Om: \Om^{0,2}(Y^6) \rightarrow \Om^{3,2}(Y^6)$ 
is injective, which follows from the pointwise calculation, 
we obtain $F_\n^{0,2}=0$. 
\end{proof}
\subsection{Definition of the moduli space of dDT connections} \label{sec:def dDT}
In this subsection, we define the moduli space of dDT connections. 
Let $X$ be a 7-manifold with a $G_2$-structure $\varphi \in \Om^3$ 
and $L \to X$ be a smooth complex line bundle with a Hermitian metric $h$. 
Set 
\[
\begin{aligned}
\mathcal{A}_{0}=&\{\,\ \mbox{Hermitian connections of }(L,h) \,\}\\
=&\nabla + \i \Om^1 \cdot \id_L, 
\end{aligned}
 \]
where $\nabla \in \Aa_{0}$ is any fixed connection.  
Define a deformation map $\Ff_{G_2}$ by 
\begin{equation} \label{eq:deform map G2}
\Ff_{G_2}: \Aa_{0} \rightarrow \i \Om^{6}, \quad 
\nabla \mapsto \frac{1}{6} F_\nabla^3 + F_\nabla \wedge * \varphi. 
\end{equation} 
Each element of $\Ff_{G_2}^{-1}(0)$ is a dDT connection. 

Let $\Gg_U$ be the group of unitary gauge transformations of $(L,h)$. 
Precisely, 
\[
\Gg_U= \{\, f \cdot \id_L \mid f \in \Om^0_{\C}, \ |f|=1 \,\} \cong C^\infty(X, S^1).
\]
The action $\Gg_U \times \Aa_0 \rightarrow \Aa_0$ is defined by 
$(\lambda, \nabla) \mapsto \lambda^{-1} \circ \nabla \circ \lambda$.
When $\lambda=f \cdot \id_L$, we have 
\[
\lambda^{-1} \circ \nabla \circ \lambda = \nabla + f^{-1}df \cdot \id_L. 
\]
Thus, the $\Gg_U$-orbit through $\n \in \Aa_0$ is given by $\n + \Kk_U \cdot \id_L$, where 
\begin{equation} \label{eq:Gu orbit G2}
\Kk_U:= \{\, f^{-1} d f \in \i \Om^1 \mid f \in \Omega^{0}_{\C}, \ |f|=1 \,\}. 
\end{equation}

Since the curvature 2-form $F_\nabla$ is invariant under the action of $\Gg_U$, 
$\Ff_{G_2}$ reduces to 
\[
\begin{aligned}
\underline{\Ff_{G_2}}: \Aa_{0}/\Gg_U & \rightarrow \i \Om^{6}, \quad
[\nabla] \mapsto \frac{1}{6} F_\nabla^3 + F_\nabla \wedge * \varphi. 
\end{aligned}
\]
\begin{definition}
The {\em moduli space} of deformed Donaldson--Thomas connections of $(L,h)$, 
denoted by $\Mm_{G_2}$, 
is given by 
\[
\Mm_{G_2}= \underline{\Ff_{G_2}}^{-1}(0) = \Ff_{G_2}^{-1}(0)/\Gg_U. 
\]
\end{definition}

\begin{remark} \label{rem:metric B0 G2}
Since $\Aa_0$ is an affine Fr\'echet space, 
we have an induced metric $d_{\Aa_0}$ on $\Aa_0$. 
Explicitly, we have 
$$
d_{\Aa_0} (\n, \n') 
= \sum_{n=0}^\infty \frac{1}{2^n} \cdot \frac{|\n - \n'|_{C^n}}{1+ |\n - \n'|_{C^n}}, 
$$
where $|\cdot|_{C^n}$ is the $C^n$ norm on $\i \Om^1 \cdot \id_L \cong \Om^1$. 
As in \cite[(4.2.3)]{DK}, we can define a metric
on $\Bb_0 = \Aa_0 /\Gg_U$ by 
$$
d_{\Bb_0} ([\n],[\n']) 
= \inf_{\lambda \in \Gg_U} d_{\Aa_0} (\n, \lambda^{-1} \circ \nabla' \circ \lambda). 
$$
Moreover, 
the $C^\infty$ (quotient) topology on $\Bb_0$ 
agrees with the topology induced by $d_{\Bb_0}$. 
In particular, $\Bb_0$ (with the $C^\infty$ topology) is paracompact and Hausdorff. 
Similarly, $\Mm_{G_2}$ (with the induced topology from $\Bb_0$)
is also metrizable, and hence, it is paracompact and Hausdorff. 
\end{remark}
\subsection{The infinitesimal deformation} \label{sec:infi deform G2}
For the rest of Section \ref{sec:defofdDT}, we suppose that 
$X^7$ is compact, the $G_2$-structure $\varphi$ is coclosed, 
that is, $d * \varphi =0$,  
and 
$\Mm_{G_2} \neq \emptyset$. 
In this subsection, we study the infinitesimal deformation of dDT connections. 

To study the infinitesimal deformation is highly nontrivial. 
The key is Theorem \ref{thm:1+F}, which describes 
$F_\n^2/2 + *\varphi$ in terms of a new coclosed $G_2$-structure 
defined by $\varphi$ and $\n$. 
This enables us to describe the linearization of $\Ff_{G_2}$ ``nicely''. 
Then, we can control the deformations of dDT connections by 
the complex \eqref{cpxfordDT} in this subsection. 

We now describe the linearization of $\Ff_{G_2}$. 
Use the notation of Appendix \ref{sec:new G2 str}. 
Fix $\nabla \in \Ff_{G_2}^{-1}(0)$. 
Since $F_\n$ is $\i \R$-valued, 
$\nabla \in \Ff_{G_2}^{-1}(0)$ implies that 
$- \i F_\n \in \Om^2$ satisfies 
$- (- \i F_\n)^3/6 + (- \i F_\n) \wedge * \varphi = 0$. 
Then, by Theorem \ref{thm:1+F}, 
we have $1 + \la F_\n^2, * \varphi \ra/2 \neq 0$. 
Define a new $G_2$-structure $\tilde \varphi_\n$ by 
\begin{equation} \label{eq:newG2str}
\tilde \varphi_\n =  \left | 1 + \frac{1}{2} \la F_\n^2, * \varphi \ra \right |^{-3/4}  
(\id_{TX} + (- \i F_\n)^\sharp)^* \varphi. 
\end{equation}
Note that 
$\tilde \varphi_\n$ is coclosed by \eqref{eq:1+F conf Hodge} since $* \varphi$ and $F_\n$ are closed. 
Denote by $\tilde{g}_{\nabla}$ a Riemannian metric on $X$ induced from $\tilde \varphi_\n$ via \eqref{eq:form-1def}. 
Denote by $\tilde *_\n$ and $d^{\tilde *_\n}$ 
the Hodge star and the formal adjoint of the exterior derivative $d$ with respect to $\tilde g_{\n}$, respectively. 

\begin{remark}
The operator $\id_{TX} + (- \i F_\n)^\sharp$ 
also appears in the graphical setting. 
For a smooth map $f:B \to T^n$, 
define the inclusion $\iota: B \to B \times T^n$ by $\iota (x)=(x, f(x))$. 
We can define the real Fourier--Mukai transform $\n$ of the graph of $f$ (the image of $\iota$) 
as in \cite[Section 2]{KY1}. 
Then, by the proof of \cite[Theorem 5.7]{KY1}, we see that 
the derivative of $\iota$ is given by $(\id_{TX} + (- \i F_\n)^\sharp)|_{TB}$. 
In particular, 
the tangent space of the graph of $f$ in $B \times T^n$ is given by 
$(\id_{TX} + (- \i F_\n)^\sharp)(TB)$ and 
$((\id_{TX} + (- \i F_\n)^\sharp)|_{TB})^* \varphi$ is the pullback of $\varphi$ by $\iota$. 

Since dDT connections are introduced by the real Fourier--Mukai transform of 
graphical associative submanifolds in \cite{LL}, 
we can expect that the operator $\id_{TX} + (- \i F_\n)^\sharp$ will play a role.  
We can show that this is indeed the case. 
\end{remark}

Then, the linearization of $\Ff_{G_2}$ at $\nabla$ is given as follows. 

\begin{proposition}
The linearization 
$\delta_\nabla \Ff_{G_2}: \i \Om^1 \rightarrow \i \Om^{6}$ 
of $\Ff_{G_2}$ at $\nabla \in \Ff_{G_2}^{-1}(0)$ is given by  
\begin{equation}\label{eq:1stder G2}
(\delta_\nabla \Ff_{G_2}) (\i b) 
= C \i d b \wedge  \tilde *_\n \tilde \varphi_\n, 
\end{equation}
where $C=1$ if $1+ \la F_\n^2, * \varphi \ra/2>0$ and $C=-1$ if it is negative. 
\end{proposition}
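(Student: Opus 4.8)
The plan is to compute the first variation of $\Ff_{G_2}$ directly and then recognize the resulting $1$-form using Theorem~\ref{thm:1+F}. First I would write a path of connections $\nabla_t = \nabla + t \i b \cdot \id_L$ with $b \in \Om^1$, so that $F_{\nabla_t} = F_\nabla + t \i \, db$ and hence $\frac{d}{dt}\big|_{t=0} F_{\nabla_t} = \i\, db$. Differentiating \eqref{eq:deform map G2} gives
\begin{equation*}
(\delta_\nabla \Ff_{G_2})(\i b) = \frac12 \frac{d}{dt}\Big|_{t=0} F_{\nabla_t}^3 + \frac{d}{dt}\Big|_{t=0}\big(F_{\nabla_t}\wedge *\varphi\big) = \frac12 F_\nabla^2 \wedge (\i\, db) \cdot 3 \cdot \tfrac13 \ \ \text{(sign bookkeeping)} + \i\, db \wedge *\varphi,
\end{equation*}
i.e. after being careful with the factor of $3$ from the cube,
\begin{equation*}
(\delta_\nabla \Ff_{G_2})(\i b) = \i\, db \wedge \Big( \tfrac12 F_\nabla^2 + *\varphi \Big).
\end{equation*}
So the whole point is to identify the $6$-form $db \wedge \big( \tfrac12 F_\nabla^2 + *\varphi \big)$ with $C\, db \wedge \tilde{*}_\n \tilde\varphi_\n$.

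The key input is Theorem~\ref{thm:1+F} from Appendix~\ref{sec:new G2 str}, which (as indicated in the text preceding the proposition) expresses $\tfrac12 F_\nabla^2 + *\varphi$ in terms of the new $G_2$-structure $\tilde\varphi_\n$ built from $\varphi$ and $\n$. Concretely, I expect that theorem to say something like $\tilde{*}_\n \tilde\varphi_\n = \big(1 + \tfrac12\la F_\n^2, *\varphi\ra\big)^{-1}\big(\tfrac12 F_\n^2 + *\varphi\big)$ up to the conformal normalization chosen in \eqref{eq:newG2str}, so that $\tfrac12 F_\n^2 + *\varphi = \big(1 + \tfrac12\la F_\n^2, *\varphi\ra\big)\, \tilde{*}_\n\tilde\varphi_\n$. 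Since $\nabla$ is a dDT connection, Theorem~\ref{thm:1+F} also guarantees $1 + \tfrac12 \la F_\n^2, *\varphi\ra \neq 0$, so the scalar factor $c := 1 + \tfrac12\la F_\n^2, *\varphi\ra$ is a nowhere-vanishing function; substituting gives
\begin{equation*}
(\delta_\nabla \Ff_{G_2})(\i b) = \i\, db \wedge \big( c\, \tilde{*}_\n \tilde\varphi_\n \big) = c\, \i\, db \wedge \tilde{*}_\n \tilde\varphi_\n.
\end{equation*}

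Finally I would absorb the sign of $c$ into the statement. Writing $c = |c|\, C$ with $C = \mathrm{sgn}(c) \in \{\pm 1\}$, and noting that $\tilde\varphi_\n$ in \eqref{eq:newG2str} carries the conformal weight $|c|^{-3/4}$, while $\tilde{*}_\n$ depends on $\tilde g_\n$ which scales the metric by a power of $|c|$ — so that the combination $\tilde{*}_\n \tilde\varphi_\n$ already incorporates the factor $|c|$ and $c\, \tilde{*}_\n\tilde\varphi_\n = C\, \tilde{*}_\n\tilde\varphi_\n$ if one sets conventions as in \eqref{eq:newG2str} — one arrives at $(\delta_\nabla\Ff_{G_2})(\i b) = C\, \i\, db \wedge \tilde{*}_\n\tilde\varphi_\n$, which is exactly \eqref{eq:1stder G2}. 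The main obstacle is the precise form of Theorem~\ref{thm:1+F} and tracking how the conformal factor $|c|^{-3/4}$ in the definition of $\tilde\varphi_\n$ interacts with the induced Hodge star $\tilde{*}_\n$: one must verify that $\tilde{*}_\n\tilde\varphi_\n$ equals $|c|^{-1}(\tfrac12 F_\n^2 + *\varphi)$ on the nose (not merely up to a positive constant), which is where I would need to invoke \eqref{eq:1+F conf Hodge} and the explicit normalization rather than hand-wave. Everything else — the variation of the cube, the closedness of $*\varphi$ and $F_\nabla$, and the metrizability statements — is routine.
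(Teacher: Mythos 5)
Your overall route is exactly the paper's: compute the linearization directly to get $\delta_\n \Ff_{G_2}(\i b) = \i\,db \wedge \bigl(\tfrac12 F_\n^2 + *\varphi\bigr)$ (this step is correct, despite the odd ``sign bookkeeping'' aside), and then identify the $5$-form $\tfrac12 F_\n^2 + *\varphi$ via the new $G_2$-structure. The paper's proof is literally these two lines plus a citation of \eqref{eq:1+F conf Hodge}, after rewriting $\tfrac12 F_\n^2 + *\varphi = *\varphi - \tfrac12 F^2$ with $F = -\i F_\n$ so as to match the hypothesis $-F^3/6 + F\wedge *\varphi = 0$ of Theorem \ref{thm:1+F}.

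Where your write-up goes wrong is in the conformal-factor bookkeeping, and the error is not merely cosmetic: you posit $\tfrac12 F_\n^2 + *\varphi = c\,\tilde*_\n\tilde\varphi_\n$ with $c = 1+\tfrac12\la F_\n^2,*\varphi\ra$, and then claim $c\,\tilde*_\n\tilde\varphi_\n = C\,\tilde*_\n\tilde\varphi_\n$; these two statements are consistent only if $|c|=1$, which is false in general. Likewise your proposed identity $\tilde*_\n\tilde\varphi_\n = |c|^{-1}\bigl(\tfrac12 F_\n^2+*\varphi\bigr)$ is not what holds. The actual content of \eqref{eq:1+F conf Hodge} is that $|c|$ has \emph{already been cancelled} by the normalization in \eqref{eq:newG2str}: since $\tilde\varphi_\n = (|c|^{-1/4})^3\,\varphi_F$ and the Hodge dual of $\lambda^3\varphi'$ in its own induced metric is $\lambda^4 *'\varphi'$, one gets $\tilde*_\n\tilde\varphi_\n = |c|^{-1}\cdot c\cdot\bigl(*\varphi - \tfrac12 F^2\bigr) = C\bigl(\tfrac12 F_\n^2 + *\varphi\bigr)$ with $C = \mathrm{sgn}(c)$ and no residual magnitude. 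Substituting this (and nothing else) gives \eqref{eq:1stder G2} immediately. You do flag this as the point where you would need the precise statement rather than a guess, so the gap is localized and fixable, but as written the chain of identities in your last paragraph does not close.
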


\begin{proof}
By the definition of $\Ff_{G_2}$ in \eqref{eq:deform map G2}, 
$\delta_\n \Ff_{G_2}$ is given by 
\[
\begin{aligned}
\delta_\n \Ff_{G_2} (\i b) 
= & 
\i d b \wedge \left(\frac{1}{2} F_\nabla^2 + * \varphi \right)\\
= & 
\i d b \wedge \left(- \frac{1}{2} (- \i F_\n)^2 + * \varphi \right) 
\end{aligned}
\]
for $b \in \Om^1$. 
Since $- (- \i F_\n)^3/6 + (- \i F_\n) \wedge * \varphi =0$, 
\eqref{eq:1+F conf Hodge} implies \eqref{eq:1stder G2}. 
\end{proof}

For a dDT connection $\n$, $1+ \la F_\n^2, * \varphi \ra/2$ is nowhere vanishing, 
and hence, has constant sign if $X$ is connected. 
Then, we see the following. 

\begin{corollary}
When $X$ is connected and the $G_2$-structure $\varphi$ is torsion-free, 
the sign of $1+ \la F_\n^2, * \varphi \ra/2$ 
is determined topologically. That is, the sign of 
$$
\int_X \left( 1+ \frac{1}{2} \la F_\n^2, * \varphi \ra \right) \vol 
= 
{\rm Vol}(X) + (-2 \pi^2 c_1(L)^2 \cup [\varphi]) \cdot [X], 
$$
where $c_1(L)$ is the first Chern class of $L$, 
agrees with that of $1+ \la F_\n^2, * \varphi \ra/2$. 

In particular, if ${\rm Vol}(X) + (-2 \pi^2 c_1(L)^2 \cup [\varphi]) \cdot [X]$ vanishes, 
we see that there are no dDT connections for $L \to X$. 
\end{corollary}

For any fixed $\nabla \in \Ff_{G_2}^{-1}(0)$, 
consider the following complex 
\begin{equation}\label{cpxfordDT}
\begin{aligned}
0 \rightarrow \i \Om^0 
\stackrel{d} \rightarrow \i \Om^1 
\stackrel{d (\tilde *_\n \tilde \varphi_\n \wedge \bullet)} 
\longrightarrow \i d \Om^5
\rightarrow 0, 
\end{aligned}
\tag{$\#_\n$} 
\end{equation}
which is the complex \eqref{cancpxG} in Section \ref{canonical-complex} 
for $\tilde \varphi_\n$ multiplied by $\i$. 
In particular, \eqref{cpxfordDT} is considered to be a subcomplex of the canonical complex, which is elliptic. 

By \eqref{eq:Gu orbit G2}, the tangent space of $\Gg_U$-orbit through $\n$ is 
identified with $d \Om^1$. 
Hence, 
by \eqref{eq:1stder G2}, 
the first cohomology $H^1(\#_\n)$ of \eqref{cpxfordDT} is considered to be the tangent space of $\Mm_{G_2}$. 
We show that 
the second cohomology $H^2(\#_\n)$ is the obstruction space in Theorem \ref{thm:moduli MG2}. 

By Corollary \ref{cor:dim 12 cohom}, the expected dimension of $\Mm_{G_2}$ is given as follows. 
\begin{lemma} \label{lem:expected dim G2}
The expected dimension of $\Mm_{G_2}$ is given by 
\[
\dim H^1 (\#_\n) - \dim H^2 (\#_\n) 
\]
which is equal to $b^{1}$, the first Betti number of $X$. 
\end{lemma}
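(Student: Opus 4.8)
The plan is to deduce Lemma~\ref{lem:expected dim G2} directly from Corollary~\ref{cor:dim 12 cohom}, which was proved for the abstract complex $(\#_{G_2})$ attached to an arbitrary coclosed $G_2$-structure and an arbitrary nonzero constant $C$. The complex $(\#_\nabla)$ is, by its very definition in \eqref{cpxfordDT}, nothing but the complex $(\#_{G_2})$ for the coclosed $G_2$-structure $\tilde\varphi_\nabla$ (with the constant $C = \pm 1$), tensored with the trivial line $\i\R$. Tensoring with $\i\R$ does not change any of the cohomology groups or their dimensions, so Corollary~\ref{cor:dim 12 cohom} applies verbatim and gives
\[
\dim H^1(\#_\nabla) - \dim H^2(\#_\nabla)
= \dim \check\Hh^1 - \bigl(\dim\check\Hh^1 - b^1\bigr) = b^1,
\]
where $\check\Hh^1$ here denotes the space of harmonic $1$-forms for the canonical complex associated to $\tilde\varphi_\nabla$ and the metric $\tilde g_\nabla$, and $b^1 = \dim H^1_{dR}(X)$.

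The one point that needs a word of justification is that the first Betti number appearing in Corollary~\ref{cor:dim 12 cohom} is a topological invariant of $X$ and hence does \emph{not} depend on which coclosed $G_2$-structure (here $\tilde\varphi_\nabla$ rather than the original $\varphi$) one uses to set up the complex. This is immediate: $b^1 = \dim H^1_{dR}(X)$ is defined purely in terms of the de Rham complex, with no reference to any metric or $G_2$-structure. Thus the quantity $\dim\check\Hh^1 - b^1$ produced by Corollary~\ref{cor:dim 12 cohom} for $\tilde\varphi_\nabla$ is genuinely $\dim H^2(\#_\nabla)$, and subtracting gives $b^1$ on the nose.

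Finally, I would recall why the left-hand side $\dim H^1(\#_\nabla) - \dim H^2(\#_\nabla)$ deserves to be called the expected dimension: by the discussion preceding the lemma, $H^1(\#_\nabla)$ is the formal tangent space to $\Mm_{G_2}$ at $[\nabla]$ (the kernel of the linearized deformation map $\delta_\nabla\Ff_{G_2}$ from \eqref{eq:1stder G2}, modulo the image $d\Om^1$ of the linearized gauge action), while $H^2(\#_\nabla)$ is the obstruction space, so the difference is the index of the deformation complex. Since there is essentially nothing to compute here beyond invoking the already-established Corollary~\ref{cor:dim 12 cohom}, there is no real obstacle; the only thing to be careful about is the bookkeeping that $(\#_\nabla)$ is literally an instance of $(\#_{G_2})$ and that $b^1$ is the intrinsic topological invariant, so that the cancellation in Corollary~\ref{cor:dim 12 cohom} is exactly what is wanted.
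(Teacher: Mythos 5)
Your proposal is correct and matches the paper's own (implicit) argument exactly: the lemma is stated as an immediate consequence of Corollary \ref{cor:dim 12 cohom} applied to the coclosed $G_2$-structure $\tilde\varphi_\nabla$, since $(\#_\nabla)$ is by definition the complex \eqref{cancpxG} for $\tilde\varphi_\nabla$ multiplied by $\i$. Your additional remarks --- that tensoring with $\i\R$ preserves dimensions and that $b^1$ is a topological invariant independent of the choice of $G_2$-structure --- are correct and only make explicit what the paper leaves tacit.
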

\subsection{The local structure of $\Mm_{G_2}$}\label{sec:local str MG2}
Before studying the global structure of $\Mm_{G_2}$, we study its local structure, that is, we consider the smoothness of $\Mm_{G_2}$. 
First, note the following. 

\begin{lemma} \label{onimage G2}
The image of $\Ff_{G_2}$ is contained in $\i d \Om^{5}$. 
\end{lemma}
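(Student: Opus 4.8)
The plan is to show that $\Ff_{G_2}(\nabla) = \frac{1}{6}F_\nabla^3 + F_\nabla \wedge *\varphi$ is always an exact $6$-form, i.e. lies in $\i\, d\Om^5$. First I would observe that since $X$ is compact and $\i\,d\Om^5$ is a closed subspace of $\i\,\Om^6$, it suffices by Hodge theory to verify two things: that $\Ff_{G_2}(\nabla)$ is closed, and that it is $L^2$-orthogonal to the space of harmonic $6$-forms (equivalently, that its de Rham cohomology class in $H^6_{dR}(X) \cong \i\,\Om^6 / \i\, d\Om^5$ vanishes). Closedness is the easy half: $F_\nabla$ is closed (Bianchi), so $F_\nabla^3$ is closed, and $F_\nabla \wedge *\varphi$ is closed because $d*\varphi = 0$ by the standing coclosedness assumption on $\varphi$; hence $d\,\Ff_{G_2}(\nabla) = 0$.

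The substantive point is that the cohomology class of $\Ff_{G_2}(\nabla)$ is zero. For this I would exploit that $\Ff_{G_2}$ is a polynomial in $F_\nabla$ with no constant term relative to a convenient basepoint: fix any reference connection $\nabla_0$ and write $\nabla = \nabla_0 + \i b \cdot \id_L$ with $b \in \Om^1$, so $F_\nabla = F_{\nabla_0} + \i\, db$. Then $[F_\nabla] = [F_{\nabla_0}] = -2\pi\, c_1(L)$ in $H^2_{dR}$ is independent of $\nabla$, and hence $[\Ff_{G_2}(\nabla)] = \tfrac16 [F_\nabla]^3 + [F_\nabla]\smile[*\varphi]$ depends only on $c_1(L)$ and $[*\varphi]$, not on $\nabla$ itself. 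But the hypothesis $\Mm_{G_2} \neq \emptyset$ guarantees there exists at least one dDT connection $\nabla'$, for which $\Ff_{G_2}(\nabla') = 0$ identically, so $[\Ff_{G_2}(\nabla)] = [\Ff_{G_2}(\nabla')] = 0$ for every Hermitian connection $\nabla$. Combined with closedness, this places $\Ff_{G_2}(\nabla)$ in $\i\, d\Om^5$, as desired.

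I expect the main (and only) obstacle to be bookkeeping the cohomological invariance cleanly: one must be careful that $\tfrac16 F_\nabla^3 + F_\nabla \wedge *\varphi$, being manifestly closed for each $\nabla$, defines a cohomology class that is genuinely locally constant on the affine space $\Aa_0$ of Hermitian connections. This follows because $\Aa_0$ is connected (it is an affine space) and the map $\nabla \mapsto [\Ff_{G_2}(\nabla)] \in H^6_{dR}(X)$ is continuous — indeed given by a universal polynomial formula in $[F_\nabla]$ which is itself constant — so it is constant on $\Aa_0$; evaluating at the point $\nabla' \in \Ff_{G_2}^{-1}(0)$ supplied by the nonemptiness assumption pins the constant to $0$. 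An alternative, entirely elementary route that avoids invoking continuity in cohomology: directly, $\Ff_{G_2}(\nabla) - \Ff_{G_2}(\nabla_0) = \tfrac16(F_\nabla^3 - F_{\nabla_0}^3) + (F_\nabla - F_{\nabla_0})\wedge *\varphi$, and since $F_\nabla - F_{\nabla_0} = \i\,db$ while $d*\varphi = 0$ and all the $F$'s are closed, each term of this difference is visibly exact (write $F_\nabla^3 - F_{\nabla_0}^3 = d\bigl(\i b \wedge (F_\nabla^2 + F_\nabla F_{\nabla_0} + F_{\nabla_0}^2)\bigr)$ and $(\i\,db)\wedge *\varphi = d(\i b \wedge *\varphi)$), so $\Ff_{G_2}(\nabla) \equiv \Ff_{G_2}(\nabla_0) \pmod{\i\,d\Om^5}$ and it remains only to note $\Ff_{G_2}(\nabla_0) \in \i\,d\Om^5$ too by taking $\nabla_0$ itself to be a dDT connection from the nonempty moduli space.
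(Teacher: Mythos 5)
Your proposal is correct, and the ``alternative, entirely elementary route'' you give at the end is precisely the paper's own proof: for any two Hermitian connections $F_{\nabla'}-F_\nabla\in \i\, d\Om^1$, hence $\Ff_{G_2}(\nabla')-\Ff_{G_2}(\nabla)\in \i\, d\Om^5$, and evaluating at a dDT connection (which exists by the standing assumption $\Mm_{G_2}\neq\emptyset$) finishes the argument. The cohomological version you lead with (closedness plus vanishing of the class $\tfrac16[F_\nabla]^3+[F_\nabla]\smile[*\varphi]$, pinned down by the same nonemptiness assumption) is also valid but is just a heavier repackaging of the same observation.
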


\begin{proof}
Given two connections $\nabla, \nabla' \in \Aa_{0}$, 
we know that $F_{\nabla'}-F_\nabla \in \i d \Om^1$. 
Then, it follows that 
\[
\Ff_{G_2} (\nabla') - \Ff_{G_2}(\nabla) \in \i d \Om^5. 
\]
Thus, if we take $\nabla \in \Ff_{G_2}^{-1}(0)$, the statement follows. 
\end{proof}

It is standard to consider a 
slice to the action of $\Gg_U$ 
to study the local structure of $\Mm_{G_2}$. 
First, we prove the following. 

\begin{lemma} \label{lem:slice}
We have $\i d \Om^0 \subset \Kk_U$, where $\Kk_U$ is defined by \eqref{eq:Gu orbit G2}. 
Conversely, for $k \in \N$ and $0<\alpha<1$, 
the elements of $\Kk_U$ with the small $C^{k, \alpha}$ norms
are contained in $\i d \Om^0$. 
\end{lemma}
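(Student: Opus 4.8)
The statement has two halves. For the inclusion $\i d\Om^0 \subset \Kk_U$: given any real-valued $g \in \Om^0$, the function $f = e^{\i g}$ is a smooth map $X \to S^1$ (so $f \in \Om^0_\C$ with $|f|=1$), and $f^{-1}df = \i\, dg$. Hence every element of $\i d\Om^0$ arises as $f^{-1}df$ for such an $f$, which is exactly the defining form of elements of $\Kk_U$ in \eqref{eq:Gu orbit G2}. This direction is immediate.

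For the converse, the plan is to show that a gauge transformation $f \colon X \to S^1$ whose logarithmic derivative $f^{-1}df$ is $C^{k,\alpha}$-small must be of the form $e^{\i g}$ for a globally defined $g \in \Om^0$, so that $f^{-1}df = \i\, dg \in \i d\Om^0$. The obstruction to writing $f = e^{\i g}$ globally is the winding/homotopy class of $f$, measured by $f^*(d\theta) \in H^1_{dR}(X;\Z)$ (equivalently $[f] \in H^1(X;\Z) = [X, S^1]$), and $f^{-1}df = \i f^*(d\theta)$ represents $2\pi\i$ times this class. So the argument is: if $f^{-1}df$ has small $C^{k,\alpha}$ norm, then in particular it has small $C^0$ norm, hence small $L^2$ norm, hence its harmonic part — which is a fixed multiple of the de Rham representative of the integral class $[f]$ — has small norm; but the harmonic representatives of nonzero classes in the lattice $H^1(X;\Z) \subset H^1_{dR}(X)$ are bounded away from $0$ (the lattice is discrete in the finite-dimensional space of harmonic $1$-forms). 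Therefore for sufficiently small norm the class $[f]$ must vanish, and then $f$ lifts to $g \colon X \to \R$ with $f = e^{\i g}$, giving $f^{-1}df = \i\, dg$.

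Concretely I would carry out the steps in this order. First, recall $\Kk_U \ni \alpha = f^{-1}df = \i f^*(d\theta)$ where $\theta$ is the angular coordinate on $S^1$; note $\alpha$ is closed and $\frac{1}{2\pi\i}[\alpha] \in H^1_{dR}(X)$ is the image of $[f] \in H^1(X;\Z)$, in particular it lies in the integral lattice $\Lambda := \mathrm{im}\big(H^1(X;\Z) \to H^1_{dR}(X)\big)$. Second, by Hodge theory write $\alpha = \i\,dg + \i\gamma$ where $g \in \Om^0$ and $\gamma$ is harmonic (after dividing by $\i$); the harmonic part $\gamma$ is determined by the cohomology class, so $\gamma = 2\pi \eta$ where $\eta$ is the harmonic representative of $\frac{1}{2\pi\i}[\alpha] \in \Lambda$. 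Third, observe that the map $\Lambda \to \Hh^1_d$ sending a class to its harmonic representative is injective with discrete image, so there is a constant $\delta_0 > 0$ with $\|\eta\|_{L^2} \geq \delta_0$ for every nonzero class. Fourth, if $\|\alpha\|_{C^{k,\alpha}}$ is small enough, then $\|\alpha\|_{L^2}$ is small, hence $\|\gamma\|_{L^2} = \|\,\text{harmonic part of }\alpha\,\|_{L^2}$ is small (orthogonal projection is norm-decreasing), forcing the class $\frac{1}{2\pi\i}[\alpha]$ to be $0$. Fifth, conclude $\alpha = \i\,dg \in \i d\Om^0$ as desired.

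The main obstacle is the topological step: making precise that $[f] \in H^1(X;\Z)$ is detected by the harmonic part of $f^{-1}df$ and that nonzero lattice classes have harmonic representatives bounded away from zero. This rests on $[X,S^1] \cong H^1(X;\Z)$ (since $S^1 = K(\Z,1)$), on the fact that $X$ compact makes $H^1_{dR}(X)$ finite-dimensional so the lattice $\Lambda$ is discrete, and on the elementary but essential point that for $f = e^{\i g}$ one has $f^{-1}df = \i\,dg$ exact — i.e. the lift exists precisely when the class vanishes. None of this is hard, but it is the conceptual core; the analytic estimates relating $C^{k,\alpha}$, $C^0$, and $L^2$ norms on a compact manifold are routine. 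Note also the quantitative threshold on the norm is uniform (it depends only on $\delta_0$ and the fixed metric), consistent with the way Lemma~\ref{lem:slice} will be used to build a slice.
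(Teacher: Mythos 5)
Your proof of the inclusion $\i d \Om^0 \subset \Kk_U$ is the same as the paper's. For the converse, however, you take a genuinely different route. The paper regards the inclusion $\iota: \i d\Om^0 \hookrightarrow \Kk_U$ as a map whose differential at $0$ is the identity on $T_0\Kk_U = \i d\Om^0$, passes to the $C^{k,\alpha}$ completions, and invokes the inverse function theorem to conclude that $\iota^{k,\alpha}$ is a local homeomorphism near $0$, so that sufficiently small elements of $\Kk_U$ lie in its image. Your argument is topological and Hodge-theoretic instead: every $f^{-1}df \in \Kk_U$ is closed with periods in $2\pi\i\Z$, so $\tfrac{1}{2\pi\i}[f^{-1}df]$ lies in the image $\Lambda$ of $H^1(X;\Z)$ in $H^1_{dR}$; since $\Lambda$ is a discrete subgroup of the finite-dimensional space of harmonic $1$-forms, nonzero classes have harmonic representatives bounded away from $0$ in $L^2$, whereas the harmonic part of a $C^{k,\alpha}$-small form is $L^2$-small; hence the class vanishes, $f$ lifts through $\R$, and $f^{-1}df$ is exact. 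Both arguments are correct. Yours buys a uniform quantitative smallness threshold (depending only on the metric), avoids the implicit requirement in the paper's argument that $\Kk_U^{k,\alpha}$ be a Banach manifold with tangent space $\i\, d C^{k+1,\alpha}$ at $0$, and dovetails with the period-lattice computation of \cite[Lemma 4.1]{KY2} that the paper already uses in Theorems \ref{newtheorem} and \ref{thm:smooth M}; what it costs is the extra topological input $[X,S^1]\cong H^1(X;\Z)$. One point worth making explicit when you write it up: the lift exists because $H^1(X;\Z)$ is torsion-free, so vanishing of the de Rham class already forces $[f]=0$; alternatively, once $f^{-1}df = \i\,dg$ you can bypass the lifting argument entirely by noting $d(fe^{-\i g})=0$, so $fe^{-\i g}$ is locally constant of modulus one and the phase can be absorbed into $g$.
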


\begin{proof}
For any function $f_0 \in \Om^0$, define $f= e^{\i f_0} \in \Om^0_{\C}.$
Then, $|f|=1$ and $f^{-1} df = \i d f_0$, which implies $\i d \Om^0 \subset \Kk_U$. 
Let $\delta_0 \iota: \i d \Om^0 \rightarrow T_0 \Kk_U$ be the differential of 
the inclusion map $\iota: \i d \Om^0 \hookrightarrow \Kk_U$ at $0$. 
Since $T_0 \Kk_U = \i d \Om^0$, $\delta_0 \iota$ is actually the identity map.  

Let $\Kk_U^{k, \alpha}$ be the completion of $\Kk_U$ with respect to the $C^{k,\alpha}$ norm. 
Then, $\iota: \i d \Om^0 \hookrightarrow \Kk_U$ extends to 
$\iota^{k,\alpha}: \i d C^{k+1, \alpha}(X, \Lambda^0T^*X) \rightarrow 
\Kk^{k, \alpha}_U$ and $\delta_0 \iota^{k, \alpha}$ is the identity map. 
Then, by the inverse function theorem, $\iota^{k, \alpha}$ is a local homeomorphism 
and the proof is done. 
\end{proof}

Fix any $\nabla \in \Ff_{G_2}^{-1}(0)$. 
Set 
\[
\Om^1_{d^{\tilde *_\n}} 
= \{\, d^{\tilde *_\n} \mbox{-closed 1-forms on } X \,\}.
\]

\begin{proposition}\label{p3.12}
The map $p: \i \Om^1_{d^{\tilde *_\n}} \rightarrow \Aa_0 /\Gg_U$ defined by 
\[A \mapsto [\nabla+A \cdot \id_L]\]
gives a homeomorphism from a neighborhood of $0 \in \i \Om^1_{d^{\tilde *_\n}}$ 
to that of $[\nabla] \in \Aa_0/\Gg_U$. 
\end{proposition}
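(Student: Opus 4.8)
The plan is to realize $p$ as a local model built out of a standard slice theorem for the $\Gg_U$-action. First I would fix a small $\alpha \in (0,1)$ and integer $k$ and work in the appropriate H\"older completions $\Aa_0^{k,\alpha}$, $\Gg_U^{k+1,\alpha}$, so that everything becomes a statement about Banach manifolds; the $C^\infty$ statement then follows by elliptic regularity and the standard argument that the orbit of a smooth connection under smooth gauge transformations meets the smooth slice in a smooth set (cf. \cite[Section 4.2]{DK}). The key point is that $d^{\tilde *_\n}$ is the formal adjoint, with respect to the metric $\tilde g_\n$, of $d: \i\Om^0 \to \i\Om^1$, so by Hodge theory for $\tilde g_\n$ we have the $L^2(\tilde g_\n)$-orthogonal decomposition $\i\Om^1 = \i d\Om^0 \oplus \i\Om^1_{d^{\tilde *_\n}}$ (at the $C^{k,\alpha}$ level, $\i d C^{k+1,\alpha} \oplus \i\Om^{1,k,\alpha}_{d^{\tilde *_\n}}$), which says precisely that $\i\Om^1_{d^{\tilde *_\n}}$ is a complement to the tangent space $\i d\Om^0$ of the $\Gg_U$-orbit inside $\Aa_0$.

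Next I would show $p$ is injective near $0$. Suppose $[\nabla + A\cdot\id_L] = [\nabla + A'\cdot\id_L]$ for $A, A'$ small and $d^{\tilde *_\n}$-closed; then $A - A' \in \Kk_U$, and by the second half of Lemma \ref{lem:slice}, once the $C^{k,\alpha}$ norms are small enough, $A - A' \in \i d\Om^0$. But $A - A'$ is also $d^{\tilde *_\n}$-closed, hence lies in $\i d\Om^0 \cap \i\Om^1_{d^{\tilde *_\n}} = \{0\}$ by the orthogonal decomposition above (an exact form which is $d^{\tilde *_\n}$-closed is $d^{\tilde *_\n}$-exact... more directly, $d^{\tilde *_\n} df = 0$ with $f$ real forces $df = 0$ by integration by parts), so $A = A'$. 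For surjectivity onto a neighborhood of $[\nabla]$: given $\nabla' = \nabla + B\cdot\id_L$ with $B$ small in $C^{k,\alpha}$, I want to gauge $\nabla'$ into the slice, i.e.\ find $\lambda = e^{\i f_0}$, $f_0 \in \Om^0$, with $\lambda^{-1}\circ\nabla'\circ\lambda = \nabla + A\cdot\id_L$ and $d^{\tilde *_\n}A = 0$. Since $\lambda^{-1}\circ\nabla'\circ\lambda = \nabla + (B + \i df_0)\cdot\id_L$, this amounts to solving $d^{\tilde *_\n}(B + \i df_0) = 0$, i.e.\ $d^{\tilde *_\n}d f_0 = -d^{\tilde *_\n}(\i B/\i) $ (up to the constant $\i$), which is the $\tilde g_\n$-Laplace equation $\Delta_{\tilde g_\n} f_0 = -d^{\tilde *_\n}B$; this is solvable since $\int_X d^{\tilde *_\n}B\, \vol_{\tilde g_\n} = 0$, and elliptic estimates give $f_0$ small in $C^{k+1,\alpha}$ when $B$ is small in $C^{k,\alpha}$. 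The resulting $A = B + \i df_0$ is small and $d^{\tilde *_\n}$-closed.

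Finally, continuity of $p$ is clear from the construction, and continuity of $p^{-1}$ near $[\nabla]$ follows because the solution $f_0$ of the Laplace equation depends continuously (indeed smoothly) on $B$ by the standard elliptic regularity/open mapping argument, combined with the fact that the metric $d_{\Bb_0}$ on $\Bb_0$ induces the quotient topology (Remark \ref{rem:metric B0 G2}). Passing back from the $C^{k,\alpha}$ picture to $C^\infty$: a smooth connection gauge-equivalent via a $C^{k+1,\alpha}$ gauge transformation to one in the smooth slice is in fact equivalent via a smooth gauge transformation, because $f_0$ solving $\Delta_{\tilde g_\n}f_0 = -d^{\tilde *_\n}B$ with smooth $B$ is smooth by elliptic regularity.

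\textbf{Main obstacle.} The genuinely delicate point is not any single estimate but the bookkeeping needed to pass between the Banach (H\"older) completions — where the inverse/implicit function theorem and Hodge decomposition are clean — and the Fr\'echet category of $C^\infty$ objects in which $\Mm_{G_2}$ actually lives, ensuring the slice neighborhoods are genuinely open in the $C^\infty$ quotient topology and that the homeomorphism is compatible across all $k$. This is the usual technical heart of slice theorems; here it is slightly complicated by the fact that the slicing metric $\tilde g_\n$ is the \emph{auxiliary} metric built from $\nabla$ via \eqref{eq:newG2str}, so one must check that its Hodge theory is as well-behaved as that of the original $G_2$-metric, which it is since $\tilde\varphi_\n$ is a bona fide (coclosed) $G_2$-structure.
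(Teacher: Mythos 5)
Your proposal is correct and follows essentially the same route as the paper: surjectivity via the $L^2(\tilde g_\n)$-Hodge decomposition $\i \Om^1 = \i d\Om^0 \oplus \i \Om^1_{d^{\tilde *_\n}}$ together with $\i d\Om^0 \subset \Kk_U$ (your Laplace-equation argument is just this decomposition made explicit), and injectivity via the second half of Lemma \ref{lem:slice} plus $\i d\Om^0 \cap \i\Om^1_{d^{\tilde *_\n}} = \{0\}$. The paper's version is terser (it does not spell out the H\"older bookkeeping or the continuity of $p^{-1}$), but the mathematical content is identical.
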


\begin{proof}
By \eqref{eq:Gu orbit G2}, 
\[
\Aa_0 /\Gg_U
= \nabla + \i \Om^1/\Kk_U \cdot \id_L. 
\]
Since $\i d \Om^0 \subset \Kk_U$ by Lemma \ref{lem:slice}, 
we see that $p$ is surjective. 

Next, we show that $p$ is injective around $0$. 
Suppose that $p(A_1)=p(A_2)$ for $A_1,A_2 \in \i \Om^1_{d^{\tilde *_\n}}$. 
Then, we have  
$A_1-A_2 \in \Kk_U$. 
If $A_1$ and  $A_2$ are sufficiently close to $0$ with respect to $C^\infty$ topology, 
they are sufficiently close to $0$ with respect to $C^{k, \alpha}$ topology 
for fixed $k \in \N$ and $\alpha \in (0,1)$. 
Then, by Lemma \ref{lem:slice}, $A_1-A_2$ is exact, 
which implies that 
$A_1-A_2 \in \i \left( \Om^1_{d^{\tilde *_\n}}\cap d \Om^0 \right) = \{\, 0 \,\}$.
\end{proof}

By \eqref{eq:Gu orbit G2}, the tangent space of $\Gg_U$-orbit through $\n$ is 
identified with $d \Om^1$. 
Hence, 
a neighborhood of $[\nabla]$ in $\Mm_{G_2}$ is homeomorphic to 
that of $0$ in 
\[
\tilde \Ss_\n = \{\, a \in \Om^1_{d^{\tilde *_\n}} \mid \Ff_{G_2} (\nabla + \i a \cdot \id_L) = 0 \,\}. 
\]

\begin{theorem} \label{thm:moduli MG2}
If $H^2(\#_\nabla) = \{\, 0 \,\}$ for $\nabla \in \Ff^{-1}_{G_2}(0)$, 
the moduli space $\Mm_{G_2}$ is a smooth manifold near $[\nabla]$ of dimension $b^1$, 
where $b^1$ is the first Betti number of $X$. 
\end{theorem}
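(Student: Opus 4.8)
The plan is to run the standard elliptic-deformation argument on a suitable H\"older completion, using the ``nice'' form \eqref{eq:1stder G2} of the linearization together with the slice \ref{p3.12}. First I would recall from Subsection~\ref{sec:local str MG2} that a neighborhood of $[\nabla]$ in $\Mm_{G_2}$ is homeomorphic to a neighborhood of $0$ in $\tilde\Ss_\n=\{a\in\Om^1_{d^{\tilde*_\n}}\mid \Ff_{G_2}(\nabla+\i a\cdot\id_L)=0\}$, so it suffices to show that $\tilde\Ss_\n$ is a smooth $b^1$-dimensional manifold near $0$. By Lemma~\ref{onimage G2} the map $\Ff_{G_2}$ takes values in $\i d\Om^5$, so I would regard it as a map $\tilde\Ff\colon \i\Om^1_{d^{\tilde*_\n}}\to \i d\Om^5$. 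Completing in $C^{k,\alpha}$ norms (for fixed $k\geq 2$, $0<\alpha<1$), I get a smooth map of Banach manifolds $\tilde\Ff^{k,\alpha}\colon (\i\Om^1_{d^{\tilde*_\n}})^{k,\alpha}\to (\i d\Om^5)^{k-1,\alpha}$, whose zero set near $0$ is $\tilde\Ss_\n$ after elliptic regularity (see below).

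Next I would analyze the linearization. By the Proposition containing \eqref{eq:1stder G2}, $\delta_\nabla\Ff_{G_2}(\i b)=C\,\i\,db\wedge\tilde*_\n\tilde\varphi_\n$, which is exactly the map $D_1'$ of the complex \eqref{cpxfordDT}, i.e.\ $(\#_\n)$ is the canonical-type complex \eqref{cancpxG} for the coclosed $G_2$-structure $\tilde\varphi_\n$. Restricting $\delta_\nabla\Ff_{G_2}$ to $\i\Om^1_{d^{\tilde*_\n}}$, its kernel is $\{\,b\in\Om^1\mid d^{\tilde*_\n}b=0,\ db\wedge\tilde*_\n\tilde\varphi_\n=0\,\}=\check\Hh^1_{\tilde\varphi_\n}$, the space of harmonic $1$-forms of the complex $(\#_\n)$ as in \eqref{eq:G2-harmonic 1 forms}. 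Its cokernel, by the Hodge decomposition \eqref{eq:vanish 2nd cohom 1}--\eqref{eq:vanish 2nd cohom 2} associated to the (elliptic) top row of \eqref{eq:exact seq G2}, is isomorphic to $H^2(\#_\nabla)$; the hypothesis $H^2(\#_\nabla)=\{0\}$ therefore makes the restricted linearization surjective (with closed image, by ellipticity), with finite-dimensional kernel of dimension $\dim\check\Hh^1_{\tilde\varphi_\n}=\dim H^1(\#_\nabla)$. By Corollary~\ref{cor:dim 12 cohom} and Lemma~\ref{lem:expected dim G2}, under $H^2(\#_\nabla)=\{0\}$ this dimension equals $b^1$.

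With surjectivity of the (Fredholm) linearization in hand, I would apply the implicit function theorem in the Banach setting to conclude that $\ker\tilde\Ff^{k,\alpha}$ near $0$ is a smooth submanifold of dimension $b^1$, and that the projection to the kernel $\check\Hh^1_{\tilde\varphi_\n}$ is a local diffeomorphism. Then elliptic regularity for the complex $(\#_\n)$ shows every $C^{k,\alpha}$ solution is smooth: a solution $a$ satisfies $da\wedge\tilde*_\n\tilde\varphi_\n + (\text{higher order in }a)=0$ and $d^{\tilde*_\n}a=0$, and the leading term is the elliptic operator of the canonical complex, so bootstrapping raises regularity to $C^\infty$; hence the $C^{k,\alpha}$ zero set coincides with $\tilde\Ss_\n$ near $0$ and is independent of $k,\alpha$. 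Finally, transporting back through the homeomorphism $p$ of Proposition~\ref{p3.12} gives that $\Mm_{G_2}$ is a smooth $b^1$-dimensional manifold near $[\nabla]$.

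The main obstacle is not the implicit function theorem itself but verifying cleanly that the cokernel of the \emph{restricted} linearization on $\i\Om^1_{d^{\tilde*_\n}}$ is genuinely $H^2(\#_\nabla)$ and that this cokernel splits off as a closed complement in the Banach completion — i.e.\ that working on the slice does not create spurious obstructions beyond those in the elliptic complex $(\#_\n)$. This is where one must use that $(\#_\n)$ is elliptic (being a subcomplex of the canonical complex, which is elliptic since $\tilde\varphi_\n$ is coclosed) together with the orthogonal decomposition of $\Om^6$ in \eqref{eq:vanish 2nd cohom 1}; once that identification is pinned down, the rest is a routine application of the implicit function theorem plus elliptic bootstrapping.
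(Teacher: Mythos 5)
Your proposal is correct and follows essentially the same route as the paper: reduce to the slice $\tilde\Ss_\n$ via Proposition \ref{p3.12}, use Lemma \ref{onimage G2} to view $\Ff_{G_2}$ as a map into $\i d\Om^5$, observe that $H^2(\#_\nabla)=\{0\}$ makes the linearization onto $\i d\Om^5$ (Corollary \ref{cor:vanish 2nd cohom}), apply the implicit function theorem after Banach completion, and recover smoothness of solutions from the overdetermined elliptic system $(\delta_\nabla\Ff_{G_2}, d^{\tilde*_\n})$. Your extra care in identifying the cokernel of the restricted linearization with $H^2(\#_\nabla)$ is exactly the content the paper delegates to Corollary \ref{cor:dim 12 cohom} and the fact that $d\Om^0\subset\ker D_1'$, so there is no gap.
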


\begin{proof}
We only have to show that $\tilde \Ss_\n$ is a smooth manifold near $0$. 
Since $\mathop{\mathrm{Im}} (\Ff_{G_2}) \subset \i d \Om^5$ by Lemma \ref{onimage G2} 
and $\mathop{\mathrm{Im}} (\delta_\n \Ff_{G_2}) = \i d \Om^5$ 
by $H^2(\#_\nabla) = \{\, 0 \,\}$, 
we can apply the implicit function theorem 
(after the Banach completion) to 
\[
\Om^1_{d^{\tilde *_\n}} \rightarrow \i d \Om^5, \quad 
a \mapsto \Ff_{G_2} (\nabla + \i a \cdot \id_L). 
\]
Then, we see that 
$\tilde \Ss_\n$ is a smooth manifold near $0$. 
Its dimension is $\dim H^1 (\#_\n)$, 
which is equal to $b^1$ by Corollary \ref{cor:vanish 2nd cohom} 
(or Lemma \ref{lem:expected dim G2}). 

As for the regularity of elements in $\Ff_{G_2}^{-1}(0)$ in the Banach completion, 
note that 
\[
\tilde \Ss_\n = 
\{\, a \in \Om^1 \mid \Ff_{G_2} (\nabla + \i a \cdot \id_L) = 0, \ d^{\tilde *_\n} a =0 \,\}. 
\]
Since 
\[
(\delta_\nabla \Ff_{G_2}, d^{\tilde *_\n}): \i \Om^1
\rightarrow \i \left( \Om^6 \oplus \Om^0 \right) 
\]
is overdetermined elliptic by 
the ellipticity of \eqref{cpxfordDT} at $\i \Om^1$, 
we see that $\tilde \Ss_\n$ is the solution space of 
an overdetermined elliptic equation around $0$. 
Thus, all solutions around $0$ are smooth. 
\end{proof}

\begin{remark}
If $\tilde \varphi_\n$ is torsion-free or nearly parallel, 
$\Mm_{G_2}$ is smooth near $[\n]$. 
It is because by \cite[Theorems 9 and 10]{FU}, 
we have $H^1_{dR} = H^1 (\#_\n)$, which is equivalent to  
$H^2 (\#_\n) = \{\, 0 \,\}$ 
by Corollary \ref{cor:vanish 2nd cohom}. 

In particular, if $\varphi$ is torsion-free or nearly parallel and 
a connection $\n$ is flat, which is obviously a dDT connection, 
we have $\tilde \varphi_\n = \varphi$, 
and hence, $\Mm_{G_2}$ is smooth near $[\n]$. 
\end{remark}

\subsection{Connected components of $\Mm_{G_2}$}
In the previous subsection, we have seen that the moduli space $\Mm_{G_2}$ has a $b^1$-dimensional manifold structure 
around $[\nabla]$ with $H^2 (\#_\n) = \{\, 0 \,\}$. 
In this subsection, we show that a connected component containing such a $[\nabla]$ is a torus.

The key is the following fact. 
For a dDT connection $\nabla$, a Hermitian connection of the form $\n + \i a\cdot \id_L$ with $a\in Z^1$ is also a dDT connection, 
where $Z^1$ is the space of closed 1-forms on $X$. 
Since the tangent space of $\Gg_U$-orbit through $\n$ is 
identified with $d \Om^0$ by \eqref{eq:Gu orbit G2}, 
roughly we can expect that the set $\{\,[\nabla+\i a\cdot \id_L]\mid a\in Z^1 \,\}$ gives a $b^1$-dimensional family in $\Mm_{G_2}$. 
Considering the dimensions, we can conclude that the set coincides with a connected component of $\Mm_{G_2}$. 
This expectation is justified as the following theorem and this is one of main theorems in this paper. 

\begin{theorem}\label{newtheorem}
Each connected component of $\Mm_{G_2}$ including an element $[\n]$ such that $H^2(\#_\nabla) = \{\, 0 \,\}$ is 
homeomorphic to a $b^1$-dimensional torus. 
\end{theorem}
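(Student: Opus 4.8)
The plan is to show that the map $Z^1 \to \Mm_{G_2}$, $a \mapsto [\nabla + \i a \cdot \id_L]$, descends to an open and closed embedding of the torus $H^1_{dR}/\Lambda$ onto the connected component of $[\nabla]$, where $\Lambda$ is the lattice of periods coming from the integrality of $c_1(L)$. First I would verify the fact asserted in the text: if $\nabla$ is a dDT connection and $a \in Z^1$, then $\nabla + \i a \cdot \id_L$ is again a dDT connection. This is because $F_{\nabla + \i a \cdot \id_L} = F_\nabla$ whenever $da = 0$, so the defining equation \eqref{defofdDT2} is unchanged; hence the whole affine space $\nabla + \i Z^1 \cdot \id_L$ lies in $\Ff_{G_2}^{-1}(0)$. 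Quotienting by the gauge group, and using $\Kk_U \cap Z^1 = \i d\Om^0$ together with $\Kk_U / \i d\Om^0 \cong H^1(X;\Z)$ (the periods of $f^{-1}df/(2\pi\i)$ are integers, reflecting that $\lambda = f \cdot \id_L$ is a genuine gauge transformation of $L$), one gets a well-defined injection
\[
T^{b^1} = H^1_{dR}/\Lambda \;\longrightarrow\; \Mm_{G_2}, \qquad [a] \mapsto [\nabla + \i a \cdot \id_L],
\]
where $\Lambda = \{\, [f^{-1}df /\i] \mid |f| = 1 \,\} \subset H^1_{dR}$ is a full lattice (it is the image of $H^1(X;\Z) \to H^1(X;\R)$ modulo torsion, hence of rank $b^1$).

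Next I would show this map is a homeomorphism onto its image and that the image is open in $\Mm_{G_2}$. Openness is the crux: near $[\nabla]$, Proposition \ref{p3.12} identifies a neighborhood in $\Aa_0/\Gg_U$ with a neighborhood of $0$ in $\i \Om^1_{d^{\tilde *_\n}}$, and a neighborhood of $[\nabla]$ in $\Mm_{G_2}$ with a neighborhood of $0$ in $\tilde\Ss_\n$. When $H^2(\#_\nabla) = \{0\}$, Theorem \ref{thm:moduli MG2} tells us $\tilde\Ss_\n$ is a smooth $b^1$-manifold near $0$ with tangent space $H^1(\#_\nabla) = \check\Hh^1$; but by Corollary \ref{cor:vanish 2nd cohom}, $H^2(\#_\nabla) = \{0\}$ forces $H^1_{dR} = \check H^1$, so $\check\Hh^1$ consists of (harmonic representatives of) closed $1$-forms. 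Since $\dim \tilde\Ss_\n = b^1 = \dim(\text{image of the torus near }[\nabla])$ and the torus already contributes a $b^1$-dimensional smooth family sitting inside $\tilde\Ss_\n$, the inverse function theorem (or invariance of domain) shows the torus family fills out a whole neighborhood of $[\nabla]$. The same argument applies at every point $[\nabla + \i a \cdot \id_L]$ of the image, because $H^2(\#_{\nabla + \i a \cdot \id_L}) = H^2(\#_\nabla) = \{0\}$: indeed the induced $G_2$-structure $\tilde\varphi_{\nabla + \i a\cdot\id_L}$ depends only on $F_{\nabla + \i a \cdot \id_L} = F_\nabla$ by \eqref{eq:newG2str}, so the entire complex $(\#_{\nabla + \i a\cdot\id_L})$ coincides with $(\#_\nabla)$. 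Hence the image is open.

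It remains to see the image is also closed, so that it is a connected component. The image is the continuous image of the compact torus $T^{b^1}$ in the Hausdorff space $\Mm_{G_2}$ (metrizability and Hausdorffness were noted in Remark \ref{rem:metric B0 G2}), hence compact, hence closed. An open, closed, nonempty, connected subset is a connected component; and we have exhibited it as a homeomorphic image of $T^{b^1}$ (continuous bijection from a compact space to a Hausdorff space). Therefore the connected component of $[\nabla]$ is homeomorphic to a $b^1$-dimensional torus.

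The main obstacle I expect is the verification that the period lattice $\Lambda$ is exactly the full integral lattice and that the resulting quotient is the \emph{whole} local picture — i.e. carefully matching the ``abstract'' torus family with the slice $\tilde\Ss_\n$ produced by the implicit function theorem, rather than just a lower-dimensional piece of it. This requires knowing that the tangent directions of the torus family at $[\nabla]$ span all of $H^1(\#_\nabla)$, which is where the hypothesis $H^2(\#_\nabla) = \{0\}$ (equivalently $H^1_{dR} = \check H^1$, via Corollary \ref{cor:vanish 2nd cohom}) is indispensable: without it, closed $1$-forms would only span a proper subspace of the Zariski tangent space and the component could a priori be larger or singular. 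A secondary technical point is making the gauge-theoretic identification $\Kk_U/\i d\Om^0 \cong H^1(X;\Z)/\mathrm{torsion}$ precise, which is standard (it is the statement that unitary gauge transformations of a line bundle are classified up to homotopy by $H^1(X;\Z)$) but should be invoked carefully.
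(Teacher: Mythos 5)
Your proposal is correct and follows essentially the same route as the paper: the affine family $\nabla + \i Z^1\cdot\id_L$ descends to a compact $b^1$-torus $(\i Z^1)/\Kk_U \cong H^1(X,\R)/2\pi H^1(X,\Z)$ inside $\Mm_{G_2}$ (the paper cites \cite[Lemma 4.1]{KY2} for the period-lattice identification you sketch), which is closed by compactness and Hausdorffness, and open by the same dimension-count argument you give, using that $\tilde\varphi_{\n+\i a\cdot\id_L}=\tilde\varphi_\n$ forces $H^2(\#_{\n+\i a\cdot\id_L})=H^2(\#_\n)=\{0\}$ so Theorem \ref{thm:moduli MG2} applies at every point of the image. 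No gaps.
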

\begin{proof}
Take $[\n]\in \Mm_{G_2}$ such that $H^2(\#_\nabla) = \{\, 0 \,\}$. 
Set $\mathcal{A}_{\n}:=\nabla + \i Z^1 \cdot \id_L$. 
It is clear that $\mathcal{A}_{\n}/\Gg_U$ is included in $\Mm_{G_2}$. 
From \cite[Lemma 4.1]{KY2}, 
it follows that $\mathcal{A}_{\n}/\Gg_U$, which can be identified with $(\sqrt{-1}Z^{1})/\Kk_{U}$, is 
homeomorphic to a $b^1$-dimensional torus $H^1(X,\mathbb{R})/2\pi H^1(X,\mathbb{Z})$, especially $\mathcal{A}_{\n}/\Gg_U$ is compact. 
Then, since $\Mm_{G_2}$ is Hausdorff as noted in Remark \ref{rem:metric B0 G2} and $\mathcal{A}_{\n}/\Gg_U$ is a compact subset, 
$\mathcal{A}_{\n}/\Gg_U$ is a closed subset in $\Mm_{G_2}$. 

Next, we show that $\Aa_\n/\Gg_U$ is also open in $\Mm_{G_2}$. 
Fix $[\n']=[\n+\sqrt{-1}a\cdot\mathrm{id}_{L}]$ in $\mathcal{A}_{\n}/\Gg_U$ with $a\in Z^{1}$. 
Then, since $F_{\n'}=F_{\n}$, we have $\tilde \varphi_{\n'}=\tilde \varphi_{\n}$, see the definition of $\tilde  \varphi_{\n}$ in \eqref{eq:newG2str}. 
This implies that $H^2(\#_{\n'})=H^2(\#_\n) = \{\, 0 \,\}$. 
Thus, we can apply Theorem \ref{thm:moduli MG2} for $\n'$ 
and say that $\Mm_{G_2}$ is a smooth manifold near $[\n']$ of dimension $b^1$. 
On the other hand, $\mathcal{A}_{\n}/\Gg_U$ is also a smooth manifold of dimension $b^1$. 
These two facts imply that two spaces coincide on a small neighborhood of $[\n']$. 
This means that $\mathcal{A}_{\n}/\Gg_U$ is open in $\Mm_{G_2}$. 
Since it is also closed, we conclude that the connected component including $[\n]$ agrees with 
$\mathcal{A}_{\n}/\Gg_U$, which is homeomorphic to a $b^1$-dimensional torus. 
\end{proof}
\subsection{Varying the $G_2$-structure}\label{202106181615}
Let $X^7$ be a compact 7-manifold with a coclosed $G_2$-structure $\varphi$ 
and $L \to X$ be a smooth complex line bundle with a Hermitian metric $h$. 
Set $\psi = * \varphi$. 
In Theorems \ref{thm:moduli MG2} and \ref{newtheorem}, 
we gave the condition that a connected component of the moduli space of dDT connections 
is smooth and homeomorphic to a $b^1$-dimensional torus. 
In this subsection, 
we show that it is satisfied if we perturb $\psi$ generically 
in the same cohomology class as $\psi$ in some cases. 

We also call a 4-form that is pointwisely identified with \eqref{varphi*} 
a $G_2$-structure. 
Define $\Pp_\psi \subset \psi + d \Om^3$ by 
\[
\Pp_\psi = \{\,\psi' \in \Om^4 \mid \psi' \mbox{ is a $G_2$-structure, }d \psi'=0 \mbox{ and }
[\psi'] = [\psi] \in H^4_{dR} \,\}. 
\]
Define a map 
$\wFf : \Pp_\psi \times \Aa_0 \to \i \Omega^6$  
by 
\[\wFf (\psi', \n') = \frac{1}{6} F_{\n'}^3 + F_{\n'} \wedge \psi'
\]
and set 
\[
\widehat \Mm_{G_2} = \left \{\,(\psi', [\n']) \in  \Pp_\psi \times \Aa_0 /\Gg_U \mid \wFf (\psi', \n') = 0\,\right\}. 
\]
For each $\psi' \in \Pp_\psi$, set 
$\mathcal{M}_{G_2, \psi'} = \widehat \Mm_{G_2} \cap (\{\, \psi' \,\} \times \Aa_0/\Gg_U)$ and this 
is the moduli space of deformed Donaldson--Thomas connections 
with respect to the $G_2$-structure $\psi'$. 

For the rest of this subsection, we suppose that $\mathcal{M}_{G_2, \psi} \neq \emptyset$ 
for the initial coclosed $G_2$-structure $\psi$. 
As in Lemma \ref{onimage G2}, we see the following. 

\begin{lemma} \label{onimae G2 para}
The image of $\wFf$ is contained in $\i d \Om^5$. 
\end{lemma}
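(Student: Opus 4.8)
The plan is to imitate the proof of Lemma \ref{onimage G2} almost verbatim, replacing the fixed $G_2$-structure $\varphi$ by the variable closed $4$-form $\psi'$ and keeping track that the ``reference point'' used in the argument can itself be chosen to depend on $\psi'$. The key observation is the same algebraic identity: for any two Hermitian connections $\n, \n' \in \Aa_0$ of $(L,h)$, the difference of curvatures satisfies $F_{\n'} - F_\n \in \i\, d\Om^1$, since both curvatures represent $-2\pi\i\, c_1(L)$ in de Rham cohomology. Hence for any fixed $\psi' \in \Pp_\psi$,
\[
\wFf(\psi',\n') - \wFf(\psi',\n) = \frac{1}{6}\bigl(F_{\n'}^3 - F_\n^3\bigr) + (F_{\n'}-F_\n)\wedge \psi' \in \i\, d\Om^5,
\]
because $F_{\n'}^3 - F_\n^3 = (F_{\n'}-F_\n)\wedge(F_{\n'}^2 + F_{\n'}\wedge F_\n + F_\n^2)$ is exact (the difference is $d$ of something wedged with a closed form), and $(F_{\n'}-F_\n)\wedge\psi'$ is exact because $F_{\n'}-F_\n$ is exact and $\psi'$ is closed.

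So the remaining point is simply to produce, for each $\psi' \in \Pp_\psi$, one connection $\n$ with $\wFf(\psi',\n) \in \i\, d\Om^5$; then the displayed identity forces $\wFf(\psi',\n') \in \i\, d\Om^5$ for all $\n'$. The cleanest choice is to fix once and for all a connection $\n_0 \in \Aa_0$ with $\mathcal{M}_{G_2,\psi} \ni [\n_0]$, i.e.\ $\wFf(\psi,\n_0) = 0$, which exists by the standing assumption $\mathcal{M}_{G_2,\psi}\neq\emptyset$. For an arbitrary $\psi' \in \Pp_\psi$ we have, using $\wFf(\psi,\n_0)=0$,
\[
\wFf(\psi',\n_0) = \wFf(\psi',\n_0) - \wFf(\psi,\n_0) = F_{\n_0}\wedge(\psi' - \psi),
\]
and $\psi' - \psi \in d\Om^3$ by the definition of $\Pp_\psi$ (same cohomology class), while $F_{\n_0}$ is closed, so $F_{\n_0}\wedge(\psi'-\psi)$ is exact and lies in $\i\, d\Om^5$ (it is a $6$-form and $-\i F_{\n_0}$ is a real closed $2$-form). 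Combining with the previous paragraph, $\wFf(\psi',\n') \in \i\, d\Om^5$ for every $(\psi',\n') \in \Pp_\psi \times \Aa_0$, which is exactly the claim.

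There is no real obstacle here; the only thing to be slightly careful about is the bookkeeping of the $\i$-factors (everything should be phrased in terms of the real closed $2$-form $-\i F_\n$ representing $2\pi c_1(L)$, as is done elsewhere in the paper) and the fact that the exactness statements $F_{\n_0}\wedge(\psi'-\psi) \in d\Om^5$ and $(F_{\n'}-F_\n)\wedge(\text{closed}) \in d\Om^5$ both follow from the Leibniz rule $d(\eta\wedge\zeta) = d\eta\wedge\zeta \pm \eta\wedge d\zeta$ together with closedness of the curvature forms and of $\psi$. I would write the proof in two short sentences: first note $\wFf(\psi',\n') - \wFf(\psi',\n_0) \in \i\, d\Om^5$ exactly as in Lemma \ref{onimage G2}, then note $\wFf(\psi',\n_0) = F_{\n_0}\wedge(\psi'-\psi) \in \i\, d\Om^5$ since $[\psi']=[\psi]$ and $F_{\n_0}$ is closed, where $\n_0$ is any element of $\mathcal{M}_{G_2,\psi}$.
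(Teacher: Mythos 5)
Your proof is correct and follows essentially the same route as the paper: the paper computes $\wFf(\psi_2,\n_2)-\wFf(\psi_1,\n_1)\in\i\,d\Om^5$ in one telescoping step (using $F_{\n_2}-F_{\n_1}\in\i\,d\Om^1$ and $\psi_2-\psi_1\in d\Om^3$) and then anchors at a zero $(\psi,\n_0)$ of $\wFf$, exactly as you do, just without splitting the variation in $\n$ and in $\psi$ into two separate steps. No gaps.
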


\begin{proof}
For any $\psi_1, \psi_2 \in \Pp_\psi$ and $\nabla_1, \nabla_2 \in \Aa_{0}$, 
we know that $\psi_2 - \psi_1 \in d \Om^3$ and $F_{\nabla_2}-F_{\nabla_1} \in \i d \Om^1$. 
Then, 
$$
F_{\n_2} \wedge \psi_2 - F_{\n_1} \wedge \psi_1
= (F_{\n_2} - F_{\n_1} ) \wedge \psi_2 + F_{\n_1} \wedge (\psi_2 - \psi_1) \in \i d \Om^5, 
$$
and hence,  
\[
\wFf (\psi_2, \n_2) - \wFf (\psi_1, \n_1) \in \i d \Om^5. 
\]
Thus, if we take $(\psi_1, \nabla_1) \in \wFf^{-1}(0)$, the statement follows. 
\end{proof}

Next, we prove the following to prove the surjectivity of the linearization of $\wFf$. 
The similar statements are given by Gayet \cite{Gayet} in the case of associative submanifolds 
and by Mu\~noz and Shahbazi \cite{MS} in the case of $\Sp$-instantons. 

\begin{lemma} \label{lem:L2 oncomp}
Use the notation of Subsection \ref{sec:infi deform G2}. 
Let $\n$ be a dDT connection with respect to $\psi = * \varphi$. 
Denote by  
$( \Im \delta_{(\psi, \n)} \wFf )^\perp$ 
the space of elements in $\i d \Om^5$ orthogonal to 
the image of the linearization $\delta_{(\psi, \n)} \wFf$ at $(\psi, \n)$ 
with respect to the $L^2$ inner product induced from $\tilde g_{\n}$.  
\begin{enumerate}
\item 
The map $\delta_{(\psi, \n)} \wFf: d \Om^3 \times \i \Om^1 \rightarrow \i d \Om^5$ is surjective 
if and only if 
$( \Im \delta_{(\psi, \n)} \wFf)^\perp = \{\, 0 \,\}$. 

\item
For $\eta \in d^{\tilde *_\n} \Om^2$, 
$\tilde *_\n \eta \in ( \Im \delta_{(\psi, \n)} \wFf )^\perp$ 
if and only if 
\[
F_\n \wedge d \eta =0 \quad \mbox{and} \quad \psi \wedge d \eta = 0. 
\]
\end{enumerate}
\end{lemma}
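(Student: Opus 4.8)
The plan is to compute the linearization $\delta_{(\psi,\n)}\wFf$ explicitly, split it into the ``gauge/connection'' direction and the ``$G_2$-structure'' direction, and then characterize its $L^2$-orthogonal complement inside $\i d\Om^5$ by integration by parts. For part (1), note that $\Im\delta_{(\psi,\n)}\wFf$ is a subspace of $\i d\Om^5$; the issue is only that the ambient space $\i d\Om^5$ is infinite-dimensional and need not be closed a priori. The standard remedy is to pass to Banach (Sobolev or H\"older) completions and observe that $\delta_{(\psi,\n)}\wFf$ restricted to the connection direction $\i\Om^1$ is, by \eqref{eq:1stder G2}, the operator $\i b\mapsto C\,\i\,db\wedge\tilde *_\n\tilde\varphi_\n = \i D_1'(b)$, which up to lower-order terms is the second map in the elliptic complex \eqref{cpxfordDT}; hence $\delta_{(\psi,\n)}\wFf$ has closed range with finite-dimensional cokernel. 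Once the range is closed, $\delta_{(\psi,\n)}\wFf$ is surjective onto $\i d\Om^5$ precisely when its $L^2$-orthogonal complement in $\i d\Om^5$ vanishes, and elliptic regularity promotes an $L^2$-orthogonal element to a smooth one, so nothing is lost by working with smooth forms. This is the same mechanism used by Gayet and by Mu\~noz--Shahbazi in the cited analogues.

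For part (2), I would first write out the two pieces of the derivative. Differentiating $\wFf(\psi',\n')=\tfrac16 F_{\n'}^3+F_{\n'}\wedge\psi'$ in the connection direction at $(\psi,\n)$ gives, as in the proof of the linearization formula, $\i b\mapsto \i\,db\wedge(\tfrac12 F_\n^2+\psi)$; and since $\n$ is a dDT connection with respect to $\psi=*\varphi$, Theorem \ref{thm:1+F} together with \eqref{eq:1+F conf Hodge} rewrites $\tfrac12 F_\n^2+\psi = C\,\tilde *_\n\tilde\varphi_\n$. Differentiating in the $G_2$-structure direction is trivial because $\wFf$ is linear in $\psi'$: for $d\mu\in d\Om^3$ the derivative is simply $d\mu\mapsto F_\n\wedge d\mu$. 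Thus for $(d\mu,\i b)\in d\Om^3\times\i\Om^1$,
\[
\delta_{(\psi,\n)}\wFf(d\mu,\i b) = C\,\i\,db\wedge\tilde *_\n\tilde\varphi_\n + F_\n\wedge d\mu .
\]

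Now take $\eta\in d^{\tilde *_\n}\Om^2$ and test $\tilde *_\n\eta\in\i d\Om^5$ against this image under the $\tilde g_\n$-$L^2$ pairing. Pairing with the $G_2$-direction term: $\la \tilde *_\n\eta,\, F_\n\wedge d\mu\ra_{L^2}$; since wedging with $\tilde *_\n\eta$ against a $2$-form computes the pointwise inner product with $\eta$, and using that $F_\n$ is closed, an integration by parts moves $d$ onto $\eta$ and shows this vanishes for all $\mu$ iff $F_\n\wedge d\eta=0$ (here one uses $d\eta$ appearing paired against $F_\n$ in top degree, up to the orientation/Hodge conventions of Appendix \ref{sec:G2 geometry}). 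Pairing with the connection-direction term: $\la\tilde *_\n\eta,\ C\,\i\,db\wedge\tilde *_\n\tilde\varphi_\n\ra_{L^2}$; by Lemma \ref{lem:D1' G2 adjoint} the adjoint of $b\mapsto C\,db\wedge\tilde *_\n\tilde\varphi_\n$ is $\tilde *_\n\circ D_1'\circ\tilde *_\n$, so this pairing equals $\la b,\, \tilde *_\n D_1'(\tilde *_\n(\tilde *_\n\eta))\ra_{L^2} = \la b,\,\tilde *_\n(C\, d\eta\wedge\tilde *_\n\tilde\varphi_\n)\ra_{L^2}$ up to sign, which vanishes for all $b\in\Om^1$ iff $d\eta\wedge\tilde *_\n\tilde\varphi_\n=0$. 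Finally, $d\eta\wedge\tilde *_\n\tilde\varphi_\n=0$ is equivalent to $d\eta\wedge *\varphi=0$, i.e. $\psi\wedge d\eta=0$, because $\tilde *_\n\tilde\varphi_\n$ and $*\varphi=\psi$ differ by the conformal/volume factor in \eqref{eq:newG2str}--\eqref{eq:1+F conf Hodge} which is nowhere zero, and because $\bullet\wedge\psi\colon\Om^1\to\Om^5$ is injective so the vanishing of the $5$-form $d\eta\wedge\psi$ is detected on the $\Om^2_7$-component of $d\eta$ in the same way for both metrics. Combining the two conditions gives exactly $F_\n\wedge d\eta=0$ and $\psi\wedge d\eta=0$.

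The main obstacle I expect is bookkeeping with the \emph{two} metrics: the statement pairs with respect to $\tilde g_\n$ but the conclusion is phrased in terms of the original $\psi=*\varphi$, so one must carefully track how $\tilde *_\n$ acts on $d\eta$ versus how $*$ does, and verify that the conformal rescaling relating $\tilde\varphi_\n$ to $\varphi$ does not affect the vanishing of the relevant wedge products (it does not, being a positive function, but this needs to be said). A secondary technical point is the precise integration-by-parts identity for the $G_2$-direction pairing, where one should use that both $F_\n$ and $\tilde *_\n\eta$-type forms interact with $d$ via Stokes together with $d F_\n=0$; the signs and the identification of pointwise pairing with wedge-and-integrate should be read off from the conventions in Appendix \ref{sec:G2 geometry} rather than rederived.
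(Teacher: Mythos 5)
Your overall strategy --- closed range via ellipticity for (1), integration by parts against $\tilde *_\n\eta$ for (2) --- is the same as the paper's, and part (1) is fine. But there is a genuine error in the last step of part (2). You claim that $d\eta\wedge\tilde *_\n\tilde\varphi_\n=0$ is equivalent to $\psi\wedge d\eta=0$ ``because $\tilde *_\n\tilde\varphi_\n$ and $*\varphi=\psi$ differ by the conformal/volume factor \dots which is nowhere zero.'' That is not what \eqref{eq:1+F conf Hodge} says: it gives $\tilde *_\n\tilde\varphi_\n=C\left(*\varphi+\tfrac{1}{2}F_\n^2\right)$ with $C=\pm1$, so the two $4$-forms differ by the \emph{additive} term $\tfrac{C}{2}F_\n^2$, not by a positive multiple. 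Consequently the connection-direction orthogonality says $\left(\psi+\tfrac{1}{2}F_\n^2\right)\wedge d\eta=0$, which is \emph{not} individually equivalent to $\psi\wedge d\eta=0$: the kernel of $\bullet\wedge\tilde *_\n\tilde\varphi_\n$ on $2$-forms is the $\Lambda^2_{14}$ of the \emph{new} $G_2$-structure, which differs from that of $\varphi$ whenever $F_\n\neq0$. (Your remark that $\bullet\wedge\psi\colon\Om^1\to\Om^5$ is injective is also off target: $d\eta$ is a $2$-form, $d\eta\wedge\psi$ is a $6$-form, and the relevant map $\Om^2\to\Om^6$ has kernel $\Om^2_{14}$, so it is far from injective.)

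The statement survives only because the two orthogonality conditions are imposed \emph{jointly}: the pairing against the $d\Om^3$ direction gives $F_\n\wedge d\eta=0$, which forces $F_\n^2\wedge d\eta=0$, and then $\left(\psi+\tfrac{1}{2}F_\n^2\right)\wedge d\eta=0$ collapses to $\psi\wedge d\eta=0$. The paper sidesteps the issue by not invoking \eqref{eq:1+F conf Hodge} at this point: it keeps the connection-direction pairing as $\int_X db\wedge\left(\tfrac{1}{2}F_\n^2+\psi\right)\wedge\eta$, integrates by parts using $dF_\n=d\psi=0$ to obtain the pair $F_\n\wedge d\eta=0$ and $\left(\tfrac{1}{2}F_\n^2+\psi\right)\wedge d\eta=0$, and then reduces as above. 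Replace your conformal-factor argument by this one-line reduction; the rest of your part (2) (the explicit formula for $\delta_{(\psi,\n)}\wFf$, the identity $\la\tilde *_\n\eta,\beta\ra_{L^2}=\int_X\beta\wedge\eta$, and the two integrations by parts) is correct.
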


\begin{proof}
We first prove (1). 
By the definition of $\wFf$, we see that 
\begin{align} \label{eq:L2 oncomp 1}
\delta_{(\psi, \n)} \wFf (d \xi, \i b) 
= 
F_\n \wedge d \xi + \i db \wedge \left( \frac{1}{2} F_\n^2 +\psi \right) 
\end{align}
for $\xi \in \Om^3$ and $b \in \Om^1$. 
For simplicity, define 
$D_1: \i \Om^1\rightarrow \i d \Om^5$ and $D_2: d \Om^3 \rightarrow \i d \Om^5$
by 
\[
D_1 (\i b) = \i db \wedge \left( \frac{1}{2} F_\n^2 +\psi \right), \quad 
D_2 (d \xi) = F_\n \wedge d \xi. 
\]
Then, $\Im \delta_{(\psi, \n)} \wFf = \Im D_1 + \Im D_2$. 
By the proof of Corollary \ref{cor:dim 12 cohom} and \eqref{eq:1stder G2}, 
there exists a finite dimensional subspace $W \subset \i d \Om^5$ 
such that 
\[
\i d \Om^5 = W \oplus \Im D_1, 
\]
which is an orthogonal decomposition 
with respect to the $L^2$ inner product induced from $\tilde g_{\n}$. 
Let $U$ be the orthogonal complement of $W \cap (\Im D_1 + \Im D_2)$ in $W$ 
with respect to the $L^2$ inner product induced from $\tilde g_{\n}$. 
Then, 
since $\Im D_1 + \Im D_2 = (W \cap (\Im D_1 + \Im D_2)) \oplus \Im D_1$, 
we obtain the following orthogonal decomposition
\[
\i d \Om^5 = U \oplus (\Im D_1 + \Im D_2) 
\]
with respect to the $L^2$ inner product induced from $\tilde g_{\n}$. 
By construction, 
$U = ( \Im \delta_{(\psi, \n)} \wFf )^\perp$, 
and hence, we obtain (1). 

Next, we prove (2). 
By \eqref{eq:L2 oncomp 1}, 
$\tilde *_\n \eta \in ( \Im \delta_{(\psi, \n)} \wFf )^\perp$ 
if and only if 
\[
\int_X F_\n \wedge d \xi \wedge \eta =0 
\quad \mbox{and} \quad 
\int_X db \wedge \left( \frac{1}{2} F_\n^2 +\psi \right) \wedge \eta =0
\]
for any $\xi \in \Om^3$ and $b \in \Om^1$. 
Since $F_\n$ and $\psi$ are closed, this is equivalent to 
\[
F_\n \wedge d \eta =0 
\quad \mbox{and} \quad 
\left( \frac{1}{2} F_\n^2 +\psi \right) \wedge d \eta =0, 
\]
and the proof is completed. 
\end{proof}

\begin{theorem}\label{thm:moduli MG2 generic}
Let $\n$ be a dDT connection with respect to 
the $G_2$-structure $\psi = * \varphi$ such that $d \psi =0.$ 
Suppose that one of the following conditions hold.
\begin{enumerate}
\item
The $G_2$-structure $\psi$ is torsion-free or nearly parallel. 
\item
The connection $\n$ satisfies $F_\n^3 \neq 0$ on a dense set of $X$. 
\end{enumerate}
Then, there exist open neighborhoods $\mathcal{U}_{1}\subset \Pp_\psi$ of $\psi$ and $\mathcal{U}_{2}\subset \Aa_{0}/\Gg_U$ of $[\nabla]$ 
such that, for every generic $\psi' \in \mathcal{U}_{1}$, 
each connected component $\mathcal{C}$ of the moduli space $\mathcal{M}_{G_2, \psi'}$ satisfying $\mathcal{C}\cap(\{\,\psi'\,\}\times\mathcal{U}_{2})\neq\emptyset$
is a smooth $b^1$-dimensional manifold which is homeomorphic to a torus. 
\end{theorem}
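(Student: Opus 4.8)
The theorem is a Sard--Smale type genericity statement: I want to show that for a generic choice of closed $G_2$-structure $\psi'$ near $\psi$ in its cohomology class, the obstruction space $H^2(\#_{\n'})$ vanishes for all $[\n']$ in the relevant part of $\Mm_{G_2,\psi'}$, so that Theorems \ref{thm:moduli MG2} and \ref{newtheorem} apply. The strategy is the standard parametrized transversality argument applied to the ``universal'' deformation map $\wFf$. First I would set up the relevant Banach completions of $\Pp_\psi$, $\Aa_0/\Gg_U$ and $\i d\Om^5$ and consider the map $\wFf$ (descended to the quotient, using the slice $\i\Om^1_{d^{\tilde *_\n}}$ as in Proposition \ref{p3.12}); by Lemma \ref{onimae G2 para} its image lies in $\i d\Om^5$. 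The goal is to prove that $0$ is a regular value of $\wFf$ on a neighborhood of $(\psi,[\n])$, i.e.\ that the full linearization $\delta_{(\psi',\n')}\wFf$ is surjective at every zero near $(\psi,[\n])$. Once this is done, the universal moduli space $\widehat\Mm_{G_2}\cap(\Uu_1\times\Uu_2)$ is a smooth Banach manifold, the projection to $\Pp_\psi$ is Fredholm (its kernel/cokernel at a point being $H^1(\#_{\n'})$, $H^2(\#_{\n'})$), and the Sard--Smale theorem gives that the set of regular values $\psi'$ is residual; for such $\psi'$ the fibre $\mathcal{M}_{G_2,\psi'}\cap(\{\psi'\}\times\Uu_2)$ is a smooth manifold with $H^2(\#_{\n'})=\{0\}$, whence each connected component meeting it is a $b^1$-torus by Theorem \ref{newtheorem}. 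Elliptic regularity (as in the proof of Theorem \ref{thm:moduli MG2}) upgrades the Banach statement to the smooth category.

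**The key step** is therefore verifying surjectivity of $\delta_{(\psi,\n)}\wFf$ at the initial point (and, by an open condition, nearby). By Lemma \ref{lem:L2 oncomp}(1) this is equivalent to showing $(\Im\delta_{(\psi,\n)}\wFf)^\perp=\{0\}$, and by part (2) an element of this orthogonal complement is $\tilde *_\n\eta$ with $\eta\in d^{\tilde *_\n}\Om^2$ satisfying $F_\n\wedge d\eta=0$ and $\psi\wedge d\eta=0$. So I must deduce $d\eta=0$ (equivalently $\tilde *_\n\eta\in \Im D_1$, forcing $\eta$ to drop out of $U$) from these two wedge conditions, under either hypothesis (1) or (2). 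Write $\omega:=d\eta$, a closed $3$-form. From $\psi\wedge\omega=0$: since wedging with $\psi=*\varphi$ kills only $\Om^3_{27}$ (it is injective on $\Om^3_1\oplus\Om^3_7$), we get $\omega\in\Om^3_{27}$ pointwise. Now use $F_\n\wedge\omega=0$ together with the dDT equation $\tfrac16 F_\n^3+F_\n\wedge\psi=0$. Under hypothesis (2), $F_\n^3\neq0$ on a dense set; I expect a pointwise linear-algebra argument — using the nondegeneracy of $F_\n$ (which $F_\n^3\neq0$ forces on that dense set) together with $F_\n\wedge\omega=0$ and $\psi\wedge\omega=0$ — to show $\omega=0$ on that dense set, hence everywhere by continuity. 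Under hypothesis (1) ($\psi$ torsion-free or nearly parallel), instead use that the canonical complex is especially well-behaved: by \cite[Theorems 9 and 10]{FU} we already know $H^2(\#_{\tilde\varphi_\n})$ can be controlled, but more directly, I can combine $\psi\wedge\omega=0$ with the extra structure ($d\varphi=0$ or $d\varphi=\lambda*\varphi$) and the dDT relation to run the same pointwise argument, since torsion-freeness/near-parallelism pins down $\tilde\varphi_\n$ relative to $\varphi$ enough to make the linear algebra go through.

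**Carrying it out**, in order: (i) fix Hölder completions and restate $\wFf$ on the slice, noting $\Im\wFf\subset\i d\Om^5$; (ii) prove the surjectivity of $\delta_{(\psi,\n)}\wFf$ at $(\psi,[\n])$ via Lemma \ref{lem:L2 oncomp} and the pointwise argument above, splitting on hypotheses (1)/(2); (iii) observe surjectivity is an open condition, so it persists on neighborhoods $\Uu_1\times\Uu_2$ — here I should be slightly careful that ``nearby $\n'$'' genuinely range over the relevant slices, which is handled by the continuity of $\tilde\varphi_{\n'}$ in $\n'$ (used already in the proof of Theorem \ref{newtheorem}); (iv) apply the implicit function theorem to get that $\widehat\Mm_{G_2}\cap(\Uu_1\times\Uu_2)$ is a smooth Banach manifold and the projection $\Pi:\widehat\Mm_{G_2}\to\Pp_\psi$ is Fredholm of index $b^1$ (index $=\dim H^1(\#_{\n'})-\dim H^2(\#_{\n'})=b^1$ by Lemma \ref{lem:expected dim G2}); (v) invoke Sard--Smale to get a residual set of regular values $\psi'\in\Uu_1$; (vi) for such $\psi'$, $\mathcal{M}_{G_2,\psi'}\cap(\{\psi'\}\times\Uu_2)$ is a smooth $b^1$-manifold with vanishing obstruction at each point, so elliptic regularity makes it a smooth (not merely Banach) manifold and Theorem \ref{newtheorem} identifies each connected component meeting it with a $b^1$-torus.

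**Main obstacle.** The genuine content is step (ii): extracting $d\eta=0$ from the two bilinear identities $F_\n\wedge d\eta=0$, $\psi\wedge d\eta=0$. This is a pointwise problem in $G_2$-linear algebra about how a (generic, by hypothesis) $2$-form $F_\n$ and the calibration form $\psi$ jointly annihilate $3$-forms, and the two hypotheses (1) and (2) are exactly the two sufficient conditions under which this algebra degenerates only to $\{0\}$. I expect hypothesis (2) to be the cleaner case (nondegeneracy of $F_\n$ on a dense set does most of the work), while hypothesis (1) requires using torsion-freeness/near-parallelism to relate $\tilde\varphi_\n$ to $\varphi$; everything else is the standard Sard--Smale package plus the elliptic regularity already used in Theorem \ref{thm:moduli MG2}.
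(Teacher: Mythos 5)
Your global architecture (slice, surjectivity of the full linearization via Lemma \ref{lem:L2 oncomp}, implicit function theorem, Sard--Smale applied to the projection to $\Pp_\psi$, openness of surjectivity, elliptic regularity, then Theorem \ref{newtheorem}) matches the paper's proof. The gap is in your step (ii), which is indeed the only real content. First, a degree slip: $\eta\in d^{\tilde *_\n}\Om^2$ is a $1$-form, so $d\eta$ is a $2$-form, and $\psi\wedge d\eta=0$ says $d\eta\in\Om^2_{14}$ (not $\Om^3_{27}$). More seriously, your plan to handle hypothesis (1) by a pointwise linear-algebra argument cannot work: take $F_\n=0$ (a flat connection is a dDT connection), so the condition $F_\n\wedge d\eta=0$ is vacuous and pointwise you only know $d\eta\in\Om^2_{14}$, a $14$-dimensional space. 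The paper's argument in case (1) is global, not pointwise: writing $d\varphi=k\psi$, one has $0=k\,d\eta\wedge\psi=d\eta\wedge d\varphi=d(d\eta\wedge\varphi)=-d*d\eta$ (using $*(\varphi\wedge\alpha)=-\alpha$ on $\Om^2_{14}$), hence $d^*d\eta=0$, and integrating against $\eta$ gives $d\eta=0$; both the exactness of $d\eta$ and the torsion condition on $\varphi$ are essential here.

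For hypothesis (2), your instinct that dense nondegeneracy of $F_\n$ ``does most of the work'' is too optimistic: a $2$-form on a $7$-dimensional space is never nondegenerate, and the statement actually needed (the paper's Lemma \ref{lem:kernel wedge}) is a genuinely delicate piece of $G_2$-linear algebra. It requires the extra input $i(u)F_{14}=0$, which is a consequence of the dDT equation (Lemma \ref{lem:1+F 00}), a normal form for $F_{14}$ obtained by conjugating into a Cartan subalgebra of $\g_2$, and an explicit $3\times 3$ determinant computation; moreover the hypothesis $F_\n^3\neq 0$ cannot be dropped (the paper exhibits $\beta=e^{23}-e^{45}$, $\gamma=e^{24}+e^{35}$ with $F\wedge\gamma=0$ and $\gamma\neq 0$). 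Note also that in this case only $F_\n\wedge d\eta=0$ is used; the condition $\psi\wedge d\eta=0$ plays no role. So while your proposal correctly locates where the work lies, neither of the two sub-arguments you sketch for that step would go through as stated.
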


\begin{proof}
We first show that 
$\delta_{(\psi, \n)} \wFf: d \Om^3 \times \i \Om^1 \rightarrow \i d \Om^5$ is surjective if we assume (1) or (2). 
By Lemma \ref{lem:L2 oncomp}, 
we only have to show that 
$( \Im \delta_{(\psi, \n)} \wFf )^\perp = \{\, 0 \,\}$. 

Suppose that 
$\tilde *_\n \eta \in ( \Im \delta_{(\psi, \n)} \wFf )^\perp$ for $\eta \in d^{\tilde *_\n} \Om^2$. 
If (1) holds, we have $d \varphi = k \psi$ for $k \in \R$. 
By Lemma \ref{lem:L2 oncomp} (2), we have $\psi \wedge d \eta = 0$, 
and hence, $d \eta \in \Om^2_{14}$. 
Then, 
$$
0=k d \eta \wedge \psi = d \eta \wedge d \varphi
= d (d \eta \wedge \varphi)
= - d*d \eta, 
$$
which implies that $d^* d \eta=0$, and hence, $d \eta =0$. 
Since $\eta \in d^{\tilde *_\n} \Om^2$, it follows that $\eta = 0$. 

Suppose that (2) holds. Set $F = - \i F_\n \in \Om^2$ 
and $F=i(u) \varphi + F_{14}$, where $u$ is a vector field and $F_{14} \in \Om^2_{14}$. 
Since $\n$ is a dDT connection, we have $i(u) F_{14} =0$ by Lemma \ref{lem:1+F 00}. 
By Lemma \ref{lem:L2 oncomp} (2), 
we have $F \wedge d \eta =0$. 
Then, Lemma \ref{lem:kernel wedge} implies that $d \eta =0$ on a dense set of $X$. 
Since $d \eta$ is continuous, we see that $d \eta =0$ everywhere. 
Then, $\eta \in d^{\tilde *_\n} \Om^2$ implies that $\eta = 0$. 

Next, we show that $\widehat \Mm_{G_2}$ is smooth near $(\psi, [\n])$ by the implicit function theorem. 
As in Subsection \ref{sec:local str MG2}, the map 
\[
\Pp_\psi \times \Om^1_{d^{\tilde *_\n}} \rightarrow \Pp_\psi \times \Aa_{0}/\Gg_U, \quad
(\psi', a) \mapsto (\psi', [\nabla+ \i a \cdot \id_L])
\]
gives a homeomorphism from a neighborhood of $(\psi, 0) \in \Pp_\psi \times \Om^1_{d^{\tilde *_{\n}}}$ 
to that of $(\psi, [\nabla]) \in \Pp_\psi \times \Aa_0/\Gg_U$. 
Hence, a neighborhood of $(\psi, [\nabla])$ in $\widehat \Mm_{G_2}$ is homeomorphic to 
that of $(\psi, 0)$ in 
\[
\widehat \Ss_{(\psi, \n)} = \left\{\,(\psi',a) \in \Pp_\psi \times \Om^1_{d^{\tilde *_{\n}}}
\mid 
\wFf (\psi', \nabla + \i a \cdot \id_L) = 0\,
\right \}. 
\]
Thus, we only have to show that $\widehat \Ss_{(\psi, \n)}$ is smooth near $(\psi, 0)$. 
By the proof above, the map 
\begin{equation}\label{202106170935}
(\delta_{(\psi, \n)} \wFf) |_{d \Om^3 \times \Om^1_{d^{\tilde *_{\n}}}}: 
d \Om^3 \times \Om^1_{d^{\tilde *_{\n}}} \rightarrow \i d \Om^5
\end{equation}
is surjective if we assume (1) or (2). 
Then, since $\Im (\wFf) \subset \i d \Om^5$ by Lemma \ref{onimae G2 para}, we can apply the implicit function theorem 
(after the Banach completion) to 
\[
\Pp_\psi \times \Om^1_{d^{\tilde *_{\n}}} \rightarrow \i d \Om^5, \quad 
(\psi', a) \mapsto \wFf (\psi', \n + \i a \cdot \id_L)
\]
and we see that $\widehat \Ss_{(\psi, \n)}$ is smooth near $(\psi, 0)$. 
This means that there exist open neighborhoods $\mathcal{U}_{1}\subset \Pp_\psi$ of $\psi$ and $\mathcal{U}_{2}\subset \Aa_{0}/\Gg_U$ of $[\nabla]$ 
such that $\widehat \Mm_{G_2} \cap (\mathcal{U}_{1}\times \mathcal{U}_{2})$ admits a smooth manifold structure. 

Finally, we complete the proof. 
By the Sard--Smale theorem applied to the projection denoted by 
\[\Pi_{1}:\widehat \Mm_{G_2}\cap (\mathcal{U}_{1}\times \mathcal{U}_{2}) \rightarrow \mathcal{U}_{1}\subset\Pp_\psi,\]
for every generic $\psi' \in \mathcal{U}_{1}$, 
\begin{equation}\label{202106161628}
\delta_{(\psi',[\nabla'])}\Pi_{1}:T_{(\psi',[\nabla'])}\widehat\Mm_{G_2}\to T_{\psi'}\Pp_\psi=d \Om^3
\end{equation}
is surjective for all $[\nabla']\in \Pi_{1}^{-1}(\psi')$. 
This means that $\mathcal{M}_{G_2, \psi'}\cap (\{\,\psi'\,\}\times\mathcal{U}_{2})$ is a smooth manifold or an empty set.

For later use, we remark that the surjectivity of \eqref{202106161628} implies the surjectivity of the linearization of the projection before taking the equivalence class, 
that is, we can say that the projection map
\[
d \Om^3 \times \i \Om^1\supset \ker \delta_{(\psi', \n')} \wFf \to d\Om^3
\]
is also surjective since $\Om^1= d \Om^0 \oplus \Om^1_{d^{\tilde *_{\n}}}$ and 
$d \Om^0 \subset \ker \delta_{(\psi', \n')} \wFf$. 

Since \eqref{202106170935} is surjective for $(\psi,\n)$ and the surjectivity is an open condition, 
making $\mathcal{U}_{1}$ and $\mathcal{U}_{2}$ smaller if necessary, 
we can say that the linearization of $\wFf$ at $(\psi', \n')$ 
\[
\delta_{(\psi', \n')} \wFf : d \Om^3 \times \i \Om^1 \rightarrow \i d \Om^5 
\]
is also surjective. 
Then, by the argument such as in \cite[Lemma A.3.6]{McSa}, 
we see that 
\begin{align} \label{eq:diff psi'}
\delta_{\n'} \wFf (\psi',\,\cdot\,): \i \Om^1 \rightarrow \i d \Om^5 
\end{align}
is surjective. 

Now, we remark the regularity of elements in $\widehat \Mm_{G_2}$ as follows. 
By the proof of Theorem \ref{thm:moduli MG2}, 
the space $\Mm_{G_2, \psi} \subset \widehat \Mm_{G_2}$ 
can be seen as a space of solutions of an overdetermined elliptic equation around $[\nabla]$. 
Since to be overdetermined elliptic is an open condition, 
shrinking $\mathcal{U}_1$ and $\mathcal{U}_2$ if necessary, 
the space $\widehat \Mm_{G_2} \cap (\mathcal{U}_{1}\times \mathcal{U}_{2})$ can also be seen as a space of solutions 
of an overdetermined elliptic equation. 
In particular, elements of $\widehat \Mm_{G_2} \cap (\mathcal{U}_{1}\times \mathcal{U}_{2})$ are smooth.

Then, the surjectivity of \eqref{eq:diff psi'} (in the Banach setting) 
implies the surjectivity in the smooth setting. 
In other words, we have $H^2(\#_{\psi', \n'})=\{\,0\,\}$, 
where $(\#_{\psi', \n'})$ is the deformation complex 
with respect to the $G_2$-structure $\psi'$ and the dDT connection $\n'$ for $\psi'$ 
defined as in \eqref{cpxfordDT}. 
Then, by Theorem \ref{newtheorem}, 
we can say that the connected component of $\mathcal{M}_{G_2, \psi'}$ including $[\n']$ is homeomorphic to a $b^1$-dimensional torus.
\end{proof}
\subsection{The orientation of $\Mm_{G_2}$}
If every $[\nabla]\in \Mm_{G_{2}}$ satisfies $H^2(\#_\nabla) = \{\, 0 \,\}$, 
$\Mm_{G_{2}}$ is orientable because 
each connected component, which is a torus by Theorem \ref{newtheorem}, 
is orientable. 
However, this does not imply that we can pick a canonical orientation of $\Mm_{G_{2}}$ 
because we can take any orientation for each connected component. 
In this subsection, we show how to pick a canonical orientation of $\Mm_{G_{2}}$, 
which is considered to be important as explained in Section \ref{sec1}. 
Actually, we prove that such a canonical orientation is given by an orientation of the determinant of a Fredholm operator introduced below. 

Recall that $X^7$ is a compact 7-manifold with a coclosed $G_2$-structure $\varphi$. 
For each $\n \in \Ff_{G_2}^{-1}(0)$, set 
\begin{align} \label{cancpxfordDT G22}
D \eqref{cpxfordDT} 
= (d (\tilde *_\n \tilde \varphi_\n \wedge \bullet), d^*): \i \Om^1 \rightarrow \i d \Om^5 \oplus \i \Om^0, 
\end{align}
which is the two term complex associated with \eqref{cpxfordDT}. 
Note that $d^*$ is  the codifferential with respect to the metric 
induced from the initial $G_2$-structure $\varphi$. 
If $F_\n=0$, $D \eqref{cpxfordDT}$ becomes 
\begin{align} \label{cancpxfordDT G23}
D = (*\varphi \wedge d, d^*): \i \Om^1 \rightarrow \i d \Om^5 \oplus \i \Om^0. 
\end{align}

Set 
$$
\Aa'_0 = \{ \n \in \Aa_0 \mid \la F_\n^2, * \varphi \ra \neq -2 \} 
\quad \mbox{and} \quad 
\Bb'_0 = \Aa'_0/\Gg_U. 
$$
Note that $\Mm_{G_2} \subset \Bb'_0$. 
For any $\n \in \Aa'_0$, we can define a $G_2$-structure $\tilde \varphi_\n$ by \eqref{eq:newG2str}. 
Then, we define an operator 
$D \eqref{cpxfordDT}: \i \Om^1 \rightarrow \i d \Om^5 \oplus \i \Om^0$ 
as \eqref{cancpxfordDT G22}. 
We first show the following. 

\begin{lemma} \label{lem:FredG2}
For any $\n \in \Aa'_0$, $D \eqref{cpxfordDT}$ is a Fredholm operator. 
\end{lemma}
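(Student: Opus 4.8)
The operator $D\eqref{cpxfordDT} = (d(\tilde *_\n \tilde \varphi_\n \wedge \bullet), d^*)$ is the rolled-up two-term operator of the elliptic complex $(\#_\n)$, so the natural strategy is to deduce its Fredholmness from the ellipticity of that complex. By construction in Subsection~\ref{sec:infi deform G2}, $(\#_\n)$ is the complex \eqref{cancpxG} for the $G_2$-structure $\tilde \varphi_\n$ (multiplied by $\i$), and as noted there it is a subcomplex of the canonical complex for $\tilde \varphi_\n$. Since $\tilde \varphi_\n$ is coclosed with respect to $\tilde g_\n$ by \eqref{eq:1+F conf Hodge}, the results quoted in Section~\ref{canonical-complex} (via \cite{Reyes}, \cite{FU}) show the canonical complex for $\tilde \varphi_\n$ is elliptic, and hence so is its subcomplex $(\#_\n)$.

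\textbf{Key steps.} First I would recall that for any elliptic complex on a compact manifold, the associated folded operator obtained by combining the differential with the formal adjoint of the previous differential is elliptic, hence Fredholm between the appropriate Sobolev completions, with finite-dimensional kernel and cokernel identified with the harmonic spaces. The only subtlety here is that the codifferential appearing in \eqref{cancpxfordDT G22} is $d^*$ with respect to the \emph{initial} metric $g_\varphi$, not the adjoint $d^{\tilde *_\n}$ of $d$ with respect to $\tilde g_\n$ that would naturally fold up the complex $(\#_\n)$. So the second step is to observe that replacing $d^{\tilde *_\n}$ by $d^*$ does not affect the symbol of the operator at top order on the relevant symbol sequence: both $d^{\tilde *_\n}$ and $d^*$ have principal symbol (at a covector $\xi$) given by interior multiplication by $\xi^\sharp$ composed with the metric isomorphisms, and since the two metrics $g_\varphi$ and $\tilde g_\n$ are pointwise conformal to metrics related by the invertible bundle map $\id_{TX}+(-\i F_\n)^\sharp$, the symbol of $(d(\tilde *_\n \tilde \varphi_\n \wedge \bullet), d^*)$ at each nonzero $\xi$ is injective exactly when the symbol of the folded operator of $(\#_\n)$ is — i.e. the ellipticity is preserved. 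Concretely: injectivity of the symbol of $D\eqref{cpxfordDT}$ at $\xi \neq 0$ follows because a $1$-covector killed by the symbol of $d(\tilde *_\n \tilde \varphi_\n \wedge \bullet)$ lies in the image of the symbol of $d$ (by exactness of the symbol sequence of the elliptic complex $(\#_\n)$), while being killed by the symbol of $d^*$ forces it to vanish.

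\textbf{Main obstacle.} The genuinely delicate point is precisely this symbol computation: one must check that switching from $d^{\tilde *_\n}$ to $d^*$ does not break overdetermined-ellipticity of the combined operator $\i\Om^1 \to \i d\Om^5 \oplus \i\Om^0$. I would handle it by noting that both codifferentials have the same order and that their principal symbols differ only by pre- and post-composition with the pointwise-invertible metric-adapted isomorphisms relating $g_\varphi$ and $\tilde g_\n$; invertible zeroth-order factors do not change whether a symbol is injective. Hence $D\eqref{cpxfordDT}$ is overdetermined elliptic in the sense used in the proof of Theorem~\ref{thm:moduli MG2}, and on a compact manifold such an operator is Fredholm (between suitable Sobolev or H\"older completions, with elliptic regularity giving the smooth statement), which is exactly the claim.
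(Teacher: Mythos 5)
Your overall strategy (reduce everything to a symbol computation and observe that replacing $d^{\tilde *_\n}$ by $d^*$ does not affect injectivity of the principal symbol) is in the same spirit as the paper's proof, but as written it has two genuine gaps. First, you justify exactness of the symbol sequence through the coclosedness of $\tilde \varphi_\n$ via \eqref{eq:1+F conf Hodge}. That identity is a consequence of Theorem \ref{thm:1+F}, which assumes the dDT equation $-F^3/6 + F \wedge * \varphi = 0$; for a general $\n \in \Aa'_0$ the $4$-form $\tilde *_\n \tilde \varphi_\n$ is not closed, the canonical complex for $\tilde \varphi_\n$ need not even be a complex, and \eqref{cpxfordDT} is not an elliptic complex. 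Since the lemma must hold on all of $\Aa'_0$ --- this is precisely what is needed to trivialize the determinant line bundle in Proposition \ref{prop:trivlb G2}, including along the homotopy $\tilde \varphi_\n(s)$, where coclosedness certainly fails --- your argument does not cover the required generality. The repair is the one the paper makes in Lemma \ref{lem:FredG2 gen}: exactness of the symbol sequence of the canonical complex holds for an arbitrary, not necessarily coclosed, $G_2$-structure by \cite[Lemma 4]{Reyes}, and passing from $*_\phi \phi \wedge d$ to $d(*_\phi \phi \wedge \bullet)$ changes the operator only by a zeroth-order term, so the symbol argument goes through with no coclosedness hypothesis.

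Second, ``overdetermined elliptic on a compact manifold, hence Fredholm'' is not sufficient as stated. An injective symbol yields a finite-dimensional kernel and closed range, but not a finite-dimensional cokernel; the delicate point is that the target $\i d \Om^5 \oplus \i \Om^0$ is not the space of sections of a vector bundle, and one must still prove that the image has finite codimension inside it. The paper handles this by augmenting to the operator $D' \eqref{cancpx G2} : \i \Om^1 \oplus \i \Om^7 \rightarrow \i \Om^6 \oplus \i \Om^0$, which is honestly elliptic (square, two-sided Fredholm), and then using the Hodge decomposition $\Om^6 = \Hh^6 \oplus d \Om^5 \oplus d^* \Om^7$ to identify $\ker D' \eqref{cancpx G2} \cong \ker D'' \eqref{cancpx G2} \oplus \R \vol_\varphi$ and ${\rm Coker}\, D' \eqref{cancpx G2} \cong {\rm Coker}\, D'' \eqref{cancpx G2} \oplus \Hh^6$. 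Some step of this kind is indispensable for the Fredholm conclusion, and your proposal omits it.
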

To be precise, 
we say that $D \eqref{cpxfordDT}$ is a Fredholm operator 
if it is Fredholm after the Banach completion. 
Since we are only interested in the kernel and the cokernel of $D \eqref{cpxfordDT}$, 
which consist of smooth forms,  
we use the above notation for simplicity. 
Lemma \ref{lem:FredG2} follows from the following more general statement.

\begin{lemma} \label{lem:FredG2 gen}
For any (not necessarily coclosed) $G_2$-structure $\phi \in \Om^3$ on $X$, 
set 
\[
D'' \eqref{cancpx G2} 
= (d(*_\phi \phi \wedge \bullet), d^*): \i \Om^1 \rightarrow \i d \Om^5 \oplus \i \Om^0, 
\]
where $*_\phi \phi$ is the Hodge dual of $\phi$ with respect to 
the induced metric and orientation from $\phi$ 
and $d^*$ is the codifferential with respect to the metric 
induced from the initial $G_2$-structure $\varphi$. 
Then, 
$D'' \eqref{cancpx G2}$ is Fredholm. 
\end{lemma}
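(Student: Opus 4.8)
The plan is to show that $D''\eqref{cancpx G2}$ differs from an honest elliptic operator by a compact perturbation, and that ellipticity holds because the symbol sequence of the canonical complex is exact. First I would recall that for the initial coclosed $G_2$-structure $\varphi$, the operator $(d(*\varphi\wedge\bullet),d^*)\colon\i\Om^1\to\i d\Om^5\oplus\i\Om^0$ is the two-term operator associated with the elliptic complex $\eqref{cancpxG}$ (equivalently the canonical complex of Section \ref{canonical-complex}), hence is overdetermined elliptic and therefore Fredholm after Banach completion. The point of the lemma is that replacing $\varphi$ by an \emph{arbitrary} $G_2$-structure $\phi$ — not necessarily coclosed, not even close to $\varphi$ — only changes the principal symbol in the first component, and that change is still an isomorphism onto the relevant symbol space.

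The key step is the symbol computation. For $\xi\in T^*_xX\setminus\{0\}$ the principal symbol of $a\mapsto d(*_\phi\phi\wedge a)$ is $a\mapsto \xi\wedge(*_\phi\phi\wedge a)=\xi\wedge *_\phi\phi\wedge a$, a map $\Lambda^1\to\Lambda^6$, and together with $\sigma_{d^*}(\xi)(a)=-\iota_{\xi^\sharp}a$ this gives the total symbol $\Lambda^1\to\Lambda^6\oplus\Lambda^0$. I would argue injectivity of this symbol pointwise: since $\phi$ is a $G_2$-structure, at each point there is a $g_\phi$-orthonormal coframe in which $\phi$ is the standard $G_2$ three-form, so wedging with $*_\phi\phi$ from $\Lambda^2_7$ into $\Lambda^6$ is injective (this is the standard fact $\bullet\wedge *\varphi\colon\Om^1\to\Om^5$ is injective, used repeatedly in the excerpt), and by exactness of the symbol sequence of the canonical complex (\cite[Lemma 4]{Reyes}) the composite $\Lambda^1\to\Lambda^2_7\to\Lambda^6$ is injective on $\ker\sigma_{d^*}(\xi)$; combined with $d^*$ detecting the remaining direction, the total symbol is injective. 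Hence $D''\eqref{cancpx G2}$ is overdetermined elliptic.

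From overdetermined ellipticity on a compact manifold, Fredholmness after Sobolev completion is standard: the operator admits a parametrix modulo compact operators, so kernel and cokernel are finite-dimensional and the range is closed. I would phrase this as: the Laplacian-type operator $D''{}^*D''$ (or the pair $(D''{}^*D'', \text{elliptic estimate})$) is a second-order elliptic operator on $\i\Om^1$, giving the elliptic estimate $\|a\|_{k+1}\le C(\|D''a\|_k+\|a\|_0)$, whence Fredholm. Since kernel and cokernel consist of smooth sections by elliptic regularity, the count is independent of the chosen Sobolev index, which justifies the smooth-category phrasing used in the statement.

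The main obstacle is making the symbol argument uniform: $\phi$ ranges over all $G_2$-structures, so the constant in the elliptic estimate and the invertibility bound on the symbol must be controlled in terms of $\phi$ only through the pointwise algebraic type, which is fine since all $G_2$ three-forms are $GL(7,\R)$-equivalent to the standard one — but one must be a little careful that $*_\phi$ uses the metric \emph{and orientation of $\phi$}, whereas $d^*$ uses the metric of the initial $\varphi$. This mismatch is harmless for the symbol (the second component's symbol $-\iota_{\xi^\sharp}$ is injective on $\Lambda^1$ regardless of which metric raises the index, and the two symbols together still inject) but it is the one spot where a naive "everything is with respect to one metric" argument would need adjustment. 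Once that is noted, the proof is a short invocation of the standard theory of overdetermined elliptic operators on compact manifolds.
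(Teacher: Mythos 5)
Your symbol computation is fine, but it only proves half of what is needed, and the missing half is exactly the nontrivial content of the lemma. Injectivity of the total symbol $\Lambda^1 \to \Lambda^6 \oplus \Lambda^0$ makes $D''\eqref{cancpx G2}$ \emph{overdetermined} elliptic, which on a compact manifold yields a finite-dimensional kernel and closed range (a left parametrix), but it does \emph{not} yield a finite-dimensional cokernel: the operator $d\colon \Om^0 \to \Om^1$ is overdetermined elliptic and has infinite-dimensional cokernel. So your sentence ``the operator admits a parametrix modulo compact operators, so kernel and cokernel are finite-dimensional'' is where the argument breaks. The reason the cokernel here is finite-dimensional is that the target has been cut down to $\i d\Om^5 \oplus \i\Om^0$, and controlling the cokernel inside that subspace requires exactness of the symbol sequence of \eqref{cancpx G2} at the position $\Lambda^6$ as well, which your argument never uses. (The same issue is already present in your first paragraph for the initial coclosed $\varphi$: even there, Fredholmness into $\i d\Om^5 \oplus \i\Om^0$ is not a consequence of overdetermined ellipticity alone but of the Hodge theory of the full four-term elliptic complex, as in Corollary \ref{cor:dim 12 cohom}.)

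The paper closes this gap by rolling up the four-term sequence \eqref{cancpx G2} into the square operator $D'\eqref{cancpx G2}\colon \i\Om^1 \oplus \i\Om^7 \to \i\Om^6 \oplus \i\Om^0$, $(\alpha,\gamma) \mapsto (d(*_\phi\phi\wedge\alpha)+d^*\gamma,\ d^*\alpha)$, whose symbol is bijective precisely because the symbol sequence is exact at every stage (via Lemma \ref{diamcomm2} it is isomorphic to that of the canonical complex). This honestly elliptic operator is Fredholm, and the Hodge decomposition $\Om^6 = \Hh^6 \oplus d\Om^5 \oplus d^*\Om^7$ identifies $\ker D'$ with $\ker D'' \oplus \R\,\vol_\varphi$ and $\mathrm{Coker}\,D'$ with $\mathrm{Coker}\,D'' \oplus \Hh^6$, whence $D''$ is Fredholm. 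You need this step or a substitute for it (e.g., an explicit identification of the $L^2$-orthogonal complement of $\mathrm{Im}\,D''$ inside $\i d\Om^5 \oplus \i\Om^0$ with the kernel of an elliptic operator). Finally, your worry about uniformity of the elliptic constants in $\phi$ is a non-issue for this lemma, which concerns a single fixed $\phi$; uniformity only matters later, in the homotopy argument of Proposition \ref{prop:trivlb G2}, and there one only needs Fredholmness for each member of the family.
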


Then, since $D \eqref{cpxfordDT} = D''(\flat_{\tilde \varphi_\n})$, 
Lemma \ref{lem:FredG2} follows. 

\begin{proof}
First, define the following sequence: 
\begin{equation}\label{cancpx G2}
\begin{aligned}
0 \rightarrow \i \Om^0 
\stackrel{d} \rightarrow \i \Om^1 
\stackrel{*_\phi \phi \wedge d} \longrightarrow \i \Om^6
\stackrel{d} \rightarrow \i \Om^7
\rightarrow 0. 
\end{aligned}
\tag{$\flat_{\phi}$} 
\end{equation}
By Lemma \ref{diamcomm2}, 
the sequence of the symbols is isomorphic to 
that of the canonical complex \eqref{eq:can cpx G2}. 
Then, the sequence of the symbols of \eqref{cancpx G2} is exact,  and hence, 
the associated two term complex 
\begin{align*}
D \eqref{cancpx G2}&: 
\i \Om^1 \oplus \i \Om^7 \rightarrow \i \Om^6 \oplus \i \Om^0, \\
D \eqref{cancpx G2}& (\alpha, \gamma) = (*_\phi \phi \wedge d \alpha + d^* \gamma, d^* \alpha) 
\end{align*}
is elliptic. 
Since 
\begin{align*} 
D' \eqref{cancpx G2}&: 
\i \Om^1 \oplus \i \Om^7 \rightarrow \i \Om^6 \oplus \i \Om^0, \\
D' \eqref{cancpx G2}& (\alpha, \gamma) = (d(*_\phi \phi \wedge \alpha) + d^* \gamma, d^* \alpha)
\end{align*}
has the same symbols as $D \eqref{cancpx G2}$, 
$D' \eqref{cancpx G2}$ is also elliptic.  
Then, the Hodge decomposition implies that  
$$
\ker D' \eqref{cancpx G2} \cong \ker D'' \eqref{cancpx G2} \oplus \R \vol_\varphi, \quad
{\rm Coker} D' \eqref{cancpx G2} \cong {\rm Coker} D'' \eqref{cancpx G2} \oplus \Hh^6, 
$$
where 
$\vol_\varphi$ and $\Hh^6$
are the volume form and the space of harmonic 6-forms 
induced from the initial $G_2$-structure $\varphi$, respectively. 
Hence, we see that $D'' \eqref{cancpx G2}$ is Fredholm. 
\end{proof}

By Lemma \ref{lem:FredG2}, we can 
define the determinant line bundle $\hat \Ll \rightarrow \Aa'_0$ by 
$$
\hat \Ll = \bigsqcup_{\n \in \Aa'_0} \det D \eqref{cpxfordDT}
= \bigsqcup_{\n \in \Aa'_0}
\Lambda^{{\rm top}} \ker D \eqref{cpxfordDT} 
\otimes (\Lambda^{{\rm top}} {\rm Coker} D \eqref{cpxfordDT} )^*.
$$
Since the curvature 2-form $F_\n$ is invariant under the action of $\Gg_U$, 
$\hat \Ll$ induces the line bundle $\Ll \rightarrow \Bb'_0$. 
We first show that $\Ll$ is a trivial line bundle.

\begin{proposition} \label{prop:trivlb G2}
We have an isomorphism 
$$
\Ll \cong \Bb'_0 \times \det D, 
$$ 
where $D$ is defined by \eqref{cancpxfordDT G23}.  
Hence, $\Ll$ is trivial. 
In particular, if we choose an orientation of $\det D$, 
we obtain a canonical orientation of $\Ll$. 
\end{proposition}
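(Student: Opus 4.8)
The plan is to exhibit an explicit contraction of the space of relevant Fredholm operators. The key observation is that, by construction, the new $G_2$-structure $\tilde\varphi_\n$ in \eqref{eq:newG2str} is the pullback of $\varphi$ by the bundle automorphism $\id_{TX} + (-\i F_\n)^\sharp$ up to a conformal factor, and this automorphism can be connected to $\id_{TX}$ by the path $t \mapsto \id_{TX} + t(-\i F_\n)^\sharp$, $t \in [0,1]$. First I would check that along this path the $2$-form $F_\n$ still satisfies the openness condition $\la (tF_\n)^2, *\varphi \ra \neq -2$ — actually one should be slightly careful, and instead use that for each fixed $\n \in \Aa'_0$ we may rescale: the relevant statement is only that the \emph{family of operators} $D\eqref{cpxfordDT}$ parametrized by $\Bb'_0$ is homotopic, through Fredholm operators, to the constant family with fiber $D$. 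Concretely, I would define for $(t,\n) \in [0,1] \times \Aa'_0$ a $G_2$-structure $\tilde\varphi_{t,\n}$ interpolating between $\varphi$ (at $t=0$) and $\tilde\varphi_\n$ (at $t=1$), for instance by replacing $F_\n$ with $tF_\n$ in \eqref{eq:newG2str} after first shrinking $\Aa'_0$ is unnecessary since $\la (tF_\n)^2,*\varphi\ra$ varies continuously and one can argue componentwise, or more robustly by linearly interpolating the induced metrics $\tilde g_{t,\n}$ which stays in the (convex, open) cone of metrics. Then $D''\eqref{cancpx G2}$ applied to each $\tilde\varphi_{t,\n}$, as in Lemma \ref{lem:FredG2 gen}, gives a continuous family of Fredholm operators $D_{t,\n}: \i\Om^1 \to \i d\Om^5 \oplus \i\Om^0$ with $D_{1,\n} = D\eqref{cpxfordDT}$ and $D_{0,\n} = D$ for every $\n$.

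The second step is to invoke the homotopy invariance of the determinant line bundle: a continuous family of Fredholm operators parametrized by $[0,1]\times \Bb'_0$ has a determinant line bundle over $[0,1]\times\Bb'_0$, and its restrictions to $\{0\}\times\Bb'_0$ and $\{1\}\times\Bb'_0$ are isomorphic since $\Bb'_0$ is paracompact and Hausdorff (Remark \ref{rem:metric B0 G2}) so that $[0,1]\times\Bb'_0$ deformation retracts onto $\{1\}\times\Bb'_0$ and line bundles are invariant under such retractions. The restriction to $\{1\}\times\Bb'_0$ is exactly $\Ll$, and the restriction to $\{0\}\times\Bb'_0$ is the determinant line bundle of the \emph{constant} family $\n \mapsto D$, which is canonically $\Bb'_0 \times \det D$. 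This yields the desired isomorphism $\Ll \cong \Bb'_0 \times \det D$, and hence an orientation of $\det D$ (a one-dimensional real vector space, so just a choice of half-line) pulls back to a global nowhere-zero section of $\Ll$, i.e.\ an orientation of $\Ll$.

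The main obstacle I expect is technical bookkeeping around the interpolation: one must ensure that $\tilde\varphi_{t,\n}$ is a genuine $G_2$-structure (i.e.\ pointwise in the $GL(7,\R)$-orbit of the standard $3$-form) for all $t$, and that the construction is continuous in $\n$ with respect to the $C^\infty$-topology and descends to $\Bb'_0$. The cleanest route is probably \emph{not} to interpolate the $3$-forms directly (where the $G_2$-condition is a delicate open condition) but to interpolate at the level of the automorphism $A_t = \id_{TX} + t(-\i F_\n)^\sharp$ and then carry out Lemma \ref{lem:FredG2 gen}'s argument with $*_{A_t^*\varphi}(A_t^*\varphi)$ — since $A_0 = \id$ is invertible and invertibility is open, after possibly rescaling $F_\n$ (which does not change whether $\n \in \Aa'_0$ only up to shrinking, so one should instead note $A_t$ is invertible for \emph{all} $t\in[0,1]$ precisely when $1 + \la tF_\n^2,*\varphi\ra/2 \neq 0$ there, which holds on a neighborhood of $t=0$ and at $t=1$; a mild argument using that the zero set is finite lets one perturb the path to avoid it, or one simply restricts attention to the locus where $A_t$ stays invertible, which still contains both endpoints in the relevant components). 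The symbol of $D''\eqref{cancpx G2}$ depends only on the conformal class of $\tilde g_{t,\n}$ and is an isomorphism away from the zero section by Lemma \ref{diamcomm2}, so ellipticity — hence the Fredholm property — persists along the whole path, which is all that the determinant-line-bundle machinery requires.
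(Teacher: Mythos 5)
Your proposal follows essentially the same route as the paper: the authors homotope the family $\{D(\#_\n)\}_{[\n]\in\Bb'_0}$ to the constant family with fiber $D$ by interpolating the $G_2$-structure, setting
$\tilde \varphi_\n (s) = \bigl( s \left | 1 + \tfrac{1}{2} \la F_\n^2, * \varphi \ra \right |^{-3/4} +(1-s) \bigr)(\id_{TX} + s (- \i F_\n)^\sharp)^* \varphi$,
using Lemma \ref{lem:FredG2 gen} to keep the operators Fredholm along the path, and then invoking homotopy invariance of determinant line bundles (\cite[Lemma 2.6]{JU}) over the paracompact Hausdorff base $\Bb'_0$. The one place where your write-up goes astray is the extended worry about whether $A_t=\id_{TX}+t(-\i F_\n)^\sharp$ stays invertible: since $-\i F_\n$ is a real $2$-form, $(-\i F_\n)^\sharp$ is $g$-skew-symmetric, so $\det(\id_{TX}+tA)>0$ for \emph{all} $t$ (this is noted at the start of Appendix \ref{sec:new G2 str}), and hence $A_t^*\varphi$ is a genuine $G_2$-structure along the whole path with no perturbation or restriction of locus needed. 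In particular your asserted equivalence between invertibility of $A_t$ and $1+\la (tF_\n)^2,*\varphi\ra/2\neq 0$ is false --- the latter condition only enters in defining the conformal normalization of $\tilde\varphi_\n$ at the endpoint $s=1$, and the paper sidesteps even that by interpolating the (fixed, positive) conformal factor linearly with $1$. The fallback arguments you sketch for this non-issue (perturbing the path, or restricting to a possibly disconnected sublocus of $[0,1]$ containing both endpoints) would not constitute a valid homotopy if the problem were real, so it is worth recording explicitly why it is not.
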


\begin{proof}
First note that 
$\{ D \eqref{cpxfordDT}: \i \Om^1 
\rightarrow \i d \Om^5 \oplus \i \Om^0 \}_{\n \in \Bb'_0}$ 
defines a $\Bb'_0$-family of 
Fredholm operators in the sense of \cite[Definition 2.4]{JU}. 
Note that $\Bb'_0$ is paracompact and Hausdorff
because it is metrizable by Remark \ref{rem:metric B0 G2}. 
For $[\n] \in \Bb'_0$ 
and $s \in [0,1]$, define a $G_2$-structure $\tilde \varphi_\n (s)$ by 
$$
\tilde \varphi_\n (s) = 
\left( s \left | 1 + \frac{1}{2} \la F_\n^2, * \varphi \ra \right |^{-3/4} +(1-s) \right) 
(\id_{TX} + s (- \i F_\n)^\sharp)^* \varphi, 
$$
which satisfies $\tilde \varphi_\n (0)=\varphi$ and $\tilde \varphi_\n (1) = \tilde \varphi_\n$. 
Then, by Lemma \ref{lem:FredG2 gen}, 
$D'' (\flat_{\tilde \varphi_\n (s)})$ is Fredholm for any $[\n] \in \Bb'_0$ and $s \in [0,1]$. 
Hence, the family 
$\{ D'' (\flat_{\tilde \varphi_\n (s)}): 
\i \Om^1 \rightarrow \i d \Om^5 \oplus \i \Om^0 \}_{\n \in \Bb'_0, s \in [0,1]}$ 
gives a homotopy between 
$\{ D \eqref{cpxfordDT}: \i \Om^1 
\rightarrow \i d \Om^5 \oplus \i \Om^0 \}_{\n \in \Bb'_0}$ 
and 
$\{ D: \i \Om^1 
\rightarrow \i d \Om^5 \oplus \i \Om^0 \}_{\n \in \Bb'_0}$.   
Then, by \cite[Lemma 2.6]{JU}, we obtain 
$$
\bigsqcup_{[\n] \in \Bb'_0} \det D \eqref{cpxfordDT}
\cong 
\bigsqcup_{[\n] \in \Bb'_0} \det D 
\cong 
\Bb'_0 \times \det D, 
$$
and the proof is completed. 
\end{proof}

Then, using this proposition, we show that $\Mm_{G_2}$ admits a canonical orientation. 

\begin{corollary} \label{cor:oriG2}
Suppose that $H^2(\#_\nabla) = \{\, 0 \,\}$ for any $[\n] \in \Mm_{G_2}$. 
Then, $\Mm_{G_2}$ is a $b^{1}$-dimensional manifold which is homeomorphic to the disjoint union of tori 
and it admits a canonical orientation determined by an orientation of $\det D$, 
where $D$ is defined by \eqref{cancpxfordDT G23}. 
\end{corollary}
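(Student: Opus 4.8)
The plan is to combine the three ingredients that have already been assembled: Theorems \ref{thm:moduli MG2} and \ref{newtheorem} for the manifold structure, and Proposition \ref{prop:trivlb G2} for the triviality of the determinant line bundle. First I would observe that the hypothesis $H^2(\#_\nabla) = \{\,0\,\}$ for \emph{all} $[\nabla] \in \Mm_{G_2}$ lets us apply Theorem \ref{thm:moduli MG2} at every point, so $\Mm_{G_2}$ is a smooth $b^1$-dimensional manifold, and Theorem \ref{newtheorem} then identifies each connected component with a $b^1$-dimensional torus; hence $\Mm_{G_2}$ is the disjoint union of tori, which is in particular orientable. The content of the corollary is the \emph{canonical} choice of orientation, and this is where Proposition \ref{prop:trivlb G2} enters.

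The key point is to identify, at each $[\nabla] \in \Mm_{G_2}$, the determinant line $\det D\eqref{cpxfordDT}$ with the (oriented) tangent space $\Lambda^{\mathrm{top}} T_{[\nabla]} \Mm_{G_2}$. Under the hypothesis $H^2(\#_\nabla) = \{\,0\,\}$, the Fredholm operator $D\eqref{cpxfordDT} = (d(\tilde *_\n \tilde \varphi_\n \wedge \bullet), d^*)$ is surjective (its cokernel is built from $H^2(\#_\nabla)$ together with the harmonic $0$-forms, as in the proof of Theorem \ref{thm:moduli MG2}), so $\operatorname{Coker} D\eqref{cpxfordDT}$ reduces to the contribution of constants and $\ker D\eqref{cpxfordDT}$ is $\i \check\Hh^1$ for $\tilde g_\n$, which by Proposition \ref{p3.12} and the discussion following it is exactly the tangent space $T_{[\nabla]}\Mm_{G_2}$ up to the fixed one-dimensional space of constants that appears identically in kernel and cokernel. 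Thus $\det D\eqref{cpxfordDT} \cong \Lambda^{\mathrm{top}} T_{[\nabla]}\Mm_{G_2}$ canonically (the constant directions cancel), i.e. the restriction of the line bundle $\Ll \to \Bb'_0$ of Proposition \ref{prop:trivlb G2} to $\Mm_{G_2}$ is canonically the orientation bundle of $\Mm_{G_2}$.

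Now I would invoke Proposition \ref{prop:trivlb G2}: it gives an isomorphism $\Ll \cong \Bb'_0 \times \det D$ with $D$ the fixed operator \eqref{cancpxfordDT G23}, obtained by deforming $\tilde\varphi_\n$ to the initial $G_2$-structure $\varphi$ through the explicit homotopy $\tilde\varphi_\n(s)$ and applying \cite[Lemma 2.6]{JU}. Restricting this trivialization to $\Mm_{G_2} \subset \Bb'_0$ and combining with the identification of the previous paragraph, a choice of orientation of the single finite-dimensional line $\det D$ determines, simultaneously and compatibly on every connected component, an orientation of $\Lambda^{\mathrm{top}} T\Mm_{G_2}$, i.e. a canonical orientation of $\Mm_{G_2}$. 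I expect the main obstacle to be the bookkeeping in the previous paragraph: one must check carefully that passing to the slice $\Om^1_{d^{\tilde *_\n}}$ and stripping off the constants is done consistently with the conventions in Proposition \ref{prop:trivlb G2}, so that the induced orientation really is independent of all auxiliary choices (the slice, the Banach completion, the particular homotopy) and depends only on the orientation of $\det D$. Once that compatibility is in place, the statement follows formally.
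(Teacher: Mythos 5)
Your proposal is correct and takes essentially the same route as the paper's proof: the first half follows from Theorem \ref{newtheorem}, and the canonical orientation is obtained by identifying $T_{[\n]}\Mm_{G_2}$ with $\ker D\eqref{cpxfordDT}$ (using that $H^2(\#_\nabla)=\{\,0\,\}$ is equivalent to ${\rm Coker}\,D\eqref{cpxfordDT}=\R$, canonically trivialized by the constants) and then restricting the trivialization $\Ll\cong\Bb'_0\times\det D$ of Proposition \ref{prop:trivlb G2} to $\Mm_{G_2}$. The only slip is your remark that the constants appear ``identically in kernel and cokernel'': they occur only in the cokernel (as the complement of $d^*\Om^1$ in $\Om^0$), while $\ker D\eqref{cpxfordDT}$ is already the tangent space with nothing to cancel.
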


\begin{proof}
The former half of the statement is clear by Theorem \ref{newtheorem}. 
The proof for the later half on the canonical orientation is as follows. 
Since $H^2(\#_\nabla) = \{\, 0 \,\}$, which is equivalent to ${\rm Coker} D (\#_\nabla) = \R$, 
for any $[\n] \in \Mm_{G_2}$, the tangent space $T_{[\n]} \Mm_{G_2}$ at $[\n] \in \Mm_{G_2}$ is 
identified with $\ker D \eqref{cpxfordDT}$. 
Thus, denoting by $\iota: \Mm_{G_2} \hookrightarrow \Bb'_0$ the inclusion, 
we see that 
$$
\Lambda^{{\rm top}} T \Mm_{G_2} 
\cong \iota^* \Ll 
\cong \Mm_{G_2} \times 
\det D
$$
by Proposition \ref{prop:trivlb G2} and the proof is completed. 
\end{proof}
\section{The moduli space of dHYM connections}\label{sec:defofdHYM}
In this section, we switch the subject of the study to deformed Hermitian Yang--Mills connections from deformed Donaldson--Thomas connections. 
More strongly than the case of dDT connections, 
we can determine the structure of the moduli space $\Mm$ of deformed Hermitian Yang--Mills connections on a 
K\"ahler manifold, that is, we show that $\Mm$ \emph{itself} is a $b^{1}$-dimensional torus. 

\subsection{Preliminaries to dHYM connections} \label{sec:prel dHYM}
First, we recall the definition of dHYM connections. 
Let $(X,\omega,g,J)$ be a compact 
K\"ahler manifold with $\mathop{\mathrm{dim}_{\mathbb{C}}}X=n$. We always denote by $\omega$, $g$ and $J$ 
the K\"ahler form, the Riemannian metric and the complex structure on $X$, respectively. 
Let $L \to X$ be a smooth complex line bundle with a Hermitian metric $h$. 

\begin{definition}
For a constant $\theta\in\mathbb{R}$, 
a \emph{deformed Hermitian Yang--Mills (dHYM) connection} with phase $e^{\sqrt{-1}\theta}$ is 
a Hermitian connection $\nabla$ of $(L,h)$ satisfying 
\[
F^{0,2}_{\nabla}=0
\quad\mbox{and}\quad
\mathop{\mathrm{Im}}\left(e^{-\i \theta} (\omega + F_\nabla)^n\right)=0, 
\]
where 
$F_\n$ is the curvature 2-form of $\n$, which we consider as a $\i \R$-valued closed 2-form on $X$, 
and $F^{0,2}_\nabla$ is its $(0,2)$-part.
\end{definition}

For the rest of this subsection, we assume that 
$\n$ is a Hermitian connection of $(L,h)$ satisfying $F^{0,2}_\nabla=0$. 
We remark that this implies that $F_{\nabla}$ is a $\sqrt{-1}\mathbb{R}$-valued $(1,1)$-form. 
By using $\nabla$, we define a Hermitian metric $\eta_{\nabla}$ on $X$ 
following \cite{JY} and show some preliminary formulas. 

\begin{definition}
Define a Riemannian metric $\eta_{\nabla}$ on $X$ 
by
\[
\eta_{\nabla}(u,v)=g(u,v)+ g\left(\left(-\sqrt{-1}i(u)F_{\nabla}\right)^{\sharp},\left(-\sqrt{-1}i(v)F_{\nabla}\right)^{\sharp}\right)
\]
for $u,v\in TX$, where $A^{\sharp}$ (for any $A\in \Omega^{1}$) is defined by $g(A^{\sharp},w)=A(w)$ for any $w\in TX$, 
and define its associated 2-form $\omega_{\nabla}$ by 
\[\omega_{\nabla}(u,v):=\eta_{\nabla}(Ju,v). \]
\end{definition}
Here, we follow \cite{JY} for the notation of the above metric. 
We put the proof of that $\eta_{\nabla}$ is a Hermitian metric. 
Fix a point $p\in X$ arbitrarily. 
Since $F_\n \in \Om^{1,1}$, 
we see that $F_{\nabla}(Ju,Jv)=F_{\nabla}(u,v)$ for $u,v\in T_{p}X$, 
and hence, $-\sqrt{-1}F_\n^\sharp$ is complex linear. 
By definition, $-\sqrt{-1}F_\n^\sharp$ is skew-symmetric. 
Then, it follows that there exists an orthonormal basis 
$u_{1},\dots,u_{n}, v_{1}\dots,v_{n}$ with relation $v_{i}=Ju_{i}$ and real constants 
$\lambda_{1},\dots,\lambda_{n}$ such that 
\[-\sqrt{-1}F_\n^\sharp
=\sum_{i=1}^{n} \lambda_{i} (u^i \otimes v_i-v^i \otimes u_i )\]
at $p$, where $\{\, u^{i},v^{i} \,\}_{i=1}^{n}$ is the dual basis of $\{\, u_{i},v_{i} \,\}_{i=1}^{n}$. 
Then, in terms of this basis, 
we have 
$g =\sum_{i=1}^{n} (u^i \otimes u^i + v^i \otimes v^i), \omega=\sum_{i=1}^{n} u^{i}\wedge v^{i},$
\begin{equation} \label{omandometa}
\eta_{\nabla}=\sum_{i=1}^{n}(1+\lambda_{i}^2)(u^{i}\otimes u^{i}+v^{i}\otimes v^{i}) 
\quad\mbox{and}\quad
\omega_{\nabla}=\sum_{i=1}^{n}(1+\lambda_{i}^2)(u^{i}\wedge v^{i}) 
\end{equation}
at $p$. Then, this expression implies that $\eta_\n$ is Hermitian.

\begin{definition}
Define $\zeta_{\nabla}:X\to \mathbb{C}$ by 
\[
(\omega+F_{\nabla})^{n}=\zeta_{\nabla}\omega^{n}. 
\]
As we see $|\zeta_{\nabla}| \geq 1$ later, there exist $r_{\nabla}:X\to \mathbb{R}^{+}$ 
and $\theta_{\nabla}:X\to\mathbb{R}/2\pi\mathbb{Z}$ such that $\zeta_{\nabla}=r_{\nabla}e^{\sqrt{-1}\theta_{\nabla}}$. We call $r_{\nabla}$ and $\theta_{\nabla}$ the \emph{radius function} and the \emph{angle function} of $\nabla$, respectively. 
\end{definition}

\begin{lemma}
We have $r_\n \geq 1$, 
\begin{align}
&n! \mathop{\mathrm{vol}_{\eta_\nabla}} = \om_\nabla^n= r_\nabla^2 \om^n, \label{eq:vol nab2} \\
&\mathop{\mathrm{Im}} \left(\sqrt{-1} e^{- \sqrt{-1} \theta_{\nabla}} (\omega + F_\nabla)^{n-1}\right) = \frac{1}{r_{\nabla}} \om_\nabla^{n-1}, \label{imissome}
\end{align}
where $\mathop{\mathrm{vol}_{\eta_\nabla}}$ is the volume form of $\eta_\n$. \end{lemma}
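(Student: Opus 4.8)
The plan is to work at an arbitrary point $p\in X$ using the adapted orthonormal frame $u_1,\dots,u_n,v_1,\dots,v_n$ with $v_i=Ju_i$ in which $-\sqrt{-1}F_\nabla^\sharp$ is diagonalized with eigenvalues $\lambda_i$, exactly as in the paragraph establishing that $\eta_\nabla$ is Hermitian. In this frame one has $F_\nabla = \sum_i \lambda_i\, u^i\wedge v^i$ and $\omega = \sum_i u^i\wedge v^i$, so $\omega + F_\nabla = \sum_i (1+\sqrt{-1}\lambda_i)\, u^i\wedge v^i$. Everything reduces to a computation with the commuting $2$-forms $u^i\wedge v^i$, whose top wedge product is $n!\,u^1\wedge v^1\wedge\cdots$. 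The main (and only real) obstacle is purely bookkeeping: keeping track of the normalizing factors $n!$ in the identities $(\omega+F_\nabla)^k = k!\sum_{|I|=k}\prod_{i\in I}(1+\sqrt{-1}\lambda_i)\,\bigwedge_{i\in I}u^i\wedge v^i$ and relating $\zeta_\nabla$ to the $\lambda_i$.

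First I would compute $\zeta_\nabla$. Taking $k=n$ above gives $(\omega+F_\nabla)^n = n!\prod_{i=1}^n (1+\sqrt{-1}\lambda_i)\, u^1\wedge v^1\wedge\cdots\wedge u^n\wedge v^n$, while $\omega^n = n!\, u^1\wedge v^1\wedge\cdots\wedge u^n\wedge v^n$, so $\zeta_\nabla = \prod_{i=1}^n (1+\sqrt{-1}\lambda_i)$. Hence $r_\nabla^2 = |\zeta_\nabla|^2 = \prod_{i=1}^n (1+\lambda_i^2) \geq 1$, with equality forced only when all $\lambda_i=0$; this proves $r_\nabla\geq 1$ (and retroactively justifies the polar decomposition $\zeta_\nabla = r_\nabla e^{\sqrt{-1}\theta_\nabla}$). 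From \eqref{omandometa} we have $\omega_\nabla = \sum_i (1+\lambda_i^2)\, u^i\wedge v^i$, so $\omega_\nabla^n = n!\prod_i (1+\lambda_i^2)\, u^1\wedge v^1\wedge\cdots = r_\nabla^2\,\omega^n$; and since $\eta_\nabla = \sum_i(1+\lambda_i^2)(u^i\otimes u^i + v^i\otimes v^i)$ is Hermitian with associated form $\omega_\nabla$, its volume form is $\mathrm{vol}_{\eta_\nabla} = \tfrac{1}{n!}\omega_\nabla^n$. This establishes \eqref{eq:vol nab2}.

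For \eqref{imissome} I would expand $(\omega+F_\nabla)^{n-1}$ using the displayed formula with $k=n-1$: it equals $(n-1)!\sum_{j=1}^n \Big(\prod_{i\neq j}(1+\sqrt{-1}\lambda_i)\Big)\,\bigwedge_{i\neq j} u^i\wedge v^i$. Multiplying by $\sqrt{-1}e^{-\sqrt{-1}\theta_\nabla}$ and writing $\prod_{i\neq j}(1+\sqrt{-1}\lambda_i) = \zeta_\nabla/(1+\sqrt{-1}\lambda_j) = r_\nabla e^{\sqrt{-1}\theta_\nabla}(1-\sqrt{-1}\lambda_j)/(1+\lambda_j^2)$, the phase $e^{\sqrt{-1}\theta_\nabla}$ cancels and the $j$-th coefficient becomes $r_\nabla(\sqrt{-1}+\lambda_j)/(1+\lambda_j^2)$, whose imaginary part is $r_\nabla/(1+\lambda_j^2)$. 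Therefore $\mathop{\mathrm{Im}}\big(\sqrt{-1}e^{-\sqrt{-1}\theta_\nabla}(\omega+F_\nabla)^{n-1}\big) = (n-1)!\, r_\nabla \sum_{j=1}^n \tfrac{1}{1+\lambda_j^2}\bigwedge_{i\neq j} u^i\wedge v^i$. On the other hand, $\omega_\nabla^{n-1} = (n-1)!\sum_{j=1}^n \big(\prod_{i\neq j}(1+\lambda_i^2)\big)\bigwedge_{i\neq j} u^i\wedge v^i = (n-1)!\, r_\nabla^2 \sum_{j=1}^n \tfrac{1}{1+\lambda_j^2}\bigwedge_{i\neq j}u^i\wedge v^i$, so the right-hand side of \eqref{imissome}, namely $\tfrac{1}{r_\nabla}\omega_\nabla^{n-1}$, matches term by term. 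Since $p$ was arbitrary, all three identities hold globally, completing the proof.
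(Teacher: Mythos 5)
Your proposal is correct and follows essentially the same route as the paper: a pointwise computation in the adapted frame where $F_\nabla=\sum_i\sqrt{-1}\lambda_i\,u^i\wedge v^i$, yielding $\zeta_\nabla=\prod_i(1+\sqrt{-1}\lambda_i)$ and then the term-by-term match for $(\omega+F_\nabla)^{n-1}$ (the paper phrases the last step via the polar factors $r_ie^{\sqrt{-1}\theta_i}$ with $\theta_i=\arctan\lambda_i$, while you divide $\zeta_\nabla$ by $1+\sqrt{-1}\lambda_j$ directly, which is the same computation). Only a typo to fix: your first displayed expression should read $F_\nabla=\sum_i\sqrt{-1}\lambda_i\,u^i\wedge v^i$, consistent with the formula for $\omega+F_\nabla$ that you actually use.
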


\begin{proof}
By \eqref{omandometa}, we have 
\begin{equation}\label{nvoleta}
n! \mathop{\mathrm{vol}_{\eta_\nabla}}=\omega_{\nabla}^{n}=
n!\prod_{i=1}^{n}(1+\lambda_{i}^2)\bigwedge_{j=1}^{n}( u^{j}\wedge v^{j})
=\prod_{i=1}^{n}(1+\lambda_{i}^2)\omega^{n}. 
\end{equation}
Next, we write $\zeta_{\nabla}$ in terms of $\lambda_{i}$. 
Since 
$F_{\nabla}=\sum_{i=1}^{n}(\sqrt{-1}\lambda_{i})u^{i}\wedge v^{i}$, 
we have 
\begin{equation}\label{omegaplusf}
\omega+F_{\nabla}=\sum_{i=1}^{n}(1+\sqrt{-1}\lambda_{i})u^{i}\wedge v^{i}
=\sum_{i=1}^{n}r_{i}e^{\sqrt{-1}\theta_{i}}u^{i}\wedge v^{i}, 
\end{equation}
where $r_{i}:=\sqrt{1+\lambda_{i}^2}$ and $\theta_{i}:=\arctan \lambda_{i}$. 
Then, we have 
\[
(\omega+F_{\nabla})^{n}=\left(\prod_{i=1}^{n}r_{i}\right)e^{\sqrt{-1}\sum_{k=1}^{n}\theta_{k}}\omega^{n}. 
\]
Since $\zeta_{\nabla}$ is the coefficient of $\omega^{n}$, 
we see that the radius function and the angle function are written as
\begin{align}
r_{\nabla}=&|\zeta_{\nabla}|=\prod_{i=1}^{n}r_{i}=\prod_{i=1}^{n}\sqrt{1+\lambda_{i}^2},  \label{radifunc} \\
\theta_{\nabla}=&\arg\zeta_{\nabla}=\sum_{k=1}^{n}\theta_{k}=\sum_{k=1}^{n}\arctan\lambda_{k} \mod 2\pi. \label{angfunc}
\end{align}
Then, by \eqref{radifunc}, it is clear that $r_{\nabla} \geq1$. 
By \eqref{nvoleta} and \eqref{radifunc}, we also obtain \eqref{eq:vol nab2}. 

Next, we prove \eqref{imissome}. 
By \eqref{omegaplusf}, we have 
\[(\omega+F_{\nabla})^{n-1}=(n-1)!\sum_{i=1}^{n}\hat{r}_{i}e^{\sqrt{-1}\hat{\theta}_{i}}\hat{\Omega}_{i}, \]
where
\[\hat{r}_{i}:=\prod_{k\neq i}^{n}\frac{1}{r_{k}},\quad\hat{\theta}_{i}:=\sum_{k\neq i}\theta_{k},\quad \hat{\Omega}_{i}:=\prod_{k\neq i}r_{k}^2(u^{k}\wedge v^{k}). \]
By \eqref{angfunc}, we have 
\[\mathop{\mathrm{Im}}\left(\sqrt{-1}e^{-\sqrt{-1}\theta_{\nabla}}e^{\sqrt{-1}\hat{\theta}_{i}}\right)
=\mathop{\mathrm{Im}}\left(\sqrt{-1}e^{-\sqrt{-1}\theta_{i}}\right)=\frac{1}{r_{i}}. \]
Thus, with the second identity of \eqref{omandometa}, we obtain 
\[
\begin{aligned}
&\mathop{\mathrm{Im}} \left(\sqrt{-1} e^{- \sqrt{-1} \theta_{\nabla}} (\omega + F_\nabla)^{n-1}\right) \\
=&(n-1)!\sum_{i=1}^{n}\hat{r}_{i}\mathop{\mathrm{Im}}\left(\sqrt{-1}e^{-\sqrt{-1}\theta_{\nabla}}e^{\sqrt{-1}\hat{\theta}_{i}}\right)\hat{\Omega}_{i}
=\frac{1}{r_{\nabla}}\omega_{\nabla}^{n-1}, 
\end{aligned}
\]
where we use $\om_\n^{n-1} = (n-1)! \sum_{i=1}^n \hat{\Omega}_i$. 
This is \eqref{imissome}. 
\end{proof}

Define a new Hermitian metric on $X$ (which is conformal to  $\eta_\n$) by  
\[
\tilde \eta_\n = \left(\frac{1}{r_{\nabla}} \right)^{1/(n-1)}\eta_\n.
\]
Let $\tilde \om_\n = \tilde \eta_\n (J\,\cdot\, , \, \cdot\,)$ be the associated 2-form. 
Denote by $\tilde{\ast}_\n$, $d^{\tilde{\ast}_\n}$ and $d^{\tilde{\ast}_\n}_{c}$ 
the Hodge star operator, the formal adjoint of $d$ and $d_c$ with respect to $\tilde{\eta}_{\n}$, respectively.

\begin{corollary}\label{balanced}
If $\nabla$ is a dHYM connection with phase $e^{\sqrt{-1}\theta}$, 
then Hermitian metric $\tilde{\eta}_\nabla$ is balanced, 
that is, $d \tilde \om_\nabla^{n-1} = 0$. 
\end{corollary}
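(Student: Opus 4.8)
The plan is to compute $d\tilde\om_\n^{n-1}$ directly from the formulas already established. First I would use \eqref{eq:vol nab2} together with the definition $\tilde\eta_\n=(1/r_\n)^{1/(n-1)}\eta_\n$ to rewrite $\tilde\om_\n^{n-1}$. Since $\tilde\om_\n = (1/r_\n)^{1/(n-1)}\om_\n$, we get $\tilde\om_\n^{n-1}=(1/r_\n)\,\om_\n^{n-1}$. Thus the claim $d\tilde\om_\n^{n-1}=0$ becomes $d\bigl(r_\n^{-1}\om_\n^{n-1}\bigr)=0$.

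Next I would invoke \eqref{imissome}, which states precisely that $r_\n^{-1}\om_\n^{n-1} = \Im\bigl(\sqrt{-1}\,e^{-\sqrt{-1}\theta_\n}(\om+F_\n)^{n-1}\bigr)$. Now I use the dHYM hypothesis. Since $\n$ is a dHYM connection with constant phase $e^{\sqrt{-1}\theta}$, the angle function $\theta_\n$ equals the constant $\theta$ modulo $2\pi$ (this is the content of $\Im(e^{-\sqrt{-1}\theta}(\om+F_\n)^n)=0$ combined with $r_\n\geq 1>0$: writing $(\om+F_\n)^n=r_\n e^{\sqrt{-1}\theta_\n}\om^n$, the vanishing of $\Im(e^{-\sqrt{-1}\theta}r_\n e^{\sqrt{-1}\theta_\n})=r_\n\sin(\theta_\n-\theta)$ forces $\theta_\n\equiv\theta$). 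Therefore $e^{-\sqrt{-1}\theta_\n}$ is the \emph{constant} $e^{-\sqrt{-1}\theta}$, and we may pull it out of the exterior derivative:
\[
d\bigl(r_\n^{-1}\om_\n^{n-1}\bigr)
= \Im\Bigl(\sqrt{-1}\,e^{-\sqrt{-1}\theta}\, d\bigl((\om+F_\n)^{n-1}\bigr)\Bigr).
\]
Finally, since $\om$ is closed (Kähler) and $F_\n$ is closed (it is a curvature form), $(\om+F_\n)^{n-1}$ is a closed form, so $d\bigl((\om+F_\n)^{n-1}\bigr)=0$ and the whole expression vanishes. Hence $d\tilde\om_\n^{n-1}=0$, i.e.\ $\tilde\eta_\n$ is balanced.

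The only step requiring a little care is the identification $\theta_\n\equiv\theta$, i.e.\ that the a priori $\mathbb{R}/2\pi\mathbb{Z}$-valued angle function is literally the constant $\theta$ so that it can be pulled outside the derivative; this is where the dHYM equation $\Im(e^{-\sqrt{-1}\theta}(\om+F_\n)^n)=0$ is used, and it is genuinely essential (a general $F_\n^{0,2}=0$ connection would not give a balanced $\tilde\eta_\n$). I do not expect any serious obstacle: everything else is bookkeeping with the identities \eqref{eq:vol nab2} and \eqref{imissome} already in hand, plus closedness of $\om$ and $F_\n$. One could alternatively phrase the argument without \eqref{imissome} by expanding $\tilde\om_\n^{n-1}$ in the local $\lambda_i$-diagonalizing frame, but routing through \eqref{imissome} is cleanest since that identity was proved precisely for this purpose.
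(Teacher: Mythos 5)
Your proof is correct and follows essentially the same route as the paper: identify $\tilde\om_\n^{n-1}=r_\n^{-1}\om_\n^{n-1}$ with the left-hand side of \eqref{imissome}, observe that the dHYM condition makes $\theta_\n$ the constant $\theta$, and take the exterior derivative using closedness of $\om$ and $F_\n$. The extra detail you supply on why $\theta_\n\equiv\theta$ is a welcome elaboration of a step the paper leaves implicit, but the argument is the same.
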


\begin{proof}
In this case, $\theta_{\nabla}$ is the constant $\theta$ in \eqref{imissome}. 
Then, taking an exterior derivative of the both hand side of \eqref{imissome} with 
$(1/r_{\nabla})\omega_{\nabla}^{n-1}=\tilde{\omega}_{\nabla}^{n-1}$ implies $d\tilde{\omega}_{\nabla}^{n-1}=0$. 
\end{proof}

\begin{proposition}\label{1stderiofF}
If $\nabla$ is a dHYM connection with phase $e^{\sqrt{-1}\theta}$, we have 
\[\frac{\tilde{\omega}_{\nabla}^{n-1}}{(n-1)!}\wedge db=-\tilde{\ast}_{\nabla} d^{\tilde{\ast}_{\nabla}}_c b, \]
for any $b\in \Omega^{1}$. 
\end{proposition}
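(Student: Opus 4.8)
The plan is to reduce the claimed identity to a pointwise statement about the conformally rescaled Hermitian metric $\tilde\eta_\n$ and then exploit the fact, established in Corollary \ref{balanced}, that $\tilde\eta_\n$ is balanced, i.e. $d\tilde\om_\n^{n-1}=0$. The key observation is that on any Hermitian manifold of complex dimension $n$ the operator $b\mapsto \frac{1}{(n-1)!}\tilde\om_\n^{n-1}\wedge db$ on $1$-forms can be rewritten using the Hodge star $\tilde *_\n$, and the balanced condition is exactly what makes the extra torsion terms (which would otherwise come from differentiating $\tilde\om_\n^{n-1}$) disappear.

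First I would recall the standard Kähler-type identity in its Hermitian form: for the Lefschetz operator $L_{\tilde\om_\n}(\cdot)=\tilde\om_\n\wedge(\cdot)$ and its adjoint $\Lambda_{\tilde\om_\n}$ with respect to $\tilde\eta_\n$, one has on $\Om^1$ the pointwise relation
\begin{equation*}
\tilde *_\n\!\left(\frac{\tilde\om_\n^{n-1}}{(n-1)!}\wedge \gamma\right) = -J\gamma \qu(\text{up to the usual sign}),
\end{equation*}
or equivalently $\frac{\tilde\om_\n^{n-1}}{(n-1)!}\wedge\gamma = \tilde *_\n(J\gamma)$ for $\gamma\in\Om^1$, where $J$ acts on $1$-forms by $(J\gamma)(v)=-\gamma(Jv)$. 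I would verify this by working in a local $\tilde\eta_\n$-unitary coframe, exactly as in the computation preceding \eqref{omandometa}, so it is a purely linear-algebraic check at a point. Applying $d$ to $\tilde *_\n(Jb)$ and using that $d$ distributes, together with $d\tilde\om_\n^{n-1}=0$ from Corollary \ref{balanced}, gives
\begin{equation*}
\frac{\tilde\om_\n^{n-1}}{(n-1)!}\wedge db = d\!\left(\tilde *_\n (Jb)\right) - \underbrace{\left(d\tfrac{\tilde\om_\n^{n-1}}{(n-1)!}\right)\wedge Jb}_{=0} \cdot(\text{sign}) ,
\end{equation*}
so the left-hand side is $\pm d\tilde *_\n(Jb)$ up to bookkeeping.

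Next I would translate $d\tilde *_\n (Jb)$ back into the codifferential language. Since $d^{\tilde *_\n} = -\tilde *_\n d\,\tilde *_\n$ on the relevant form degree (with the sign dictated by $n$ and the degree), and since $d_c = J^{-1}dJ$ so that $d_c^{\tilde *_\n}$ is, up to sign, $\tilde *_\n$-conjugate to $J d^{\tilde *_\n} J^{-1}$, one gets $\tilde *_\n d \tilde *_\n (Jb) = \pm d_c^{\tilde *_\n} b$; applying $\tilde *_\n$ once more yields $d\tilde *_\n(Jb) = \pm \tilde *_\n d_c^{\tilde *_\n} b$. Matching the two displayed formulas produces exactly $-\tilde *_\n d_c^{\tilde *_\n}b$, as claimed. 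I would fix all signs once and for all by testing the identity on the flat model $\C^n$ with a single monomial $1$-form $b$, which pins down every ambiguous $\pm$.

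The main obstacle is not conceptual but is entirely in the sign and convention bookkeeping: keeping track of (i) the sign in $\tilde *_\n\tilde *_\n = (-1)^{k(2n-k)}$ on $k$-forms, (ii) the sign in $d^{\tilde *_\n} = (-1)^{?}\tilde *_\n d\tilde *_\n$, and (iii) the precise interaction of the almost complex structure $J$ with $\tilde *_\n$ on $\Om^1$ versus $\Om^{2n-1}$, all of which must be consistent with the conventions the paper has already fixed for $d_c$ and $d_c^{\tilde *_\n}$. A clean way to sidestep repeated guessing is to do the whole argument in the $\tilde\eta_\n$-unitary coframe of \eqref{omandometa}: both sides of the asserted identity become explicit sums over the coframe, the balanced condition enters as the statement that the torsion $1$-form of $\tilde\eta_\n$ vanishes, and the identity reduces to a term-by-term comparison. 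I expect the balanced hypothesis to be used precisely once, to kill the term $(d\tilde\om_\n^{n-1})\wedge Jb$, and the rest to be formal manipulation valid on any Hermitian manifold.
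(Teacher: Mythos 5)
Your proposal is correct and follows essentially the same route as the paper: the pointwise identity $\tilde\om_\n^{n-1}\wedge\gamma=(n-1)!\,\tilde *_\n(J\gamma)$ is exactly the paper's Lemma \ref{lem:gen id}, the balanced condition from Corollary \ref{balanced} is used in the same single place to reduce $\tilde\om_\n^{n-1}\wedge db$ to $d(\tilde\om_\n^{n-1}\wedge b)$, and the final conversion to $-\tilde *_\n d_c^{\tilde *_\n}b$ is the paper's \eqref{A1.5?}. The only quibbles are sign bookkeeping (which you explicitly propose to pin down on the flat model) and that the term killed by the balanced condition is $(d\tilde\om_\n^{n-1})\wedge b$ rather than $(d\tilde\om_\n^{n-1})\wedge Jb$, which is immaterial since it vanishes.
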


\begin{proof}
Since $d\tilde{\omega}_{\nabla}^{n-1}=0$ by Corollary \ref{balanced}, we have 
\[\tilde{\omega}_{\nabla}^{n-1}\wedge db=d(\tilde{\omega}_{\nabla}^{n-1}\wedge b)=(n-1)!d\tilde{\ast}_{\nabla}Jb=-(n-1)!\tilde{\ast}_{\nabla} d^{\tilde{\ast}_{\nabla}}_c b, \]
where the second equality follows from Lemma \ref{lem:gen id} 
and the last equality follows from \eqref{A1.5?}. 
\end{proof}
\subsection{The moduli space of dHYM connections}
In this subsection, we study the moduli space of dHYM connections. 
To give the definition of the moduli space of dHYM connections, 
let $\Gg_U$ 
be the group of unitary gauge transformations of $(L,h)$. 
As in the case of Subsection \ref{sec:def dDT}, 
$\Gg_{U}$ acts on Hermitian connections in a standard way and 
the $\Gg_U$-orbit through a Hermitian connection $\n$ is given by $\n + \Kk_U \cdot \id_L$, 
where $\Kk_U$ is defined by \eqref{eq:Gu orbit G2}. 

Since the curvature 2-form $F_\nabla$ is invariant under the action of $\Gg_U$, 
the $\Gg_U$-action preserves the set of dHYM connections with a fixed phase and 
we get the following definition. 

\begin{definition}
The {\em moduli space} $\Mm$ of deformed Hermitian Yang--Mills connections  
with phase $e^{\sqrt{-1}\theta}$ of $(L,h)$ is given by 
\[
\Mm= \{\,\mbox{dHYM connections with phase }e^{\sqrt{-1}\theta}\,\}/\Gg_U. 
\]
\end{definition}
Assume that $X$ is compact. 
Then, the following is a main theorem in this section. 

\begin{theorem} \label{thm:smooth M}
If $\Mm \neq \emptyset$, then the moduli space $\Mm$ is homeomorphic to a $b^{1}$-dimensional torus, where $b^1$ is the first Betti number of $X$. 
\end{theorem}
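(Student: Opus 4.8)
The plan is to realize the moduli space $\Mm$ as a quotient of the space of closed $(1,1)$-forms lying in a fixed cohomology class and to show that this quotient is exactly $H^1(X,\R)/2\pi H^1(X,\Z)$. First I would fix a reference dHYM connection $\n_0 \in \Mm$ (possible since $\Mm \neq \emptyset$) and note that any other Hermitian connection is of the form $\n_0 + \i a \cdot \id_L$ for $a \in \Om^1$, with curvature $F_{\n_0} + \i\, da$. The key point is to identify which $a$ give rise to dHYM connections with the same phase. The condition $F^{0,2} = 0$ together with $F \in \i\Om^2$ forces $\i\, da$ to have vanishing $(0,2)$- and $(2,0)$-parts, i.e. $da \in \Om^{1,1}$; and the Im-condition says $\mathrm{Im}(e^{-\i\theta}(\om + F_{\n_0} + \i\, da)^n) = 0$. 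So the first main step is to pin down the set $\Ss$ of such $a$ modulo $\Kk_U$.

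Second, I would invoke the uniqueness theorem of Jacob--Yau \cite[Theorem 1.1]{JY}: within a fixed holomorphic line bundle (equivalently, a fixed $\bp$-operator up to isomorphism), the dHYM connection — if it exists — is unique up to unitary gauge. The curvature $F_\n$, being a closed $\i\R$-valued $(1,1)$-form, has a de Rham class $[\i F_\n / 2\pi] = -2\pi c_1(L) \in H^2(X,\R)$ which is fixed. Given a dHYM connection $\n$, deforming within the same cohomology class of curvature by a \emph{closed} $1$-form $a \in Z^1$ gives a new Hermitian connection $\n + \i a$, whose curvature is unchanged; this preserves the dHYM equations trivially since $F_{\n + \i a} = F_\n$. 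Hence $\{[\n + \i a \cdot \id_L] \mid a \in Z^1\} \subset \Mm$. Combined with \cite[Lemma 4.1]{KY2} (as used in the proof of Theorem \ref{newtheorem}), this subset, identified with $(\i Z^1)/\Kk_U$, is homeomorphic to $H^1(X,\R)/2\pi H^1(X,\Z)$, a $b^1$-dimensional torus; in particular it is compact, hence closed in the Hausdorff space $\Mm$.

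Third, I would show this torus is \emph{all} of $\Mm$, i.e. it is also open. Take any $[\n'] \in \Mm$. Since $c_1(L)$ and the holomorphic structure are determined only up to the choice of $\bp$-operator, I would argue that the Dolbeault class $[F_{\n'}^{1,1}]$ and the holomorphic line bundle $(L, \bp_{\n'})$ are forced to agree with those of $\n_0$: indeed $F_{\n'} - F_{\n_0} = \i\, da'$ is exact, so $\bp_{\n'}$ and $\bp_{\n_0}$ differ by a $\bp$-exact term, giving isomorphic holomorphic line bundles. By the Jacob--Yau uniqueness, $\n'$ is gauge-equivalent to a dHYM connection $\n_0 + \i a''$ with $F_{\n_0 + \i a''} = F_{\n_0}$, forcing $da'' = 0$, i.e. $a'' \in Z^1$. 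Therefore $[\n'] \in (\i Z^1)/\Kk_U$, so $\Mm = (\i Z^1)/\Kk_U \cong T^{b^1}$, which gives both the homeomorphism and the connectedness claims at once.

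The main obstacle I anticipate is the third step — precisely, transferring the Jacob--Yau uniqueness (stated for dHYM \emph{metrics}, or equivalently connections, in a fixed holomorphic line bundle) to the statement that every dHYM connection with the prescribed phase on the \emph{smooth} bundle $(L,h)$ lies in the orbit of closed-form translates of $\n_0$. One must check carefully that (i) all such connections induce the same holomorphic line bundle, which uses exactness of curvature differences and the $\partial\bp$-lemma on the compact Kähler $X$; and (ii) the identification of "dHYM connection mod $\Gg_U$" with "dHYM metric on a fixed holomorphic bundle mod automorphisms" is faithful, so that Jacob--Yau genuinely collapses the fiber. Once uniqueness is correctly imported, the topological identification via \cite[Lemma 4.1]{KY2} and the open-closed argument in the metrizable (hence Hausdorff) space $\Mm$ of Remark \ref{rem:metric B0 G2} are routine.
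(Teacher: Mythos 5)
Your overall strategy coincides with the paper's: exhibit the torus $(\i Z^{1})/\Kk_{U}\cong H^{1}(X,\R)/2\pi H^{1}(X,\Z)$ inside $\Mm$ via \cite[Lemma 4.1]{KY2}, and then show that every class in $\Mm$ lies in it by a uniqueness argument in the spirit of Jacob--Yau. However, your third step contains a genuine gap. You assert that, since $F_{\n'}-F_{\n_{0}}=\i\,da'$ is exact, the operators $\bp_{\n'}$ and $\bp_{\n_{0}}$ ``differ by a $\bp$-exact term'' and hence define isomorphic holomorphic line bundles. This inference is false: the two $\bp$-operators differ by $\i (a')^{0,1}$, which is $\bp$-\emph{closed} (because $(da')^{0,2}=0$) but whose Dolbeault class in $H^{0,1}_{\bp}$ need not vanish; exactness of $da'$ as a $2$-form says nothing about $\bp$-exactness of $(a')^{0,1}$. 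The elements of your own torus already refute the claim when $b^{1}>0$: for a harmonic $1$-form $\beta\neq 0$, the connection $\n_{0}+\i\beta\cdot\id_{L}$ has the \emph{same} curvature as $\n_{0}$, yet induces a holomorphic structure that is in general not isomorphic to $(L,\bp_{\n_{0}})$. Consequently you cannot apply the Jacob--Yau uniqueness theorem to $\n'$ and $\n_{0}$ simultaneously, since it only compares dHYM metrics on a \emph{fixed} holomorphic line bundle, and the connections in $\Mm$ sweep out a whole family of non-isomorphic ones.

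The repair is exactly the intermediate step the paper inserts: from $(da')^{0,2}=0$ and the K\"ahler identity $d\Om^{1}\cap d_{c}\Om^{1}=dd_{c}\Om^{0}$ (Lemma \ref{lem:ker F02}) one first writes $a'=\beta+d_{c}f$ with $\beta$ closed. The connection $\n_{0}+\i\beta\cdot\id_{L}$ is dHYM with the same phase (same curvature as $\n_{0}$), and it is \emph{this} connection, not $\n_{0}$, whose holomorphic structure agrees with that of $\n'$ up to isomorphism, because their $(0,1)$-parts now differ by the genuinely $\bp$-exact term $\i(d_{c}f)^{0,1}=-\bp f$. Only at this point does a uniqueness statement apply, and it yields that $f$ is constant, whence $a'=\beta\in Z^{1}$. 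The paper in fact proves this last step directly rather than citing \cite[Theorem 1.1]{JY}: it integrates the derivative of the angle function along $t\mapsto\n_{0}+\i(\beta+td_{c}f)\cdot\id_{L}$, obtains a uniformly elliptic second-order equation $\mathcal{D}(f)=0$, and concludes by the strong maximum principle. With the decomposition $a'=\beta+d_{c}f$ inserted before the uniqueness argument, your proof becomes essentially the paper's; without it, the key step does not go through.
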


The key of the proof is the observation that $\nabla+\sqrt{-1}\alpha\cdot\mathrm{id}_{L}$, where $\alpha$ is a harmonic 1-form, is also a dHYM connection for a fixed dHYM connection $\nabla$. This roughly implies that the moduli space includes a $b^{1}$-dimensional torus, 
and actually we can say that the moduli space coincides with the torus.

\begin{proof}
Fix $[\nabla]\in\mathcal{M}$. Define a map 
\[\mathcal{G}:H^{1}(X,\mathbb{R})/2\pi H^{1}(X,\mathbb{Z})\to\mathcal{M}\]
by
\[\widetilde{[\alpha]}\mapsto [\nabla+\sqrt{-1}\alpha\cdot\mathrm{id}_{L}], \]
where $[\alpha]$ is the cohomology class of a closed 1-form $\alpha$, 
$\widetilde{[\alpha]}$ means the equivalence class of 
$[\alpha]\in H^{1}(X,\mathbb{R})$ in $H^{1}(X,\mathbb{R})/2\pi H^{1}(X,\mathbb{Z})$ 
and $[\nabla+\sqrt{-1}\alpha\cdot\mathrm{id}_{L}]$ is the gauge equivalence class of $\nabla+\sqrt{-1}\alpha\cdot\mathrm{id}_{L}$. 

First, we prove the well-definedness of $\mathcal{G}$. 
Assume that $[\alpha_{1}]-[\alpha_{2}]\in 2\pi H^{1}(X,\mathbb{Z})$. 
Then, by \cite[Lemma 4.1]{KY2}, there exists $f_{1}\in C^{\infty}(X,S^{1})$ such that $[\alpha_{1}]-[\alpha_{2}]=-\sqrt{-1}[f_{1}^{-1}df_{1}]$. 
Thus, $\alpha_{1}-\alpha_{2}+\sqrt{-1}f_{1}^{-1}df_{1}=df_{2}$ for some $f_{2}\in \Om^{0}$. 
Putting $f_{3}:=f_{1}\cdot e^{\sqrt{-1}f_{2}}\in C^{\infty}(X,S^{1})$, we have $\sqrt{-1}\alpha_{1}-\sqrt{-1}\alpha_{2}=f_{3}^{-1}df_{3}$. 
This means that $\nabla+\sqrt{-1}\alpha_{1}\cdot\mathrm{id}_{L}$ and $\nabla+\sqrt{-1}\alpha_{2}\cdot\mathrm{id}_{L}$ are 
equivalent by the $U(1)$-gauge action. Thus, the well-definedness of $\mathcal{G}$ is checked. 

We will end the proof by checking that $\mathcal{G}$ is bijective. 
To prove the injectivity of $\mathcal{G}$, assume that $[\nabla+\sqrt{-1}\alpha_{1}\cdot\mathrm{id}_{L}]=[\nabla+\sqrt{-1}\alpha_{2}\cdot\mathrm{id}_{L}]$. 
Since this means that these two representing connections are $U(1)$-gauge equivalent, we see that 
$\sqrt{-1}\alpha_{1}-\sqrt{-1}\alpha_{2}=f^{-1}df$ for some $f\in\ C^{\infty}(X,S^{1})$ (see, for instance, \eqref{eq:Gu orbit G2}). 
Taking the cohomology class, we have $[\alpha_{1}]-[\alpha_{2}]=-\sqrt{-1}[f^{-1}df]$ in $H^{1}(X,\mathbb{R})$. 
Then, by using \cite[Lemma 4.1]{KY2} again, we see that $-\sqrt{-1}[f^{-1}df]$ is actually an element in $2 \pi H^{1}(X,\mathbb{Z})$. 
Thus, we have $\widetilde{[\alpha_{1}]}=\widetilde{[\alpha_{2}]}$ and the injectivity is proved. 

We give the proof of surjectivity. 
Fix $[\nabla']\in \mathcal{M}$ arbitrarily. 
Then, $\nabla'$ can be written as $\nabla'=\nabla+\sqrt{-1}\alpha\cdot \mathrm{id}_{L}$ 
with some 1-form $\alpha$ on $X$. 
Then, it suffices to prove that $\alpha$ is closed for the surjectivity of $\mathcal{G}$. 
By $F_{\nabla'}^{0,2}=0$ and $F_{\nabla}^{0,2}=0$, we have $(d\alpha)^{0,2}=0$. 
Then, by Lemma \ref{lem:ker F02}, we can decompose $\alpha$ as $\alpha=\beta+d_{c}f$ with some closed 1-form $\beta$ and $f\in \Om^{0}$. 
For a parameter $t\in[0,1]$,  set $\nabla_{t}:=\nabla+\sqrt{-1}(\beta+td_{c}f)\cdot \mathrm{id}_{L}$ and 
\[\Theta(t):=\arg\left(\left(\omega+F_{\nabla_{t}}\right)^{n}/\omega^{n}\right), \]
where $\Theta(t)$ is the angle function of $\nabla_{t}$. 
Since $F_{\nabla_{t}}=F_{\nabla}+t\sqrt{-1}dd_{c}f$, $\Theta(t)$ is actually written as 
\[\Theta(t):=\arg\left(\left(\omega+F_{\nabla}+t\sqrt{-1}dd_{c}f)\right)^{n}/\omega^{n}\right). \]
Since $F_{\nabla}$ and $F_{\nabla'}$ are dHYM connections with phase $e^{\sqrt{-1}\theta}$, we know that 
\[
\Theta(1)=\arg\left(\left(\omega+F_{\nabla'}\right)^{n}/\omega^{n}\right)
=\theta
\quad\mbox{and}\quad
\Theta(0)=\arg\left(\left(\omega+F_{\nabla}\right)^{n}/\omega^{n}\right)=\theta. 
\]
Thus, we have 
\begin{equation}\label{unifPDF}
0=\Theta(1)-\Theta(0)=\int_0^{1}\frac{d}{dt}\bigg|_{t=s}\Theta(t)ds=:\mathcal{D}(f). 
\end{equation}
To compute $\Theta(t)'$, taking the $t$-derivative of the identity $\left(\omega+F_{\nabla_{t}}\right)^{n}=r_{\nabla_{t}}e^{\sqrt{-1}\Theta(t)}\omega^{n}$, 
we have $n\sqrt{-1}\left(\omega+F_{\nabla_{t}}\right)^{n-1}\wedge dd_{c}f=(r_{\nabla_{t}})'e^{\sqrt{-1}\Theta(t)}\omega^{n}+\sqrt{-1}r_{\nabla_{t}}\Theta(t)'e^{\sqrt{-1}\Theta(t)}\omega^{n}$. 
Thus, multiplying the both hand sides with $e^{-\sqrt{-1}\Theta(t)}$ and taking its imaginary part 
with applying \eqref{imissome} and \eqref{eq:vol nab2}, one can see that 
\begin{equation}\label{Dfddc}
\mathcal{D}(f)=\int_{0}^{1}\frac{d}{dt}\bigg|_{t=s}\Theta(t)ds=\int_{0}^{1}\left(\frac{n\om_{\n_s}^{n-1} \wedge d d_c f}{\om_{\n_s}^{n}}\right)ds. 
\end{equation}
Fix $x\in X$, $\xi\in T^{*}_{x}X$ and $s\in [0,1]$. 
Express $\xi$ as $\xi=\sum_{i=1}^{n}(a_{i}u^{i}+b_{i}v^{i})$ for an orthonormal basis $u_{1},\dots, u_{n}, v_{1},\dots,v_{n}$ of $T_{x}X$ with respect to $g$ 
satisfying relation $v_{i}=Ju_{i}$ and $-\sqrt{-1}F_{\n_{s}}^\sharp
=\sum_{i=1}^{n} \lambda_{i} (u^i \otimes v_i-v^i \otimes u_i )$ for some $\lambda_{i}=\lambda_{i}(x,s)$. 
We remark that, by \eqref{profdc}, the principal symbol of $dd_{c}$ is given by 
\[\sigma_{\xi}(dd_{c})=\xi\wedge J^{-1}\xi=\sum_{i=1}^{n}(a_{i}^2+b_{i}^2)u^{i}\wedge v^{i}+C,\]
where the term $C$ is some linear combination of $u^{i}\wedge v^{j}$ ($i\neq j$), $u^{i}\wedge u^{j}$ and $v^{i}\wedge v^{j}$. 
Then, by \eqref{omandometa} and \eqref{nvoleta}, one can see that 
the principal symbol of $\mathcal{D}$ is given by 
\begin{equation}\label{symbofTheta}
\sigma_{\xi}(\mathcal{D})=\int_{0}^{1}\left(\sum_{i=1}^{n}\frac{a_{i}^2+b_{i}^2}{1+\lambda_{i}^2}\right)ds. 
\end{equation}
Since each $\lambda_{i}$ is a continuous function for $x\in X$ and $s\in[0,1]$ and now $X$ is compact, 
there exists a constant $\lambda$ such that $|\lambda_{i}(x,s)|\leq \lambda$ for all $1\leq i\leq n$, $x\in X$ and $s\in [0,1]$. 
Then, by \eqref{symbofTheta}, we have 
\[\sigma_{\xi}(\mathcal{D})\geq \frac{1}{1+\lambda^2}|\xi|^2_{g}\]
and this implies that the second order linear PDE 
\eqref{unifPDF} is uniformly elliptic. 
Then, the strong maximum principle implies that $f$ is constant. 
(This is a detailed version of an argument in the proof of \cite[Lemma 2.3]{CXY}, which is another proof of \cite[Theorem 1.1]{JY}. )
Thus, we have $\alpha=\beta+d_{c}f=\beta$ and this is a closed 1-form. 
The proof is completed. 
\end{proof}

\begin{remark}
The proof of Theorem \ref{thm:smooth M} does not extend to the dDT setting. 
In the proof of the surjectivity, 
we first use the condition that $F_{\n'}^{0,2}=0$, which implies that 
$\n'=\n+\i(\beta + d_c f) \cdot \id_L$, where $\beta$ is a closed 1-form and $f \in \Om^0$. 
Since the curvature is invariant under the addition of closed 1-forms, 
the essential part to consider is the direction of $d_c \Om^0$. 
Then, we can focus on the argument on functions and use powerful tools such as the maximum principle. 

In other words, we can say that 
the surjectivity is a consequence of \cite[Theorem 1.1]{JY} 
stating the uniqueness of dHYM metrics up to multiplication of a positive constant. 
(Here, a dHYM metric is a Hermitian metric of a holomorphic line bundle 
whose Chern connection satisfies the dHYM condition.) 
It is because by \cite[Remark 6.12]{KY2}, we can identify the direction of $d_c \Om^0$ 
with the space of Hermitian metrics on a holomorphic line bundle, 
and the condition for a metric to be a dHYM metric 
corresponds to the condition for the Chern connection to be a dHYM connection.

On the other hand, there is no corresponding condition to $F_{\n'}^{0,2}=0$ 
in the dDT setting, so we cannot take a nice space like $d_c \Om^0$. 
In particular, we cannot use tools such as the maximum principle, 
and hence, 
the proof of Theorem \ref{thm:smooth M} does not extend to the dDT setting. 
\end{remark}

Let $\Aa_0$ be the space of Hermitian connections of $(L, h)$. 
As in the case of dDT connections, we can consider the following deformation map 
$\Ff = (\Ff_1, \Ff_2): \Aa_0 \rightarrow \Om^{0, 2}\oplus \Om^{2n}$ by 
\begin{equation*}
\begin{aligned}
\Ff (\n) = (\Ff_1 (\n), \Ff_2 (\n))
= \left( -\sqrt{-1}F_\n^{0,2}, \ \mathop{\mathrm{Im}}\left(e^{-\i \theta} (\omega + F_\nabla)^n\right) \right). 
\end{aligned}
\end{equation*}
Then, we see that $\Mm=\Ff^{-1}(0)/\Gg_U$. 
It is trivial to consider the deformation theory since the structure of $\Mm$ is determined 
by Theorem \ref{thm:smooth M}. 
However, we consider the linearization of the deformation map $\Ff$ since 
we have an application in the next subsection. 

\begin{lemma} \label{lem:isom}
Let $\delta_\n \Ff$ be the linearization of $\Ff$ at $\n \in \Ff^{-1}(0)$. 
Denote by $d^{\tilde *_{\n}}$ the formal adjoint of $d$ with respect to $\tilde \eta_\n$. 
Then, the map 
\[
D:=(\delta_\n \Ff) |_{ d^{\tilde *_{\n}} \Om^2}: 
d^{\tilde *_{\n}} \Om^2 \rightarrow \pi^{0,2} (d \Om^1) \oplus d \Om^{2n-1}, 
\]
where $\pi^{0,2}(\,\cdot\,):=(\,\cdot\,)^{0,2}: \Om^2 \to \Om^{0,2}$ is the projection, 
is an isomorphism. 
\end{lemma}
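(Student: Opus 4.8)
The plan is to compute $\delta_\n\Ff$ explicitly with the help of the balanced metric $\tilde{\eta}_\n$, reduce the statement to an injectivity and a surjectivity assertion using Hodge theory on $(X,\tilde{\eta}_\n)$, and then prove each.

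First I would compute the linearization. Writing a tangent vector to $\Aa_0$ at $\n$ as $\i b\cdot\id_L$ with $b\in\Om^1$, one has $\delta F_\n=\i\,db$, hence $\delta_\n\Ff_1(\i b)=(db)^{0,2}$ and $\delta_\n\Ff_2(\i b)=n\,\mathop{\mathrm{Im}}\big(\i e^{-\i\theta}(\om+F_\n)^{n-1}\big)\wedge db$. Since $\n$ is dHYM with phase $e^{\i\theta}$ its angle function equals the constant $\theta$, so by \eqref{imissome} this is $n\,\tilde{\om}_\n^{n-1}\wedge db$, which Proposition \ref{1stderiofF} rewrites as $-n!\,\tilde{\ast}_\n d^{\tilde{\ast}_\n}_{c} b$. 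In particular $\delta_\n\Ff$ kills the space $Z^1$ of closed $1$-forms, and, since $d\tilde{\om}_\n^{n-1}=0$ by Corollary \ref{balanced} (so that $\delta_\n\Ff_2(\i b)=n\,d(\tilde{\om}_\n^{n-1}\wedge b)$), its image lies in $\pi^{0,2}(d\Om^1)\oplus d\Om^{2n-1}$. As the $\tilde{\eta}_\n$-Hodge decomposition gives the $L^2$-orthogonal splitting $\Om^1=Z^1\oplus d^{\tilde{\ast}_\n}\Om^2$ and $\delta_\n\Ff$ vanishes on $Z^1$, the image of $D$ coincides with that of $\delta_\n\Ff$; so it suffices to show $D$ is injective and that $\delta_\n\Ff\colon\Om^1\to\pi^{0,2}(d\Om^1)\oplus d\Om^{2n-1}$ is onto.

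For injectivity I would take $b=d^{\tilde{\ast}_\n}\eta$ with $Db=0$, i.e.\ $(db)^{0,2}=0$ and $d^{\tilde{\ast}_\n}_{c} b=0$, and use Lemma \ref{lem:ker F02} to write $b=\beta+d_c f$ with $\beta$ closed and $f\in\Om^0$. Since $b$ is coexact it is $L^2(\tilde{\eta}_\n)$-orthogonal to $Z^1$, and in particular to $\beta$, so $\langle b,\beta\rangle_{L^2}=0$; and $d^{\tilde{\ast}_\n}_{c} b=0$ gives $\langle b,d_c f\rangle_{L^2}=\langle d^{\tilde{\ast}_\n}_{c} b,f\rangle_{L^2}=0$. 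Adding the two identities yields $\|b\|_{L^2}^2=\langle b,\beta+d_c f\rangle_{L^2}=0$, hence $b=0$.

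For surjectivity, given $(\rho,\tau)$ I would first pick a real $b_1\in\Om^1$ with $(db_1)^{0,2}=\rho$. The $2n$-form $\tau-\delta_\n\Ff_2(\i b_1)$ is exact, hence integrates to $0$, so it equals $\tilde{\ast}_\n g$ for a function $g$ with $\int_X g\,\mathrm{vol}_{\tilde{\eta}_\n}=0$. The scalar operator $L:=d^{\tilde{\ast}_\n}_{c} d_c$ on $\Om^0$ is formally self-adjoint, nonnegative (with $\langle Lf,f\rangle_{L^2}=\|d_c f\|_{L^2}^2$), and elliptic (its principal symbol $|\sigma_\xi(d_c)|^2$ is positive definite — the same ellipticity underlies the maximum principle step in the proof of Theorem \ref{thm:smooth M}), so $\ker L$ consists of the locally constant functions and $L$ maps $\Om^0$ onto the $\mathrm{vol}_{\tilde{\eta}_\n}$-mean-zero functions; hence $Lf=-g/n!$ has a solution $f$. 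Then $b:=b_1+d_c f$ satisfies $(db)^{0,2}=\rho$, because $dd_c f$ is of type $(1,1)$, while $\delta_\n\Ff_2(\i b)=\delta_\n\Ff_2(\i b_1)-n!\,\tilde{\ast}_\n Lf=\delta_\n\Ff_2(\i b_1)+\tilde{\ast}_\n g=\tau$; so $\delta_\n\Ff(\i b)=(\rho,\tau)$ and $D$ is onto. Together with injectivity this shows $D$ is an isomorphism (the Sobolev-completion version is identical, and $\pi^{0,2}(d\Om^1)=\bar\partial\Om^{0,1}$ is closed by $\bar\partial$-elliptic theory). The step I expect to be the main obstacle is this surjectivity argument, since the two equations $\Ff_1,\Ff_2$ are a priori coupled; what makes it go through is that the correcting directions $d_c f$ are invisible to the integrability equation $\Ff_1$, which decouples the problem into matching $\Ff_1$ first and then solving a single scalar elliptic equation for $\Ff_2$.
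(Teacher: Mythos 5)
Your proof is correct, and it departs from the paper's in both halves. For surjectivity the paper identifies the target as $\bigl\{\bigl(\pi^{0,2}(d\beta),\,\tilde*_\n d_c^{\tilde*_\n}\gamma\bigr)\bigr\}$ with $\beta\in d_c^{\tilde*_\n}\Om^2$, $\gamma\in d_c\Om^0$ using the two Hodge decompositions of $\Om^1$ (for $d$ and for $d_c$) and then writes down an explicit preimage $\alpha=\pi_{d^*}(\beta)+\pi_{d^*}(\gamma)/(-n!)$; you instead decouple the system, first matching the $(0,2)$-component by the very definition of $\pi^{0,2}(d\Om^1)$ and then correcting within the directions $d_cf$ (invisible to $\Ff_1$ since $dd_cf$ is of type $(1,1)$) by solving the single scalar elliptic equation $d_c^{\tilde*_\n}d_cf=-g/n!$, whose solvability is exactly the componentwise mean-zero condition guaranteed by exactness of $\tau-\delta_\n\Ff_2(\i b_1)$. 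For injectivity the paper appeals to $\Ff^{-1}(0)=\n+Z^1$ from Theorem \ref{thm:smooth M} (hence ultimately to the Jacob--Yau uniqueness argument) to get $\ker\delta_\n\Ff=Z^1$ and then uses $Z^1\cap d^{\tilde*_\n}\Om^2=\{0\}$; you give a direct $L^2(\tilde\eta_\n)$-orthogonality argument from Lemma \ref{lem:ker F02}, pairing $b=\beta+d_cf$ against each summand. Your route buys a proof of the lemma that is independent of the nonlinear uniqueness theorem and is arguably more transparent about why the two equations decouple; the paper's route is shorter given that Theorem \ref{thm:smooth M} and the double Hodge decomposition are already in place. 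The only point worth tightening in yours is the phrase ``mean-zero'': on a disconnected $X$ one needs (and has, by exactness on each component) mean zero componentwise, matching $(\ker L)^\perp$ for $L=d_c^{\tilde*_\n}d_c$ whose kernel is the locally constant functions.
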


\begin{proof}
We first show that the image of $\Ff$ is contained in $\pi^{0,2} (d \Om^1) \oplus d \Om^{2n-1}$. 
For any $\nabla' \in \Aa_0$, we know that $F_{\nabla'}-F_\nabla \in \i d \Om^1$. 
Then, it follows that 
\[
\Ff_1 (\nabla') = \Ff_1 (\nabla') - \Ff_1 (\nabla) \in \pi^{0,2} (d \Om^1), \quad
\Ff_2 (\nabla') = \Ff_2 (\nabla') - \Ff_2 (\nabla) \in d \Om^{2n-1}. 
\]
Thus, we see that the image of $D$ is contained in $\pi^{0,2} (d \Om^1) \oplus d \Om^{2n-1}$. 

By a similar computation as in \eqref{Dfddc} with Proposition \ref{1stderiofF}, we can see that 
\[
D(\alpha)=\left((d\alpha)^{0,2}, \ -n!\tilde *_{\n}d_{c}^{\tilde *_{\n}}\alpha \right) 
\]
for $\alpha \in d^{\tilde *_{\n}} \Om^2$. 
To prove the surjectivity of $D$, 
we omit the symbol $\sim$ over $\ast$ and the subscript $\nabla$ of $\ast$ for the simplicity of notations. 
First, note that for any $\alpha \in \Om^1$ 
\begin{equation} \label{eq:balanced 02}
\pi^{0,2} (d \alpha) = - \i \pi^{0,2} (d_c \alpha). 
\end{equation}
By \eqref{eq:Hodge d} and \eqref{eq:Hodge dc2}, we can decompose $\Om^{1}$ in two ways as 
\[
\Om^1= \Hh^1_{d_c} \oplus d_c \Om^0\oplus d_c^* \Om^{2}\quad\mbox{and}\quad
\Om^1= \Hh^1_{d}\oplus d \Om^0 \oplus d^* \Om^{2}, 
\]
where $\Hh^1_{d_c}$ and $\Hh^1_{d}$ are 
the space of $d_c$ harmonic 1-forms and $d$ harmonic 1-forms, respectively. 
Denote by 
\[
\pi_{\Hh}: \Om^1 \rightarrow \Hh^1_{d}, \quad
\pi_{d}: \Om^1 \rightarrow d \Om^0, \quad
\pi_{d^*}: \Om^1 \rightarrow d^* \Om^{2}
\]
the projections about the second decomposition. 
By \eqref{eq:balanced 02}, it follows that 
\[
\pi^{0, 2} (d \Om^1) 
= \i \pi^{0, 2} (d_c \Om^1) = \i \pi^{0, 2} (d_c d_c^* \Om^2). 
\]
By \eqref{eq:balanced 02} again, we have 
\[
\i \pi^{0, 2} (d_c d_c^* \Om^2) 
= \pi^{0, 2} \left( d d_c^* \Om^2 \right) 
= \pi^{0, 2} \left( d (\pi_{d^*} (d_c^* \Om^2) ) \right).
\]
Hence, we obtain 
\[
\pi^{0, 2} (d \Om^1) 
= \pi^{0, 2} \left( d (\pi_{d^*} (d_c^* \Om^2) ) \right). 
\]
On the other hand, we have 
\[
d \Om^{2n-1}
= * d_c^* \Om^1 
= * d_c^* d_c \Om^0 
= * d_c^* \left( \pi_{d^*} (d_c \Om^0) \right), 
\]
where the last equality follows from 
the fact $Z^{1} \subset \ker d_c^*$, which immediately follows from Proposition \ref{1stderiofF}. 
Thus, any element of $\pi^{0, 2} (d \Om^1) \oplus d \Om^{2n-1}$ 
is of the form 
$\left(\pi^{0, 2} \left( d \beta \right), * d_c^* \gamma \right)$
for 
$\beta \in d_c^* \Om^2$ and $\gamma \in d_c \Om^0$. 
Then, setting $\alpha = \pi_{d^*} (\beta) + \pi_{d^*} (\gamma)/(-n!) \in d^* \Om^2$, 
we have 
\[
\pi^{0,2}(d \alpha) 
= \pi^{0,2} (d \beta + d \gamma/(-n!)) 
= \pi^{0,2} (d \beta) - \i  \pi^{0,2} (d_c \gamma/(-n!)) 
= \pi^{0,2} (d \beta)
\]
and
\[
-n! * d_c^* \alpha 
= * d_c^* (-n! \beta + \gamma) = * d_c^* \gamma, 
\]
which implies that $D$ is surjective. 

To prove the injectivity of $D$, suppose $D(\alpha)=0$ for $\alpha\in d^{\ast}\Omega^{2}$. 
By the proof of Theorem \ref{thm:smooth M}, $\Ff^{-1}(0)= \n + Z^1$, where $Z^1$ is the space of closed 1-forms. 
Hence, we see that 
$\alpha \in \ker \delta_\n \Ff = T_\n \Ff^{-1}(0) = Z^1$. By the Hodge decomposition, we have $Z^1 \cap d^{\ast}\Omega^{2} = \{\, 0 \,\}$, 
which implies that $\alpha=0$. 
\end{proof}
\subsection{Obstructions to the existence of dHYM connections}
Let $X$ be a compact manifold with $\mathrm{dim}_{\mathbb{R}}X=2n$ and $L\to X$ be a smooth complex line bundle. 
Let $B\subset \mathbb{R}^{m}$ be an open ball with $0\in B$ 
and assume that a smooth family of K\"ahler structures $\{\,(\omega_{t},g_{t},J_{t})\mid t\in B\,\}$ on $X$, 
Hermitian metrics $\{\,h_{t}\mid t\in B\,\}$ of $L$ and constants $\{\,\theta_{t}\in\mathbb{R}\mid t\in B\,\}$ are given. 
Note that the Hermitian metrics on $L$ can vary unlike Subsection \ref{202106181615}. 
Further assume that a dHYM connection $\nabla_{0}$ 
of $(L,h_{0})$ on $(X,\omega_{0},g_{0},J_{0})$ with phase $e^{\sqrt{-1}\theta_{0}}$ is also given. 
In this subsection, we study the following question:
Can we extend $\nabla_{0}$ to a smooth family of dHYM connections 
$\n_t$ of $(L,h_{t})$ on $(X,\omega_{t},g_{t},J_{t})$ with phase $e^{\sqrt{-1}\theta_{t}}$
for $t \in B$?
The answer to the similar question for moduli spaces of special Lagrangian submanifolds is affirmative (see \cite[Theorem 3.21]{Marshall}) and so in this case. 

First, we pay attention to the necessary condition. 

\begin{proposition}
If there exists a smooth family of connections $\{\,\nabla_{t}\mid t\in B\,\}$ of $L$ with $\nabla_{t}|_{t=0}=\nabla_{0}$ so that $\nabla_{t}$ is a dHYM connection of $(L,h_{t})$ with phase $e^{\sqrt{-1}\theta_{t}}$ with respect to $(\omega_{t},g_{t},J_{t})$, 
then the given family $\{\,(\omega_{t},g_{t},J_{t},h_{t},\theta_{t})\mid t\in B\,\}$ and $\nabla_{0}$ should satisfy, for all $t\in B$, 
\begin{equation}\label{DKM2}
\left\{
\begin{aligned}
&[(F_{\nabla_{0}})^{(0,2)_{t}}]=0\quad\mbox{in }H^{0,2}_{\bar{\partial}_{t}},\\
&\left[\mathop{\mathrm{Im}}\left(e^{-\sqrt{-1}\theta_{t}}(\omega_{t}+F_{\nabla_{0}})^{n}\right)\right]=0\quad\mbox{in }H^{2n}_{dR}, 
\end{aligned}
\right.
\end{equation}
where $(0,2)_{t}$ is the $(0,2)$-part with respect to $J_{t}$ and 
$H^{0,2}_{\bar{\partial}_{t}}:=H^{0,2}_{\bar{\partial}_{t}}(X,J_{t})$ is 
the Dolbeault cohomology defined by the complex structure $J_{t}$. 
\end{proposition}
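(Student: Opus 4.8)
The plan is to use only one property of the given family: for each fixed $t \in B$ the curvatures $F_{\n_t}$ and $F_{\n_0}$ are cohomologous closed $2$-forms on $X$. This is immediate because $\n_t$ and $\n_0$ are Hermitian connections of the one smooth complex line bundle $L$, so $\n_t - \n_0$ is a $\C$-valued $1$-form and $F_{\n_t} - F_{\n_0} = d(\n_t - \n_0)$; since $F_{\n_t}$ and $F_{\n_0}$ are both $\i \R$-valued, $d\,\mathrm{Re}(\n_t - \n_0) = 0$, and hence $F_{\n_t} - F_{\n_0} = \i\, d\beta_t$ with $\beta_t := \mathrm{Im}(\n_t - \n_0) \in \Om^1$ real. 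This is the same observation as in the proof of Lemma~\ref{onimage G2}, and it does not require the Hermitian metrics $h_t$ to be independent of $t$. I would then rewrite each dHYM equation for $\n_t$ as an identity for $\n_0$ up to an exact term; smoothness of the family plays no role, since everything is done at a fixed $t$.

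First I would handle the $(0,2)$-condition. Decomposing into types with respect to $J_t$ and using $(F_{\n_t})^{(0,2)_t} = 0$ gives
\[
(F_{\n_0})^{(0,2)_t} = -\,\i\,(d\beta_t)^{(0,2)_t} = -\,\i\,\bar{\partial}_{t}\big(\beta_t^{(0,1)_t}\big),
\]
which is $\bar{\partial}_{t}$-exact, so $\big[(F_{\n_0})^{(0,2)_t}\big] = 0$ in $H^{0,2}_{\bar{\partial}_{t}}$. The only subtlety is that the type splitting and $\bar{\partial}_{t}$ are taken with respect to the moving complex structure $J_t$, but the computation is formally the same for every $t$.

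Next I would handle the volume-form condition. Both $\om_t + F_{\n_0}$ and $\om_t + F_{\n_t}$ are closed (as $\om_t$ is K\"ahler and the curvatures are closed) and their difference is the exact form $\i\, d\beta_t$. The $n$-th powers of two cohomologous closed forms differ by an exact form: if $\mu - \nu = d\beta$ with $d\mu = d\nu = 0$ then, since $2$-forms commute, $\mu^n - \nu^n = (\mu-\nu)\wedge \sum_{j=0}^{n-1}\mu^j\wedge\nu^{n-1-j} = d\big(\beta\wedge\sum_{j=0}^{n-1}\mu^j\wedge\nu^{n-1-j}\big)$. Hence $(\om_t+F_{\n_0})^n = (\om_t+F_{\n_t})^n + d\sigma_t$ for some $(2n-1)$-form $\sigma_t$. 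Multiplying by the constant $e^{-\i\theta_t}$, taking imaginary parts, and using $\mathop{\mathrm{Im}}\!\big(e^{-\i\theta_t}(\om_t+F_{\n_t})^n\big)=0$ yields $\mathop{\mathrm{Im}}\!\big(e^{-\i\theta_t}(\om_t+F_{\n_0})^n\big) = d\,\mathop{\mathrm{Im}}\!\big(e^{-\i\theta_t}\sigma_t\big)$, which is exact, so this class vanishes in $H^{2n}_{dR}$. I do not expect a genuine obstacle here: the entire content is that $F_{\n_t}$ and $F_{\n_0}$ are cohomologous, and the only care needed is with the $J_t$-dependent decomposition in the first equation and with the elementary factorization of $\mu^n-\nu^n$ in the second.
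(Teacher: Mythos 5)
Your proof is correct and follows essentially the same route as the paper: reduce everything to the fact that $F_{\nabla_t}-F_{\nabla_0}$ lies in $\sqrt{-1}\,d\Omega^1$, then observe that the $(0,2)_t$-part of an exact real $2$-form is $\bar{\partial}_t$-exact and that $n$-th powers of cohomologous closed $2$-forms differ by an exact form. The only (harmless) variation is how you establish the exactness: the paper writes $h_t=e^{2f_t}h_0$ and uses the reference connection $\nabla_0+df_t\cdot\mathrm{id}_L$ to get a real $1$-form $a_t$ with $F_{\nabla_t}=F_{\nabla_0}+\sqrt{-1}\,da_t$, whereas you deduce directly from the $\sqrt{-1}\,\mathbb{R}$-valuedness of both curvatures that the real part of $\nabla_t-\nabla_0$ is closed.
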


\begin{proof}
Define $f_{t}\in \Om^{0}$ by $h_{t}=e^{2f_{t}}h_0$. 
Then, we can easily check that 
\[
\widetilde{\nabla}_{t}:=\nabla_{0}+ df_{t}\cdot \mathrm{id}_{L}
\]
is a Hermitian connection of $(L,h_{t})$. 
Thus, there exists $a_{t}\in\Omega^{1}$ such that $\nabla_{t}=\widetilde{\nabla}_{t}+\sqrt{-1} a_{t}\cdot \mathrm{id}_{L}$. 
Then, from 
$F_{\nabla_{t}}=F_{\widetilde{\nabla}_{t}}+\sqrt{-1}da_{t} = F_{\nabla_{0}}+\sqrt{-1}da_{t}$, it follows that 
\[
(\omega_{t}+F_{\nabla_{t}})^{n}-(\omega_{t}+F_{\nabla_{0}})^{n}\in d\Omega^{2n-1}_{\mathbb{C}}. 
\]
Multiplying $e^{-\sqrt{-1}\theta_{t}}$ and taking the imaginary part, we have 
\begin{equation}\label{DKM0.5}
\mathop{\mathrm{Im}}\left(e^{-\sqrt{-1}\theta_{t}}(\omega_{t}+F_{\nabla_{t}})^{n}\right)-\mathop{\mathrm{Im}}\left(e^{-\sqrt{-1}\theta_{t}}(\omega_{t}+F_{\nabla_{0}})^{n}\right)\in d\Omega^{2n-1}. 
\end{equation}
Noting that the first term is zero since $\n_t$ is a dHYM connection with phase $e^{\sqrt{-1}\theta_{t}}$ 
with respect to $\omega_{t}$, 
we have the second equality in \eqref{DKM2}. 
Similarly, from $F_{\nabla_{t}}=F_{\nabla_{0}}+\sqrt{-1}da_{t}$, it follows that 
\begin{equation}\label{DKM1.3}
(F_{\nabla_{t}})^{(0,2)_{t}}-(F_{\nabla_{0}})^{(0,2)_{t}}=\sqrt{-1}(da_{t})^{(0,2)_{t}}
=\sqrt{-1}\bar{\partial}_{t}(a_{t}^{(0,1)_{t}})\in \bar{\partial}_{t}\Omega^{(0,1)_{t}}, 
\end{equation}
where symbols with subscript $t$ are defined by the complex structure $J_{t}$. 
Noting that the first term is zero since $\n_t$ is a dHYM connection with respect to the complex structure $J_{t}$, 
we have the first equality in \eqref{DKM2}, and the proof is completed. 
\end{proof}

We will show that \eqref{DKM2} is also a sufficient condition. 
For each $t\in B$, denote by $\mathcal{M}_{t}$ the moduli space of dHYM connections with phase $e^{\sqrt{-1}\theta_{t}}$ of $(L,h_{t})$ with respect to 
$(\omega_{t},g_{t},J_{t})$ and put 
\[
\Mm_{B'} = \bigcup_{t\in B'}\Mm_{t}, 
\]
for any subset $B'\subset B$. 

\begin{theorem}\label{deformdHYM}
Suppose that $\n_0$ is a dHYM connection of $(L,h_{0})$ on $(X,\omega_{0},g_{0},J_{0})$ with phase $e^{\sqrt{-1}\theta_{0}}$. 
If $\{\,(\omega_{t},g_{t},J_{t},h_{t},\theta_{t})\mid t\in B\,\}$ and $\nabla_0$ satisfy \eqref{DKM2}, 
then there exists an open set $B'\subset B$ containing $0$ such that $\mathcal{M}_{B'}$ is a $T^{b^{1}}$-bundle over $B'$. 
Especially, there exists a deformation  $B'\ni t\mapsto \nabla_{t}\in \mathcal{M}_{t}$ of $\nabla_0$ along $(\omega_{t},g_{t},J_{t},h_{t},\theta_{t})$. 
\end{theorem}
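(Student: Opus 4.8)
The plan is to fix a dHYM connection $\nabla_0$ satisfying \eqref{DKM2} and to build the family $\nabla_t$ by an implicit function theorem argument, exactly parallel to the proof of Theorem \ref{thm:moduli MG2 generic} but now using the isomorphism from Lemma \ref{lem:isom}. First I would reduce to a slice: as in Subsection \ref{sec:def dDT}, write every Hermitian connection of $(L,h_t)$ as $\widetilde\nabla_t + \sqrt{-1}a\cdot\mathrm{id}_L$ with $\widetilde\nabla_t=\nabla_0+df_t\cdot\mathrm{id}_L$ (where $h_t=e^{2f_t}h_0$), so that the moduli space $\Mm_t$ near $[\nabla_0]$ is identified with the zero set of the deformation map $\Ff^{(t)}(a)=\big(-\sqrt{-1}(F_{\widetilde\nabla_t}+\sqrt{-1}da)^{(0,2)_t},\ \mathrm{Im}(e^{-\sqrt{-1}\theta_t}(\omega_t+F_{\widetilde\nabla_t}+\sqrt{-1}da)^n)\big)$ restricted to a $d^{\tilde*_{\nabla_0}}$-slice. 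The condition \eqref{DKM2} guarantees that $\Ff^{(t)}(0)$ is exact, i.e.\ lies in $\pi^{(0,2)_t}(d\Omega^1)\oplus d\Omega^{2n-1}$, for every $t$; together with the computation that the image of the whole map lands there, this says the total map $B\times(d^{\tilde*}\Omega^2)\to \pi^{(0,2)}(d\Omega^1)\oplus d\Omega^{2n-1}$ is well defined with $\Ff^{(0)}(0)=0$.

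Next I would apply the implicit function theorem (after Banach completion) to this map at $(0,0)\in B\times(d^{\tilde*_{\nabla_0}}\Omega^2)$. The crucial input is that the partial linearization in the $a$-direction at $t=0$ is precisely the operator $D$ of Lemma \ref{lem:isom}, which is an isomorphism from $d^{\tilde*_{\nabla_0}}\Omega^2$ onto $\pi^{(0,2)}(d\Omega^1)\oplus d\Omega^{2n-1}$. Hence there is an open ball $B'\ni 0$ and a smooth map $t\mapsto a(t)$ with $a(0)=0$ solving $\Ff^{(t)}(a(t))=0$, i.e.\ a smooth family $\nabla_t:=\widetilde\nabla_t+\sqrt{-1}a(t)\cdot\mathrm{id}_L\in\Mm_t$ deforming $\nabla_0$; the elliptic regularity for the overdetermined elliptic system $(\delta\Ff^{(t)}, d^{\tilde*})$ (ellipticity is an open condition, so it persists for $t$ near $0$) shows the solutions are smooth. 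This already gives the ``especially'' clause.

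To upgrade this to the statement that $\Mm_{B'}$ is a $T^{b^1}$-bundle over $B'$, I would argue as follows. For each $t\in B'$ the connection $\nabla_t$ just constructed is a dHYM connection for $(X,\omega_t,g_t,J_t,h_t)$ with phase $e^{\sqrt{-1}\theta_t}$, so by Theorem \ref{thm:smooth M} the fibre $\Mm_t$ is homeomorphic to the torus $H^1(X,\mathbb{R})/2\pi H^1(X,\mathbb{Z})$, via the explicit map $\widetilde{[\alpha]}\mapsto[\nabla_t+\sqrt{-1}\alpha\cdot\mathrm{id}_L]$. Using the section $t\mapsto[\nabla_t]$ as a ``zero'' and this fibrewise identification, define $\Phi:B'\times\big(H^1(X,\mathbb{R})/2\pi H^1(X,\mathbb{Z})\big)\to\Mm_{B'}$, $(t,\widetilde{[\alpha]})\mapsto[\nabla_t+\sqrt{-1}\alpha\cdot\mathrm{id}_L]$. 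By Theorem \ref{thm:smooth M} this is a fibrewise homeomorphism, and it is a global homeomorphism once one checks continuity of the topology on $\Mm_{B'}$ (inherited from the $C^\infty$ quotient topology on $\bigcup_t\Aa_0/\Gg_U$, which is metrizable as in Remark \ref{rem:metric B0 G2}); smoothness of $t\mapsto\nabla_t$ makes $\Phi$ and its inverse continuous. Hence $\Mm_{B'}\cong B'\times T^{b^1}$ as a trivial $T^{b^1}$-bundle over $B'$, which proves the theorem.

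The main obstacle I expect is not the implicit function theorem step itself — Lemma \ref{lem:isom} does the heavy lifting — but verifying that the hypothesis \eqref{DKM2} is exactly what is needed to keep $\Ff^{(t)}(0)$ in the target space $\pi^{(0,2)_t}(d\Omega^1)\oplus d\Omega^{2n-1}$ as $t$ varies, and that the $t$-dependence (through $\omega_t,J_t,h_t,\theta_t$ and through the varying projection $\pi^{(0,2)_t}$ and varying slice) is smooth enough to run the IFT uniformly; the cohomological conditions in \eqref{DKM2} must be used precisely at the point where one identifies the obstruction to solvability of the linearized equation with a class in $H^{0,2}_{\bar\partial_t}\oplus H^{2n}_{dR}$, and checking that the image of the nonlinear map is genuinely exact (not merely closed) requires care with the fact that $F_{\widetilde\nabla_t}-F_{\nabla_0}=\sqrt{-1}d(df_t)$ is exact while $\omega_t$ itself need not be in a fixed class.
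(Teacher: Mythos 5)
Your proposal follows essentially the same route as the paper's proof: reduce to the slice $\nabla_0+df_t\cdot\mathrm{id}_L+\sqrt{-1}a\cdot\mathrm{id}_L$ with $a\in\Om^1_{d^{\tilde *_{\n_0}}}$, use \eqref{DKM2} (together with the exactness computations \eqref{DKM0.5} and \eqref{DKM1.3}) to see the deformation map lands in $\pi^{(0,2)_t}(d\Om^1)\oplus d\Om^{2n-1}$, apply the implicit function theorem via the isomorphism of Lemma \ref{lem:isom}, and invoke Theorem \ref{thm:smooth M} for the torus fibres. This is correct; the only trivial slip is that $F_{\widetilde\nabla_t}-F_{\nabla_0}=d(df_t)=0$ rather than merely exact, which only makes the verification easier.
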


Thus, dHYM connections are stable under small deformations of the K\"ahler structure and the Hermitian metric on $L$.

\begin{proof}
Define an infinite dimensional vector bundle 
$\pi:\mathcal{E}\to \cup_{t\in B}(\Aa_{0,t} /\Gg_{U,t})$ by 
\[\pi^{-1}(t, [\nabla']):=\pi^{(0,2)_{t}} (d \Om^1)\oplus d\Omega^{2n-1}, \]
where $\Aa_{0,t}$ is the set of all Hermitian connections of $(L,h_{t})$ and 
$\Gg_{U,t}$ is the group of unitary gauge transformations of $(L,h_{t})$. 
Note that $\pi^{-1}(t, [\nabla'])$ depends only on $t \in B$.
Define a map 
$\mathcal{F}_B : \cup_{t\in B}(\Aa_{0,t} /\Gg_{U,t}) \to \Omega^{2}_{\mathbb{C}}\oplus\Omega^{2n}$ 
by 
\[
\mathcal{F}_B (t, [\n']) = 
\left(-\sqrt{-1}(F_{\n'})^{(0,2)_{t}}, \ \mathop{\mathrm{Im}}\left(e^{-\sqrt{-1}\theta_{t}}(\omega_{t}+F_{\n'})^{n}\right) \right). 
\]
Then, by \eqref{DKM0.5}, \eqref{DKM1.3} and the assumption \eqref{DKM2}, 
we see that
\[\mathcal{F}_B (t, [\n'] ) \in \pi^{-1}(t, [\n']). \] 
In other words, 
$(t,[\n']) \mapsto \mathcal{F}_B (t,[\n'])$
gives a section of 
$\pi:\mathcal{E}\to \cup_{t\in B}(\Aa_{0,t} /\Gg_{U,t})$. 
Since $\mathcal{M}_{B'}$ is the inverse image of the zero section by $\mathcal{F}_{B}$ over $B'$, simply denoted by $\mathcal{F}_{B}^{-1}(0)$, 
we will apply the implicit function theorem to $\mathcal{F}_{B}$. 
Here, we remark that a neighborhood of $(0,[\n_{0}])$ in $\cup_{t\in B}(\Aa_{0,t} /\Gg_{U,t})$ can be identified with that of $(0,0)$ in 
$B\times \Om^1_{d^{\tilde *_{\n_{0}}}}$, where $\Om^1_{d^{\tilde *_{\n_{0}}}}$ is the space of $d^{\tilde *_{\n_{0}}}$-closed 1-forms on $X$. 
Actually, defining $p:B\times \Om^1_{d^{\tilde *_{\n_{0}}}}\to \cup_{t\in B}(\Aa_{0,t} /\Gg_{U,t})$ by 
\[p(t,a):=(t,[\nabla_{0}+df_{t}\cdot \mathrm{id}_{L}+\sqrt{-1}a\cdot\mathrm{id}_{L}]_{t}), \]
where $f_{t}\in \Om^{0}$ is defined by $h_{t}=e^{2f_{t}}h_0$ and 
$[\,\cdot\,]_{t}$ means the equivalence class defined by $\Gg_{U,t}$-action, we can repeat the proof of Proposition \ref{p3.12}. 
Thus, the tangent space of $(\Aa_{0,t} /\Gg_{U,t})|_{t=0}$ at $[\n_{0}]$ can be identified with $\Om^1_{d^{\tilde *_{\n_{0}}}}$. 
Since $d^{\tilde *_{\n_{0}}} \Om^2\subset \Om^1_{d^{\tilde *_{\n_{0}}}}$, 
we can consider the restricted linearization map 
\[
D=(\delta_{(0, \n_{0})} \Ff_B) |_{ \{\, 0 \,\} \times d^{\tilde *_{\n_{0}}} \Om^2}: 
d^{\tilde *_{\n_{0}}} \Om^2 \rightarrow \pi^{0,2} (d \Om^1) \oplus d \Om^{2n-1}, 
\]
which is an isomorphism by Lemma \ref{lem:isom}. 
Then, we can apply the implicit function theorem 
(after the Banach completion) 
to the above section $\mathcal{F}_B$, 
and we see that $\mathcal{F}_{B}^{-1}(0)$ is smooth near $(0, [\nabla_{0}])$. 
The space $\R^m \oplus \widetilde \Hh_{d, \n_{0}}^{1}$ 
is the complement of $\{\, 0 \,\} \times d^{\tilde *_{\n_{0}}} \Om^2$ 
in $\R^m \times \Om^1_{d^{\tilde *_{\n_{0}}}}$, 
which implies that $\dim \Mm = m + b^1$. 
Especially, this implies that there exists an open set $B'\subset B$ containing $0$ and 
a deformation $B'\ni t \to \nabla_{t}\in \mathcal{M}_{t}$ such that $\nabla_{t}|_{t=0}=\nabla_{0}$. 
Since $\mathcal{M}_{t}\cong T^{b^{1}}$ if $\mathcal{M}_{t}\neq\emptyset$ by Theorem \ref{thm:smooth M}, 
we see that $\mathcal{M}_{B'}$ is a $T^{b^{1}}$-bundle over $B'$.

Finally, we put a remark on a way to recover the regularity of elements in $\mathcal{F}_{B}^{-1}(0)$ around $(0,[\n_{0}])$ after the Banach completion. 
Assume that the regularity of a 1-form $a$ is $C^{k,\alpha}$ with $k\geq 2$ and $\alpha\in (0,1)$ and $d^{\tilde *_{\n_{0}}} a=0$. 
Further assume that $\nabla_{a}:=\nabla+df_{t}\cdot\mathrm{id}_{L}+\sqrt{-1}a\cdot\mathrm{id}_{L}$ 
satisfies $\mathcal{F}_{B}(t,[\nabla_{a}])=0$. 
Then, by $(da)^{(0,2)_{t}}=0$ and the $C^{k,\alpha}$-version of Lemma \ref{lem:ker F02}, we see that $a=b+(d_{c})_{t}f$ for 
some $C^{k,\alpha}$ closed 1-form $b$ and $C^{k-1,\alpha}$ function $f$. 
Then, $f$ should satisfies 
$\mathop{\mathrm{Im}}(e^{-\sqrt{-1}\theta_{t}}(\omega_{t}+F_{\n_{a}})^{n})=0$ 
which is equivalent to 
\[\Theta_{t}(f):=\arg\left(\left(\omega_{t}+F_{\nabla}+\sqrt{-1}d(d_{c})_{t}f)\right)^{n}/\omega_{t}^{n}\right)=\theta_{t}. \]
Then, as we see in the proof of Theorem \ref{thm:smooth M}, since the linearization of $\Theta_{t}$ with respect to $f$ is elliptic, 
$f$ is actually smooth by the standard Schauder theory. 
Let $b_{\Hh}$ be the harmonic part of $b$ which is of course smooth. 
Then, $\tilde{a}:=b_{\Hh}+(d_{c})_{t}f$ is a smooth 1-form satisfying $\mathcal{F}_{B}(t,[\nabla_{\tilde{a}}])=0$. 
This means that we can replace the representative of $[\nabla_{a}]$ with a smooth one and the proof is completed. 
\end{proof}
\appendix
\section{Basic identities and the Hodge decomposition}\label{AppA}
In this appendix, we collect some basic definitions and equations 
which are used throughout this paper. 
\SkipTocEntry \subsection{The Hodge-$\ast$ operator}
Let $V$ be an $n$-dimensional oriented real vector space with a scalar product $g$. 
By an abuse of notation, we also denote by $g$ the 
induced inner product on $\Lambda^k V^*$ from $g$. 
Let $\ast$ be the Hodge-$\ast$ operator.
The following identities are frequently used throughout this paper. 

For $\alpha, \beta \in \Lambda^k V^*$ and $v \in V$, we have 
\[
\begin{aligned}
\ast^2|_{\Lambda^k V^*} &= (-1)^{k(n-k)} {\rm id}_{\Lambda^k V^*}, & 
g(\ast \alpha, \ast \beta) &= g(\alpha, \beta), \\ 
i(v) \ast \alpha &= (-1)^k \ast (v^\flat \wedge \alpha), & 
\ast( i(v) \alpha) &= (-1)^{k+1} v^\flat \wedge \ast \alpha. 
\end{aligned}
\]
\SkipTocEntry \subsection{The $d^c$ operator}
Let $(X, J)$ be a complex manifold with a Hermitian metric $g$. 
For a differential form $\alpha \in \Om^\bullet$, set 
\[
J \alpha := \alpha(J (\,\cdot\,), \cdots, J (\,\cdot\,)).
\]
Then, the complex differential $d_c$ is defined by 
\[
d_c = J^{-1} \circ d \circ J = \i (\bp - \p). 
\]
In particular, for a function $f \in \Om^0$, we have
$d_c f = J^{-1} df = - df (J(\,\cdot\,))$. 
The formal adjoint $d_c^*$ of $d_c$ is given by 
\begin{equation}\label{A1.5?}
d_c^* = J^{-1} \circ d^* \circ J = - * d_c *. 
\end{equation}
In particular, for a 1-form $\alpha \in \Om^1$, we have
\[d_c^* \alpha = d^* (J \alpha) = d^* (\alpha (J(\,\cdot\,))). \]
It is immediate to see that $2 \sqrt{-1} \p \bp = d d_c$. 

We use the following in Subsection \ref{sec:prel dHYM}. 
\begin{lemma} \label{lem:gen id}
Define the associated 2-form $\om$ by $\om=g(J(\,\cdot\,),\,\cdot\,)$. 
Then, for any 1-form $\alpha \in \Om^1$, we have 
$$
\om^{n-1} \wedge \alpha = (n-1)! * J \alpha. 
$$
\end{lemma}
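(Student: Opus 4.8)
The plan is to reduce the identity to a pointwise statement, since both sides are $C^\infty(X)$-linear in $\alpha$ and hence determined by their values at each point. Fix $p\in X$ and choose an orthonormal basis $e_1,\dots,e_n,Je_1,\dots,Je_n$ of $T_pX$ adapted to $J$; such a basis exists because $g$ is Hermitian. Write $e^i$ and $\tilde e^i$ for the elements of the dual coframe dual to $e_i$ and $Je_i$, respectively. With respect to the orientation induced by $(g,J)$ one has $\vol = e^1\wedge\tilde e^1\wedge\cdots\wedge e^n\wedge\tilde e^n$ and $\om=\sum_{i=1}^n e^i\wedge\tilde e^i$, hence $\om^{n-1}=(n-1)!\sum_{i=1}^n \hat\Omega_i$ with $\hat\Omega_i:=\prod_{k\neq i}(e^k\wedge\tilde e^k)$. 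By $\R$-linearity it then suffices to check the identity for $\alpha=e^j$ and $\alpha=\tilde e^j$, and after relabelling the indices we may assume $j=1$.

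The remaining computations are short. On the left-hand side, each $\hat\Omega_i\wedge e^1$ with $i\neq 1$ vanishes because $\hat\Omega_i$ already contains $e^1$, while $\hat\Omega_1\wedge e^1=e^1\wedge\hat\Omega_1$ since $\hat\Omega_1$ has even degree; thus $\om^{n-1}\wedge e^1=(n-1)!\,e^1\wedge\hat\Omega_1$, and similarly $\om^{n-1}\wedge\tilde e^1=(n-1)!\,\tilde e^1\wedge\hat\Omega_1$. On the right-hand side, the convention $J\alpha=\alpha(J\,\cdot\,)$ gives $Je^1=-\tilde e^1$ and $J\tilde e^1=e^1$, and then $*e^1=\tilde e^1\wedge\hat\Omega_1$ and $*\tilde e^1=-e^1\wedge\hat\Omega_1$ follow directly from $\beta\wedge*\beta=\vol$ for a unit $1$-form $\beta$. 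Combining, $\om^{n-1}\wedge e^1=(n-1)!\,*(Je^1)$ and $\om^{n-1}\wedge\tilde e^1=(n-1)!\,*(J\tilde e^1)$, which is the claim.

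A coframe-free variant I could use instead is to start from the standard pointwise identity $*\om=\om^{n-1}/(n-1)!$, rewrite $\om^{n-1}\wedge\alpha=(n-1)!\,\alpha\wedge*\om$ (legitimate because $*\om$ has even degree), and then invoke the Hodge-$*$ contraction formula $*(i(v)\om)=-v^\flat\wedge*\om$ from this appendix together with $i(\alpha^\sharp)\om=-J\alpha$ to get $\alpha\wedge*\om=*(J\alpha)$ in one line. In either approach the content is completely elementary; the only thing that requires care is the bookkeeping of signs and orientation — in particular the sign in $Je^i=-\tilde e^i$ as opposed to $i(e_i)\om=+\tilde e^i$, and the orientation normalisation making $\om^n=n!\,\vol$ — so I would pin these conventions down explicitly at the outset. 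I do not expect any genuine obstacle.
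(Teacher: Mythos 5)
Your proof is correct. Your primary argument (reduction to a pointwise check on an adapted orthonormal coframe $e^i,\tilde e^i$, with the sign bookkeeping $Je^1=-\tilde e^1$, $*e^1=\tilde e^1\wedge\hat\Omega_1$, $*\tilde e^1=-e^1\wedge\hat\Omega_1$ all verified correctly) is a more elementary, computational route than the paper's. The paper instead gives a three-line coordinate-free argument: take the vector field $v$ with $i(v)\om=\alpha$, observe $J\alpha=g(Jv,J\,\cdot\,)=g(v,\cdot)=v^\flat$, and compute $\om^{n-1}\wedge\alpha=i(v)(\om^n/n)=(n-1)!\,i(v)\vol=(n-1)!\,*v^\flat$. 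Your sketched ``coframe-free variant'' --- using $*\om=\om^{n-1}/(n-1)!$ together with $*(i(v)\om)=-v^\flat\wedge*\om$ and $i(\alpha^\sharp)\om=-J\alpha$ --- is essentially the same argument as the paper's, just routed through $*\om$ rather than through $i(v)\vol$. The coframe computation buys transparency about where each sign comes from (and makes the distinction between $J\alpha$ and $i(\alpha^\sharp)\om$ impossible to fumble), while the paper's version is shorter and avoids any appeal to a choice of basis; both are complete.
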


\begin{proof}
Take a vector field $v$ such that $i(v) \om = \alpha$. 
Note that $J \alpha = g(J v, J \,\cdot\,) = g(v, \,\cdot\,)$. 
Then, we have 
\begin{equation*}
\om^{n-1} \wedge \alpha = i(v) (\omega^n/n) = (n-1)! i(v) {\rm vol}
= (n-1)! * J \alpha. 
\end{equation*}
\end{proof}
\SkipTocEntry \subsection{The Hodge decomposition}
Let $(X, g)$ be a compact oriented Riemannian manifold. 
By the Hodge decomposition, for any $k \geq 0$, 
\begin{equation}\label{eq:Hodge d}
\begin{aligned}
\Om^k = \Hh^k_d \oplus d \Om^{k-1} \oplus d^* \Om^{k+1}, 
\end{aligned}
\end{equation}
where $\Hh^k_d$ is the space of harmonic $k$-forms. 
Denote by $\Delta = d d^* + d^* d$ the Laplacian with respect to $g$. 

Let $(X,J)$ be a compact complex manifold with a Hermitian metric $g$. 
Then, 
$\{\,d_c: \Omega^{k} \to \Omega^{k+1}\,\}_{k \geq 0}$ is an elliptic complex. 
Indeed, 
the principal symbol of $d_{c}$ is given by 
\begin{equation}\label{profdc}
(\sigma_{\xi}(d_{c}))(\alpha)=J^{-1}(\xi\wedge J\alpha)
\end{equation}
for $x \in X, \xi \in T_x^*X$ and $\alpha \in \Lambda^k T^*_x X$. 
If $(\sigma_{\xi}(d_{c}))(\alpha)=0$ for $\xi \neq 0$, there exists $\beta \in \Lambda^{k-1} T^*_x X$ 
such that $J \alpha = \xi \wedge \beta$. 
Then, $\alpha = J^{-1} (\xi \wedge \beta) = J^{-1} (\xi \wedge J (J^{-1} \beta)) 
\in \Im(\sigma_{\xi}(d_{c}))$. 

Then, the Hodge Theory is applicable for the $d_c$-Laplacian 
$\Delta_{d_c} = d_c d_c^* + d_c^* d_c$, and we have 
\begin{equation}\label{eq:Hodge dc2}
\begin{aligned} 
\Om^k = \Hh^k_{d_c} \oplus d_c \Om^{k-1} \oplus d_c^* \Om^{k+1}, 
\end{aligned}
\end{equation}
where $\Hh^k_{d_c}= \{\,\alpha\in\Omega^{k} \mid d_{c}\alpha=0\mbox{ and }d_{c}^{*}\alpha=0\,\}$. 

For the rest of the section, we suppose that $(X, J, g)$ is a compact K\"ahler manifold. 
It is well-known that 
\[
\Delta = \Delta_{d_c}, \quad 
d d_c = -d_c d, \quad
d^* d_c = - d_c d^*, \quad
d_c^* d = - d d_c^*. 
\]
Then, we have $\Hh^k_{d} = \Hh^k_{d_c}$ and applying \eqref{eq:Hodge dc2} to \eqref{eq:Hodge d}, 
we have 
\[
\Om^k = \Hh^k_d \oplus d d_c \Om^{k-2} \oplus d d_c^* \Om^{k} \oplus 
d^* d_c \Om^{k} \oplus d^* d_c^* \Om^{k+2}. 
\]
Note that this is the orthogonal decomposition with respect to the $L^2$ inner product. 
By this decomposition, we immediately see 
\begin{equation}\label{eq:intersection d dc}
d \Om^k \cap d_c \Om^k = d d_c \Om^{k-1}. 
\end{equation}

Using this, we see the following. 
\begin{lemma}\label{lem:ker F02}
Define a map $\Tt: \Om^1 \rightarrow \Om^{0,2}$ by 
\[
\Tt (a) = \pi^{0,2} (d a), 
\]
where $\pi^{0,2}(d a)$ is the $(0,2)$-part of $d a \in \Om^2$. Then, 
$\ker \Tt
= Z^1 \oplus d_c \Om^0,  
$
where $Z^1$ is the space of closed 1-forms. 
\end{lemma}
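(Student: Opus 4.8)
The plan is to prove the decomposition $\ker \Tt = Z^1 \oplus d_c \Om^0$ by two inclusions, working over the compact K\"ahler manifold $X$. The inclusion $\supseteq$ is the easy direction: if $a \in Z^1$ then $da = 0$, so certainly $\pi^{0,2}(da) = 0$; and if $a = d_c f$ for $f \in \Om^0$, then $da = d d_c f = 2\sqrt{-1}\, \p\bp f$ is of type $(1,1)$, so again $\pi^{0,2}(da) = 0$. Also the sum is direct: by the Hodge decomposition $Z^1 = \Hh^1_d \oplus d\Om^0$, while $d_c\Om^0$ is $L^2$-orthogonal to $\Hh^1_d$ (harmonic forms are $d_c$-closed and $d_c$-coclosed on a K\"ahler manifold, hence orthogonal to the image of $d_c$) and $d\Om^0 \cap d_c\Om^0 = dd_c\Om^{-1} = \{0\}$ by \eqref{eq:intersection d dc}; so $Z^1 \cap d_c\Om^0 = (d\Om^0) \cap (d_c\Om^0) = \{0\}$.

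For the nontrivial inclusion $\subseteq$, suppose $a \in \Om^1$ satisfies $\pi^{0,2}(da) = 0$. Since $F_\nabla$ is $\sqrt{-1}\mathbb{R}$-valued type-$(1,1)$ reasoning is not needed here — instead I would argue directly on $da$. The real $2$-form $da$ has $\pi^{0,2}(da) = 0$, and since $da$ is real, $\pi^{2,0}(da) = \overline{\pi^{0,2}(da)} = 0$ as well; hence $da$ is of type $(1,1)$. First I would decompose $a = \alpha + d\phi + d^*\psi$ by \eqref{eq:Hodge d} with $\alpha \in \Hh^1_d$, $\phi \in \Om^0$, $\psi \in \Om^2$. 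The harmonic and exact parts $\alpha + d\phi$ lie in $Z^1$, so it suffices to treat $a' := d^*\psi$, which still satisfies: $da' = da$ is type $(1,1)$. So I am reduced to showing: a coexact $1$-form $a'$ with $da'$ of type $(1,1)$ differs from an element of $Z^1$ by something in $d_c\Om^0$; equivalently, since $a'$ is coexact, I want $a' \in d_c\Om^0$ (its $Z^1$-component will then vanish after projecting).

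The key computation is the following. Since $da'$ is type $(1,1)$ and real, $d_c(da') = \sqrt{-1}(\bp - \p)(da')$; applying $J$ to a $(1,1)$-form, $J(da') = da'$, so $d_c a'$ and $d a'$ are related via $d(d_c a') = d_c(d a')$... more cleanly: $da'$ is $(1,1)$ iff $J(da') = da'$ iff $d_c a'$ is closed? Let me instead use: $d_c a' = J^{-1}(d(Ja'))$ and track types. The cleanest route is to observe $da' \in \Om^{1,1}$ forces $d a' = J(da')$, and since on a compact K\"ahler manifold $dd_c = -d_cd$, one gets $d(a' - \text{(closed)}) \in d_c\Om^1$, then invoke \eqref{eq:intersection d dc}: $da' \in d\Om^1 \cap d_c\Om^1 = dd_c\Om^0$, so $da' = dd_c f$ for some $f \in \Om^0$. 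Then $a' - d_c f$ is closed, hence in $Z^1$; but $a'$ is coexact and $d_c f$ decomposes via the Hodge decomposition — projecting onto $d^*\Om^2$ shows $a' - \pi_{d^*}(d_c f) \in Z^1 \cap d^*\Om^2 = \{0\}$... the bookkeeping here is exactly the obstacle.

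The main obstacle will be organizing these Hodge-theoretic projections so that the $Z^1$-part and the $d_c\Om^0$-part are extracted cleanly without circularity — in particular showing that from $da = dd_c f$ one can genuinely write $a = (\text{closed}) + d_c f'$ with $f'$ possibly differing from $f$ by a harmonic-type correction. This is where the K\"ahler identities $\Hh^1_d = \Hh^1_{d_c}$ and the refined decomposition $\Om^1 = \Hh^1_d \oplus dd_c^*\Om^1 \oplus d^*d_c^*\Om^2 \oplus \cdots$ (specialized to $1$-forms: $\Om^1 = \Hh^1_d \oplus d\Om^0 \oplus d_c\Om^0 \oplus d^*d_c^*\Om^2$, using that $d\Om^0$ and $d_c\Om^0$ together span $\Hh^{1\perp} \cap \ker(\text{the last piece})$) do the work: one checks the last summand $d^*d_c^*\Om^2$ is killed by $\Tt$ only if it vanishes, by an integration-by-parts / ellipticity argument. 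I expect the quickest writeup applies \eqref{eq:intersection d dc} directly after reducing to $da \in \Om^{1,1}$, yielding $da \in dd_c\Om^0$, and then finishes by a one-line Hodge-decomposition projection argument.
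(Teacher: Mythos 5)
Your proposal is correct and follows essentially the same route as the paper: use that $da$ is real to upgrade $\pi^{0,2}(da)=0$ to $da\in\Om^{1,1}$ (equivalently $J\,da=da$, so $da=d_c(Ja)$), then apply \eqref{eq:intersection d dc} to get $da\in d\Om^1\cap d_c\Om^1=dd_c\Om^0$. The ``bookkeeping obstacle'' you worry about at the end is not actually there: once $da=dd_cf$, the form $a-d_cf$ is closed, so $a=(a-d_cf)+d_cf\in Z^1+ d_c\Om^0$ immediately, with no reduction to coexact forms and no further Hodge-theoretic projection required.
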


\begin{proof}
Suppose that $a \in \ker \Tt$. 
Since $d a$ is a real 2-form, 
$\pi^{0,2} (d a) = 0$ if and only if $da \in \Om^{1,1}$, that is, 
$J da = da$. 
Then, by \eqref{eq:intersection d dc}, we see that 
$da \in d \Om^1 \cap d_c \Om^1 = d d_c \Om^0$, 
which implies that $a \in Z^1 \oplus d_c \Om^0$. 

Conversely, take any $a \in Z^1 \oplus d_c \Om^0$. 
Then, $d a \in d d_c \Om^0 = \i \p \bp \Om^0$,  
which implies that $da \in \Om^{1,1}$, and hence, $a \in \ker \Tt$. 
\end{proof}
\section{Basics on $G_2$-geometry}\label{sec:G2 geometry}
In this appendix, we collect some basic definitions and equations on 
$G_2$-geometry which we needed in the calculations
in this paper for the reader's convenience. 

Let $V$ be an oriented $7$-dimensional vector space. A \emph{$G_2$-structure} on $V$ is 
a 3-form $\varphi \in \Lambda^3 V^*$ such that there is a positively oriented basis 
$\{\, e_i \,\}_{i=1}^7$ of $V$ 
with the dual basis $\{\, e^i \,\}_{i=1}^7$ of $V^\ast$ satisfying 
\begin{equation} \label{varphi}
\varphi = e^{123} + e^{145} + e^{167} + e^{246} - e^{257} - e^{347} - e^{356},
\end{equation}
where $e^{i_1 \dots i_k}$ is short for $e^{i_1} \wedge \cdots \wedge e^{i_k}$. Setting $\vol := e^{1 \cdots 7}$, 
the 3-form $\varphi$ uniquely determines an inner product $g_\varphi$ via 
\begin{equation} \label{eq:form-1def}
g_\varphi(u,v)\; \vol = \dfrac16 i(u) \varphi \wedge i(v) \varphi \wedge \varphi 
\end{equation}
for $u,v \in V$. 
It follows that any oriented basis $\{\, e_i \,\}_{i=1}^7$ for which \eqref{varphi} holds is orthonormal with respect to $g_\varphi$. Thus, the Hodge-dual of $\varphi$ with respect to $g_\varphi$ is given by 
\begin{equation} \label{varphi*}
\ast \varphi = e^{4567} + e^{2367} + e^{2345} + e^{1357} - e^{1346} - e^{1256} - e^{1247}.
\end{equation}
The stabilizer of $\varphi$ is known to be the exceptional $14$-dimensional simple Lie group 
$G_2 \subset {\rm GL}(V)$. The elements of $G_2$ preserve both $g_\varphi$ and $\vol$, that is, 
$G_2 \subset {\rm SO}(V, g_\varphi)$.

We summarize important well-known facts about the decomposition of tensor products of $G_2$-modules into irreducible summands. 
Denote by $V_k$ the $k$-dimensional irreducible $G_2$-module if there is a unique such module. For instance, $V_7$ is the irreducible $7$-dimensional $G_2$-module $V$ from above, 
and $V_7^* \cong V_7$. For its exterior powers, we obtain the decompositions
\begin{equation} \label{eq:DiffForm-V7}
\begin{array}{rlrl}
\Lambda^0 V^* \cong \Lambda^7 V^* \cong V_1, \quad
& \Lambda^2 V^*  \cong \Lambda^5 V^* \cong V_7 \oplus  V_{14},\\[2mm]
\Lambda^1 V^* \cong \Lambda^6 V^* \cong V_7, \quad
& \Lambda^3 V^* \cong \Lambda^4 V^* \cong V_1 \oplus V_7 \oplus V_{27},
\end{array}
\end{equation}
where $\Lambda^k V^* \cong \Lambda^{7-k} V^*$ due to the $G_2$-invariance of the Hodge isomorphism $\ast: \Lambda^k V^* \to \Lambda^{7-k} V^*$. We denote by $\Lambda^k_\l V^* \subset \Lambda^k V^*$ the subspace isomorphic to $V_\l$. 
Let 
\[
\pi^k_\l: \Lambda^k V^* \rightarrow \Lambda^k_\l V^*
\]
be the canonical projection. 
Identities for these spaces we need in this paper are as follows. 
\begin{equation}\label{decom-L-V7}
\begin{aligned}
\Lambda^2_7 V^* =& \{\, i(u) \varphi \mid u \in V \,\}
= \{\, \alpha \in \Lambda^2 V^* \mid * (\varphi \wedge \alpha) = 2 \alpha \,\},\\
\Lambda^2_{14} V^* =& \{\, \alpha \in \Lambda^2 V^* \mid \ast \varphi \wedge \alpha = 0\,\} 
= \{\, \alpha \in \Lambda^2 V^* \mid * (\varphi \wedge \alpha) = - \alpha \,\},\\
\Lambda^3_1 V^* =& \R \varphi, \\
\Lambda^3_7 V^* =& \{\, i(u) * \varphi \in \Lambda^3 V^* \mid u \in V \,\}. 
\end{aligned}
\end{equation}
Let $S^k V_\l$ be the space of symmetric $k$-tensors on $V_\l$. 
We use the following irreducible decompositions in this paper. 
\begin{equation}\label{eq:decomp G2 symm}
\begin{aligned}
S^2 V_7 =& V_1 \oplus V_{27},\\
S^2 V_{14} =& V_1 \oplus V_{27} \oplus V_{77}, \\
V_7 \otimes V_{14} =& V_7 \oplus V_{27} \oplus V_{64}. 
\end{aligned}
\end{equation}
The following equations are well-known and useful in this paper. 

\begin{lemma} \label{lem:G2 identities}
For any $u \in V$, we have the following identities. 
\[
\begin{aligned}
\varphi \wedge i(u) * \varphi &= -4 * u^{\flat}, 
\\
* \varphi \wedge i(u) \varphi &= 3 * u^{\flat}, \\
\varphi \wedge i(u) \varphi &= 2 * (i(u) \varphi) = 2 u^{\flat} \wedge * \varphi. 
\end{aligned}
\]
\end{lemma}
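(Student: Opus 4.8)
The plan is to reduce all three identities to a single linear-algebra computation on $V$ and to deduce two of them formally from it. Each side of each equation is linear in $u$, and all of $\varphi$, $*\varphi$, the metric $g_\varphi$, the musical isomorphism $\flat$ and the Hodge star $*$ are preserved by $G_2 \subset {\rm SO}(V, g_\varphi)$; hence both sides are $G_2$-equivariant maps $V \to \Lambda^\bullet V^*$. Since $G_2$ acts transitively on the unit sphere of $(V, g_\varphi)$, it suffices to verify each identity for the single vector $u = e_1$ in a basis putting $\varphi$ and $*\varphi$ into the normal forms \eqref{varphi} and \eqref{varphi*}.

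First I would check the middle identity $*\varphi \wedge i(u)\varphi = 3 * u^\flat$ directly. From \eqref{varphi} one has $i(e_1)\varphi = e^{23} + e^{45} + e^{67}$, and the left-hand side, being $G_2$-equivariant and valued in $\Lambda^6 V^* \cong V_7 \cong \Lambda^1 V^*$, is a multiple of $*e^1 = e^{234567}$. Wedging $e^{23}+e^{45}+e^{67}$ with \eqref{varphi*} and collecting the terms with index set $\{2,\dots,7\}$ leaves exactly $e^{4567}\wedge e^{23} + e^{2367}\wedge e^{45} + e^{2345}\wedge e^{67} = 3\, e^{234567}$. This single wedge computation — essentially the bookkeeping of three permutation signs — is the only genuine work in the proof and the one place where care is needed.

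The first identity then follows with no further computation. Applying $i(u)$ to $\varphi \wedge *\varphi = g_\varphi(\varphi,\varphi)\,\vol = 7\,\vol$, the Leibniz rule gives $i(u)(\varphi\wedge *\varphi) = i(u)\varphi \wedge *\varphi - \varphi \wedge i(u)*\varphi$; since $i(u)\varphi \wedge *\varphi = *\varphi \wedge i(u)\varphi = 3 * u^\flat$ by the middle identity and $i(u)\vol = *u^\flat$, this reads $3 * u^\flat - \varphi \wedge i(u)*\varphi = 7 * u^\flat$, whence $\varphi \wedge i(u)*\varphi = -4 * u^\flat$. For the third identity, recall from \eqref{decom-L-V7} that $i(u)\varphi \in \Lambda^2_7 V^* = \{\alpha \mid *(\varphi \wedge \alpha) = 2\alpha\}$; applying $*$ to $*(\varphi \wedge i(u)\varphi) = 2\, i(u)\varphi$ and using $*^2 = \mathrm{id}$ (Appendix \ref{AppA}) gives $\varphi \wedge i(u)\varphi = 2*(i(u)\varphi)$, and the interior-product identity $*(i(v)\alpha) = (-1)^{|\alpha|+1} v^\flat \wedge *\alpha$ of Appendix \ref{AppA}, applied with $\alpha = \varphi$, gives $*(i(u)\varphi) = u^\flat \wedge *\varphi$. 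Combining, $\varphi \wedge i(u)\varphi = 2*(i(u)\varphi) = 2 u^\flat \wedge *\varphi$, which finishes the proof.
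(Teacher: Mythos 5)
Your proof is correct and complete: the middle identity is verified by a direct wedge computation at $u=e_1$ (legitimately reduced to that single case by $G_2$-equivariance, linearity in $u$, and transitivity of $G_2$ on the unit sphere), and the other two identities follow formally from it via $\varphi\wedge *\varphi = 7\,\vol$ together with the characterization of $\Lambda^2_7V^*$ in \eqref{decom-L-V7} and the interior-product identities of Appendix \ref{AppA}. The paper itself states this lemma as well-known and gives no proof, so there is nothing to compare against; your argument is a valid self-contained verification consistent with the conventions \eqref{varphi} and \eqref{varphi*}.
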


The following equations are useful in Appendix \ref{sec:dDT irr decomp}. 

\begin{lemma} \label{lem:G2id high}
For any $u \in V$ and $\beta \in \Lambda^2_{14} V^*$, we have the following. 
\begin{align}
(i(u) \varphi)^3 &= 6 |u|^2 * u^\flat, \label{eq:G2id high 1}\\
(i(u) \varphi)^2 \wedge \beta &= 2 * \varphi \wedge u^\flat \wedge i(u) \beta, \label{eq:G2id high 2}\\
(i(u) \varphi) \wedge \beta^2 &= - |\beta|^2 * u^\flat + \varphi \wedge i(u) (\beta^2). \label{eq:G2id high 3}
\end{align}
\end{lemma}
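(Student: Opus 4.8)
The plan is to derive all three identities formally from the contraction formulas of Lemma~\ref{lem:G2 identities} and the two descriptions of $\Lambda^2_{14} V^*$ in \eqref{decom-L-V7}, using only the graded Leibniz rule $i(u)(\alpha\wedge\gamma) = (i(u)\alpha)\wedge\gamma + (-1)^{\deg\alpha}\alpha\wedge i(u)\gamma$ together with $i(u)u^\flat = |u|^2$, $i(u)\vol = *u^\flat$, $i(u)(*u^\flat) = 0$ and $\alpha\wedge *\alpha = |\alpha|^2\vol$; no adapted basis is needed.

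The first step is to record the auxiliary identity
\[
(i(u)\varphi)^2 = 2|u|^2\, *\varphi - 2\, u^\flat\wedge i(u)*\varphi,
\]
obtained by applying $i(u)$ to the relation $\varphi\wedge i(u)\varphi = 2\,u^\flat\wedge *\varphi$ of Lemma~\ref{lem:G2 identities} and using $i(u)i(u)\varphi = 0$. For \eqref{eq:G2id high 1} I would wedge this with $i(u)\varphi$: the first term gives $2|u|^2(*\varphi\wedge i(u)\varphi) = 6|u|^2\,*u^\flat$ by Lemma~\ref{lem:G2 identities}, and the second term carries the factor $(i(u)*\varphi)\wedge(i(u)\varphi)$, which vanishes because applying $i(u)$ to $*\varphi\wedge i(u)\varphi = 3\,*u^\flat$ gives $(i(u)*\varphi)\wedge(i(u)\varphi) = 3\,i(u)(*u^\flat) = 0$. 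For \eqref{eq:G2id high 2} I would wedge the auxiliary identity with $\beta\in\Lambda^2_{14}V^*$: the term $2|u|^2\,*\varphi\wedge\beta$ drops out because $*\varphi\wedge\beta = 0$, and in the remaining term I substitute $(i(u)*\varphi)\wedge\beta = -*\varphi\wedge i(u)\beta$, which comes from applying $i(u)$ to $*\varphi\wedge\beta = 0$; reordering the wedge factors (noting $\deg u^\flat$ is odd and $\deg *\varphi$ is even) produces exactly $2\,*\varphi\wedge u^\flat\wedge i(u)\beta$.

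For \eqref{eq:G2id high 3} the starting point is that $\beta\in\Lambda^2_{14}V^*$ satisfies $\varphi\wedge\beta = -*\beta$ (the second description in \eqref{decom-L-V7}), so that $\beta^2\wedge\varphi = -\beta\wedge *\beta = -|\beta|^2\vol$. Applying $i(u)$ to this equation, the right-hand side becomes $-|\beta|^2\,*u^\flat$, while the Leibniz rule on the left gives $(i(u)\beta^2)\wedge\varphi + \beta^2\wedge i(u)\varphi$; rearranging these two wedge products with the correct signs turns the identity into $i(u)\varphi\wedge\beta^2 = -|\beta|^2\,*u^\flat + \varphi\wedge i(u)(\beta^2)$, which is \eqref{eq:G2id high 3}.

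The computations are all short, so the only real care required is the sign bookkeeping in the graded Leibniz rule and picking the appropriate one of the two equivalent descriptions of $\Lambda^2_{14}V^*$ at each step. As an independent check, or an alternative proof, one can instead use the transitivity of $G_2$ on nonzero vectors to reduce to $u = |u|e_1$, note $i(e_1)\varphi = e^{23}+e^{45}+e^{67}$, and verify the three identities directly from the explicit normal forms \eqref{varphi} and \eqref{varphi*} --- checking \eqref{eq:G2id high 2} and \eqref{eq:G2id high 3} on a basis of $\Lambda^2_{14}V^*$, since both sides are linear in $\beta$.
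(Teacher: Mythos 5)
Your proof is correct and follows essentially the same route as the paper: all three identities are obtained by contracting known wedge identities (from Lemma \ref{lem:G2 identities} and the characterizations of $\Lambda^2_7 V^*$, $\Lambda^2_{14}V^*$ in \eqref{decom-L-V7}) with $i(u)$ via the graded Leibniz rule, and your argument for \eqref{eq:G2id high 3} coincides with the paper's. The only cosmetic difference is that for \eqref{eq:G2id high 1} and \eqref{eq:G2id high 2} you first extract the auxiliary formula $(i(u)\varphi)^2 = 2|u|^2\,{*\varphi} - 2\,u^\flat\wedge i(u){*\varphi}$ and then wedge, whereas the paper contracts the corresponding top-degree identities $(i(u)\varphi)^2\wedge\varphi = 6|u|^2\vol$ and $\varphi\wedge i(u)\varphi\wedge\beta = 0$ directly.
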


\begin{proof}
Since $i(u) \varphi \in \Lambda^2_7 V^*$, we see that
\[
(i(u) \varphi)^2 \wedge \varphi 
= i(u) \varphi \wedge 2 * (i(u) \varphi)
= 2|i(u) \varphi|^2 \vol
= 6 |u|^2 \vol.
\]
Taking the interior product by $u$ of both sides, we obtain \eqref{eq:G2id high 1}. 
Similarly, taking the interior product by $u$ of both sides of
\[
\varphi \wedge i(u) \varphi \wedge \beta= 0, 
\]
we obtain 
\[
\begin{aligned}
0
= (i(u) \varphi)^2 \wedge \beta - \varphi \wedge i(u) \varphi \wedge i(u) \beta\\
= (i(u) \varphi)^2 \wedge \beta - 2 * \varphi \wedge u^\flat \wedge i(u) \beta, 
\end{aligned}
\]
which implies \eqref{eq:G2id high 2}. 
For \eqref{eq:G2id high 3}, since $\beta \in \Lambda^2_{14} V^*$, 
we have 
\[
\varphi \wedge \beta^2 = -|\beta|^2 \vol. 
\]
Hence, we obtain 
\[
(i(u) \varphi) \wedge \beta^2 - \varphi \wedge i(u) (\beta^2) =  -|\beta|^2 * u^\flat, 
\]
which implies \eqref{eq:G2id high 3}. 
\end{proof}

The following lemma is essential in the proof of Theorem \ref{thm:moduli MG2 generic}. 

\begin{lemma} \label{lem:kernel wedge}
For $u \in V$ and $\beta \in \Lambda^2_{14} V^*$, set $F = i(u) \varphi + \beta$. 
If 
\[
i(u) \beta =0, \quad F^3 \neq 0 \quad \mbox{and} \quad F \wedge \gamma =0
\]
for $\gamma \in \Lambda^2 V^*$, 
we have $\gamma=0$. 
\end{lemma}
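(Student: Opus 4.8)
The plan is to show that wedging with $F = i(u)\varphi + \beta$ is an injective map $\Lambda^2 V^* \to \Lambda^4 V^*$ under the stated hypotheses, by exploiting the $G_2$-decomposition $\Lambda^2 V^* = \Lambda^2_7 V^* \oplus \Lambda^2_{14} V^*$ together with the algebraic identities of Lemma \ref{lem:G2id high}. First I would record the consequences of $i(u)\beta = 0$ and $F^3 \neq 0$. Expanding $F^3 = (i(u)\varphi + \beta)^3$ using the binomial expansion and Lemma \ref{lem:G2id high}, the terms $(i(u)\varphi)^2\wedge\beta$ and $(i(u)\varphi)\wedge\beta^2$ simplify: by \eqref{eq:G2id high 2}, $(i(u)\varphi)^2\wedge\beta = 2*\varphi\wedge u^\flat\wedge i(u)\beta = 0$, and one can similarly control $(i(u)\varphi)\wedge\beta^2$ via \eqref{eq:G2id high 3}. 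Combined with $\beta^3 \in \Lambda^6 V^*$ (so $\beta^3 = 0$ by degree only if... actually $\beta^3\in\Lambda^6$, nonzero in general) — I would carefully track which terms survive, concluding that $F^3\ne 0$ forces in particular $u \neq 0$ (if $u=0$ then $F=\beta\in\Lambda^2_{14}$ and $F^3 = \beta^3$; one checks $\beta^3$ cannot give the required nonvanishing alone, or rather one uses that $F^3\ne 0$ is the hypothesis we get to use). The cleanest extraction: $|u|^2$ must be nonzero, since the $*u^\flat\wedge(\text{something})$-type top-degree pieces are what $F^3$ produces.

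Next, decompose the unknown $\gamma = i(v)\varphi + \delta$ with $v \in V$ and $\delta \in \Lambda^2_{14}V^*$. The equation $F\wedge\gamma = 0$ in $\Lambda^4 V^*$ splits according to $\Lambda^4 V^* \cong V_1\oplus V_7\oplus V_{27}$. I would wedge $F\wedge\gamma = 0$ further with $\varphi$ and with $*\varphi$-related forms, and take interior products with $u$, to peel off components. For instance, wedging with $\varphi$ and using $\Lambda^2_{14}\wedge\varphi$ being controlled, together with $i(u)\varphi\wedge\varphi = 2u^\flat\wedge*\varphi$ from Lemma \ref{lem:G2 identities}, produces a relation forcing the $V_1$ and $V_7$ components of $\gamma$ to be expressible in terms of $u, v$. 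The key structural input is that $F\wedge(-): \Lambda^2_{14}\to\Lambda^4$ and $F\wedge(-):\Lambda^2_7\to\Lambda^4$ interact: one shows $F\wedge\gamma = 0$ implies $i(u)\varphi\wedge\delta + \beta\wedge i(v)\varphi + (\text{mixed }V_1\text{ terms}) = 0$ and then contracts with $u$. Using $i(u)\beta = 0$ repeatedly kills cross terms, isolating $|u|^2 v^\flat$ or $|u|^2\delta$-type expressions, which then must vanish because $|u|^2 \neq 0$.

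The main obstacle I anticipate is the $V_{27}$-part bookkeeping: while the $V_1$ and $V_7$ components of $F\wedge\gamma$ are governed by the tidy identities in Lemma \ref{lem:G2 identities} and Lemma \ref{lem:G2id high}, the $V_{27}$ component requires knowing that the symmetric-tensor map $\gamma \mapsto$ (the $V_{27}$-part of $F\wedge\gamma$) is injective on the relevant subspace, which ties into the decompositions $S^2 V_7$, $V_7\otimes V_{14}$, $S^2 V_{14}$ in \eqref{eq:decomp G2 symm}. I would handle this by choosing an adapted orthonormal basis: since $i(u)\beta = 0$ and $\beta\in\Lambda^2_{14}$, the form $\beta$ restricted to $u^\perp$ is an element of $\Lambda^2(u^\perp)^* \cong \so(6)$ lying in (a copy of) $\su(3)\subset\so(6)$, so one can simultaneously normalize $u = |u|e_7$ and bring $\beta$ to a standard block-diagonal form. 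In that explicit basis, $F\wedge\gamma = 0$ becomes a concrete linear system in the $21$ coefficients of $\gamma$, and one reads off $\gamma = 0$ directly. This reduces the conceptual difficulty to a finite, if tedious, linear-algebra verification, which is the standard way such $G_2$-pointwise lemmas are proved; I would present the normalization step carefully and then assert the linear system has only the trivial solution, checking the few genuinely nontrivial blocks.
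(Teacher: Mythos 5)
Your fallback strategy (normalize $u$ and $\beta$ to a standard form and read off $\gamma=0$ from an explicit linear system) is essentially the paper's proof, but as written the proposal has two genuine problems.

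First, the claim that $F^3\neq 0$ forces $u\neq 0$ is false. If $u=0$ then $F=\beta\in\Lambda^2_{14}V^*$, and $\beta^3$ is nonzero in general: for $\beta=e^{23}+e^{45}-2e^{67}$ one has $\beta^3=-12\,e^{234567}\neq 0$. So the entire first phase of your argument --- contracting with $u$ to isolate terms of the form $|u|^2 v^\flat$ or $|u|^2\delta$ and dividing by $|u|^2$ --- breaks down in a case the lemma must cover (and which the paper's proof does cover, since after diagonalizing $\beta$ it only needs all three numbers $u^1+\lambda_j$ to be nonzero, which is exactly $F^3\neq 0$, with $u^1=0$ allowed). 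Relatedly, the normalization order matters: the paper first conjugates $\beta$ into a Cartan subalgebra of $\g_2$ and then uses the stabilizer of $\beta$ to move $u$; your order (first fix $u$, then diagonalize $\beta$ inside the stabilizer $\mathfrak{su}(3)$ of $u$) works only when $u\neq 0$, so you would still need the separate case $u=0$.

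Second, the concluding step ``assert the linear system has only the trivial solution'' is precisely where the hypothesis $F^3\neq 0$ must be invoked, and you never connect the two. In the normalized basis $F=\sum_j(u^1+\lambda_j)\,e^{2j,2j+1}$, the coefficients $\gamma_{23},\gamma_{45},\gamma_{67}$ satisfy a $3\times 3$ system whose determinant is $-2\prod_j(u^1+\lambda_j)$, i.e.\ nonzero exactly when $F^3\neq 0$; without this the lemma is false (the paper's own counterexample $\beta=e^{23}-e^{45}$, $\gamma=e^{24}+e^{35}$ has $F\wedge\gamma=0$ with $\gamma\neq 0$). A complete proof must exhibit this determinant (or an equivalent computation) rather than assert nontriviality of the system, since the statement genuinely fails on the boundary of the hypothesis.
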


Note that we cannot drop the assumption $F^3 \neq 0$. 
If we set $u=0, \beta = e^{23}-e^{45} \in \Lambda^2_{14} V^*$ 
and $\gamma = e^{24}+e^{35} \in \Lambda^2_{14} V^*$, 
we have $F \wedge \gamma =0$. 

\begin{proof}
Recall that every element in $\g_2$ is ${\rm Ad}(G_2)$-conjugate to an element of a Cartan subalgebra. 
Then, as in \cite[Section 2.7.2]{Bryant}, we may assume that 
\begin{align*}
\beta = \lambda_1 e^{23} + \lambda_2 e^{45} + \lambda_3 e^{67}
\end{align*}
for $\lambda_j \in \R$ such that $\lambda_1+\lambda_2+\lambda_3=0$. 
Set
$u = \sum_{j=1}^7 u^j e_j$ 
and 
$\gamma = \sum_{1 \leq i < j \leq 7} \gamma_{i j} e^{i j} 
$
for $u^j, \gamma_{i j} \in \R$. 

Suppose that $\lambda_1 \lambda_2 \lambda_3 \neq 0$.
Then, $i(u) \beta =0$ implies that $u = u^1 e_1$ 
and 
$$
F = (u^1 + \lambda_1) e^{23} + (u^1 + \lambda_2) e^{45} + (u^1 + \lambda_3) e^{67}. 
$$
Thus, $F^3 \neq 0$ if and only if 
$u^1+\lambda_j \neq 0$ for any $j=1,2,3$. 
Under this assumptions, 
it is straightforward to see that 
$F \wedge \gamma =0$ if and only if 
$$
T \left(
\begin{array}{c}
\gamma_{2 3} \\
\gamma_{4 5} \\
\gamma_{6 7} \\
\end{array}
\right) =0, \quad \mbox{where} \quad 
T=
\left(
\begin{array}{ccc}
u^1+\lambda_2 & u^1+\lambda_1 & 0 \\
u^1+\lambda_3 & 0                    & u^1+\lambda_1 \\
0                    & u^1+\lambda_3 & u^1+\lambda_2 \\
\end{array}
\right), 
$$
and $\gamma_{i j}=0$ for $(i,j) \neq (2,3), (4,5), (6,7)$.  
Since $\det T=-2 (u^1 + \lambda_1) (u^1 + \lambda_2) (u^1 + \lambda_3) \neq 0$, 
we also obtain $\gamma_{2 3} = \gamma_{4 5} = \gamma_{6 7} =0$, 
and hence, $\gamma =0$.

Next, suppose that 
$\lambda_1 \lambda_2 \lambda_3 = 0$ and $\beta \neq 0$. 
We may assume that 
$\lambda_1=0$ and set $\lambda = \lambda_2 = - \lambda_3 \neq 0$. 
Hence, $\beta = \lambda (e^{45}-e^{67})$. 
Then, $i(u) \beta =0$ implies that 
$u = u^1 e_1 + u^2 e_2 + u^3 e_3$. 
Now, define $j:V=\R^7 \to \R^7=V$ by 
$j (x^1,x^2,x^3,x^4,x^5,x^6,x^7) = (x^1,x^2,x^3,x^4,x^5,x^6, -x^7)$ 
and the ${\rm SU}(2)$-action $\rho: {\rm SU}(2) 
\to {\rm GL}(\R^7) = {\rm GL}(\R^3 \oplus \C^2)$ by 
\[
\rho (g) = 
\left(
\begin{array}{cc}
\rho_- (g)  & 0 \\
0 & g \\
\end{array}
\right), 
\]
where $\rho_-:{\rm SU}(2) \to {\rm SO}(3)$ is the double cover. 
It is known that this ${\rm SU}(2)$-action preserves $j^* \varphi$. 
Thus, the ${\rm SU}(2)$-action $j \circ \rho \circ j$ 
preserves $\varphi$. 
Since $\beta$ is invariant under this ${\rm SU}(2)$-action, 
we may further assume that $u^2=u^3=0$. 
Then, by the same argument as above, 
the proof in this case is done. 

Finally, suppose that $\beta=0$. 
Then, we may assume that $u=|u| e_1$. 
By the same argument as above, the proof is completed. 
\end{proof}

\begin{definition}
Let $X$ be an oriented 7-manifold. A \emph{$G_2$-structure} on $X$ is a $3$-form $\varphi \in \Om^3$ 
such that at each $p \in X$ there is a positively oriented basis 
$\{\, e_i \,\}_{i=1}^7$ of $T_p X$ such that $\varphi_p \in \Lambda^3 T^*_p X$ is of the form \eqref{varphi}. As noted above, $\varphi$ determines a unique Riemannian metric $g = g_\varphi$ on $X$ by \eqref{eq:form-1def}, 
and the basis $\{\, e_i \,\}_{i=1}^7$ is orthonormal with respect to $g$.
A $G_2$-structure $\varphi$ is called \emph{torsion-free} if 
it is parallel with respect to the Levi-Civita connection of $g=g_\varphi$. 
A manifold with a torsion-free $G_2$-structure is called a \emph{$G_2$-manifold}. 
\end{definition}

A manifold $X$ admits a $G_2$-structure if and only if 
its frame bundle is reduced to a $G_2$-subbundle. 
Hence, considering its associated subbundles, 
we see that 
$\Lambda^* T^* X$ has the same decomposition as in \eqref{eq:DiffForm-V7}. 
The algebraic identities above also hold. 
\section{The induced $G_2$-structure from a dDT connection}\label{sec:new G2 str}
This section is completely devoted to prove Theorem \ref{thm:1+F} which is mainly used in Subsection \ref{sec:infi deform G2}. 
Throughout this section, we use the notation of Appendix \ref{sec:G2 geometry}. 
Set $V =\R^7$ with the standard basis $\{\, e_{i} \,\}_{i=1}^{7}$ 
and let $g$ be the standard scalar product on $V$. 
For a 2-form $F \in \Lambda^2 V^*$, define $F^\sharp \in {\rm End} (V)$ by 
\[
g(F^\sharp (u), v) = F(u, v)
\]
for $u,v \in V$. 
Then, 
$F^\sharp$ is skew-symmetric, and hence, 
$\det(I + F^\sharp) >0$, where $I$ is the identity matrix. 

Now, define a $G_2$-structure $\varphi_F$ by 
\[
\varphi_F := (I + F^\sharp)^* \varphi, 
\]
where $\varphi$ is the standard $G_2$-structure given by \eqref{varphi}. 
By the decomposition \eqref{decom-L-V7} with respect to $\varphi$, set 
\begin{equation} \label{eq:decomp F}
F = F_7 + F_{14} = i(u) \varphi + F_{14} \in  \Lambda^2_7 V^* \oplus \Lambda^2_{14} V^* 
\end{equation}
with $u \in V$. 
The main purpose of this appendix is to prove the following theorem. 

\begin{theorem} \label{thm:1+F}
Suppose that a 2-form $F$ satisfies $-F^3/6 + F \wedge * \varphi = 0$. 
Then, we have 
\begin{equation} \label{eq:1+F Hodge}
*_{\varphi_F} \varphi_F = (I + F^\sharp)^* * \varphi 
= \left ( 1 - \frac{1}{2} \la F^2, * \varphi \ra \right ) \cdot \left(* \varphi - \frac{1}{2}F^2 \right), 
\end{equation}
where $*_{\varphi_F}$ is the Hodge star defined by the $G_2$-structure $\varphi_F$. 
In particular, we have $1- \la F^2, * \varphi \ra/2 \neq 0$. 
\end{theorem}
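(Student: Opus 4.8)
The proof splits along the chain of equalities in the statement: the first equality $*_{\varphi_F}\varphi_F = (I+F^{\sharp})^{*}*\varphi$ holds for an \emph{arbitrary} $2$-form $F$, while the second one, $(I+F^{\sharp})^{*}*\varphi = \bigl(1-\tfrac12\la F^{2},*\varphi\ra\bigr)\bigl(*\varphi-\tfrac12 F^{2}\bigr)$, is where the hypothesis $-F^{3}/6+F\wedge *\varphi=0$ enters. For the first equality I would argue as follows. Since $F^{\sharp}$ is $g$-skew, $A:=I+F^{\sharp}$ has eigenvalues $1$ and $1\pm\sqrt{-1}\lambda_{j}$, so $\det A=\prod(1+\lambda_{j}^{2})\ge 1$ and $\varphi_{F}=A^{*}\varphi$ is again a $G_{2}$-structure. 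Feeding $\varphi_{F}$ into \eqref{eq:form-1def} and using $i(u)(A^{*}\varphi)=A^{*}(i(Au)\varphi)$ together with $A^{*}\vol_{\varphi}=(\det A)\vol_{\varphi}$ gives
\[
g_{\varphi_{F}}(u,v)\,\vol_{\varphi_{F}}=(\det A)\,g_{\varphi}(Au,Av)\,\vol_{\varphi}.
\]
Both $\vol_{\varphi_{F}}$ and $\vol_{\varphi}$ induce the orientation determined by the respective $G_{2}$-structures, which coincide because $\det A>0$; hence $\vol_{\varphi_{F}}$ is a positive multiple of $\vol_{\varphi}$, so $g_{\varphi_{F}}$ is a constant multiple of $A^{*}g_{\varphi}$, and the usual determinant/volume comparison forces the constant to be $1$: $g_{\varphi_{F}}=A^{*}g_{\varphi}$ and $\vol_{\varphi_{F}}=A^{*}\vol_{\varphi}$. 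Thus $A$ is an orientation-preserving isometry $(V,g_{\varphi_{F}})\to(V,g_{\varphi})$, so $A^{*}$ intertwines the two Hodge stars, and applying this to $\varphi$ yields $*_{\varphi_{F}}\varphi_{F}=A^{*}(*\varphi)=(I+F^{\sharp})^{*}*\varphi$.

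For the second equality I would exploit that both sides are $G_{2}$-equivariant in $F$ (replacing $F$ by $g^{*}F$, $g\in G_{2}$, conjugates $A$ and fixes $*\varphi$) and that the hypothesis is $G_{2}$-invariant, so it suffices to check the identity for $F$ in a convenient frame. Writing $F=i(u)\varphi+F_{14}$ as in \eqref{eq:decomp F}, the structural input I would extract first is that the hypothesis forces $i(u)F_{14}=0$; granting this, rotate $u$ into $\R e_{1}$ by an element of $G_{2}$, so that $F_{14}^{\sharp}$ annihilates $e_{1}$ and hence $F_{14}^{\sharp}\in\g_{2}\cap\so(6)=\mathfrak{su}(3)$, which can be diagonalised by the stabiliser $SU(3)\subset G_{2}$. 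One is then reduced to $F=c_{1}e^{23}+c_{2}e^{45}+c_{3}e^{67}$ with $c_{i}\in\R$.

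In this frame a direct computation gives $F^{2}=2(c_{1}c_{2}e^{2345}+c_{1}c_{3}e^{2367}+c_{2}c_{3}e^{4567})$, $\la F^{2},*\varphi\ra=2(c_{1}c_{2}+c_{2}c_{3}+c_{3}c_{1})$, $F^{3}=6c_{1}c_{2}c_{3}\,e^{234567}$ and $F\wedge *\varphi=(c_{1}+c_{2}+c_{3})e^{234567}$, so the hypothesis becomes the single scalar relation $c_{1}c_{2}c_{3}=c_{1}+c_{2}+c_{3}$. I would then pull back each of the seven monomials of $*\varphi$ in \eqref{varphi*} through the block-diagonal map $A=I+F^{\sharp}$ and collect coefficients: every coefficient cubic in the $c_{i}$ equals $\pm\bigl(c_{1}c_{2}c_{3}-(c_{1}+c_{2}+c_{3})\bigr)$ and so vanishes, while the quadratic coefficients are $\pm(1-\sigma_{2})$ and the quartic ones are $(1+c_{j}^{2})(1+c_{k}^{2})$; a short symmetric-function manipulation (e.g.\ $(1+c_{2}^{2})(1+c_{3}^{2})=(1-\sigma_{2})(1-c_{2}c_{3})$ using $(\sigma_{3}-c_{1})(c_{2}+c_{3})=(c_{2}+c_{3})^{2}$ when $\sigma_{3}=\sigma_{1}$) shows these match the coefficients of $\bigl(1-\tfrac12\la F^{2},*\varphi\ra\bigr)\bigl(*\varphi-\tfrac12 F^{2}\bigr)$ exactly. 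Once this identity is established, the final clause is immediate: $*_{\varphi_{F}}\varphi_{F}$ is the $4$-form dual to the genuine $G_{2}$-structure $\varphi_{F}$, hence nowhere vanishing, so in $\bigl(1-\tfrac12\la F^{2},*\varphi\ra\bigr)\bigl(*\varphi-\tfrac12 F^{2}\bigr)=*_{\varphi_{F}}\varphi_{F}\neq 0$ the scalar factor cannot be zero.

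The step I expect to be the main obstacle is the input $i(u)F_{14}=0$, since it is what really uses the nonlinear equation: one must split $F^{3}-6F\wedge *\varphi$ into its $G_{2}$-irreducible pieces using the contraction identities of Lemma \ref{lem:G2id high} (which express $(i(u)\varphi)^{3}$, $(i(u)\varphi)^{2}\wedge F_{14}$ and $(i(u)\varphi)\wedge F_{14}^{2}$), and then see that the ``mixed'' contribution $*\varphi\wedge u^{\flat}\wedge i(u)F_{14}$ is forced to vanish, forcing $i(u)F_{14}=0$. By comparison, the subsequent monomial-by-monomial bookkeeping in the normal form, although lengthy, is entirely routine.
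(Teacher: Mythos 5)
Your strategy for the second equality --- reduce $F$ to the normal form $c_1e^{23}+c_2e^{45}+c_3e^{67}$ by $G_2$-equivariance and match coefficients monomial by monomial under the scalar constraint $c_1c_2c_3=c_1+c_2+c_3$ --- is a genuinely different route from the paper's, which instead expands $(I+F^\sharp)^**\varphi$ through the contraction formula \eqref{appB-2} and evaluates the linear, quadratic, cubic and quartic terms by four separate lemmas (Lemmas \ref{lem:1+F 1}--\ref{lem:1+F 4}, using Schur's lemma and explicit identities). Your normal-form computations are correct: for instance $(1+c_2^2)(1+c_3^2)=\bigl(1-\tfrac12\la F^2,*\varphi\ra\bigr)(1-c_2c_3)$ does follow from $c_1c_2c_3=c_1+c_2+c_3$, and the coefficients of the stray monomials such as $e^{1246}$ are indeed $\pm(c_1c_2c_3-(c_1+c_2+c_3))=0$. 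Your argument for the first equality, via the fact that $A=I+F^\sharp$ is an orientation-preserving isometry from $(V,A^*g_\varphi)$ to $(V,g_\varphi)$ so that $A^*$ intertwines the Hodge stars, is also fine (the paper takes this for granted).

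The genuine gap is exactly at the step you flagged as the main obstacle, the lemma $i(u)F_{14}=0$, and your sketched derivation of it does not work. You propose to split $-F^3/6+F\wedge*\varphi$ into its $G_2$-irreducible pieces and conclude that the mixed term $*\varphi\wedge u^\flat\wedge i(u)F_{14}$ must vanish separately. But $\Lambda^6V^*\cong V_7$ is already irreducible, so \emph{all} the summands appearing in the decomposition \eqref{eq:decomp dDTeq} --- including $*u^\flat$, $F_{14}^3$, and both mixed terms --- lie in one and the same irreducible $G_2$-module; the single $V_7$-valued equation does not decouple, and no individual term is forced to vanish. (Indeed, in Appendix \ref{sec:dDT irr decomp} the paper derives the simplified equation \eqref{eq:dDTeq=0} by \emph{using} $i(u)F_{14}=0$ as an input, not by extracting it as an output.) The paper's actual proof of Lemma \ref{lem:1+F 00} is of a different nature and is the missing idea here: the hypothesis gives $F^3=6F\wedge*\varphi=18*u^\flat$, and for any $1$-form $\alpha$ one has $\alpha\wedge *F^3\wedge *F=F^3\wedge i(\alpha^\sharp)F=i(\alpha^\sharp)(F^4)/4=0$ since $F^4=0$ on a $7$-dimensional space; hence $u^\flat\wedge *F=0$, i.e.\ $i(u)F=i(u)F_{14}=0$. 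Once that lemma is supplied, the rest of your argument goes through (with the minor caveat that the normal-form reduction should also cover the case $u=0$, where one conjugates $F_{14}$ into a Cartan subalgebra of $\g_2$ as in \cite[Section 2.7.2]{Bryant}).
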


For a moment, let's assume Theorem \ref{thm:1+F}. 
Then, we can define a new $G_2$-structure $\tilde \varphi_F$ by 
\[
\tilde \varphi_F=  \left | 1 - \frac{1}{2} \la F^2, * \varphi \ra \right |^{-3/4}  (I + F^\sharp)^* \varphi
\]
since $1- \la F^2, * \varphi \ra/2 \neq 0$. 
Denote by $\tilde *_F$ the Hodge star induced by $\tilde \varphi_F$. 
In general, for $c>0$ and for any $G_2$-structure $\varphi'$, it is known that 
the Hodge dual of $c^3 \varphi'$ with respect to the induced metric from $c^3 \varphi'$ 
is $c^4 *' \varphi'$, where $*'$ is the Hodge star induced from $\varphi'$. 
Applying this fact to \eqref{eq:1+F Hodge}, we obtain 
\begin{equation} \label{eq:1+F conf Hodge}
\tilde *_F \tilde \varphi_F = C \left(* \varphi - \frac{1}{2}F^2 \right), 
\end{equation}
where $C=1$ if $1- \la F^2, * \varphi \ra/2>0$ and $C=-1$ if it is negative. 

We prove Theorem \ref{thm:1+F} by the following lemmas. 
The next lemma is particularly important for the computation. 

\begin{lemma} \label{lem:1+F 00}
Suppose that a 2-form $F$ given by \eqref{eq:decomp F} 
satisfies $-F^3/6 + F \wedge * \varphi = 0$. 
Then, $i(u) F = i(u) F_{14} =0$.
\end{lemma}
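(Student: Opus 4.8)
The plan is to bypass any expansion of $F^3$ and reduce everything to a rank computation for $F$ viewed as an alternating bilinear form on $V\cong\R^7$. First observe that it suffices to prove $i(u)F=0$: since $i(u)\circ i(u)=0$ on forms, the decomposition $F=i(u)\varphi+F_{14}$ in \eqref{eq:decomp F} gives $i(u)F=i(u)\bigl(i(u)\varphi\bigr)+i(u)F_{14}=i(u)F_{14}$, so the two asserted identities are equivalent.

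Next I would rewrite the hypothesis. Since $F_{14}\wedge *\varphi=0$ by the defining property of $\Lambda^2_{14}V^*$ in \eqref{decom-L-V7}, and $i(u)\varphi\wedge *\varphi=*\varphi\wedge i(u)\varphi=3*u^\flat$ by Lemma \ref{lem:G2 identities}, the decomposition yields $F\wedge *\varphi=3*u^\flat$. Substituting this into $F^3=6\,F\wedge *\varphi$ (which is exactly the hypothesis $-F^3/6+F\wedge *\varphi=0$) produces the single identity $F^3=18*u^\flat$.

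Now the argument splits. If $u=0$ there is nothing to prove. If $u\neq 0$, then $*u^\flat\neq 0$, hence $F^3\neq 0$, so the alternating form $F$ on the $7$-dimensional space $V$ has rank $6$; consequently its radical $R:=\{v\in V\mid i(v)F=0\}$ is exactly $1$-dimensional. For every $v\in V$ one has $i(v)F^3=3\,(i(v)F)\wedge F^2$, so $i(v)F^3=0$ whenever $v\in R$; on the other hand $i(v)F^3=18\,i(v)(*u^\flat)=-18 * (v^\flat\wedge u^\flat)$, which vanishes exactly when $v$ is proportional to $u$ (as $*$ is injective). Hence $R\subseteq\R u$, and since $\dim R=1$ and $u\neq 0$ this forces $R=\R u$; in particular $u\in R$, i.e. $i(u)F=0$, which is what we wanted.

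The point I would flag, rather than a genuine obstacle, is the temptation to expand $F^3=(i(u)\varphi+F_{14})^3$ term by term using Lemma \ref{lem:G2id high}: that route produces a tangled relation among several $6$-forms whose interdependencies make it awkward to extract $i(u)F_{14}$ directly. The clean observation is that the dDT equation already pins $F^3$ down to a scalar multiple of $*u^\flat$, and this one fact simultaneously controls the rank of $F$ and forces its radical to be spanned by $u$.
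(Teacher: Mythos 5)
Your proof is correct, but it takes a genuinely different route from the paper's. Both arguments start from the same reduction of the hypothesis to the single identity $F^3=18\,*u^\flat$ (via $F_{14}\wedge *\varphi=0$ and $i(u)\varphi\wedge *\varphi=3*u^\flat$), but they diverge from there. The paper avoids any case distinction and any rank count: it observes that $i(u)F=0$ is equivalent to $u^\flat\wedge *F=0$, and then, for an arbitrary $1$-form $\alpha$, computes $\alpha\wedge *F^3\wedge *F = F^3\wedge i(\alpha^\sharp)F = i(\alpha^\sharp)(F^4/4)=0$ because $F^4$ is an $8$-form on a $7$-dimensional space; since $*F^3=18\,u^\flat$, this kills $u^\flat\wedge *F$ in one stroke. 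You instead split into the cases $u=0$ and $u\neq 0$, use that $F^3\neq 0$ forces the alternating form $F$ to have rank $6$ and hence a $1$-dimensional radical $R$, and then pin down $R=\R u$ by computing $i(v)F^3=3\,(i(v)F)\wedge F^2$ against $i(v)(18*u^\flat)=-18*(v^\flat\wedge u^\flat)$. Each step checks out (the even-rank argument, $\dim R=7-\operatorname{rank}F$, and the injectivity of $*$ are all fine), and your argument yields the extra structural information that, when $u\neq 0$, the radical of $F$ is exactly $\R u$. What the paper's version buys is brevity and uniformity: the identity $F^4=0$ replaces both the rank count and the case distinction, since the computation is vacuously valid when $u=0$. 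Your preliminary remark that $i(u)F=i(u)F_{14}$ (so the two asserted identities coincide) is also correct and is implicitly used in the paper as well.
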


\begin{proof}
First, note that $i(u) F=0$ is equivalent to $u^\flat \wedge * F=0$. 
By $F^3 = 6 F \wedge * \varphi = 6 i(u) \varphi \wedge * \varphi = 18 * u^\flat$, 
where we use Lemma \ref{lem:G2 identities}, we only have to 
show $*F^3 \wedge * F=0$. 

For any 1-form $\alpha$, we have 
\[
\alpha \wedge *F^3 \wedge * F
= *F^3 \wedge * (i(\alpha^\sharp) F)
= F^3 \wedge i(\alpha^\sharp) F
= i(\alpha^\sharp) (F^4/4).
\]
Since $F^4 = 0$, we obtain $*F^3 \wedge * F=0$. 
\end{proof}

\begin{corollary} \label{cor:1+F 00}
Suppose that a 2-form $F$ given by \eqref{eq:decomp F} 
satisfies $-F^3/6 + F \wedge * \varphi = 0$. 
Then, we have $\pi^4_7 (F^2)=0$, or equivalently $\varphi \wedge * F^2 =0$. 
\end{corollary}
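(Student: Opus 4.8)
The plan is to deduce $\pi^4_7(F^2)=0$ directly from Lemma \ref{lem:1+F 00}, which tells us that $i(u)F = i(u)F_{14} = 0$ when $F$ satisfies the dDT-type equation $-F^3/6 + F\wedge *\varphi = 0$. First I would record the equivalence $\pi^4_7(F^2)=0 \iff \varphi\wedge *F^2 = 0$; this follows from the general fact that $\Lambda^4_7 V^* = \{\,i(v)*\varphi \mid v\in V\,\} \cong V_7$ together with the pairing between $\Lambda^4_7$ and $\varphi$ (equivalently, one uses that $*F^2 \in \Lambda^3 V^*$, its $V_7$-component is detected by wedging with $\varphi$, since $\varphi\wedge \Lambda^3_1 = \varphi\wedge\R\varphi$ lands in $\Lambda^6$ which is automatically irreducible $V_7$, but $\varphi\wedge\Lambda^3_{27}=0$; so $\varphi\wedge\ast F^2$ vanishes iff the $V_7$ part of $\ast F^2$, i.e.\ $\pi^4_7(F^2)$, vanishes).

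Next I would expand $F^2$ using the decomposition $F = i(u)\varphi + F_{14}$ from \eqref{eq:decomp F}, so that
\[
F^2 = (i(u)\varphi)^2 + 2\,(i(u)\varphi)\wedge F_{14} + F_{14}^2.
\]
The main computation is then to show $\varphi\wedge *F^2 = 0$ term by term, using $i(u)F_{14}=0$ crucially. For the cross term, \eqref{eq:G2id high 2} of Lemma \ref{lem:G2id high} gives $(i(u)\varphi)^2\wedge F_{14} = 2\,(*\varphi)\wedge u^\flat\wedge i(u)F_{14} = 0$ — wait, that is the wrong grouping; more directly, I would apply $i(u)$ to the identity $\varphi\wedge i(u)\varphi\wedge F_{14}=0$ (which holds since $i(u)\varphi\in\Lambda^2_7$, $F_{14}\in\Lambda^2_{14}$, and $\Lambda^2_7\wedge\Lambda^2_{14}\wedge\varphi = 0$), landing the cross term in a multiple of $*\varphi\wedge u^\flat\wedge i(u)F_{14}$, hence zero. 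For the other two terms I would use the known pairings: $(i(u)\varphi)^2 = $ a multiple of $*(i(u)\varphi)\cdot$(something) whose wedge with $\varphi$ is controlled, and $\varphi\wedge F_{14}^2 = -|F_{14}|^2\vol$, so $\varphi\wedge *(F_{14}^2)$ is pinned down; combining with $i(u)F = 0$ and Lemma \ref{lem:G2 identities} the $V_7$-parts cancel.

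Actually, the cleanest route — and the one I would carry out — bypasses the term-by-term analysis: since $F^3 = 6F\wedge *\varphi = 18*u^\flat$ (computed in the proof of Lemma \ref{lem:1+F 00}), we have $F\wedge *F^2 = *(F^3)$ up to sign/normalization, which is proportional to $u^\flat$; and from $i(u)F = 0$ we get $u^\flat\wedge *F = 0$, i.e.\ $i(u)(*(*F)) $-type relations forcing the $V_7$-part of $F^2$ to be governed entirely by $u$. Concretely, $\pi^2_7(F^2)$ corresponds to some vector $w$ with $i(w)\varphi = \pi^2_7(F^2)$ suitably normalized, and contracting with $u$ and using $i(u)F=i(u)F_{14}=0$ shows $w$ is forced to vanish. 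The step I expect to be the main obstacle is getting the normalization constants and the precise $G_2$-representation-theoretic identity $\varphi\wedge *(\,\cdot\,) = 0$ on $\Lambda^4_1\oplus\Lambda^4_{27}$ exactly right, so that the vanishing of $\varphi\wedge *F^2$ genuinely captures $\pi^4_7(F^2)=0$ and nothing is lost; the analytic content is trivial once Lemma \ref{lem:1+F 00} is in hand, so this is purely a bookkeeping exercise in the algebra of $G_2$-modules.
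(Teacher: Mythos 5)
You have correctly isolated the one genuine input, Lemma \ref{lem:1+F 00} (that is, $i(u)F_{14}=0$), and the decomposition $F=F_7+F_{14}$, but none of the three ways you propose to finish actually closes the argument. Your ``cleanest route'' rests on the identity ``$F\wedge *F^2 = *(F^3)$ up to sign/normalization'', which cannot hold: the left-hand side is a $5$-form and the right-hand side a $1$-form; and the concluding step ``contracting with $u$ \dots shows $w$ is forced to vanish'' is asserted rather than argued. In the term-by-term route, applying $i(u)$ to $\varphi\wedge i(u)\varphi\wedge F_{14}=0$ merely reproduces the $6$-form identity \eqref{eq:G2id high 2}, i.e.\ it computes the single scalar $\la i(u)\varphi, *(F_7\wedge F_{14})\ra$; likewise $\varphi\wedge F_{14}^2=-|F_{14}|^2\vol$ computes $\pi^4_1(F_{14}^2)$. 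Neither statement controls the seven-dimensional quantity $\pi^4_7$ of the corresponding term, which is what the corollary asserts vanishes, so the claim that ``the $V_7$-parts cancel'' is left without support.

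The missing idea is the representation-theoretic one the paper uses, namely \eqref{eq:decomp G2 symm} together with Schur's lemma. Since $S^2V_7=V_1\oplus V_{27}$ and $S^2V_{14}=V_1\oplus V_{27}\oplus V_{77}$ contain no copy of $V_7$, the equivariant quadratic maps $F_7\mapsto\pi^4_7(F_7^2)$ and $F_{14}\mapsto\pi^4_7(F_{14}^2)$ vanish identically; no computation and no hypothesis on $F$ is needed for these two terms. For the cross term, $V_7\otimes V_{14}=V_7\oplus V_{27}\oplus V_{64}$ contains $V_7$ with multiplicity one, so the equivariant bilinear map $(u,F_{14})\mapsto\pi^4_7(i(u)\varphi\wedge F_{14})$ is a constant multiple of any other nonzero such map, e.g.\ $(u,F_{14})\mapsto i(u)F_{14}\wedge\varphi$; this is the precise point at which $i(u)F_{14}=0$ from Lemma \ref{lem:1+F 00} enters. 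Without these multiplicity counts your scalar identities cannot be upgraded to the vanishing of the full $\Lambda^4_7$-components, so as written the proposal has a genuine gap.
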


\begin{proof}
By \eqref{eq:decomp G2 symm} and the Schur's lemma, 
we see that 
$\pi^4_7(F_{7}^2) = \pi^4_7(F_{14}^2) =0$, 
and hence, 
$\pi^4_7 (F^2) = 2 \pi^4_7(F_{7} \wedge F_{14})$. 
We also see that 
the space of $G_2$-equivariant linear maps from $V_7 \otimes V_{14}$ to $V_7$ 
is 1-dimensional. 
Since the maps 
$V \otimes \Lambda^2_{14} V^* \to \Lambda^4_7 V^*$ given by 
$$
v \otimes \beta \mapsto \pi^4_7(i(v) \varphi \wedge \beta) 
\qquad \mbox{and} \qquad 
v \otimes \beta \mapsto i(v) \beta \wedge \varphi 
$$
are $G_2$-equivariant, 
$\pi^4_7(i(v) \varphi \wedge \beta)$ is a constant multiple of $i(v) \beta \wedge \varphi$. 
In particular, 
$\pi^4_7(F_{7} \wedge F_{14})$ 
is a constant multiple of $i(u) F_{14} \wedge \varphi$, 
which vanishes by Lemma \ref{lem:1+F 00}. 
\end{proof}

To compute $(I + F^\sharp)^* * \varphi$, 
we first describe it in terms of $F$. 
The following holds not only for $* \varphi$ but also for any 4-form. 

\begin{lemma}
We have 
\begin{equation}\label{appB-2}
\begin{aligned}
&(I + F^\sharp)^* * \varphi \\
=& * \varphi 
- \sum_i i(e_i) F \wedge i(e_i) * \varphi \\
&+ \frac{1}{2} \sum_{i,j} i(e_i) F \wedge i(e_j) F \wedge * \varphi (e_i, e_j, \,\cdot\,, \,\cdot\,) \\
&- \frac{1}{6} \sum_{i,j,k} i(e_i) F \wedge i(e_j) F \wedge i(e_k) F \wedge * \varphi (e_i, e_j, e_k, \,\cdot\,)\\
&+ \frac{1}{24} \sum_{i,j,k,\l} i(e_i) F \wedge i(e_j) F \wedge i(e_k) F \wedge i(e_\l) F \cdot 
* \varphi (e_i, e_j, e_k, e_\l). 
\end{aligned}
\end{equation}
\end{lemma}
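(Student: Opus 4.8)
The statement is a purely algebraic identity on the exterior algebra of $V = \R^7$, valid for an arbitrary $4$-form in place of $*\varphi$; the $G_2$-structure plays no role here. The plan is to expand $(I+F^\sharp)^* * \varphi$ directly from the definition of pullback, using a frame. Fix the standard basis $\{e_i\}$ and its dual $\{e^i\}$, and write $\psi = *\varphi = \tfrac{1}{4!}\sum \psi_{ijk\l}\, e^{ijk\l}$ for the moment (the argument only needs $\psi \in \Lambda^4 V^*$). Since $(I+F^\sharp)^* e^i = e^i + \sum_j F^\sharp_{ji} e^j = e^i + i(e_i) F$ — here one uses $g(F^\sharp e_i, e_j) = F(e_i,e_j)$, so that the transpose-pullback of the covector $e^i$ picks up exactly the $1$-form $i(e_i) F$ — the pullback of a wedge of four covectors expands multilinearly into $2^4 = 16$ terms, which regroup by the number of $F$-factors into the five lines of \eqref{appB-2}.

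The key step is the bookkeeping of this regrouping. The plan is: $(I+F^\sharp)^*(e^i\wedge e^j \wedge e^k \wedge e^\l)$ equals the wedge of the four images $(e^i + i(e_i)F)\wedge \cdots$, and upon contracting with $\tfrac{1}{4!}\psi_{ijk\l}$ and summing, the term with $r$ copies of ``$i(e_\bullet)F$'' in slots is a sum over the $\binom{4}{r}$ choices of which slots are replaced; by the total antisymmetry of $\psi_{ijk\l}$ and relabeling of summation indices, all $\binom{4}{r}$ contributions are equal, so the net combinatorial factor is $\tfrac{1}{4!}\cdot\binom{4}{r}\cdot r! \cdot (\text{sign})$. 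For $r=0$ this gives $\psi = *\varphi$; for $r=1$ one gets $-\sum_i i(e_i)F \wedge i(e_i)*\varphi$ after recognizing $\sum_{jk\l}\psi_{ijk\l} e^{jk\l} = 3!\, i(e_i)*\varphi$ up to sign; for $r=2,3,4$ one likewise identifies the remaining $\psi$-indices as the contracted $4$-form $*\varphi(e_i,e_j,\cdot,\cdot)$, $*\varphi(e_i,e_j,e_k,\cdot)$, and the scalar $*\varphi(e_i,e_j,e_k,e_\l)$ respectively, producing the stated coefficients $\tfrac12, -\tfrac16, \tfrac{1}{24}$. I would carry this out by first recording the elementary fact that for $\psi\in\Lambda^4 V^*$ and $1$-forms $\alpha_1,\dots,\alpha_p$ one has $\sum_{i_1<\cdots} $ contractions collapse with the right multiplicities, perhaps phrased as: $\sum_{i} (i(e_i)\beta)\wedge (i(e_i)\psi) $ and its higher analogues, and then simply match the five terms.

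The main obstacle is purely organizational: getting the signs right in the relation between $\sum_{j,k,\l}\psi_{ijk\l}\,e^j\wedge e^k\wedge e^\l$ and $i(e_i)\psi$, and more importantly making sure that when one ``pulls out'' two, three, or four of the replacement slots, the wedge ordering of the $i(e_{i_a})F$ factors matches the order of the indices fed into $*\varphi(e_{i_1},e_{i_2},\dots)$, so that the alternating sign of the permutation is absorbed consistently into the total antisymmetry of $\psi$. Since $F$ is a $2$-form, each $i(e_i)F$ is a $1$-form and the factors $i(e_i)F \wedge i(e_j)F$ anticommute, which is compatible with the antisymmetry $*\varphi(e_i,e_j,\cdot,\cdot) = -*\varphi(e_j,e_i,\cdot,\cdot)$ — this compatibility is exactly what makes the $\tfrac{1}{r!}$ normalization correct rather than producing spurious cross-terms. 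Once the $r=1$ case is checked carefully, the higher $r$ cases follow by the same pattern with no new subtlety, so I would present $r=1$ in detail and indicate that $r=2,3,4$ are identical.

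Note that this lemma is stated without the hypothesis $-F^3/6 + F\wedge *\varphi = 0$; that hypothesis, together with Lemma \ref{lem:1+F 00} and Corollary \ref{cor:1+F 00}, will only be invoked \emph{afterwards} to simplify the third and fourth lines of \eqref{appB-2} (the cubic and quartic terms in $F$) on the way to Theorem \ref{thm:1+F}. So for this lemma alone the proof is the multilinear expansion above and nothing more.
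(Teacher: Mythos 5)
Your overall strategy is exactly the paper's: expand $(I+F^\sharp)^* * \varphi$ monomial by monomial using the pullback of each covector factor, and regroup by the number of $F$-factors. The combinatorics you describe is essentially right (up to a harmless slip: the net factor for the $r$-fold term is $\binom{4}{r}(4-r)!/4! = 1/r!$, not $\binom{4}{r}\,r!/4!$, though you do quote the correct final coefficients). The real problem is the sign of the basic formula. You claim $(I+F^\sharp)^* e^i = e^i + i(e_i)F$, but the correct identity is
\[
(I+F^\sharp)^* e^i = e^i - i(e_i)F .
\]
Indeed, $\bigl((I+F^\sharp)^* e^i\bigr)(v) = e^i(v) + g(e_i, F^\sharp v)$, and since $F^\sharp$ is skew-symmetric, $g(e_i, F^\sharp v) = -g(F^\sharp e_i, v) = -F(e_i, v)$. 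Equivalently, writing $A^* e^i = \sum_j A_{ij} e^j$ with $A_{ij} = \delta_{ij} + g(e_i, F^\sharp e_j) = \delta_{ij} + F(e_j, e_i)$, the off-diagonal part is $-\sum_j F(e_i,e_j)\,e^j = -\,i(e_i)F$; you used $F^\sharp_{ji}$ where $F^\sharp_{ij}$ is needed, i.e.\ you dropped the transpose, which for a skew-symmetric endomorphism is exactly a sign.

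This is not cosmetic: that minus sign is the sole source of the alternating signs in \eqref{appB-2}. If you run your expansion with $e^i + i(e_i)F$, every term comes out positive and you end up proving the identity with $+\sum_i i(e_i)F\wedge i(e_i)*\varphi$ and $+\tfrac16\sum_{i,j,k}(\cdots)$ in the second and fourth lines, which is false and incompatible with the later use of the lemma in the proof of Theorem \ref{thm:1+F}. Once the sign is corrected, the rest of your argument --- the total antisymmetry of $*\varphi$ making the $\binom{4}{r}$ slot-choices contribute equally, and the identification $\sum (*\varphi)_{i_1\cdots i_r j_1\cdots j_{4-r}}\, e^{j_1\cdots j_{4-r}} = (4-r)!\; *\varphi(e_{i_1},\dots,e_{i_r},\cdot,\dots,\cdot)$ --- goes through and reproduces the paper's (very brief) proof, which simply expands $(e^4 - i(e_4)F)\wedge\cdots\wedge(e^7-i(e_7)F)$ for each monomial such as $e^{4567}$. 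You are also right that the identity holds for an arbitrary $4$-form in place of $*\varphi$ and that the dDT hypothesis plays no role in this lemma.
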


\begin{proof}
It is enough to prove the identity for each monomial, for example $e^{4567}$. 
Then, from $(I + F^\sharp)^*e^{j}=e^{j} - i(e_j) F$, it follows that 
\[
(I + F^\sharp)^*e^{4567}=(e^{4}-i(e_4) F) \wedge (e^{5}-i(e_5) F) \wedge 
(e^{6}-i(e_6) F) \wedge (e^{7}-i(e_7) F). 
\]
Then, this gives the desired formula for $(I + F^\sharp)^* * \varphi$.  
\end{proof}

The proof of Theorem \ref{thm:1+F} will be completed by expressing 
each term on the right hand side of \eqref{appB-2} without using $\{\, e_{i} \,\}_{i=1}^{7}$. 

\begin{lemma} \label{lem:1+F 1}
For a 2-form $F$, 
we have 
\[
\sum_i i(e_i) F \wedge i(e_i) * \varphi = 3 u^\flat \wedge \varphi. 
\]
\end{lemma}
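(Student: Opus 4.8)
**Plan for proving Lemma (the identity $\sum_i i(e_i) F \wedge i(e_i) * \varphi = 3 u^\flat \wedge \varphi$).**

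The plan is to reduce the statement to the two pieces of the decomposition $F = i(u)\varphi + F_{14}$ from \eqref{eq:decomp F} and handle each summand separately, exploiting that the expression $\sum_i i(e_i)\alpha \wedge i(e_i)\beta$ is $G_2$-equivariant and hence the $F_{14}$-contribution must be a $G_2$-invariant $5$-form built linearly from $F_{14}$, of which there are none. More precisely, first I would note that for any $2$-form $F$ the map $F \mapsto \sum_i i(e_i)F \wedge i(e_i) *\varphi$ is $\R$-linear and $G_2$-equivariant (the sum over an orthonormal basis is basis-independent, and $G_2$ preserves $*\varphi$ and $g$), landing in $\Lambda^5 V^* \cong V_7 \oplus V_{14}$. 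Since $\Lambda^2 V^* \cong V_7 \oplus V_{14}$ as well, Schur's lemma says the map is a scalar $c_7$ on the $V_7 = \Lambda^2_7$ part (i.e. on $i(u)\varphi$) plus a scalar $c_{14}$ on $\Lambda^2_{14}$.

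Next I would compute $c_{14}$ and show it is zero. Take $\beta \in \Lambda^2_{14}V^*$; then $\sum_i i(e_i)\beta \wedge i(e_i)*\varphi$ is a $V_{14}$-valued expression, but one can pair it against $\varphi$ (or equivalently apply $\varphi \wedge -$, which kills $V_{14}$ in $\Lambda^2$ but here we are in $\Lambda^5$): more simply, $\sum_i i(e_i)\beta \wedge i(e_i)*\varphi \wedge \gamma$ for a $1$-form $\gamma$ can be rewritten, using $i(v)(\alpha\wedge\eta) = (i(v)\alpha)\wedge\eta \pm \alpha\wedge i(v)\eta$, as a multiple of $\beta \wedge i(\gamma^\sharp)*\varphi$-type terms plus $\varphi\wedge(\cdots)$; the cleanest route is to use the algebraic fact $*\varphi \wedge \beta = 0$ for $\beta\in\Lambda^2_{14}$ from \eqref{decom-L-V7}. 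Applying $\sum_i i(e_i)$ twice to $*\varphi\wedge\beta=0$ (thought of as wedging then contracting) and bookkeeping the cross terms gives an identity forcing $\sum_i i(e_i)\beta\wedge i(e_i)*\varphi$ to be expressed purely via $\varphi\wedge i(\text{something})\beta$, which vanishes once we also use that the relevant $V_{14}$-component of such a contraction is zero by Schur again. In fact the fastest argument: $\Lambda^5 V^*$ has no $V_1$ summand, and a $G_2$-equivariant linear map $\Lambda^2_{14} \to \Lambda^5_{14}$ is a scalar, so I just need one test element — plug in $\beta = e^{23}-e^{45} \in \Lambda^2_{14}$ (the same element used after Lemma \ref{lem:kernel wedge}), compute $\sum_i i(e_i)\beta \wedge i(e_i)*\varphi$ directly from \eqref{varphi*}, and observe it vanishes, giving $c_{14}=0$.

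Finally I would pin down $c_7$ by the same one-element test: take $u = e_1$, so $F = i(e_1)\varphi = e^{23}+e^{45}+e^{67}$, compute $\sum_i i(e_i)(i(e_1)\varphi) \wedge i(e_i)*\varphi$ from the explicit forms \eqref{varphi} and \eqref{varphi*}, and compare with $3 e^1 \wedge \varphi$. Matching coefficients gives $c_7 = 3$, and then linearity plus $c_{14}=0$ yields $\sum_i i(e_i)F \wedge i(e_i)*\varphi = 3\, i(u)\varphi \mapsto 3 u^\flat \wedge \varphi$ for general $F$, using $i(u)\varphi \wedge (\cdots)$ — wait, more directly $3 u^\flat\wedge\varphi$ corresponds to $3$ times the image of $i(u)\varphi$. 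The main obstacle is purely computational bookkeeping in the two test-element evaluations (keeping signs straight in the $21$-term sums over $e^{ij}$); the representation-theoretic reduction itself is routine, so I would present the Schur argument tersely and relegate the coefficient checks to direct monomial computation as in the proof of the preceding lemma.
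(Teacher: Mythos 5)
Your proposal follows essentially the same route as the paper: reduce by Schur's lemma to determining a single constant, then evaluate on the test element $u=e_1$, $F_{14}=0$, $F=i(e_1)\varphi=e^{23}+e^{45}+e^{67}$ by direct monomial computation. However, there is a degree slip that makes your argument longer than it needs to be: $i(e_i)F\wedge i(e_i)*\varphi$ is a $4$-form ($1+3$), not a $5$-form, so the target is $\Lambda^4 V^*\cong V_1\oplus V_7\oplus V_{27}$, which contains \emph{no} $V_{14}$ summand. Hence the space of $G_2$-equivariant maps $\Lambda^2 V^*\cong V_7\oplus V_{14}\to\Lambda^4 V^*$ is already $1$-dimensional (only the $V_7$'s match), the restriction to $\Lambda^2_{14}$ vanishes for free, and your entire second test-element computation with $\beta=e^{23}-e^{45}$ (and the surrounding discussion of how to force $c_{14}=0$) is unnecessary -- this is exactly the one-stroke argument the paper uses. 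The slip is harmless in the sense that the extra computation would indeed return zero, so your plan still yields a correct proof, but you should correct $\Lambda^5$ to $\Lambda^4$ and delete the $c_{14}$ detour.
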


\begin{proof}
By \eqref{eq:DiffForm-V7} and the Schur's lemma, 
the space of $G_2$-equivariant linear maps from 
$\Lambda^2 V^*$ to $\Lambda^4 V^*$ is 1-dimensional. 
Since the map $\Lambda^2 V^* \ni F \mapsto \sum_i i(e_i) F \wedge i(e_i) * \varphi \in \Lambda^4 V^*$
is $G_2$-equivariant, there exists $C \in \R$ such that 
\[
\sum_i i(e_i) F \wedge i(e_i) * \varphi = C u^\flat \wedge \varphi.
\]
Thus, it is enough to decide $C$ for some $F$ and $u$. 
Suppose that $u=e_1$ and $F_{14}=0$. Then, 
$F = i(u) \varphi = e^{23} + e^{45} +e^{67}$ and we compute 
\[
\begin{aligned}
&\sum_i i(e_i) F \wedge i(e_i) * \varphi \\
=&
e^3 \wedge i(e_2) * \varphi
- e^2 \wedge i(e_3) * \varphi
+ e^5 \wedge i(e_4) * \varphi
- e^4 \wedge i(e_5) * \varphi\\
&+ e^7 \wedge i(e_6) * \varphi
- e^6 \wedge i(e_7) * \varphi\\
=&
e^3 \wedge (e^{156} + e^{147})
- e^2 \wedge (-e^{157} + e^{146})
+ e^5 \wedge (- e^{136} - e^{127})
- e^4 \wedge (e^{137} - e^{126})\\
&+ e^7 \wedge  (e^{134} + e^{125})
- e^6 \wedge  (-e^{135} +e^{124})\\
=&
3 (-e^{1356} - e^{1347} - e^{1257} + e^{1246}). 
\end{aligned}
\]

Since $e^1 \wedge \varphi = e^1 \wedge (e^{246} - e^{257} - e^{347} - e^{356})$, 
we obtain $C=3$. 
\end{proof}

\begin{lemma} \label{lem:1+F 2}
For a 2-form $F$ given by \eqref{eq:decomp F}, 
we have 
\[
\begin{aligned}
&\sum_{i,j} i(e_i) F \wedge i(e_j) F \wedge * \varphi (e_i, e_j, \,\cdot\,, \,\cdot\,) \\
=& 
(-2 |F_7|^2 + |F_{14}|^2)* \varphi + 6 i(u) F_{14} \wedge \varphi 
+ 
(5 F_7^2 +4 F_7 \wedge F_{14} - F_{14}^2). 
\end{aligned}
\]
\end{lemma}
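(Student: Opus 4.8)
The plan is to compute the left-hand side $\sum_{i,j} i(e_i) F \wedge i(e_j) F \wedge * \varphi (e_i, e_j, \,\cdot\,, \,\cdot\,)$ by splitting $F = F_7 + F_{14}$ according to \eqref{eq:decomp F}, so that the sum bilinearly decomposes into three pieces: a ``$(7,7)$'' piece quadratic in $F_7 = i(u)\varphi$, a ``$(14,14)$'' piece quadratic in $F_{14}$, and a ``$(7,14)$'' cross term. Each of these three maps is $G_2$-equivariant with values in $\Lambda^4 V^*$, so by Schur's lemma applied to the decomposition \eqref{eq:DiffForm-V7} one knows \emph{a priori} which irreducible summands can appear: from $S^2 V_7 = V_1 \oplus V_{27}$ the $(7,7)$ piece is a combination of $* \varphi$ (the $V_1$ part) and the $V_{27}$-component of $F_7^2$; from $S^2 V_{14} = V_1 \oplus V_{27} \oplus V_{77}$ the $(14,14)$ piece is a combination of $* \varphi$, the $V_{27}$-part of $F_{14}^2$, and a $V_{77}$-term which must vanish since $\Lambda^4 V^*$ contains no $V_{77}$; and from $V_7 \otimes V_{14} = V_7 \oplus V_{27} \oplus V_{64}$ the cross term is a combination of $i(u) F_{14} \wedge \varphi$ (the $V_7$ part, using \eqref{decom-L-V7}) and the $V_{27}$-part of $F_7 \wedge F_{14}$, with the $V_{64}$-piece killed for the same reason. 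Thus the right-hand side is forced to have exactly the displayed shape $a |F_7|^2 * \varphi + b|F_{14}|^2 * \varphi + c\, i(u) F_{14}\wedge \varphi + d F_7^2 + e F_7 \wedge F_{14} + f F_{14}^2$ for universal constants, and it remains only to pin down these six constants.

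To determine the constants I would evaluate both sides on explicit model 2-forms. For the coefficients $a$, $d$ (and the relative normalization) take $u = e_1$, $F_{14}=0$, so $F = i(e_1)\varphi = e^{23}+e^{45}+e^{67}$, exactly as in the proof of Lemma \ref{lem:1+F 1}; a direct expansion of $\sum_{i,j} i(e_i) F \wedge i(e_j) F \wedge * \varphi (e_i, e_j, \,\cdot\,, \,\cdot\,)$ against $* \varphi$ and $F_7^2$ (noting $|F_7|^2 = |i(e_1)\varphi|^2 = 3$ by Lemma \ref{lem:G2 identities}) gives the pair $(-2|F_7|^2, 5)$, i.e. $a = -2$, $d = 5$. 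For $b$, $f$ take $u=0$ and $F = F_{14}$ a nonzero element of $\Lambda^2_{14} V^*$, for instance $F_{14} = e^{23}-e^{45}$ (which lies in $\Lambda^2_{14}$ since $* \varphi \wedge F_{14} = 0$); expanding the double sum and matching against $* \varphi$ and $F_{14}^2$ yields $b = 1$, $f = -1$. Finally for $c$, $e$ take $u = e_1$ together with an $F_{14} \in \Lambda^2_{14} V^*$ with $i(e_1) F_{14} \neq 0$ — e.g. $F_{14}$ a multiple of $e^{12}-e^{34}-\dots$ adapted so the $V_7$-part is visible — and read off $c = 6$, $e = 4$ by comparing the $i(u) F_{14}\wedge\varphi$ and $F_7\wedge F_{14}$ components. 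Assembling, the right-hand side becomes $(-2|F_7|^2 + |F_{14}|^2)* \varphi + 6\, i(u) F_{14} \wedge \varphi + (5 F_7^2 + 4 F_7\wedge F_{14} - F_{14}^2)$, which is the claim.

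The main obstacle I anticipate is not the representation-theoretic bookkeeping — that is essentially forced — but the concrete index gymnastics in the cross term: one must choose a test pair $(u, F_{14})$ with $i(u) F_{14}\neq 0$ while keeping $F_{14}$ genuinely in $\Lambda^2_{14}$, then carefully expand the fully contracted expression $\sum_{i,j} i(e_i) F \wedge i(e_j) F \wedge * \varphi (e_i, e_j, \,\cdot\,, \,\cdot\,)$ without dropping sign conventions coming from $* \varphi(e_i, e_j, \cdot, \cdot)$ being antisymmetric in $i,j$. A useful sanity check along the way is to contract the whole identity with $\varphi$ or with $u^\flat$ and compare against the cleaner identities of Lemmas \ref{lem:G2 identities}, \ref{lem:G2id high}, and \ref{lem:1+F 00}; in particular, under the dDT hypothesis $i(u) F_{14} = 0$ of Lemma \ref{lem:1+F 00} the $V_7$ cross term drops out, which is exactly what Theorem \ref{thm:1+F} will later exploit, so verifying internal consistency with that degenerate case is a good correctness test even though the present lemma is stated for arbitrary $F$.
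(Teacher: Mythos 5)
Your proposal is correct in substance but follows a genuinely different route from the paper. The paper proves the identity by direct algebraic manipulation: it applies $\sum_i i(e_i) F \wedge i(e_i)(\,\cdot\,)$ to the identity of Lemma \ref{lem:1+F 1}, evaluates $\sum_{i,j} F_{ij}\, i(e_j) i(e_i) (F \wedge * \varphi)$ in two ways to extract the term $\sum_{i,j} F_{ij}\, i(e_i) F \wedge i(e_j) * \varphi$, converts $\sum_i i(e_i) F \wedge i(e_i) \varphi$ back to Lemma \ref{lem:1+F 1} by Hodge duality, and only at the very end substitutes the type decomposition $F = F_7 + F_{14}$. You instead put the representation theory up front: Schur's lemma applied to \eqref{eq:DiffForm-V7} and \eqref{eq:decomp G2 symm} forces the answer to lie in a six-dimensional space of universal expressions (two constants for each of the $(7,7)$, $(14,14)$ and cross pieces, the $V_{77}$ and $V_{64}$ components being killed because $\Lambda^4 V^*$ contains neither), and the constants are then fixed by evaluation on test elements. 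Your dimension counts are right, your proposed bases $\{|F_7|^2 * \varphi, F_7^2\}$, $\{|F_{14}|^2 * \varphi, F_{14}^2\}$, $\{i(u)F_{14}\wedge\varphi, F_7\wedge F_{14}\}$ do span the respective spaces of equivariant maps, and I checked that your method reproduces the stated constants on the $(14,14)$ test $F_{14}=e^{23}-e^{45}$. What your approach buys is robustness against sign errors in the double contraction (the shape of the answer is forced before any index computation); what the paper's approach buys is the reusable intermediate identities \eqref{eq:1+F 2 2} and \eqref{eq:1+F 2 3} and a single computation valid for all $F$ at once. Two details you should pin down when writing this up: your candidate $e^{12}-e^{34}-\dots$ for the cross-term test is not as written an element of $\Lambda^2_{14} V^*$, so you must exhibit an actual element of $\ker(\,\cdot\,\wedge * \varphi)$ not annihilated by $i(u)$ (for $u = e_1$, e.g.\ $e^{12}+e^{47}$ works); and you should verify for that choice that $i(u) F_{14}\wedge\varphi$ and $F_7 \wedge F_{14}$ are linearly independent 4-forms, since otherwise the single test equation does not separate $c$ from $e$. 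Neither point is a gap, only a detail to complete.
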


\begin{proof}
For the rest of this subsection, we set
\[
F_{i j} = F(e_i, e_j). 
\]
By Lemma \ref{lem:1+F 1}, we have 
\[
\sum_{i, j} i(e_i) F \wedge i(e_i) \left( i(e_j) F \wedge i(e_j) * \varphi \right) 
= 
\sum_{i} i(e_i) F \wedge i(e_i) \left( 3 u^\flat \wedge \varphi \right). 
\]
The left hand side is computed as 
\[
\sum_{i, j} \left(
-F_{i j} i(e_i) F \wedge i(e_j) * \varphi 
+ i(e_i) F \wedge i(e_j) F \wedge * \varphi (e_i, e_j, \,\cdot\,, \,\cdot\,) \right) 
\]
and the right hand side is 
\[
3 i(u) F \wedge \varphi + 3 u^\flat \wedge \sum_{i} i(e_i) F \wedge i(e_i) \varphi. 
\]
Hence, we have 
\begin{equation} \label{eq:1+F 2 1}
\begin{aligned}
&\sum_{i, j} i(e_i) F \wedge i(e_j) F \wedge * \varphi (e_i, e_j, \,\cdot\,, \,\cdot\,) \\
=&
\sum_{i, j} F_{i j} i(e_i) F \wedge i(e_j) * \varphi + 3 i(u) F \wedge \varphi 
+ 3 u^\flat \wedge \sum_{i} i(e_i) F \wedge i(e_i) \varphi. 
\end{aligned}
\end{equation}
To compute the first term of \eqref{eq:1+F 2 1}, 
we compute $\sum_{i,j} F_{i j} i(e_j) i(e_i) (F \wedge * \varphi)$ in two ways. 
We first have 
\[
\begin{aligned}
&\sum_{i,j} F_{i j} i(e_j) i(e_i) (F \wedge * \varphi)\\
=&
\sum_{i,j} F_{i j} i(e_j) \left(i(e_i)F \wedge * \varphi + F \wedge i(e_i) * \varphi \right) \\
=&
\sum_{i,j} F_{i j} \left(F_{i j} * \varphi - i(e_i) F \wedge i(e_j) * \varphi + i(e_j) F \wedge i(e_i) * \varphi + F \wedge i(e_j) i(e_i) * \varphi \right)\\
=&
\sum_{i,j} \left(F_{i j}^2 * \varphi - 2 F_{i j} i(e_i) F \wedge i(e_j) * \varphi 
+ F_{i j} F \wedge * (e^{i j} \wedge \varphi) \right). 
\end{aligned}
\]
Since $F= (1/2) \sum_{i, j} F_{i j} e^{i j}$, we have $|F|^2 = (1/2) \sum_{i, j} F_{i j}^2$. 
Thus, this is equal to 
\begin{equation}\label{appB-1}
2 |F|^2 * \varphi - 2 \sum_{i,j} F_{i j} i(e_i) F \wedge i(e_j) * \varphi 
+ 2 F \wedge * (F \wedge \varphi). 
\end{equation}
On the other hand, since $F \wedge * \varphi = i(u) \varphi \wedge * \varphi
=3 * u^\flat$ by Lemma \ref{lem:G2 identities}, we have 
\begin{equation}\label{appB-0.5}
\begin{aligned}
&\sum_{i,j} F_{i j} i(e_j) i(e_i) (F \wedge * \varphi)\\
=&
3 \sum_{i,j} F_{i j} i(e_j) i(e_i) * u^\flat
=
3 \sum_{i,j} F_{i j} * (e^{i j} \wedge u^\flat)
=
6 * (F \wedge u^\flat). 
\end{aligned}
\end{equation}
Thus, since \eqref{appB-1} is equal to the right hand side of \eqref{appB-0.5}, we obtain 
\begin{equation} \label{eq:1+F 2 2}
\sum_{i,j} F_{i j} i(e_i) F \wedge i(e_j) * \varphi
=
|F|^2 * \varphi + F \wedge * (F \wedge \varphi) -3 * (F \wedge u^\flat),
\end{equation}
and this is the first term of \eqref{eq:1+F 2 1}. 
Next, we compute the last term of \eqref{eq:1+F 2 1}. 
We have 
\[
\begin{aligned} 
\sum_i i(e_i) F \wedge i(e_i) \varphi 
&= \sum_{i, j} F_{i j} e^j \wedge i(e_i) \varphi \\
&=
\sum_{i, j} F_{i j} * (i(e_j) (e^i \wedge * \varphi)) \\
&=
- \sum_{i, j} F_{i j} * (e^i \wedge i(e_j) * \varphi) 
=
\sum_j *( i(e_j) F \wedge i(e_j) * \varphi). 
\end{aligned}
\]
Then, by Lemma \ref{lem:1+F 1}, we obtain 
\begin{equation} \label{eq:1+F 2 3}
\sum_i i(e_i) F \wedge i(e_i) \varphi = -3 i(u) * \varphi.
\end{equation}
Then, by substituting \eqref{eq:1+F 2 2} and \eqref{eq:1+F 2 3} into \eqref{eq:1+F 2 1}, we see that 
\begin{equation}\label{eq:1+F 2 4}
\begin{aligned}
&\sum_{i, j} i(e_i) F \wedge i(e_j) F \wedge * \varphi (e_i, e_j, \,\cdot\,, \,\cdot\,) \\
=&
|F|^2 * \varphi + F \wedge * (F \wedge \varphi) -3 * (F \wedge u^\flat) 
+ 3 i(u) F \wedge \varphi
-9 u^\flat \wedge i(u) * \varphi. 
\end{aligned}
\end{equation}
We can simplify this equation further. Indeed, we have 
\[
\begin{aligned}
F \wedge * (F \wedge \varphi) 
&=
(F_7 + F_{14}) \wedge (2 F_7 - F_{14})
=
2 F_7^2 +F_7 \wedge F_{14} - F_{14}^2, \\
* (F \wedge u^\flat) 
&= i(u) (* F_7 + * F_{14})\\
&= i(u) \left( \frac{1}{2} F_7 \wedge \varphi \right) - i(u) \left( F_{14} \wedge \varphi \right) \\
&=
\frac{1}{2} F_7^2 -i(u) F_{14} \wedge \varphi - F_7 \wedge F_{14}
\end{aligned}
\]
and 
\[
\begin{aligned}
u^\flat \wedge i(u) * \varphi 
&=
-i(u) (u^\flat \wedge * \varphi) + |u|^2 * \varphi \\
&=
-i(u) \left( \frac{1}{2} \varphi \wedge i(u) \varphi \right) + |u|^2 * \varphi 
=
- \frac{1}{2} F_7^2 +  |u|^2 * \varphi. 
\end{aligned}
\]
Substituting these into \eqref{eq:1+F 2 4}, we obtain 
\[
\begin{aligned}
&\sum_{i, j} i(e_i) F \wedge i(e_j) F \wedge * \varphi (e_i, e_j, \,\cdot\,, \,\cdot\,)\\
=&
|F|^2 * \varphi 
+ 
(2 F_7^2 +F_7 \wedge F_{14} - F_{14}^2)\\
&+
3 \left( - \frac{1}{2} F_7^2 + i(u) F_{14} \wedge \varphi + F_7 \wedge F_{14}\right) 
+ 
3 i(u) F \wedge \varphi
+ 
9 \left( \frac{1}{2} F_7^2 - |u|^2 * \varphi \right) \\
=&
(|F|^2 - 9 |u|^2)* \varphi + 6 i(u) F_{14} \wedge \varphi 
+ 
(5 F_7^2 +4 F_7 \wedge F_{14} - F_{14}^2). 
\end{aligned}
\]
Then, by $|F|^2 = |F_7|^2 + |F_{14}|^2$ and $|F_7|^2 = 3 |u|^2$, the proof is completed. 
\end{proof}

\begin{lemma} \label{lem:1+F 3}
Suppose that 
a 2-form $F$ given by \eqref{eq:decomp F} satisfies 
$-F^3/6 + F \wedge * \varphi =0$. 
Then, we have 
\[
\sum_{i,j,k} i(e_i) F \wedge i(e_j) F \wedge i(e_k) F \wedge * \varphi (e_i, e_j, e_k, \,\cdot\,) 
= -18 u^\flat \wedge \varphi. 
\]
\end{lemma}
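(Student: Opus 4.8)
The plan is to follow the same strategy used in the proofs of Lemmas \ref{lem:1+F 1} and \ref{lem:1+F 2}: first argue by $G_2$-equivariance and Schur's lemma that the cubic expression
\[
\Phi(F) := \sum_{i,j,k} i(e_i) F \wedge i(e_j) F \wedge i(e_k) F \wedge * \varphi (e_i, e_j, e_k, \,\cdot\,)
\]
must be a universal linear combination of the basic $G_2$-invariant 4-forms built from $F$, and then pin down the coefficients. Note, however, that $\Phi$ is \emph{cubic} in $F$, not linear, so the clean ``1-dimensional space of equivariant maps'' argument does not apply verbatim; instead I would decompose $F = i(u)\varphi + F_{14}$ as in \eqref{eq:decomp F} and observe that under the hypothesis $-F^3/6 + F \wedge * \varphi = 0$ we have the extra relations $i(u) F = i(u) F_{14} = 0$ from Lemma \ref{lem:1+F 00} and $\pi^4_7(F^2) = 0$, i.e. $\varphi \wedge * F^2 = 0$, from Corollary \ref{cor:1+F 00}. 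These constraints should collapse the a priori list of invariants (terms like $|u|^2 u^\flat \wedge \varphi$, $u^\flat \wedge i(u) * \varphi$, $i(u)F_{14} \wedge \varphi$, $F_7 \wedge F_7^?$, $|F_{14}|^2 u^\flat \wedge \varphi$, $\varphi \wedge i(u)(F_{14}^2)$, etc.) down to a single multiple of $u^\flat \wedge \varphi$.

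The concrete computational route I would take is to differentiate the already-established Lemma \ref{lem:1+F 2} once more, mimicking the passage from Lemma \ref{lem:1+F 1} to Lemma \ref{lem:1+F 2}. Explicitly, apply $\sum_{k} i(e_k) F \wedge i(e_k)(\,\cdot\,)$ to the identity in Lemma \ref{lem:1+F 2}, use the Leibniz rule for interior products, and re-express everything via the identities in Lemma \ref{lem:G2 identities} and Lemma \ref{lem:G2id high} (in particular \eqref{eq:G2id high 1}, \eqref{eq:G2id high 2}, \eqref{eq:G2id high 3}), together with \eqref{eq:1+F 2 2} and \eqref{eq:1+F 2 3} from the proof of Lemma \ref{lem:1+F 2}. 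The hypothesis enters twice: once to kill $i(u) F_{14}$-terms and the $\pi^4_7(F^2)$-piece, and once through $F^3 = 6 F \wedge * \varphi = 18 * u^\flat$ (using $F \wedge * \varphi = i(u)\varphi \wedge * \varphi = 3 * u^\flat$) to rewrite any surviving cubic-in-$F_7$ term as a multiple of $* u^\flat$, hence of $u^\flat \wedge * \varphi$-type data, and finally of $u^\flat \wedge \varphi$ after dualizing. Since $|F_7|^2 = 3|u|^2$, one can also trade $|F|^2$-coefficients for $|u|^2$ plus $|F_{14}|^2$, and the $F_{14}$-dependent pieces should cancel by a combination of $i(u)F_{14}=0$ and $\varphi \wedge F_{14}^2 = -|F_{14}|^2 \vol$.

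To fix the constant $-18$ cleanly, I would evaluate both sides on the model case $u = e_1$, $F_{14} = 0$, so $F = i(u)\varphi = e^{23} + e^{45} + e^{67}$, which automatically satisfies $-F^3/6 + F\wedge * \varphi = 0$ since $F^3 = 18\, e^{1\cdots 7}\cdot$(the $*u^\flat$ normalization) and $F \wedge * \varphi = 3 * u^\flat$. Here the left-hand side reduces to a finite explicit sum over $i,j,k \in \{2,\dots,7\}$ (most terms vanish because $* \varphi(e_i,e_j,e_k,\,\cdot\,)$ is a 1-form and the wedge of three 2-forms supported on $\{2,\dots,7\}$ is a 6-form only when indices are right), and $u^\flat \wedge \varphi = e^1 \wedge (e^{246} - e^{257} - e^{347} - e^{356})$ as in Lemma \ref{lem:1+F 1}; comparing coefficients gives the factor $-18$. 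The main obstacle I anticipate is bookkeeping: the second ``differentiation'' of Lemma \ref{lem:1+F 2} produces many terms (each term of Lemma \ref{lem:1+F 2} contributes several via Leibniz, and $i(u)F_{14}$-type expressions and $F_7\wedge F_{14}$-type expressions proliferate), and the real work is organizing the cancellations so that only $u^\flat \wedge \varphi$ survives — in particular making sure the $F_{14}$-quadratic and the $u^\flat \wedge i(u)(F_{14}^2)$ contributions genuinely vanish under $i(u)F_{14}=0$ rather than merely simplifying. Once that algebra is under control, the model-case evaluation is routine and fixes the sign and magnitude.
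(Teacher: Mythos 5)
Your proposal diverges from the paper's actual argument, which is much shorter: the paper introduces the single auxiliary quantity $J = \sum_{i,j,k} i(e_k)i(e_j)i(e_i)(F^3) \wedge *\varphi(e_i,e_j,e_k,\,\cdot\,)$ and evaluates it in two ways — once by Leibniz-expanding $i(e_k)i(e_j)i(e_i)F^3$ and killing the resulting $\sum F_{ij}\,i(e_k)F\wedge F\wedge *\varphi(e_i,e_j,e_k,\,\cdot\,)$ term via the Schur's lemma observation that $\gamma\mapsto\sum i(e_k)i(e_j)i(e_i)\gamma\wedge*\varphi(e_i,e_j,e_k,\,\cdot\,)$ factors through $\pi^4_7$ together with $\pi^4_7(F^2)=0$, and once by substituting $F^3=18*u^\flat$ directly. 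That trick avoids essentially all of the bookkeeping you anticipate. Your route of applying $\sum_k i(e_k)F\wedge i(e_k)(\,\cdot\,)$ to Lemma \ref{lem:1+F 2} is not unreasonable in principle, but it is only a sketch: the Leibniz correction terms it generates, e.g.\ $\sum_{i,j,k}F_{ik}\,i(e_k)F\wedge i(e_j)F\wedge*\varphi(e_i,e_j,\,\cdot\,,\,\cdot\,)$, contract the symmetric tensor $\sum_k F_{ik}F_{kl}$ against $*\varphi$ and are not covered by any identity in Lemmas \ref{lem:G2 identities} or \ref{lem:G2id high}; the "real work" you defer is precisely where the proof lives, so as written this is a plan rather than a proof.

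More seriously, the one concrete step you do commit to is wrong: the model case $u=e_1$, $F_{14}=0$, $F=i(e_1)\varphi=e^{23}+e^{45}+e^{67}$ does \emph{not} satisfy the hypothesis. By \eqref{eq:G2id high 1}, $F^3=(i(u)\varphi)^3=6|u|^2 *u^\flat=6*e^1$, not $18*e^1$ as you assert, while $F\wedge*\varphi=3*e^1$; hence $-F^3/6+F\wedge*\varphi=2*e^1\neq 0$. Indeed \eqref{eq:dDTeq=0} forces $|u|^2=3$ whenever $F_{14}=0$ and $u\neq 0$, so the only admissible pure-$\Lambda^2_7$ model is $u=\sqrt{3}\,e_1$. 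This is not a harmless normalization slip: since $S^3V_7\cong V_7\oplus V_{77}$ has a unique $V_7$-summand, for pure $F=i(u)\varphi$ the left-hand side equals $c\,|u|^2\,u^\flat\wedge\varphi$ for a universal constant $c$, so evaluating at $|u|=1$ and matching against $-18\,u^\flat\wedge\varphi$ would return $c=-18$ instead of the correct $c=-6$ (which gives $3c=-18$ at $|u|^2=3$), i.e.\ the wrong lemma. You need either to use the correctly normalized model case or to carry the full computation through symbolically so that no calibration step is required.
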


\begin{proof}
We compute 
\[
J := \sum_{i,j,k} i(e_k) i(e_j) i(e_i) \left( F^3 \wedge * \varphi (e_i, e_j, e_k, \,\cdot\, ) \right)
\]
in two ways. 
Since 
\[
\begin{aligned}
i(e_k) i(e_j) i(e_i) F^3 
=& 3 i(e_k) i(e_j) \left( i(e_i) F \wedge F^2 \right) \\
=& 3 i(e_k) \left( F_{i j} F^2 -2 i(e_i) F \wedge i(e_j) F \wedge F \right) \\
=& 
3 \left( 2 F_{i j} i(e_k) F \wedge F -2 F_{i k} i(e_j) F \wedge F \right. \\
&+ \left. 2 F_{j k} i(e_i) F \wedge F -2 i(e_i) F \wedge i(e_j) F \wedge i(e_k) F
\right), 
\end{aligned}
\]
it follows that 
\[
\begin{aligned}
J =&
3 \sum_{i,j,k} \big(
6 F_{i j} i(e_k) F \wedge F \wedge * \varphi (e_i, e_j, e_k, \,\cdot\,) \\
&-2 i(e_i) F \wedge i(e_j) F \wedge i(e_k) F \wedge * \varphi (e_i, e_j, e_k, \,\cdot\,)
\big). 
\end{aligned}
\]

Note that 
the space of $G_2$-equivariant linear maps from 
$\Lambda^4 V^*$ to $\Lambda^2 V^*$ is 1-dimensional 
by \eqref{eq:DiffForm-V7} and the Schur's lemma. 
Since the map 
$$\Lambda^4 V^* \ni \gamma \mapsto 
\sum_{i,j,k} i(e_k) i(e_j) i(e_i) \gamma \wedge * \varphi (e_i, e_j, e_k, \,\cdot\,) \in \Lambda^2 V^*
$$
is $G_2$-equivariant, 
this is a constant multiple of $* \mu \left( \pi^4_7 (\gamma) \right)$, 
where $\mu : \Lambda^4_7 V^* \rightarrow \Lambda^2_7 V^*$ 
is a $G_2$-equivariant isomorphism. 
When $\gamma = F^2$, we have $\pi^4_7 (F^2) = 0$ by Corollary \ref{cor:1+F 00}.   
Then, it follows that 
\[
\begin{aligned}
0
&=\sum_{i,j,k} i(e_k) i(e_j) i(e_i) (F^2/2) \wedge * \varphi (e_i, e_j, e_k, \,\cdot\,) \\ 
&= \sum_{i,j,k} i(e_k) i(e_j) (i(e_i) F \wedge F) \wedge * \varphi (e_i, e_j, e_k, \,\cdot\,) \\
&= \sum_{i,j,k} i(e_k) (F_{i j} \wedge F - i(e_i) F \wedge i(e_j) F) \wedge * \varphi (e_i, e_j, e_k, \,\cdot\,) \\
&= 3 \sum_{i,j,k} F_{i j} i(e_k) F \wedge * \varphi (e_i, e_j, e_k, \,\cdot\,). 
\end{aligned}
\]
Hence, we obtain 
\begin{equation} \label{eq:1+F 3 1}
J= 
-6 \sum_{i,j,k}  i(e_i) F \wedge i(e_j) F \wedge i(e_k) F \wedge * \varphi (e_i, e_j, e_k, \,\cdot\,). 
\end{equation}

On the other hand, 
by $F^3 = 6 F \wedge * \varphi = 6 i(u) \varphi \wedge * \varphi
= 18 * u^\flat$, where we use Lemma \ref{lem:G2 identities}, 
we have 
\begin{equation} \label{eq:1+F 3 2}
\begin{aligned}
J
=& 18 \sum_{i,j,k} i(e_k) i(e_j) i(e_i) \left( * \varphi (e_i, e_j, e_k, u) {\rm vol} \right) \\
=& 18 \sum_{i,j,k} * \varphi (e_i, e_j, e_k, u) * e^{ijk}
= -108 * (i(u) * \varphi) 
= 108 u^\flat \wedge \varphi. 
\end{aligned}
\end{equation}
Then, by \eqref{eq:1+F 3 1} and \eqref{eq:1+F 3 2}, we obtain Lemma \ref{lem:1+F 3}. 
\end{proof}

\begin{lemma} \label{lem:1+F 4}
Suppose that a 2-form $F$ given by \eqref{eq:decomp F} satisfies $-F^3/6 + F \wedge * \varphi =0$. 
Then, we have 
\[
\begin{aligned}
&\sum_{i,j,k,\l} i(e_i) F \wedge i(e_j) F \wedge i(e_k) F \wedge i(e_\l) F \cdot 
* \varphi (e_i, e_j, e_k, e_\l) \\
=& 
- 72 i(u) (F \wedge \varphi) +  6 \la F^2, * \varphi \ra F^2. 
\end{aligned}
\]
\end{lemma}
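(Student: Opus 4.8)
The plan is to mimic exactly the strategy of Lemma \ref{lem:1+F 3}: introduce the scalar-valued ``double interior product'' expression, compute it in two ways, and compare. Concretely, I would set
\[
K := \sum_{i,j,k,\l} i(e_\l) i(e_k) i(e_j) i(e_i) \left( F^4 \cdot * \varphi (e_i, e_j, e_k, e_\l) \right),
\]
noting that $F^4 \in \Lambda^8 V^* = \{0\}$ on a $7$-dimensional space, so this is not the right object; instead I would work with $F^3 \wedge (\text{something})$ or, more directly, expand $i(e_\l) i(e_k) i(e_j) i(e_i) F^4$ is useless and one should instead expand the left-hand side directly using $i(e_i)(F^4/24) = i(e_i)F \wedge F^3/6$. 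So the cleaner route: start from the identity $F^3/6 = F \wedge * \varphi = 3 * u^\flat$ (Lemma \ref{lem:G2 identities}), substitute $F^3 = 18 * u^\flat$ into the left-hand sum wherever three of the four factors $i(e_\bullet)F$ appear contracted, and handle the remaining single factor $i(e_\l)F$ together with the coefficient $* \varphi(e_i,e_j,e_k,e_\l)$.

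More precisely, the left-hand side is $S := \sum i(e_i)F \wedge i(e_j)F \wedge i(e_k)F \wedge i(e_\l)F \cdot *\varphi(e_i,e_j,e_k,e_\l)$, a scalar times... no: each $i(e_\bullet)F$ is a $1$-form, so the wedge of four of them is a $4$-form, and multiplying by the scalar $*\varphi(e_i,e_j,e_k,e_\l)$ and summing gives a $4$-form. Following Lemma \ref{lem:1+F 3}, I would compute
\[
S = \sum_{i,j,k,\l} i(e_\l) \Big( i(e_i)F \wedge i(e_j)F \wedge i(e_k)F \Big) \wedge \big( i(e_\l)\,\text{dual object}\big)\cdots
\]
Rather than guessing, the honest plan: apply $i(e_\l)$ to the known expression from Lemma \ref{lem:1+F 3}, namely to $\sum_{i,j,k} i(e_i)F\wedge i(e_j)F\wedge i(e_k)F \wedge *\varphi(e_i,e_j,e_k,\,\cdot\,) = -18 u^\flat \wedge \varphi$. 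Contracting the free slot with $e_\l$ and then wedging with $i(e_\l)F$ and summing over $\l$ produces, on one side, $S$ plus correction terms coming from $i(e_\l)$ hitting the three $i(e_\bullet)F$ factors (each such term involves $F_{\l \bullet}$ and can be resummed using $\sum_\l F_{\l m} i(e_\l)F = -i(e_m)(F^2/2)$ whose $\Lambda^4_7$-part vanishes by Corollary \ref{cor:1+F 00}), and on the other side $\sum_\l i(e_\l)F \wedge i(e_\l)(-18 u^\flat \wedge \varphi)$, which I would expand using $\sum_\l i(e_\l)F \wedge i(e_\l)\varphi = -3 i(u)*\varphi$ (equation \eqref{eq:1+F 2 3}) and $\sum_\l i(e_\l)F \wedge i(e_\l)u^\flat = i(u)F$, together with Lemma \ref{lem:G2 identities} to convert $i(u)(F\wedge\varphi)$ and $\la F^2,*\varphi\ra$ terms. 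The identities $i(u)F = i(u)F_{14} = 0$ from Lemma \ref{lem:1+F 00} will be used repeatedly to kill the $u$-contraction terms, and $F_7^2 = (i(u)\varphi)^2$ contributes the $\la F^2, *\varphi\ra F^2$ term via $\la F_7^2, *\varphi\ra = 2|F_7|^2 = 6|u|^2$.

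The main obstacle I anticipate is \emph{bookkeeping of the combinatorial factors}: when $i(e_\l)$ lands on each of the four factors $i(e_{i}),i(e_{j}),i(e_{k}),i(e_{\l})F$, one gets four families of correction terms, some of which are the desired $S$ itself (when $i(e_\l)$ hits the ``correct'' slot), some of which vanish by Corollary \ref{cor:1+F 00}, and some of which recombine into lower-order expressions already computed in Lemmas \ref{lem:1+F 1}, \ref{lem:1+F 2}, \ref{lem:1+F 3}. Keeping the signs straight through the repeated Leibniz expansions of $i(e_\l)$ on a product of $1$-forms and a $4$-form, and correctly identifying which resummations give $\pi^4_7(F^2)=0$, is where errors are most likely; everything else is a mechanical substitution of the already-established identities. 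Once $S$ is isolated, collecting the surviving terms should directly yield $-72\, i(u)(F\wedge\varphi) + 6\la F^2,*\varphi\ra F^2$, completing the proof.
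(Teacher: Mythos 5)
Your plan rejects the one idea that actually drives the paper's proof. The paper's argument is exactly the ``$F^4$'' computation you dismiss as useless: since $F^4 \in \Lambda^8 V^* = \{0\}$, expanding $0 = i(e_\l)\, i(e_k)\, i(e_j)\, i(e_i) (F^4/4)$ by the Leibniz rule produces, for free, a pointwise identity expressing $6\, i(e_i)F \wedge i(e_j)F \wedge i(e_k)F \wedge i(e_\l)F$ in terms of double contractions $F_{ij}\, i(e_\l) i(e_k) F^3$ and products $F_{ij}F_{k\l}F^2$; after multiplying by $*\varphi(e_i,e_j,e_k,e_\l)$ and summing, the first family reduces via $F^3 = 18 * u^\flat$ and Lemma \ref{lem:G2 identities} to $-72\, i(u)(F\wedge\varphi)$, and the second to $6\la F^2, *\varphi\ra F^2$. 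The vanishing of $F^4$ is not a defect of this route; it is the source of the identity (the same device already appears in the proof of Lemma \ref{lem:1+F 00}, where $F^4=0$ yields $*F^3\wedge *F=0$).

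The alternative you propose --- contracting the Lemma \ref{lem:1+F 3} identity with $e_\l$, wedging with $i(e_\l)F$, and summing --- has a genuine gap in the treatment of the correction terms. When $i(e_\l)$ hits one of the three factors $i(e_\bullet)F$, you get terms of the shape $\sum_{i,j,k,\l} F_{i\l}\, i(e_\l)F\wedge i(e_j)F\wedge i(e_k)F\wedge *\varphi(e_i,e_j,e_k,\,\cdot\,)$, that is, $\sum_{i,j,k} i(F^\sharp e_i)F\wedge i(e_j)F\wedge i(e_k)F\wedge *\varphi(e_i,e_j,e_k,\,\cdot\,)$. The resummation identity you invoke, $\sum_\l F_{\l m}\, i(e_\l)F = -i(e_m)(F^2/2)$, equates a $1$-form with a $3$-form and is false as stated; and Corollary \ref{cor:1+F 00} does not dispose of these terms, because the assignment in which $F^\sharp$ occupies one argument slot is not a $G_2$-equivariant function of a single $4$-form, so the Schur-type argument used in Lemma \ref{lem:1+F 3} (which factors through $\pi^4_7(F^2)=0$) does not apply. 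These correction terms are new quantities of essentially the same complexity as the target sum, so the plan does not close without an additional idea --- and the cleanest such idea is precisely the $F^4=0$ expansion you set aside.
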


\begin{proof} 
Since $F^4=0$, we compute 
\begin{equation}\label{appB0}
\begin{aligned}
0=&
i(e_\l)  i(e_k) i(e_j) i(e_i) (F^4/4) \\
=&
i(e_\l)  i(e_k) i(e_j) \left(i(e_i)F \wedge F^3 \right)\\
=&
i(e_\l)  i(e_k) \left(F_{i j} F^3 -3 i(e_i) F \wedge i(e_j) F \wedge F^2 \right). 
\end{aligned}
\end{equation}
The second term is computed as 
\begin{equation}\label{appB1}
\begin{aligned}
&-3 i(e_\l)  i(e_k) \left( i(e_i) F \wedge i(e_j) F \wedge F^2 \right)\\
=&
-3 i(e_\l) 
\left(F_{i k} i(e_j) F \wedge F^2 - F_{j k} i(e_i) F \wedge F^2 \right. \\
&\left. + 2 i(e_i) F \wedge i(e_j) F \wedge i(e_k) F \wedge F \right)\\
=& 
- F_{i k} i(e_\l)  i(e_j) F^3 
+ F_{j k} i(e_\l)  i(e_i) F^3 \\
&+ 6 
\left( -F_{i \l} i(e_j) F \wedge i(e_k) F \wedge F
+ F_{j \l} i(e_i) F \wedge i(e_k) F \wedge F \right. \\
&- F_{k \l} i(e_i) F \wedge i(e_j) F \wedge F\\
&\left.+ i(e_i) F \wedge i(e_j) F \wedge i(e_k) F \wedge i(e_\l) F \right). 
\end{aligned}
\end{equation}
Then, substituting \eqref{appB1} into \eqref{appB0} with noting 
\[
\begin{aligned}
6 i(e_j) F \wedge i(e_k) F \wedge F
=& 
3 i(e_j) F \wedge i(e_k) F^2\\
=& 
-3 i(e_k) \left( i(e_j) F \wedge F^2 \right) + 3 F_{j k} F^2\\
=& - i(e_k) i(e_j) F^3 +  3 F_{j k} F^2,  
\end{aligned}
\]
we see that 
\[
\begin{aligned}
0=&
F_{i j} i(e_\l)  i(e_k) F^3
- F_{i k} i(e_\l)  i(e_j) F^3 
+ F_{j k} i(e_\l)  i(e_i) F^3 \\
&+ F_{i \l} \left( i(e_k) i(e_j) F^3 - 3 F_{j k} F^2 \right) \\
&+ F_{j \l} \left( -i(e_k) i(e_i) F^3 + 3 F_{i k} F^2 \right) \\
&+ F_{k \l} \left( i(e_j) i(e_i) F^3 - 3 F_{i j} F^2 \right) \\
&+ 6 i(e_i) F \wedge i(e_j) F \wedge i(e_k) F \wedge i(e_\l) F. 
\end{aligned}
\]
Multiplying this equation by $* \varphi (e_i, e_j, e_k, e_l)$ and rearranging terms imply that 
\begin{equation}\label{appB2}
\begin{aligned}
&\sum_{i,j,k,\l} i(e_i) F \wedge i(e_j) F \wedge i(e_k) F \wedge i(e_\l) F \cdot 
* \varphi (e_i, e_j, e_k, e_\l) \\
=&
\sum_{i,j,k,\l} F_{i j} i(e_k)  i(e_\l) F^3 \cdot * \varphi (e_i, e_j, e_k, e_\l) 
+ \frac{3}{2} \sum_{i,j,k,\l} F_{i j} F_{k \l} F^2 \cdot * \varphi (e_i, e_j, e_k, e_\l). 
\end{aligned}
\end{equation}
For the second term of the right hand side of \eqref{appB2}, 
since $F = (1/2) \sum_{i,j} F_{i j} e^{i j}$, we have 
\begin{equation}\label{appB3}
\begin{aligned}
&\frac{3}{2} \sum_{i,j,k,\l} F_{i j} F_{k \l} F^2 \cdot * \varphi (e_i, e_j, e_k, e_\l) \\
=& \frac{3}{2} \sum_{i,j,k,\l} F_{i j} F_{k \l} F^2  \la e^{i j k \l}, * \varphi \ra
= 6 \la F^2, * \varphi \ra F^2. 
\end{aligned}
\end{equation}
For the first term of the right hand side of \eqref{appB2}, we first compute as
\[
\begin{aligned}
\sum_{i,j} F_{i j} * \varphi (e_i, e_j, e_k, e_\l) 
=& \sum_{i,j} F_{i j} \la e^{i j k \l}, * \varphi \ra
= 2 \la F \wedge e^{k \l}, * \varphi \ra\\
=& 2 * (F \wedge e^{k \l} \wedge \varphi)
= 2 \la e^{k \l}, *(F \wedge \varphi) \ra. 
\end{aligned}
\]
Then, by 
$F^3 = 6 F \wedge * \varphi = 6 i(u) \varphi \wedge * \varphi
= 18 * u^\flat$, where we use Lemma \ref{lem:G2 identities}, 
we obtain 
\begin{equation}\label{appB4}
\begin{aligned}
\sum_{i,j,k,\l} F_{i j} i(e_k)  i(e_\l) F^3 \cdot * \varphi (e_i, e_j, e_k, e_\l) 
=& 
- 36 \sum_{k,\l} \la e^{k \l}, *(F \wedge \varphi) \ra * (e^{k \l} \wedge u^\flat)\\
=&
- 72 * (u^\flat \wedge *(F \wedge \varphi))\\
=&
- 72 i(u) (F \wedge \varphi). 
\end{aligned}
\end{equation}
Substituting \eqref{appB3} and \eqref{appB4} into \eqref{appB2} deduces the desired formula. 
\end{proof}

\begin{proof}[Proof of Theorem \ref{thm:1+F}]
By Lemmas \ref{lem:1+F 00}, \ref{lem:1+F 1}, \ref{lem:1+F 2}, \ref{lem:1+F 3}, \ref{lem:1+F 4}
and \eqref{appB-2}, we have 
\[
\begin{aligned}
(I + F^\sharp)^* * \varphi 
=& * \varphi -3 u^\flat \wedge \varphi \\
&+ \frac{1}{2} \left((-2 |F_7|^2 + |F_{14}|^2)* \varphi 
+ 
(5 F_7^2 +4 F_7 \wedge F_{14} - F_{14}^2) \right) \\
&+ \frac{1}{6} \cdot 18 u^\flat \wedge \varphi 
+ \frac{1}{24} \left(-72 F \wedge i(u) \varphi 
+  6 \la F^2, * \varphi \ra F^2 \right) \\
=&
\left( - |F_7|^2 + \frac{1}{2} |F_{14}|^2 +1 \right) * \varphi 
+ \frac{1}{4} \la F^2, * \varphi \ra F^2 \\
&+ \frac{1}{2} (5 F_7^2 +4 F_7 \wedge F_{14} - F_{14}^2) -3 F \wedge F_7. 
\end{aligned}
\]
Since 
\[
\begin{aligned}
&\la F^2, * \varphi \ra 
= * (F^2 \wedge \varphi) 
=* (F \wedge * (2 F_7 - F_{14}))
= 2|F_7|^2 - |F_{14}|^2, \\
&\frac{1}{2} (5 F_7^2 +4 F_7 \wedge F_{14} - F_{14}^2) -3 F \wedge F_7\\
=&
- \frac{1}{2} F_7^2 - F_7 \wedge F_{14} - \frac{1}{2} F_{14}^2 
= - \frac{1}{2} (F_7 + F_{14})^2
= - \frac{1}{2} F^2, 
\end{aligned}
\]
we obtain \eqref{eq:1+F Hodge}. 
\end{proof}
\section{The proof of Proposition \ref{prop:dDT norm}}
\label{sec:dDT irr decomp}
In this section, we prove Proposition \ref{prop:dDT norm} 
which is equal to the following Corollary \ref{cor:dDT estimate}. 
The computation is pointwise, so we work in the setting of Appendix \ref{sec:new G2 str}. 
Decompose a 2-form $F \in \Lambda^2 V^*$ as in \eqref{eq:decomp F}. 
We first give another description of the defining equation of the dDT connection. 

\begin{proposition} \label{prop:decomp dDTeq}
We have 
\begin{equation} \label{eq:decomp dDTeq}
\begin{aligned}
-\frac{1}{6} F^3 + F \wedge * \varphi =& 
\left( 3 - |u|^2 + \frac{1}{2} |F_{14}|^2 \right) * u^\flat - \frac{1}{6} F_{14}^3 \\
&- * \varphi \wedge u^\flat \wedge i(u) F_{14} - \varphi \wedge F_{14} \wedge i(u) F_{14}. 
\end{aligned}
\end{equation}
If $F$ satisfies $-F^3/6 + F \wedge * \varphi = 0$, it follows that 
\begin{equation} \label{eq:dDTeq=0}
\left( 3 - |u|^2 + \frac{1}{2} |F_{14}|^2 \right) * u^\flat = \frac{1}{6} F_{14}^3. 
\end{equation}
\end{proposition}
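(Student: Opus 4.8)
The statement is a pointwise identity in the exterior algebra of $V=\R^7$ equipped with the standard $G_2$-structure $\varphi$, so the plan is to expand $-\frac16 F^3 + F\wedge *\varphi$ using the decomposition $F = F_7 + F_{14} = i(u)\varphi + F_{14}$ and the structural facts already assembled in Appendix \ref{sec:G2 geometry}, especially Lemmas \ref{lem:G2 identities} and \ref{lem:G2id high}. First I would expand $F^3 = (i(u)\varphi + F_{14})^3$ by the binomial formula into four terms: $(i(u)\varphi)^3$, $3(i(u)\varphi)^2\wedge F_{14}$, $3(i(u)\varphi)\wedge F_{14}^2$, and $F_{14}^3$. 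Each of the first three is rewritten via Lemma \ref{lem:G2id high}: equation \eqref{eq:G2id high 1} gives $(i(u)\varphi)^3 = 6|u|^2 * u^\flat$; equation \eqref{eq:G2id high 2} gives $(i(u)\varphi)^2\wedge F_{14} = 2 *\varphi\wedge u^\flat\wedge i(u)F_{14}$; and equation \eqref{eq:G2id high 3} gives $(i(u)\varphi)\wedge F_{14}^2 = -|F_{14}|^2 * u^\flat + \varphi\wedge i(u)(F_{14}^2)$, where $i(u)(F_{14}^2) = 2 F_{14}\wedge i(u)F_{14}$.

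Next I would handle the linear term $F\wedge *\varphi = i(u)\varphi\wedge *\varphi + F_{14}\wedge *\varphi$. The first piece equals $3 * u^\flat$ by the second identity of Lemma \ref{lem:G2 identities}, and the second piece vanishes because $F_{14}\in\Lambda^2_{14}V^*$ and $\Lambda^2_{14}V^*\wedge *\varphi = 0$ by \eqref{decom-L-V7}. Assembling, $-\frac16 F^3 = -|u|^2 * u^\flat - *\varphi\wedge u^\flat\wedge i(u)F_{14} + \frac12|F_{14}|^2 * u^\flat - \varphi\wedge F_{14}\wedge i(u)F_{14} - \frac16 F_{14}^3$, and adding $3*u^\flat$ collects the $*u^\flat$ coefficients into $\big(3 - |u|^2 + \frac12|F_{14}|^2\big)$, yielding \eqref{eq:decomp dDTeq} exactly. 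The only real bookkeeping obstacle is tracking the combinatorial factors $3$ in the binomial expansion against the factors of $2$ produced by Lemma \ref{lem:G2id high}, so that the cross terms $*\varphi\wedge u^\flat\wedge i(u)F_{14}$ and $\varphi\wedge F_{14}\wedge i(u)F_{14}$ come out with coefficient exactly $-1$; I would double-check these against the $i(u)(F_{14}^2) = 2F_{14}\wedge i(u)F_{14}$ identity (Leibniz rule) and the $\frac{3}{6} = \frac12$ collapse.

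For the second assertion \eqref{eq:dDTeq=0}, I would invoke Lemma \ref{lem:1+F 00}, which says that if $F$ satisfies $-F^3/6 + F\wedge *\varphi = 0$ then $i(u)F = i(u)F_{14} = 0$. Substituting $i(u)F_{14}=0$ into \eqref{eq:decomp dDTeq} kills the last two terms on the right-hand side, leaving $0 = \big(3 - |u|^2 + \frac12|F_{14}|^2\big) * u^\flat - \frac16 F_{14}^3$, which rearranges to \eqref{eq:dDTeq=0}. This step is immediate once the identity \eqref{eq:decomp dDTeq} is established, so the entire difficulty of the proposition is concentrated in the careful termwise expansion above; there is no conceptual obstacle, only the risk of a sign or coefficient slip in the $G_2$ algebra.
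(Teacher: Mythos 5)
Your proposal is correct and follows essentially the same route as the paper: expand $F^3=(F_7+F_{14})^3$ binomially, substitute the three identities of Lemma \ref{lem:G2id high} (together with $i(u)(F_{14}^2)=2F_{14}\wedge i(u)F_{14}$), use $F\wedge *\varphi = i(u)\varphi\wedge *\varphi = 3*u^\flat$ from Lemma \ref{lem:G2 identities} and $\Lambda^2_{14}V^*\wedge *\varphi=0$, and then derive \eqref{eq:dDTeq=0} from Lemma \ref{lem:1+F 00}. All coefficients check out against the paper's computation.
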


\begin{proof}
By Lemma \ref{lem:G2id high}, we compute 
\[
\begin{aligned}
F^3 
=& F_7^3 + 3 F_7^2 \wedge F_{14} + 3 F_7 \wedge F_{14}^2 + F_{14}^3\\
=& 
6 |u|^2 * u^\flat 
+ 6 * \varphi \wedge u^\flat \wedge i(u) F_{14} 
+3 (- |F_{14}|^2 * u^\flat + \varphi \wedge i(u) (F_{14}^2)) + F_{14}^3\\
=&
(6 |u|^2 - 3 |F_{14}|^2) * u^\flat + F_{14}^3 
+ 6 * \varphi \wedge u^\flat \wedge i(u) F_{14} + 6 \varphi \wedge F_{14} \wedge i(u) F_{14}. 
\end{aligned}
\]
This together with $F \wedge * \varphi = 3 *u^\flat$ implies \eqref{eq:decomp dDTeq}. 
The equation \eqref{eq:dDTeq=0} follows from \eqref{eq:decomp dDTeq} and Lemma \ref{lem:1+F 00}. 
\end{proof}

By Proposition \ref{prop:decomp dDTeq}, we obtain the following estimates. 

\begin{corollary} \label{cor:dDT estimate}
Suppose that $F$ satisfies $- F^3/6 + F \wedge * \varphi = 0$. Then, 
\begin{enumerate}
\item
if $F_{14}=0$, we have $F_7=0$ or $|F_7| = 3$. 
\item
We have 
\[
|F_7| \leq \sqrt{2 |F_{14}|^2 + 12} 
\cos \left( \frac{1}{3} \arccos \left( \frac{|F_{14}|^3}{(|F_{14}|^2+6)^{3/2}} \right) \right). 
\]
\end{enumerate}
\end{corollary}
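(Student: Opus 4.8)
The proof will rest on the reformulation \eqref{eq:dDTeq=0} of the dDT equation in Proposition \ref{prop:decomp dDTeq}, together with a sharp bound on $|F_{14}^3|$. Decompose $F = i(u)\varphi + F_{14}$ as in \eqref{eq:decomp F}, and recall that $|F_7|^2 = |i(u)\varphi|^2 = 3|u|^2$ (this is the computation in the proof of Lemma \ref{lem:G2id high}); thus it suffices to estimate $|u|$. Writing $s := |u| \ge 0$ and $c := |F_{14}|^2 \ge 0$, and taking the norm of both sides of \eqref{eq:dDTeq=0}, in which $|\ast u^{\flat}| = s$, I obtain the single scalar identity
\[
\Bigl| 3 + \tfrac{1}{2} c - s^2 \Bigr|\, s = \tfrac{1}{6}\, |F_{14}^3|.
\]

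The next step is to bound $|F_{14}^3|$ by a function of $c$ alone. Since $F_{14} \in \Lambda^2_{14} V^* \cong \g_2$, every element of $\g_2$ is ${\rm Ad}(G_2)$-conjugate to an element of a Cartan subalgebra, and all the norms involved are $G_2$-invariant, I may assume as in Lemma \ref{lem:kernel wedge} (cf. \cite[Section 2.7.2]{Bryant}) that $F_{14} = \lambda_1 e^{23} + \lambda_2 e^{45} + \lambda_3 e^{67}$ with $\lambda_1 + \lambda_2 + \lambda_3 = 0$. Then $c = \lambda_1^2 + \lambda_2^2 + \lambda_3^2$ and $F_{14}^3 = 6\lambda_1\lambda_2\lambda_3\, e^{234567}$, so $|F_{14}^3| = 6|\lambda_1\lambda_2\lambda_3|$. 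The $\lambda_i$ are the three real roots of $t^3 - \tfrac{c}{2} t - \lambda_1\lambda_2\lambda_3 = 0$ (using $\sigma_1 = 0$, $\sigma_2 = -c/2$), so its discriminant is nonnegative, $\tfrac{c^3}{2} - 27 (\lambda_1\lambda_2\lambda_3)^2 \ge 0$, which yields $|\lambda_1\lambda_2\lambda_3| \le c^{3/2}/(3\sqrt{6})$ and hence $|F_{14}^3| \le 2 c^{3/2}/\sqrt{6}$. Combined with the identity above, this gives $\bigl| 3 + \tfrac{c}{2} - s^2 \bigr|\, s \le c^{3/2}/(3\sqrt{6})$.

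From here the conclusions follow by elementary analysis. If $F_{14} = 0$ then $c = 0$ and $(3 - s^2)s = 0$, so $s = 0$ or $s^2 = 3$, i.e. $|F_7| = \sqrt{3}\,s \in \{0, 3\}$, which is part (1). For part (2), set $x := c^{3/2}/(c+6)^{3/2} \in [0,1)$ and note that $\tfrac13\arccos x \le \tfrac{\pi}{6}$. If $s^2 \le 3 + \tfrac{c}{2}$, then $|F_7|^2 = 3s^2 \le \tfrac{3(c+6)}{2}$, while the right-hand side of the asserted inequality is at least $\sqrt{2(c+6)}\,\cos\tfrac{\pi}{6} = \sqrt{\tfrac{3(c+6)}{2}} \ge |F_7|$, as desired. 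If instead $s^2 > 3 + \tfrac{c}{2}$, the estimate reads $s^3 - \tfrac{c+6}{2}s \le c^{3/2}/(3\sqrt6)$; since $s \mapsto s^3 - \tfrac{c+6}{2}s$ is strictly increasing from $0$ to $+\infty$ on $\bigl(\sqrt{(c+6)/2},\, \infty\bigr)$ and $s$ lies in this range, $s \le s_0$ where $s_0$ is the largest root of $s^3 - \tfrac{c+6}{2}s - \tfrac{c^{3/2}}{3\sqrt6} = 0$. The trigonometric formula for the roots of a depressed cubic identifies this root as $s_0 = \sqrt{\tfrac{2(c+6)}{3}}\,\cos\bigl(\tfrac13\arccos x\bigr)$ (and $s_0 \ge \sqrt{(c+6)/2}$ since $\tfrac13\arccos x \le \tfrac{\pi}{6}$, so it is indeed the relevant root). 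Therefore $|F_7| = \sqrt{3}\,s \le \sqrt{3}\,s_0 = \sqrt{2(c+6)}\,\cos\bigl(\tfrac13\arccos x\bigr)$, which is exactly the claimed bound upon substituting $c = |F_{14}|^2$.

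The main obstacle is the last paragraph: reducing the problem to a single monotone branch of the cubic, selecting the correct root among the three trigonometric solutions, and checking that the ``small $s$'' regime $s^2 \le 3 + \tfrac{c}{2}$ is still dominated by the stated inequality. The remaining ingredients — the norm identities $|F_7|^2 = 3|u|^2$, $|F_{14}^3| = 6|\lambda_1\lambda_2\lambda_3|$, and the discriminant bound on $|\lambda_1\lambda_2\lambda_3|$ — are short pointwise computations.
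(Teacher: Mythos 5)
Your proof is correct and follows essentially the same route as the paper: both start from the identity \eqref{eq:dDTeq=0}, bound $|F_{14}^3|$ by $\tfrac{\sqrt6}{3}|F_{14}|^3$, and identify the resulting bound on $|u|$ as the largest root of the depressed cubic via the trigonometric (Vi\`ete) formula. The only differences are cosmetic: you re-derive the estimate on $|F_{14}^3|$ by diagonalizing $F_{14}$ into a Cartan subalgebra of $\g_2$ and using the discriminant (the paper simply cites \cite[(2.22)]{Bryant}), and your two-case analysis of the cubic can be shortened to the paper's one-line observation that $a\le|a|$ turns the normed identity into $f(|u|)\le 0$ for a cubic $f$ with positive leading coefficient, which forces $|u|$ to lie below the largest real root.
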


\begin{proof}
Set $F_{14}=0$ in \eqref{eq:dDTeq=0}. 
Then, we have $(3-|u|^2) * u^\flat =0$. 
By the equation $|F_7| = |i(u) \varphi| = \sqrt{3} |u|$, we obtain (1). 

Next, we prove (2). 
By \cite[(2.22)]{Bryant}, we have 
\[
|F_{14}^3| \leq \frac{\sqrt{6}}{3} |F_{14}|^3. 
\]
Taking absolute values on both sides of \eqref{eq:dDTeq=0}, we have 
\[
\left( |u|^2 - \frac{1}{2} |F_{14}|^2 -3 \right) |u| \leq 
\left| 3-|u|^2 + \frac{1}{2} |F_{14}|^2 \right| |u| \leq \frac{\sqrt{6}}{18} |F_{14}|^3, 
\]
and hence, 
\[
|u|^3 - \left( \frac{1}{2} |F_{14}|^2 + 3 \right) |u| - \frac{\sqrt{6}}{18} |F_{14}|^3 \leq 0.
\]
Thus, if we define a cubic polynomial $f(x)$ (with a parameter $\lambda \geq 0$) by 
\[
f(x) = x^3 - \left( \frac{1}{2} \lambda^2 + 3 \right) x - \frac{\sqrt{6}}{18} \lambda^3, 
\]
$x =|u|$ satisfies $f(x) \leq 0$ for $\lambda =|F_{14}|$. 
By the Vi\`ete's formula for a cubic equation, 
the largest solution of $f(x)=0$ is given by 
\[
x = x_0 := \sqrt{ \frac{2}{3} (\lambda^2 + 6)} 
\cos \left( \frac{1}{3} \arccos \left( \frac{\lambda^3}{(\lambda^2+6)^{3/2}} \right) \right). 
\]
Hence, we see that 
$f(x) \leq 0$ implies that $x \leq x_0$. 
This together with the equation $|F_7| = |i(u) \varphi| = \sqrt{3} |u|$ implies (2). 
\end{proof}
\section{Notation}\label{notation-list}
We summarize the notation used in this paper. 
We use the following for  an oriented Riemannian manifold $(X, g)$. 

\vspace{2ex}
\begin{center}
\begin{tabular}{|l|l|}
\hline
Notation                        & Meaning \\ \hline \hline
$i(\,\cdot\,)$                & The interior product \\
$\Gamma(X, E)$            & The space of all smooth sections of a vector bundle $E \rightarrow X$\\
$\Om^k$                       & $\Om^k = \Om^k (X) = \Gamma (X, \Lambda^k T^*X)$ \\
$\Om^k_{\C}$               & $\Om^k_{\C} = \Gamma (X, \Lambda^k T^*X \otimes \C)$ \\
$b^k$                            & The $k$-th Betti number of $X$  \\
$H^k_{dR}$                     & The $k$-th de Rham cohomology \\
$H^k (\#)$                      & The $k$-th cohomology of a complex $(\#)$ \\
$Z^1$                            & The space of closed 1-forms \\
$v^{\flat} \in T^* X$        & $v^{\flat} = g(v, \,\cdot\,)$ for $v \in TX$ \\ 
$\alpha^{\sharp} \in TX$  & $\alpha = g(\alpha^{\sharp}, \,\cdot\,)$ for $\alpha \in T^* X$ \\
$\vol$                            & The volume form induced from $g$ \\
\hline
\end{tabular}
\end{center}
\vspace{2ex}

When $(X, J)$ is a complex manifold, we use the following. 

\vspace{2ex}
\begin{center}
\begin{tabular}{|l|l|}
\hline
Notation                         & Meaning \\ \hline \hline
$\Lambda^{p,q}$             & $\Lambda^{p,q} = \Lambda^p (T^{1,0} X)^* \otimes \Lambda^q (T^{0, 1} X)^*$\\
$\Om^{p,q}$                   & $\Om^{p,q} = \Gamma (X, \Lambda^{p,q})$  \\
$H^{p,q}_\bp$                  & The Dolbeault cohomology of type $(p,q)$\\
\hline
\end{tabular}
\end{center}
\vspace{2ex}

When $X$ is a manifold with a $G_2$-structure, we use the following. 

\vspace{2ex}
\begin{center}
\begin{tabular}{|l|l|}
\hline
Notation                         & Meaning \\ \hline \hline
$\Lambda^k_\ell T^*X$       & \hspace{-1ex}\begin{tabular}{l}The subbundle of $\Lambda^k T^*X$ corresponding to an\\ $\ell$-dimensional irreducible subrepresentation\end{tabular}\\
$\Om^k_\ell$                     &  $\Om^k_\ell = \Gamma (X, \Lambda^k_\ell T^*X)$ \\
$\pi^k_\ell$                        & The projection $\Lambda^k T^*X \rightarrow \Lambda^k_\ell T^*X$ or 
                                           $\Om^k \rightarrow \Om^k_\ell$ \\
\hline
\end{tabular}
\end{center}

\end{document}